%This file is formatted by AMS-LATEX.

%File name = OMlecturesFinal.tex

\documentclass{amsart}
\usepackage{amsmath}
\usepackage{amsfonts}
\usepackage{graphics}
\usepackage{epsfig}
\usepackage{amssymb}
\usepackage{amscd}
\usepackage[all]{xy}
\usepackage{latexsym}
\usepackage{graphicx}
\usepackage{multirow}
\usepackage{geometry}
\usepackage{color}
\usepackage[usenames,dvipsnames]{pstricks}
\usepackage{epsfig}
\usepackage{pst-grad} 
\usepackage{pst-plot}

\newtheorem{thm}{Theorem}[section]
\newtheorem{cor}[thm]{Corollary}
\newtheorem{lem}[thm]{Lemma}

\newtheorem{conj}[thm]{Conjecture}
\newtheorem{quest}[thm]{Question}
\newtheorem{prob}[thm]{Problem}

\theoremstyle{definition}
\newtheorem{Def}[thm]{Definition}
\newtheorem{rem}[thm]{Remark}
\newtheorem*{ack}{Acknowledgement}
\newtheorem{ex}{Example}[section]
\newtheorem{case}{Case}

\numberwithin{equation}{section}
\numberwithin{figure}{section}

%Definitions

\def\End{{\text{\rm{End}}}}

\def\tr{{\text{\rm{tr}}}}
\def\rchi{{\hbox{\raise1.5pt\hbox{$\chi$}}}}
\def\Aut{{\text{\rm{Aut}}}}

\def\isom{\cong}
\def\tensor{\otimes}
\def\dsum{\oplus}

\def\a{\alpha}
\def\b{\beta}
\def\lam{\lambda}

\def\gam{\gamma}
\def\Gam{\Gamma}

\def\Arg{{\text{\rm{Arg}}}}

\def\supp{{\text{\rm{supp}}}}

\def\Catalan{{\text{\rm{Catalan}}}}
\def\Pic{{\text{\rm{Pic}}}}

\def\NS{{\text{\rm{NS}}}}
\def\erf{{\text{\rm{erf}}}}
\def\Gauss{{\text{\rm{Gau\ss}}}}

\def\Ext{{\text{\rm{Ext}}}}

%NewDefinitions

\newcommand{\bea}{\begin{eqnarray}}
\newcommand{\eea}{\end{eqnarray}}
\newcommand{\be}{\begin{equation}}
\newcommand{\ee}{\end{equation}}

\newcommand{\Mbar}{{\overline{\mathcal{M}}}}
\newcommand{\bA}{{\mathbb{A}}}
\newcommand{\bP}{{\mathbb{P}}}
\newcommand{\bC}{{\mathbb{C}}}

\newcommand{\bF}{{\mathbb{F}}}
\newcommand{\bL}{{\mathbb{L}}}

\newcommand{\bQ}{{\mathbb{Q}}}
\newcommand{\bR}{{\mathbb{R}}}

\newcommand{\bZ}{{\mathbb{Z}}}

\newcommand{\cM}{{\mathcal{M}}}
\newcommand{\cN}{{\mathcal{N}}}

\newcommand{\cD}{{\mathcal{D}}}

\newcommand{\cO}{{\mathcal{O}}}

\newcommand{\cU}{{\mathcal{U}}}

\newcommand{\la}{{\langle}}
\newcommand{\ra}{{\rangle}}
\newcommand{\half}{{\frac{1}{2}}}
\newcommand{\bp}{{\mathbf{p}}}
\newcommand{\bx}{{\mathbf{x}}}

\newcommand{\rar}{\rightarrow}
\newcommand{\lrar}{\longrightarrow}

%Dimensions
\textwidth = 6in
\oddsidemargin = 0.25in
\evensidemargin=0.25in
\textheight = 8.7in
\topmargin = -0.2in

\begin{document}
\large
\setcounter{section}{0}

\allowdisplaybreaks

\title[Topological recursion for Higgs bundles and 
quantum curves]
{Lectures on  the topological recursion for
Higgs bundles and 
quantum curves}

\author[O.\ Dumitrescu]{Olivia Dumitrescu}
\address{
Olivia Dumitrescu:
Department of Mathematics\\
Central Michigan University\\
Mount Pleasant, MI 48859}
\address{and Simion Stoilow Institute of Mathematics\\
Romanian Academy\\
21 Calea Grivitei Street\\
010702 Bucharest, Romania}
\email{dumit1om@cmich.edu}

\author[M.\ Mulase]{Motohico Mulase}
\address{Motohico Mulase:
Department of Mathematics\\
University of California\\
Davis, CA 95616--8633, U.S.A.\\}
\address{
and Kavli Institute for Physics and Mathematics of the 
Universe\\
The University of Tokyo\\
Kashiwa, Japan}
\email{mulase@math.ucdavis.edu}

\begin{abstract}
The paper aims at giving an introduction to 
the notion of \emph{quantum curves}.
The main purpose  is to describe the new
 discovery of the relation between
the following two disparate subjects:
one is the topological recursion, that has
its origin in random matrix theory and  has been
effectively applied to many
 enumerative geometry problems; and
 the  other is
  the quantization
of Hitchin spectral curves associated with 
Higgs bundles. 
Our emphasis 
is   on explaining
the motivation and examples. Concrete
examples of the direct relation 
between Hitchin spectral curves
and enumeration problems are given.
A general geometric framework of
quantum curves is also discussed.
\end{abstract}

\subjclass[2010]{Primary: 14H15, 14N35, 81T45;
Secondary: 14F10, 14J26, 33C05, 33C10, 
33C15, 34M60, 53D37}

\keywords{Topological quantum field 
theory; topological recursion; quantum curves;
Hitchin spectral curves, Higgs bundles}

\maketitle
\tableofcontents

\begin{flushright}
We're not going to tell you
the story 
\\
the way it happened.
\\
We're going to tell it 
 \\
the way we remember it.
\end{flushright}

\section{Introduction}
\label{sect:intro}

Mathematicians often keep their childhood
dream for a long time. When you saw a perfect
rainbow as a child, you might have
 wondered what awaited you
when you went  over the arch. 
In a  lucky situation, you might have seen the
double, or even triple, rainbow arches spanning 
above the  brightest one, with the 
reversing color patterns on the higher arches. 
Yet we see nothing underneath
the brightest arch.

One of the purposes of these lectures 
is to  offer you 
a vision: \textbf{on the other side of the rainbow,
you see quantum invariants.} This statement 
describes only the tip of the iceberg. We 
believe something like the following is 
happening: Let $C$ be a
smooth projective curve over $\bC$, and 
\begin{equation}
\label{spectral-intro}
\xymatrix{
\Sigma \ar[dr]_{\pi}\ar[r]^{i} 
&\overline{T^*C}\ar[d]^{\pi}
\\
&C		}
\end{equation} 
be an arbitrary
 Hitchin \emph{spectral curve} associated
with a particular
  \emph{meromorphic} Higgs
bundle $(E,\phi)$ 
on $C$. Then the asymptotic expansion
at an essential singularity
of a solution (flat section)
 of the $\hbar$-connection on $C$,
that is the image of the \textbf{quantization} applied to 
$\Sigma$, carries the information of 
quantum invariants of a totally different
geometric structure, which should be considered
as the \textbf{mirror} to the geometric context
\eqref{spectral-intro}.

In this introduction,
we are going to tell you a story
of  an example to this
mirror correspondence using the rainbow
integral of Airy. The Hitchin spectral curve
is a singular compactification 
of a parabola $x=y^2$ in a Hirzebruch 
surface. 
The corresponding 
 quantum invariants, the ones  hiden
 underneath the rainbow,
 are the cotangent class intersection numbers
of the moduli space $\Mbar_{g,n}$. These 
numbers then determine the coefficients
of the tautological  relations among the 
generators of the tautological rings of 
$\cM_g$ and $\Mbar_{g,n}$. 
The uniqueness of the asymptotic expansion 
relates  the WKB analysis
of the quantization of the parabola at  infinity
 to the intersection 
 numbers  on $\Mbar_{g,n}$,
 through a combinatorial  estimate of the
 \emph{volume} of the moduli space
 $\cM_{g,n}$.

The story begins in 1838.

\subsection{On the other side of the rainbow}

\begin{figure}[hbt]
\includegraphics[height=1.5in]{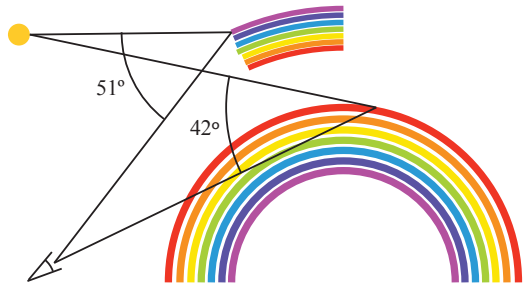}
\caption{Rainbow archs}
\label{fig:ranbow}
\end{figure}

Sir George Biddel Airy devised 
a simple formula, which he called 
the \emph{rainbow
integral} 
\be\label{rainbow}
Ai(x) = \frac{1}{2\pi} \int_{-\infty} ^\infty
e^{ipx}e^{i\frac{p^3}{3}}dp
\ee
and now carries his name,
in his attempt of explaining the rainbow phenomena
in terms of  wave optics \cite{Airy}. 
The angle between the sun and
the observer measured at the brightest arch is always
about $42^\circ$. The higher arches also have 
definite angles, independent of the rainbow. 
Airy tried to explain these angles and the 
brightness of the rainbow arches by
 the peak points of 
the rainbow integral.

\begin{figure}[hbt]
\includegraphics[width=3in]{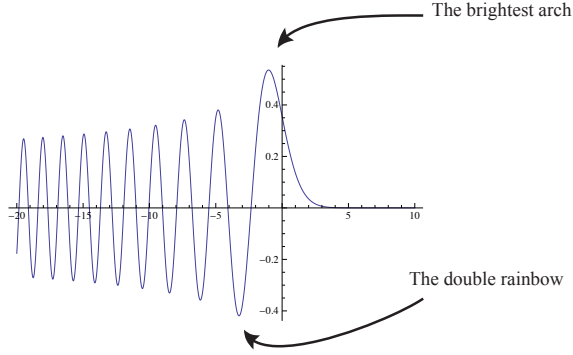}
\caption{The Airy function}
\label{fig:Airy}
\end{figure}

We note that \eqref{rainbow} is an 
\emph{oscillatory integral}, and determines a 
real analytic function in $x\in \bR$. It is easy to see,
by integration by parts and taking care of the 
boundary contributions in oscillatory 
integral, that 
\begin{align*}
\frac{d^2}{dx^2}Ai(x) 
&=
\frac{1}{2\pi} \int_{-\infty} ^\infty (-p^2)
e^{ipx}e^{i\frac{p^3}{3}}dp
=
\frac{1}{2\pi} \int_{-\infty} ^\infty 
e^{ipx}\left(i\frac{d}{dp}e^{i\frac{p^3}{3}}\right)dp
\\
&=
-\frac{1}{2\pi} \int_{-\infty} ^\infty 
\left(i\frac{d}{dp}e^{ipx}\right)e^{i\frac{p^3}{3}}dp
=
\frac{1}{2\pi}\; x \int_{-\infty} ^\infty e^{ipx}
e^{i\frac{p^3}{3}}dp.
\end{align*}
Thus the Airy function satisfies a second order
differential equation (known as the 
Airy differential equation)
\be\label{Airy diff}
\left(\frac{d^2}{dx^2}-x\right) Ai(x)=0.
\ee
Now we consider $x\in \bC$ as a complex variable.
Since the coefficients of  \eqref{Airy diff} are
entire functions (they are just $1$ and $x$),
any solution of this differential equation is automatically
entire, and has a convergent power series expansion
\be\label{Airy power}
Ai(x) = \sum_{n=0}^\infty a_n x^n
\ee
at the origin with the radius of convergence $\infty$.
Plugging \eqref{Airy power} into \eqref{Airy diff},
we obtain a recursion formula 
\be
\label{Airy coeff}
a_{n+2} = \frac{1}{(n+2)(n+1)} \; a_{n-1}, 
\qquad n\ge 0, 
\ee
with the initial condition $a_{-1} = 0.$
Thus we find
\begin{equation*}
 a_{3n} =a_0\cdot
\frac{\prod_{j=1}^n (3j-2)}{(3n)!},
\qquad
 a_{3n+1} =a_1\cdot
\frac{\prod_{j=1}^n (3j-1)}{(3n+1)!},
\qquad
 a_{3n+2} = 0.
\end{equation*}
Here, $a_0$ and $a_1$ are arbitrary constants,
so that the Airy differential equation has
a two-dimensional space of solutions. These coefficients
do not seem to be particularly interesting.
The
oscillatory integral \eqref{rainbow}
tells us that 
as $x\rar +\infty$ on the real axis,
the Airy function defined by the rainbow integral
vanishes, because  $e^{ipx+ip^3/3}$ oscillates so much
that the integral cancels. More precisely, $Ai(x)$
satisfies a limiting formula
\be
\label{airyasym}
\lim_{x\rar +\infty}
\frac{Ai(x)}{\frac{1}{2\sqrt{\pi}}
\cdot  \frac{1}{\sqrt[4]{x}}
\exp\left({-\frac{2}{3}{x^{\frac{3}{2}}}}\right)}
 = 1.
\ee
Hence it exponentially decays, as 
$x\rar +\infty$,
 $x\in \bR$.
Among the Taylor series solutions
\eqref{Airy power},  there is 
\emph{only one} solution
that satisfies this exponential decay
property,  which is given by the following
initial condition for \eqref{Airy coeff}:
$$
a_0 = \frac{1}{3^{\frac{2}{3}}\Gam(\frac{2}{3})},
\qquad 
a_1 = - \frac{1}{3^{\frac{1}{3}}\Gam(\frac{1}{3})}.
$$
The exponential decay on the positive real axis 
explains 
why we do not see any rainbows under the brightest
arch. Then what do we really
see underneath the rainbow? Or on the \emph{other 
side} of the rainbow?

The differential equation \eqref{Airy diff} tells us 
 that 
the Airy function has an essential singularity
at $x=\infty$. Otherwise, the solution
would be a polynomial in $x$, but 
\eqref{Airy coeff} does not terminate
at a finite $n$. How do we analyze the behavior
of a holomorphic function at its essential
singularity? And what kind of information
does it tell us?

\begin{Def}[Asymptotic expansion]
Let $f(z)$ be a holomorphic
function defined on an open domain $\Omega$
 of the complex plane ${\mathbb{C}}$ having the
origin $0$ on its boundary.
 A formal
power series
$$
\sum_{n=0} ^{\infty} a_{n} z^{n}
$$
is  an \emph{asymptotic expansion} of $f(z)$
on $\Omega$ at $z=0$ if
\begin{equation}
\label{def asymptotic}
\lim_{\substack{z\rightarrow 0 \\ z\in \Omega}}
\frac{1}
{z^{m+1}}
\left(
f(z) - \sum_{n = 0} ^m a_{n} z^{n}
\right)   = a_{m+1}
\end{equation}
holds for every $m\ge 0$.
\end{Def}

The asymptotic expansion is a
domain specific notion. For example,
$f(z) = e^{-1/z}$ is holomorphic
on $\bC^* = \bC\setminus \{0\}$, 
but it does not have any asymptotic 
expansion on all of $\bC^*$. 
However, it has an asymptotic expansion
$$
e^{-1/z} \sim 0
$$
on 
$$
\Omega=\left\{z\in \bC^*\;\left|\; |\Arg(z)|<
\frac{\pi}{2}-\epsilon\right.\right\},
\qquad \epsilon>0.
$$
If there is an asymptotic expansion of $f$ on 
a domain $\Omega$, then it is unique, 
and if $\Omega'\subset \Omega$ with 
$0\in \partial \Omega'$, then 
obviously the same asymptotic
expansion holds on $\Omega'$.

The Taylor expansion \eqref{Airy power}
shows that  
$Ai(x)$ is real valued on the real axis, and from
\eqref{airyasym}, we see that the value is
positive for $x>0$. Therefore, $\log Ai(x)$
is a holomorphic function on $Re(x)>0$.

\begin{thm}[Asymptotic expansion of the Airy function]
Define \be\label{Airy 0,1}
S_0(x) = - \frac{2}{3} x^{\frac{3}{2}},
\qquad
S_1(x) = -\frac{1}{4} \log x-\log (2\sqrt{\pi}).
\ee
Then $\log Ai(x) - S_0(x)-S_1(x)$
has the following asymptotic expansion on
$Re(x)>0$.
\be
\label{logAiry asym}
\log Ai(x) - S_0(x)-S_1(x) = 
\sum_{m=2}^\infty
S_m(x),
\ee
\begin{equation}
\label{Airy Sm}
S_m(x) :=x^{-\frac{3}{2}(m-1)} \cdot
\frac{1}{2^{m-1}}
\sum_{\substack{g\ge 0,n>0\\2g-2+n=m-1}} 
\frac{(-1)^n}{n!}
\sum_{\substack{d_1+\dots+d_n\\
=3g-3+n}}
\la \tau_{d_1}\cdots \tau_{d_n}\ra_{g,n}
\prod_{i=1}^n |2d_i-1|!!
\end{equation}
for $m\ge 2$.
The key coefficients are defined by
\be\label{intersection}
\la \tau_{d_1}\cdots \tau_{d_n}\ra_{g,n}
:=\int_{\Mbar_{g,n}}
\psi_1^{d_1}\cdots \psi_n^{d_n},
\ee
where, $\Mbar_{g,n}$ is the moduli space of
stable curves of genus $g$ with $n$ smooth
marked
points. Let $[C,(p_1,\dots,p_n)] \in \Mbar_{g,n}$
be a point of the moduli space. 
We can construct a line bundle $\bL_i$
on the smooth
Deligne-Mumford stack $\Mbar_{g,n}$ by
attaching the cotangent line $T^*_{p_i}C$
at the point $[C,(p_1,\dots,p_n)]$ of the moduli
space. The symbol
$$
\psi_i=c_1(\bL_i) \in H^2(\Mbar_{g,n},\bQ)
$$ 
denotes its first Chern class.
Since $\Mbar_{g,n}$ has dimension $3g-3+n$, 
the integral \eqref{intersection} is automatically
$0$ unless $d_1+\dots+d_n
=3g-3+n$.
\end{thm}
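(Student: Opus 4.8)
The plan is to read \eqref{Airy diff} as the source of a WKB recursion for the $S_m$, to fix the integration constants from \eqref{airyasym}, and then to recognize that recursion as the topological (Virasoro/DVV) recursion for the numbers \eqref{intersection} after a principal specialization. First I would substitute $Ai(x)=\exp\bigl(S(x)\bigr)$ into \eqref{Airy diff}, so that the linear equation becomes the Riccati equation
\begin{equation*}
S''(x)+\bigl(S'(x)\bigr)^2=x .
\end{equation*}
Writing $S=\sum_{m\ge 0}S_m$ with $S_m$ homogeneous so that $S_m'(x)$ is the pure power $x^{-\frac32 m+\frac12}$ for every $m\ge 0$ (the $m=1$ term being $-\tfrac14\log x$, whose derivative still carries this degree), and collecting the coefficient of $x^{-\frac32 m+1}$ on both sides, I obtain for each $m\ge 1$
\begin{equation*}
2\,S_0'(x)\,S_m'(x)+S_{m-1}''(x)+\sum_{\substack{a+b=m\\ a,b\ge 1}}S_a'(x)\,S_b'(x)=0,
\qquad S_0'(x)=-x^{1/2}.
\end{equation*}
Because $S_0'$ never vanishes, this solves for $S_m'$, hence for $S_m$ up to an additive constant.

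The constants are pinned down as follows. Uniqueness of the asymptotic expansion forces the constant to vanish for each $m\ge 2$ (these $S_m$ decay), while the lone constant in $S_1$ is exactly $-\log(2\sqrt\pi)$ by the normalization \eqref{airyasym}; that a genuine---not merely formal---asymptotic expansion exists on $\mathrm{Re}(x)>0$ I would take from the classical theory of the irregular singular point of \eqref{Airy diff} at $x=\infty$, together with the fact that the leading behaviour in \eqref{airyasym} has nonzero leading coefficient, so its logarithm is holomorphic with a bona fide asymptotic series. A direct check reproduces $S_0,S_1$ in \eqref{Airy 0,1} and, at $m=2$, gives $S_2=-\tfrac{5}{48}x^{-3/2}$; this matches \eqref{Airy Sm} since the only contributing terms are $\la\tau_0^3\ra_{0,3}=1$ and $\la\tau_1\ra_{1,1}=\tfrac1{24}$.

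It remains to show that the right-hand side of \eqref{Airy Sm} solves this same recursion with the same data. Here I would set
\begin{equation*}
F_{g,n}(x):=\frac{1}{2^{2g-2+n}}\,x^{-\frac32(2g-2+n)}\sum_{\substack{d_1+\cdots+d_n\\=3g-3+n}}\la \tau_{d_1}\cdots\tau_{d_n}\ra_{g,n}\prod_{i=1}^n|2d_i-1|!! ,
\end{equation*}
so that the asserted $S_m=\sum_{2g-2+n=m-1}\frac{(-1)^n}{n!}F_{g,n}$ is the principal specialization (all $n$ marked points evaluated at the same $x$) of the multivariate free energies attached to the Airy curve $x=y^2$. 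With $S_0'=-x^{1/2}=y$ playing the role of the spectral-curve datum, the displayed WKB recursion is precisely the Eynard--Orantin topological recursion in this one-variable form: the quadratic sum $\sum_{a+b=m}S_a'S_b'$ collects the separating and disconnected contributions, $S_{m-1}''$ the non-separating node, and division by $2S_0'$ is the integration against the recursion kernel $1/(2y)$. The coefficients so produced are the Witten--Kontsevich numbers \eqref{intersection}; equivalently, one may verify the recursion directly from the DVV/Virasoro relations for $\la\prod\tau_{d_i}\ra_{g}$, in which the marked-point merger carries the weight $2d_j+1$ and the node contributions carry the index splittings $a+b=d_1-2$.

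The main obstacle is exactly this identification at the level of coefficients: one must show that the weights $\prod|2d_i-1|!!$, combined with the $2d_j+1$ factors and the index shifts of the DVV recursion, collapse to the plain rational numbers appearing in the WKB recursion, with the powers of $2$ and the half-integer powers of $x$ bookkept correctly under the principal specialization. This is a purely combinatorial lemma about the behaviour of the double factorials under the operations $d_1+d_j-1$ and $a+b=d_1-2$; it is where the particular normalization of the Airy curve enters, and it is the step I expect to require the most care. Once it is verified for $S_m'$, integrating and fixing constants as above completes the proof.
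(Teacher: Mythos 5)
Your proposal is correct and follows essentially the same route as the paper: derive the Riccati/WKB recursion for the $S_m$ from the Airy equation, fix the integration constants by the decay condition \eqref{boundary} together with \eqref{airyasym}, and identify that recursion with the principal specialization of the topological recursion satisfied by the intersection-number generating functions, the combinatorial step you flag being exactly what the paper imports from \cite{DMSS} via \eqref{Airy TR}. The only cosmetic difference is that you phrase the key identification through the DVV/Virasoro relations rather than the Eynard--Orantin recursion on the spectral curve $x=y^2$, but the paper itself observes that for this example the two are exactly the same.
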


Surprisingly,  on the other side of the rainbow, i.e.,
when $x>0$, 
we see the intersection numbers 
\eqref{intersection}!

\begin{rem}
The relation between the Airy function and 
the intersection numbers was discovered by
Kontsevich \cite{K1992}. He replaces the 
variables $x$ and $p$ in \eqref{rainbow}
by \emph{Hermitian matrices}. It is a general
property that a large class of Hermitian
matrix integrals satisfy an integrable system
of KdV and KP type
(see, for example, \cite{M1994}). 
Because of the cubic polynomial
in the integrand, the matrix Airy function
of \cite{K1992} counts trivalent ribbon graphs
through the Feynman diagram expansion,
which represent open dense subsets 
of $\Mbar_{g,n}$. By identifying the intersection 
numbers and the Euclidean volume of these open
subsets, Kontsevich proves the Witten conjecture
\cite{W1991}. Our formulas \eqref{logAiry asym}
and \eqref{Airy Sm} are a consequence of
the results reported in \cite{CMS,DMSS, MP2012,MS}.
We will explain the relation 
more concretely in these lectures.
\end{rem}

\begin{rem}
The numerical value of the
asymptotic expansion \eqref{logAiry asym}
is given by
\begin{multline}
\label{Ai numerical}
\log Ai(x) 
= -\frac{2}{3} x^{\frac{3}{2}}
-\frac{1}{4}\log x -\log(2\sqrt{\pi})
\\
-\frac{5}{48} x^{-\frac{3}{2}}
+\frac{5}{64}x^{-3}
-\frac{1105}{9216}x^{-\frac{9}{2}}
+\frac{565}{2048}x^{-6}
-\frac{82825}{98304}x^{-\frac{15}{2}}
+\frac{19675}{6144}x^{-9}
\\
-\frac{1282031525}{88080384} x^{-\frac{21}{2}}
+\frac{80727925}{1048576}x^{-12}
-\frac{1683480621875}{3623878656}
x^{-\frac{27}{2}}
+\cdots .
\end{multline}
This follows from the WKB analysis of the
Airy differential equation, which will be
explained  in this introduction. 
\end{rem}

\begin{rem}
Although the asymptotic expansion is not
\emph{equal} to the holomorphic function itself,
we use the equality sign in these lectures to avoid
further cumbersome notations.
\end{rem}

The Airy differential equation appears in 
many different places, 
showing the feature of a \emph{universal object} in the 
WKB analysis. It reflects the fact that
the intersection numbers 
\eqref{intersection} are the most fundamental 
objects in Gromov-Witten theory. 
In  contrast to the Airy differential equation,
the gamma function is a universal object 
in the context of \emph{difference equations}. 
We recall that 
$$
\Gam(z+1) = z\Gam(z), 
$$
and 
its asymptotic expansion for $Re(z)>0$
is given by
\be
\label{Stirling}
\log \Gam(z) = 
z\log z-z-\half \log z +\half \log(2\pi) 
+ \sum_{m=1}^\infty \frac{B_{2m}}{2m(2m-1)}
z^{-(2m-1)},
\ee
where $B_{2m}$ is the $(2m)$-th
Bernoulli number defined by the generating function
$$
\frac{x}{e^x-1}=\sum_{n=0}^\infty \frac{B_n}{n!}
x^n.
$$
This  is called
 \textbf{Stirling's formula}, and its main part gives 
 the well-known approximation of the factorial:
 $$
 n! \sim \sqrt{2\pi n}\; \frac{n^n}{e^n}.
 $$
The asymptotic expansion of the gamma
function is deeply related to the moduli
theory of algebraic curves. For example,
from 
Harer and Zagier \cite{HZ} we learn that
the orbifold Euler characteristic of the
moduli space of smooth algebraic curves
is given by the formula
\be
\label{Mgn Euler}
\rchi(\cM_{g,n})=(-1)^{n-1}
\frac{(2g-3+n)!}{(2g-2)!n!}\zeta(1-2g).
\ee 
Here, the special value of the Riemann zeta function
is the Bernoulli number
$$
\zeta(1-2g) = -\;\frac{B_{2g}}{2g}.
$$
The expression \eqref{Mgn Euler} is valid for
$g=0, n\ge 3$ if we use the gamma function 
for $(2g-2)!$.
Stirling's formula \eqref{Stirling} 
is  much simpler  than 
the expansion of $\log Ai(x)$. 
As the prime factorization of one of the coefficients
$$
\frac{1683480621875}{3623878656}
= \frac{5^5\cdot 13\cdot 17\cdot 2437619}
{2^{27}\cdot 3^3}
$$
shows, we do not expect any simple closed formula
for the coefficients
of  \eqref{Ai numerical}, like Bernoulli numbers.
Amazingly, still there is a  close relation between
these two asymptotic expansions
\eqref{Ai numerical} and \eqref{Stirling}
through the work on \emph{tautological
relations} of Chow classes on the moduli
space $\cM_g$ by Mumford \cite{Mumford},
followed by recent exciting developments on the
Faber-Zagier conjecture
\cite{F, I, PP}. In Theorem~\ref{thm:FgnC}, we will
see yet another close relationship between 
the Euler characteristic of $\cM_{g,n}$ and
the intersection numbers on $\Mbar_{g,n}$,
through two special values of the \emph{same}
 function.

The asymptotic expansion of the Airy
function $Ai(x)$ itself for 
$Re (x)>0$ has actually a rather simple expression:
\be
\label{Ai asym}
\begin{aligned}
Ai(x) &= \frac{e^{-\frac{2}{3}x^{\frac{3}{2}}}}
{2\sqrt{\pi} x^{\frac{1}{4}}}
\sum_{m=0}^\infty 
\frac{\big(-\frac{3}{4}\big)^m
\Gam\big(m+\frac{5}{6}\big)
\Gam\big(m+\frac{1}{6}\big)}
{2\pi m!}\; x^{-\frac{3}{2}m}
\\
&=\frac{e^{-\frac{2}{3}x^{\frac{3}{2}}}}
{2\sqrt{\pi}x^{\frac{1}{4}}}
\sum_{m=0}^\infty 
(-1)^m \left(\frac{1}{576}\right)^m
\frac{(6m)!}{(2m)!(3m)!} \;x^{-\frac{3}{2}m}.
\end{aligned}
\ee
The expansion in terms of the
gamma function values  of the first line
of \eqref{Ai asym} naturally arises from a
hypergeometric function. 
The first line is equal to the second line
because 
$$
\Gam(z) \Gam(1-z) = \frac{\pi}{\sin \pi z},
$$
and  induction on $m$.
Since the $m=0$ term in the 
summation is $1$, we can 
apply the formal logarithm expansion
$$
\log(1-X) = -\sum_{j=1}^\infty \frac{1}{j}X^j
$$
 to  \eqref{Ai asym} with
 $$
 X = -\sum_{m=1}^\infty 
(-1)^m \left(\frac{1}{576}\right)^m
\frac{(6m)!}{(2m)!(3m)!} \;x^{-\frac{3}{2}m},
 $$
and obtain
\begin{multline}
\label{Ai gamma}
\log Ai(x) 
= -\frac{2}{3} x^{\frac{3}{2}}
-\frac{1}{4}\log x -\log(2\sqrt{\pi})
\\
-\frac{1}{576}\;\frac{6!}
{2!3!} x^{-\frac{3}{2}}
+
\left(\frac{1}{576}\right)^2
\left(
\frac{12!}{4!6!}
-\half \left(
\frac{6!}
{2!3!}
\right)^2
\right)
x^{-3}
\\
-\left(\frac{1}{576}\right)^3
\left(
\frac{18!}{6!9!} 
-\frac{12!}{4!6!} \cdot
\frac{6!}
{2!3!}
+\frac{1}{3} \left(
\frac{6!}
{2!3!}
\right)^3
\right)
x^{-\frac{9}{2}}+\cdots.
\end{multline}
In general, for $m\ge 1$, we have
\be
\label{Sm in gamma}
S_{m+1}(x) = (-1)^{m} x^{-\frac{3}{2}m}
\left(\frac{1}{576}\right)^m
\sum_{\lam\vdash m}
(-1)^{\ell(\lam)-1}\frac{\big(\ell(\lam)-1\big)!}
{|\Aut(\lam)|} \prod_{i=1}^{\ell(\lam)}
\frac{(6\lam_i)!}{(2\lam_i)!(3\lam_i)!},
\ee
where $\lam$ is a partition of $m$, $\ell(\lam)$
its length,  and $\Aut(\lam)$ is the group of 
permutations of the parts of $\lam$
of equal length.  

Comparing  \eqref{Airy Sm}
 and \eqref{Sm in gamma}, 
we establish concrete  relations among the intersection 
numbers.

\begin{thm}[Rainbow formula]
The cotangent class intersection numbers
of $\Mbar_{g,n}$ 
satisfy the following relation for every $m\ge 1:$
\be
\label{rainbow formula}
\begin{aligned}
&\sum_{\substack{g\ge 0,n>0\\2g-2+n=m}} 
\frac{1}{n!}
\sum_{\substack{d_1+\dots+d_n\\
=3g-3+n}}
\la \tau_{d_1}\cdots \tau_{d_n}\ra_{g,n}
\prod_{i=1}^n |2d_i-1|!!
\\
&=
\left(\frac{1}{288}\right)^{m}
\sum_{\lam\vdash m}
(-1)^{\ell(\lam)-1}\frac{\big(\ell(\lam)-1\big)!}
{|\Aut(\lam)|} 
\prod_{i=1}^{\ell(\lam)}
\frac{(6\lam_i)!}{(2\lam_i)!(3\lam_i)!}.
\end{aligned}
\ee
\end{thm}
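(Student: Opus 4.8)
The plan is to recognize that the Rainbow formula is precisely the identification of two independent evaluations of a single quantity, the asymptotic coefficient $S_{m+1}(x)$. The first evaluation is geometric: applying the Asymptotic Expansion Theorem, specifically \eqref{Airy Sm} with the index shifted from $m$ to $m+1$ (valid for $m\ge 1$, since \eqref{Airy Sm} holds for indices $\ge 2$), gives
$$S_{m+1}(x) = x^{-\frac{3}{2}m}\,\frac{1}{2^{m}}\sum_{\substack{g\ge 0,n>0\\2g-2+n=m}}\frac{(-1)^n}{n!}\sum_{d_1+\dots+d_n=3g-3+n}\la\tau_{d_1}\cdots\tau_{d_n}\ra_{g,n}\prod_{i=1}^n|2d_i-1|!!.$$
The second evaluation is analytic: the WKB/hypergeometric computation recorded in \eqref{Sm in gamma} gives
$$S_{m+1}(x) = (-1)^m x^{-\frac{3}{2}m}\left(\frac{1}{576}\right)^m\sum_{\lam\vdash m}(-1)^{\ell(\lam)-1}\frac{\big(\ell(\lam)-1\big)!}{|\Aut(\lam)|}\prod_{i=1}^{\ell(\lam)}\frac{(6\lam_i)!}{(2\lam_i)!(3\lam_i)!}.$$
Both expressions are already established earlier in the excerpt, so the entire content of the theorem is the comparison of their coefficients.

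First I would equate the two and cancel the common factor $x^{-\frac{3}{2}m}$, which matches automatically. The only subtlety is the sign $(-1)^n$ on the geometric side. Over the summation range $2g-2+n=m$ one has $n = m+2-2g$, hence $(-1)^n = (-1)^m$ \emph{uniformly}, independently of the individual stratum $(g,n)$. This lets me pull the factor $(-1)^m$ out of the entire geometric sum and cancel it against the $(-1)^m$ standing in front of the analytic expression.

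After this cancellation I would multiply through by $2^m$. On the analytic side the constant becomes $2^m\cdot(1/576)^m = (2/576)^m = (1/288)^m$, which is exactly the prefactor $\big(1/288\big)^m$ in \eqref{rainbow formula}, while the geometric side becomes the claimed sum $\sum\frac{1}{n!}\la\tau_{d_1}\cdots\tau_{d_n}\ra_{g,n}\prod|2d_i-1|!!$ over the stable range $2g-2+n=m$, $n>0$. No further manipulation is required.

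The main obstacle is not in this bookkeeping, which is routine, but lies entirely upstream in the two input formulas. Formula \eqref{Sm in gamma} comes from extracting the logarithm of the explicit hypergeometric series \eqref{Ai asym} and reorganizing the resulting power-sum combinatorics into partition form; this is the standard passage from $\log\big(1+\sum_{m\ge 1}c_m t^m\big)$ to a sum over partitions with cumulant-type coefficient $(-1)^{\ell(\lam)-1}\big(\ell(\lam)-1\big)!/|\Aut(\lam)|$. Formula \eqref{Airy Sm} is the genuinely deep ingredient, encoding the Witten--Kontsevich intersection theory through the WKB analysis of the quantized parabola as developed in \cite{CMS,DMSS,MP2012,MS}. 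Once both are granted as stated, the Rainbow formula follows immediately from the coefficient matching above.
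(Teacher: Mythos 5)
Your proposal is correct and is essentially identical to the paper's own argument: the paper proves the Rainbow formula precisely by comparing \eqref{Airy Sm} (shifted to index $m+1$) with \eqref{Sm in gamma}, invoking the same observation that $(-1)^n=(-1)^m$ whenever $2g-2+n=m$, so that the sign cancels and $2^m\cdot(1/576)^m=(1/288)^m$ yields the stated prefactor. Your write-up simply makes this coefficient-matching bookkeeping explicit, which is exactly what the paper leaves to the reader.
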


Here, we use the fact that $(-1)^n = (-1)^{m}$ if
$2g-2+n = m$.
For example, for $m=2$, we have
$$
\frac{1}{6}\la \tau_0^3\tau_1\ra_{0,4}
+
\half \la \tau_1^2\ra_{1,2} 
+ 3\la \tau_0\tau_2\ra_{1,2}
=
\left(\frac{1}{288}\right)^2
\left(\frac{12!}{4!6!}-\half 
\left(\frac{6!}{2!3!}\right)^2
\right)
= \frac{5}{16}.
$$
This can be verified by evaluating
$$
\la \tau_0^3\tau_1\ra_{0,4}=1,
\qquad \la \tau_1^2\ra_{1,2} 
=\la \tau_0\tau_2\ra_{1,2}=\frac{1}{24}.
$$

\begin{rem}
The main purpose of these lectures is to 
relate the topological recursion of \cite{EO2007}
and quantization of Hitchin spectral curves.
The left-hand side of \eqref{rainbow formula}
represents the topological recursion in this 
example, since the intersection numbers 
can be computed through this mechanism,
as explained below. Actually, this is an important
example that leads to the universal structure 
of the topological recursion. The right-hand side
is the asymptotic expansion of a function that is
coming from the geometry of the
Hitchin spectral curve of a Higgs bundle.
\end{rem}

The structure of the cohomology ring (or more 
fundamental Chow ring) of the moduli space
$\Mbar_{g,n}$, and its open part 
$\cM_{g,n}$ consisting of smooth curves, attracted
much attention since the publication of
the inspiring paper by Mumford
\cite{Mumford} mentioned above. 
Let us focus on a simple situation
$$
\pi:\cM_{g,1}\lrar \cM_g,
$$
which  \emph{forgets} the marked point on 
a smooth curve. By gluing the canonical
line bundle of the fiber of each
point on the base $\cM_g$, which is the
curve represented by the point on the moduli,
 we obtain the
relative dualizing sheaf $\omega$ on 
$\cM_{g,1}$. Its first Chern class, 
considered as a divisor on $\cM_{g,1}$
and an element of
the Chow group $A^1(\cM_{g,1})$, is denoted
by $\psi$. 
In the notation of \eqref{intersection}, 
this is the same as $\psi_1$.
Mumford defines 
\emph{tautological} classes
$$
\kappa_a := \pi_*(\psi^{a+1})\in A^a(\cM_g).
$$
One of the statements
of the Faber-Zagier conjecture of \cite{F},
now a theorem due to Ionel \cite{I}, 
says
 the following.

\begin{conj}[A part of the
Faber-Zagier Conjecture \cite{F}]
Define rational numbers $a_j\in \bQ$ by
\be
\sum_{j=1}^\infty a_j t^j
=-\log \left(
\sum_{m=0}^\infty
\frac{(6m)!}{(2m)!(3m)!} t^m
\right).
\ee
Then the coefficient of $t^\ell$ of the
expression 
$$
\exp\left(
\sum_{j=1}^\infty a_j \kappa_j t^j
\right)
\in \big(\bQ[\kappa_1,\kappa_2,\dots]\big)[[t]]
$$
for each $\ell\ge 1$ gives the unique
codimension $\ell$ tautological relation
among the $\kappa$-classes on the 
moduli space $\cM_{3\ell -1}$.
\end{conj}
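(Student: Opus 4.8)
The plan is to first determine the coefficients $a_j$ explicitly by means of the Rainbow formula \eqref{rainbow formula}, and then to upgrade the resulting numerical identities to an identity of tautological \emph{classes}. Write $A(t)=1+\sum_{k\ge 1}c_kt^k$ with $c_k=\frac{(6k)!}{(2k)!(3k)!}$; expanding $-\log A$ gives
$$
a_m=[t^m]\bigl(-\log A(t)\bigr)=\sum_{\lam\vdash m}(-1)^{\ell(\lam)}\frac{(\ell(\lam)-1)!}{|\Aut(\lam)|}\prod_{i=1}^{\ell(\lam)}c_{\lam_i}.
$$
Comparing with \eqref{rainbow formula}, one sees that $a_m$ equals exactly $-288^{\,m}$ times its left--hand side, i.e.\ a weighted sum of the $\psi$-intersection numbers $\la\tau_{d_1}\cdots\tau_{d_n}\ra_{g,n}$ produced by the WKB quantization of the spectral curve $x=y^2$. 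Thus the Faber--Zagier numbers are precisely the Airy/Witten--Kontsevich data; but this fixes only the \emph{scalars} $a_j$, not the vanishing of any class.

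For the existence of the relation --- that the degree-$\ell$ part of $\exp\bigl(\sum_{j\ge 1}a_j\kappa_jt^j\bigr)$ is zero in the tautological ring $R^\ell(\cM_g)$ --- I would invoke Pixton's relations for $3$-spin structures. Pandharipande--Pixton--Zvonkine prove these hold on $\Mbar_{g,n}$, via Teleman's classification and the Givental reconstruction of the semisimple $3$-spin cohomological field theory. Restricting such a relation to the open locus $\cM_g$ kills all boundary strata classes, and the absence of marked points removes all $\psi$-contributions, so what survives is exactly a polynomial in the $\kappa_j$; matching the combinatorics identifies it with the stated expression. (Mumford's Grothendieck--Riemann--Roch computation for the Hodge bundle $\bE$ is the classical precursor here, supplying the dictionary $\ch_{2j-1}(\bE)\leftrightarrow\kappa_{2j-1}$ on $\cM_g$.) Ionel's theorem \cite{I} reaches the same vanishing by an independent route, realizing the classes as pushforwards from moduli of stable maps of insufficient dimension.

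For the uniqueness in codimension $\ell$ on the distinguished space $\cM_{3\ell-1}$ I would argue inside the $\kappa$-ring. The degree-$\ell$ monomials $\kappa_{a_1}\cdots\kappa_{a_k}$ with $\sum a_i=\ell$ are indexed by partitions of $\ell$, hence number $p(\ell)$. Using Looijenga's vanishing $R^k(\cM_g)=0$ for $k>g-2$ together with Faber's (now largely established) Gorenstein description of $R^*(\cM_g)$, the genus $g=3\ell-1$ is exactly the value for which the codimension-$\ell$ part has dimension $p(\ell)-1$ --- one short of the number of monomials --- so a single linear relation is forced. Once the FZ expression is known to be a genuine relation, it must be that unique one; for $\ell=1$ this degenerates to the statement $\kappa_1=0$ on $\cM_2$.

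The main obstacle is the geometric vanishing in the second step. The elementary logarithm and WKB manipulations of the first paragraph, reinforced by \eqref{rainbow formula}, pin down the coefficients and supply a numerical consistency check --- each FZ relation, paired with the appropriate $\psi$-monomial and integrated over $\Mbar_{g,n}$, reproduces an instance of the Rainbow formula --- yet they cannot by themselves force the corresponding class in $R^\ell(\cM_g)$ to vanish. That vanishing is where essentially all the difficulty is concentrated, and it is exactly what Teleman's classification (in the PPZ route) or the dimension estimates on moduli of stable maps (in Ionel's route) are needed to supply.
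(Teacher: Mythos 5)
The first thing to say is that the paper does not prove this statement at all: it is stated as a conjecture, quoted from Faber \cite{F}, and the surrounding text merely records that it is now a theorem of Ionel \cite{I}, with generalizations to $\Mbar_{g,n}$ proved in \cite{PP}. The paper's only original contribution here is the identification \eqref{FZ conj} of the coefficients $a_j$ with the Airy/Witten--Kontsevich intersection numbers through the rainbow formula \eqref{rainbow formula}, and your first paragraph reproduces exactly this computation, correctly, including the factor $-288^{\,j}$. Your second paragraph (existence of the relations) is an appeal to Pandharipande--Pixton(--Zvonkine) and Ionel, i.e., to precisely the results that constitute the theorem; as attribution this is the same position the paper itself takes, though it is of course not an independent argument.

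The genuine gap is in your uniqueness paragraph. Looijenga's vanishing applies to $R^k(\cM_g)$ only for $k>g-2$; here $k=\ell$ while $g-2=3\ell-3\ge \ell$ for every $\ell\ge 2$, so it says nothing about the degree in question (it settles only the degenerate case $\ell=1$, where it directly gives $\kappa_1=0$ on $\cM_2$). More seriously, the ``Gorenstein description of $R^*(\cM_g)$'' you lean on is not ``largely established'': it is verified by computer only in low genus and is open in general, and the modern expectation, informed by the very relations you cite, is that such Gorenstein statements can fail. What the results you invoke actually yield is only one of the two needed inequalities: Ionel's generation theorem (here $\lfloor g/3\rfloor=\ell-1$) expresses $\kappa_\ell$ as a polynomial in $\kappa_1,\dots,\kappa_{\ell-1}$, giving $\dim_\bQ R^\ell(\cM_{3\ell-1})\le p(\ell)-1$, i.e., the \emph{existence} of at least one relation. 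Uniqueness is the reverse inequality --- linear independence in $R^\ell(\cM_{3\ell-1})$ of the $p(\ell)-1$ degree-$\ell$ monomials in $\kappa_1,\dots,\kappa_{\ell-1}$ --- and for $g=3\ell-1$ this degree lies just outside the Harer--Madsen--Weiss stable range (stability gives freeness only through degree $\ell-1$), so it follows neither from stability, nor from Looijenga, nor from the existence of Pixton's relations. It would require a nondegeneracy statement for the socle pairing $R^\ell\times R^{g-2-\ell}\to R^{g-2}\cong \bQ$ (explicit $\lambda_g\lambda_{g-1}$-type evaluations plus a rank computation), which your outline does not supply. As written, the uniqueness half of the statement is unproven.
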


We see from \eqref{rainbow formula},
these coefficients $a_j$'s are given by
the intersection numbers \eqref{intersection},
by a change of the variables
$$
t=- \frac{1}{576}\; x^{-\frac{3}{2}}.
$$
Indeed, we have
\be
\label{FZ conj}
a_j = - 288^j
\sum_{\substack{g\ge 0,n>0\\2g-2+n=j}} 
\frac{1}{n!}
\sum_{\substack{d_1+\dots+d_n\\
=3g-3+n}}
\la \tau_{d_1}\cdots \tau_{d_n}\ra_{g,n}
\prod_{i=1}^n |2d_i-1|!!\; .
\ee
These tautological relations are generalized
for the moduli spaces $\Mbar_{g,n}$,
and are proved in \cite{PP}. Amazingly, 
still the rainbow 
integral \eqref{rainbow} plays the essential 
role in determining the tautological 
relations in this generalized work.

\begin{rem}
As we have seen above, the asymptotic 
expansions of the gamma function and the Airy
function carry information of quantum invariants,
in particular, certain topological information 
of $\cM_{g,n}$ and $\Mbar_{g,n}$. 
We note that these quantum invariants are stored
in the \textbf{insignificant} part of the 
asymptotic expansions. 
\end{rem}

Here come questions.

\begin{quest}
\label{quest: intersection}
The Airy function is a one-variable function.
It cannot be a generating function of 
all the intersection numbers \eqref{intersection}.
Then how do we obtain all intersection 
numbers from the Airy function, or the Airy 
differential equation, alone?
\end{quest}

\begin{quest}
\label{quest: Higgs}
The relations between the Airy function, the
gamma function, and intersection numbers
are all great. 
But then how does this relation have anything
to do with \textbf{Higgs bundles}?
\end{quest}

\begin{quest}
\label{quest: spectral}
As we remarked, the information 
of the quantum invariants is stored in the 
insignificant part of the asymptotic expansion.
In the Airy example, they correspond to
$S_m(x)$ of \eqref{Airy Sm} for $m\ge 2$. 
Then what does the main part of the asymptotic 
behavior of the function,  i.e.,  those functions in
\eqref{Airy 0,1}, determine?
\end{quest}

As we have remarked earlier,
Kontsevich \cite{K1992} utilized  matrix integral 
techniques 
to answer Question~\ref{quest: intersection}. 
The key idea  is to replace the 
variables in \eqref{rainbow} by Hermitian
matrices, and then use the asymptotic expansion on
 the result. Through the Feynman diagram expansion,
 he was able to obtain a generating function 
 of all the intersection numbers.
 
 What we explain in these lectures is the 
 concept of \textbf{topological recursion}
 of \cite{EO2007}. Without going into 
 matrix integrals, we can directly obtain 
 (a totally different set of)
 generating functions of the intersection numbers
 from the Airy function. 
 Here, the Airy differential equation is 
 identified as a \textbf{quantum curve},
 and  application of the \textbf{semi-classical limit}
 and the topological recursion enable us to 
 calculate generating functions of the intersection 
 numbers.

 But before going into  detail, let us 
 briefly answer Question~\ref{quest: Higgs}
 below. The point is that 
 the geometry of the Airy function is a special
 example of Higgs bundles.

 For the example of the Airy
 differential equation, the topological 
 recursion is exactly the same as the
 \emph{Virasoro constraint conditions}
 for the intersection numbers 
 \eqref{intersection}, and the 
 semi-classical limit recovers the
 \textbf{Hitchin spectral curve} of the 
 corresponding Higgs bundle.
 The information stored in the main part
 of the asymptotic expansion \eqref{Airy 0,1}, 
 as asked in Question~\ref{quest: spectral},
 actually determines the
 spectral curve and its geometry. 
 We can turn the story in the
 other way around: we will see that the functions
 $S_0(x)$ and $S_1(x)$ corresponding to
 \eqref{Airy 0,1} in the general context
 are indeed determined by the geometry
 of the Hitchin spectral curve
 of an appropriate Higgs bundle.

The stage setting  
is the following. As a base curve,
we have $\bP^1$. On this curve we have a
 vector bundle 
 \be
 \label{E}
 E =  \cO_{\bP^1}(-1)\dsum\cO_{\bP^1}(1) 
 = 
K_{\bP^1}^{\half}\dsum K_{\bP^1}^{-\half}
\ee
 of rank $2$. The main character of the Second Act is 
 a \emph{meromorphic} Higgs field
\begin{equation}
\label{Airy Higgs}
\phi = \begin{bmatrix}
& x(dx)^2\\
1
\end{bmatrix}:
E\lrar K_{\bP^1}(5)\tensor E.
\end{equation}
Here, $x$ is an affine coordinate of 
$\bA^1\subset 
\bP^1$, $1$ on the $(2,1)$-component of 
$\phi$ is the natural morphism
$$
1:K_{\bP^1}^{\half}\overset{=}{\lrar}
K_{\bP^1}^{\half}\lrar K_{\bP^1}^{\half}\tensor
\cO_{\bP^1}(5),
$$
and 
$$
x(dx)^2\in 
H^0\big(\bP^1,K_{\bP^1}^{\tensor 2}(5-1)\big)
\isom \bC
$$ 
is 
the unique (up to a constant factor) meromorphic
\emph{quadratic differential}
 on $\bP^1$ that has one zero at $x=0$
and a pole of order $5$ at $x=\infty$.
We use $K_C$ to denote the canonical sheaf on 
a projective algebraic curve $C$.
The data $(E,\phi)$ is called  a \emph{Higgs pair}.
Although $\phi$ contains a quadratic
differential in its component, because of the 
shape of the vector bundle $E$, we see that
$$
E = 
K_{\bP^1}^{\half} \dsum
K_{\bP^1}^{-\half}
\overset{\phi}{\lrar}
\left(K_{\bP^1}^{\frac{3}{2}}\dsum
K_{\bP^1}^{\half}
\right)\tensor \cO_{\bP^1}(5)
=
K_{\bP^1}(5)\tensor \left(K_{\bP^1}^{\half}\dsum
K_{\bP^1}^{-\half}\right),
$$
hence
$$
\phi\in H^0\big(\bP^1,K_{\bP^1}(5)\tensor \End(E)
\big)
$$
is indeed an $\End(E)$-valued meromorphic 
$1$-form on $\bP^1$. 

The cotangent 
bungle 
$$
\pi:T^*\bP^1\lrar \bP^1
$$
 is the total space
of $K_{\bP^1}$. Therefore, the pull-back bundle
$\pi^*K_{\bP^1}$ has a tautological section
$\eta\in H^0(T^*\bP^1,\pi^*K_{\bP^1})$,
which is a globally defined
holomorphic $1$-form on 
$T^*\bP^1$. 
The global holomorphic $2$-form
$-d\eta$ gives the 
 holomorphic symplectic structure,
 hence a hyper-K\"ahler structure, on 
$T^*\bP^1$.
If we trivialize the cotangent 
bundle on the affine  neighborhood of
$\bP^1$ with a coordinate $x$, 
and use a fiber coordinate $y$, then 
$\eta = ydx$. We wish to define the spectral curve
of this Higgs pair. Due to the fact that
$\phi$ is singular at $x=\infty$, we cannot capture
the whole story within the cotangent bundle.
We note that the cotangent bundle $T^*\bP^1$ 
has a natural
compactification
$$
\overline{T^*\bP^1} :=
\bP(K_{\bP^1}\dsum \cO_{\bP^1}) =\bF_2
\overset{\pi}{\lrar} \bP^1,
$$
which is  known as a Hirzebruch surface.
The holomorphic $1$-form $\eta$ extends to 
$\overline{T^*\bP^1}$ as a meromorphic $1$-form
with simple poles along the divisor at 
infinity.

Now we can consider the characteristic polynomial
$$
\det(\eta - \pi^*\phi) \in 
H^0\left(\overline{T^*\bP^1},
\pi^*\big(K_{\bP^1}^{\tensor 2}(5)\big)
\right)
$$
as a meromorphic section of the line bundle
$\pi^* K_{\bP^1} ^{\tensor 2}$ on the compact
space $\overline{T^*\bP^1}$.
It defines the Hitchin spectral curve
\be
\label{Sigma Airy}
\Sigma = \big(\det(\eta - \pi^*\phi)\big)_0
\subset \overline{T^*\bP^1}
\ee
as a divisor. Again in terms of the local coordinate
$(x,y)$ of $\overline{T^*\bP^1}$, the spectral 
curve $\Sigma$ is simply given by
\be\label{x=y^2}
x=y^2.
\ee
It is a perfect parabola in the $(x,y)$-plane. But
our $\Sigma$ is in the Hirzebruch surface,
not in the projective plane.  
Choose the coordinate $(u,w)\in \bF_2$ 
around $(x,y)=(\infty,\infty)$ defined by
\be
\label{uw}
\begin{cases}
u = \frac{1}{x}\\
\frac{1}{w}du = ydx
\end{cases}.
\ee
Then the local expression of $\Sigma$ around 
$(u,w)=(0,0)$ becomes a quintic cusp 
\be
\label{quintic cusp}
w^2=u^5.
\ee
So the spectral curve $\Sigma$ is indeed highly
singular at  infinity!

\subsection{Quantum curves, semi-classical
limit, and the WKB analysis}

At this stage we have
come to the point to 
introducing the notion of \emph{quantization}. We wish
to quantize the spectral curve $\Sigma$
of \eqref{Sigma Airy}. 
In terms of the affine coordinate $(x,y)$, the
\textbf{quantum curve} of $y^2-x=0$ should be
the Airy differential equation
\be\label{Airy quantum}
\left(\left(\hbar \frac{d}{dx}\right)^2 -x\right)
\Psi(x,\hbar) = 0.
\ee
This is the Weyl quantization, in 
which we change the commutative algebra
$\bC[x,y]$ to a Weyl algebra $\bC[\hbar]\la x,y\ra$
defined by the commutation relation
\be\label{commutation}
[x,y]=-\hbar.
\ee
We consider $x\in \bC[\hbar]\la x,y\ra$ as the 
multiplication operator by the coordinate $x$,
and $y = \hbar\frac{d}{dx}$ as a differential 
operator. 

How do we know that  
\eqref{Airy quantum} is the right quantization
of the spectral curve \eqref{x=y^2}? 
Apparently, the limit $\hbar \rar 0$ of the
differential operator does not reproduce
the spectral curve.
Let us now recall the WKB method for 
analyzing 
differential equations like \eqref{Airy quantum}.
This is a method that relates classical mechanics
and quantum mechanics. As we see below,
the WKB method is not for finding a
convergent  analytic 
solution to the differential equation.
Since the equation we wish to solve is
considered to be a quantum equation,  the
corresponding classical problem, if it exists,
should be recovered by taking $\hbar \rar 0$.
We denote by an unknown  function $S_0(x)$ 
the  ``solution'' to the 
corresponding classical 
problem.
To emphasize the classical behavior
 at the $\hbar \rar 0$ limit,
we expand the solution to the quantum 
equation as
\be\label{Psi expansion}
\Psi(x,\hbar) = \exp\left(
\sum_{m=0} ^\infty \hbar^{m-1}S_m(x)
\right):=
\exp\left(
\frac{1}{\hbar} S_0(x)
\right)
\cdot
\exp\left(
\sum_{m=1} ^\infty \hbar^{m-1}S_m(x)
\right).
\ee
The idea is
that  as $\hbar\rar 0$, the effect of
$S_0(x)$ is magnified. But as
a series in $\hbar$, \eqref{Psi expansion}
is ill defined because the coefficient of 
each power of $\hbar$ is an infinite sum.
It is also clear that $\hbar\rar 0$ does not
make sense for $\Psi(x,\hbar)$.
Instead of expanding \eqref{Psi expansion}
immediately in $\hbar$ and take its $0$ limit, 
we use the following 
 standard procedure. 
First we note that \eqref{Airy quantum}
is equivalent to 
$$
\left[
\exp\left(
-\frac{1}{\hbar} S_0(x)
\right)\cdot 
\left(\left(\hbar \frac{d}{dx}\right)^2 -x\right)
\cdot
\exp\left(
\frac{1}{\hbar} S_0(x)
\right)
\right]
\exp\left(
\sum_{m=1} ^\infty \hbar^{m-1}S_m(x)
\right)
=
0.
$$
Since
the conjugate differential 
operator
\begin{multline*}
\exp\left(
-\frac{1}{\hbar} S_0(x)
\right)\cdot 
\left(\left(\hbar \frac{d}{dx}\right)^2 -x\right)
\cdot
\exp\left(
\frac{1}{\hbar} S_0(x)
\right)
\\
=
\left(\hbar \frac{d}{dx}\right)^2
+2\hbar S_0'(x) \frac{d}{dx}+  \big(S_0'(x)\big)^2
-x+\hbar S_0''(x)
\end{multline*}
is a well-defined differential operator, 
\emph{and} its limit $\hbar\rar 0$ makes sense,
we interpret \eqref{Airy quantum}
as the following differential equation:
\be
\label{conjugate}
\left[
\left(\hbar \frac{d}{dx}\right)^2
+2\hbar S_0'(x) \frac{d}{dx} + \big(S_0'(x)\big)^2
-x+\hbar S_0''(x)
\right]
\exp\left(
\sum_{m=1} ^\infty \hbar^{m-1}S_m(x)
\right)
=0.
\ee
Here, $'$ indicates the $x$-derivative.
This equation is equivalent to 
\be\label{WKB Airy}
\left(\sum_{m=0} ^\infty \hbar^m S_m'(x)
\right)^2 + \sum_{m=0} ^\infty \hbar^{m+1}
 S_m''(x) -x =0
\ee
for every $m\ge 0$.
The coefficient of the $\hbar^0$,
or the $\hbar\rar 0$ limit of
 \eqref{WKB Airy}, then gives
\be\label{S0}
\big(S_0'(x)\big)^2 - x=0,
\ee
and that of $\hbar^1$ gives
\be\label{S1}
S_0''(x) + 2S_0'(x)S_1'(x)=0.
\ee
The $\hbar^0$ term is what we call
the \textbf{semi-classical limit} of the
differential equation \eqref{Airy quantum}.
From \eqref{S0} we obtain
\be
\label{S0 solved}
S_0(x) = \pm \frac{2}{3} x^{\frac{3}{2}} +
c_0,
\ee
with a constant of integration $c_0$. 
Then plugging $S_0(x)$ into \eqref{S1}
we obtain 
$$
S_1(x) = -\frac{1}{4}\log x -\log(2\sqrt{\pi})+c_1,
$$
again with a constant of integration $c_1$.
Note that these solutions
are  consistent with \eqref{Airy 0,1}.
For $m\ge 1$, the coefficient 
of  $\hbar^{m+1}$
 gives
\be\label{Sm'}
S_{m+1}'(x) = -\frac{1}{2S_0'(x)}
\left(
S_m''(x)+\sum_{a=1}^{m}S_a'(x)S_{m+1-a}'(x)
\right),
\ee
which can be solved recursively, term by term
from $S_0(x)$. This mechanism is the
method of Wentzel-Kramers-Brillouin (WKB)
approximation.

We can ignore the constants of integration 
when solving \eqref{Sm'}
because it is easy to
restore them, if necessary,
just by adding $c_m$ to each $S_m(x)$
in \eqref{Psi expansion}. The solution then simply
changes to another one
$$
\left(\exp\left(\frac{1}{\hbar}c_0\right)
\exp\left(\frac{1}{\hbar}S_0(x)\right)
\right)\cdot
\exp\left(\sum_{m=1}^\infty h^{m-1}c_m\right)
\exp\left(\sum_{m=1}^\infty h^{m-1}S_m(x)\right).
$$
In terms of the main variable $x$, the above
solution is a constant multiple of the original
one. The two choices of the
 sign in \eqref{S0 solved} lead to  
two linearly independent
solutions of \eqref{Airy quantum}.
 If we impose 
\be
\label{boundary}
\lim_{x\rar \infty}S_m(x)=0, \qquad m\ge 2,
\ee
then the differential equation \eqref{Sm'}
uniquely determines
all terms $S_m(x)$. Thus, with the choice
of the negative sign in \eqref{S0 solved}
and imposing $c_0=c_1=0$
and \eqref{boundary}, we obtain
the unique  exponentially
decaying solution for $x\rar \infty$ along the 
real line. This solution 
 agrees with 
\eqref{Airy Sm} and \eqref{Sm in gamma}.
Thus we obtain the second line of the
Rainbow formula \eqref{rainbow formula}.

We also see from the semi-classical 
limit \eqref{S0} that if we put
$y=S_0'(x)$, then  we recover the
Hitchin spectral curve
$x=y^2$. The functions $S_m(x)$  actually
live on the  spectral curve rather than the 
base $\bP^1$, because of the appearance 
of $\sqrt{x}$ in \eqref{Airy Sm}.

\begin{rem} One can ask a question: 
\textbf{Does \eqref{Psi expansion} give a
convergent solution?} The answer is
a flat \textbf{No!} 
Suppose we solve the Airy differential 
equation with the WKB method explained
above, and define a ``solution'' by 
\eqref{Psi expansion}. Expand the second
 exponential factor as a 
power series in $\hbar$, and write the solution as
$$
\Psi(x,\hbar) = \exp\left(\frac{1}{\hbar}S_0(x)
\right)
\sum_{n=0}^\infty f_n(x) \hbar^n.
$$
Then for any compact subset
 $K\subset \bC\setminus \{0\}$,
 there is a constant $C_K$ such that
 $$
 \sup_{x\in K}|f_n(x)|
 \le C_K^n n!.
 $$
 Therefore, unless we are in an 
 extremely special case,
 the second exponential factor 
 in the expression \eqref{Psi expansion}
 does not converge as a power series in 
 $\hbar$ at all! Since 
 $$
 \Psi(x,\hbar):= Ai\left(x/\hbar^{\frac{2}{3}}\right)
 $$
 is a solution that is entire in $x$ and any $\hbar$
 for which $1/\hbar^{\frac{2}{3}}$ makes sense, 
 the WKB method around $x\in \bC\setminus \{0\}$
 is the same as the asymptotic expansion of 
 the Airy function $Ai(x)$ given in 
 \eqref{Ai asym}. There we see the factorial growth
 of the coefficients. Thus the WKB method is 
 \textbf{not} for finding a
 convergent analytic solution.
\end{rem}

\subsection{The topological recursion 
as quantization}

Then what is good about the WKB method and
the purely formal solution \eqref{Psi expansion}?
Let us examine \eqref{Airy Sm}.
We note that $S_m(x)$s are one variable functions, 
and  different values of $g$ and $n$ 
are summed  in its definition.
Therefore, knowing
the solution $\Psi(x,\hbar)$ of \eqref{Airy quantum}
that decays exponentially as $x\rar \infty$ along
the real axis, assuming $\hbar>0$, does not
seem to 
possibly recover  
 intersection numbers $\la \tau_{d_1}\cdots
\tau_{d_n}\ra_{g,n}$ for all values of
$(d_1,\dots,d_n)$ and $(g,n)$. 
Then how much information does
 the quantum curve
\eqref{Airy quantum} really have?

Here comes the idea of \textbf{topological recursion}
of Eynard and Orantin \cite{EO2007}.
This mechanism gives a refined expression 
of each $S_m(x)$, and computes all
intersection numbers. The solution 
$\Psi$ of \eqref{Psi expansion}
is never holomorphic, and it makes sense 
only as the asymptotic expansion of
a holomorphic solution at its essential singularity. 
The expansion of the Airy function $Ai(x)$ 
at a holomorphic point does not carry any
interesting information. The function's key 
information is concentrated in the expansion 
at the essential singularity. 
The topological recursion is for obtaining this
hidden information 
when applied at the essential singularity 
of the solution, by giving an explicit
formula for the WKB expansion.
And the WKB analysis
 is indeed a method that determines
 the relation between the quantum 
 behavior and the classical behavior of a
 given system, i.e., the process of quantization.

As we have seen above, the quantum curve
\eqref{Airy quantum} recovers the spectral
curve \eqref{x=y^2} by the procedure of
semi-classical limit.
We recall that the spectral curve lives
in the Hirzebruch surface $\bF_2$, 
and it has a quintic cusp singularity 
\eqref{quintic cusp} at $(x,y)=(\infty,\infty)$.
It requires two blow-ups of $\bF_2$ 
to resolve the singularity of $\Sigma$.
Let us denote  this minimal resolution by
$\widetilde{\Sigma}$.
The  proper transform
  is a smooth curve of genus $0$, 
 hence it is a $\bP^1$. 
Let $B\isom \bP^1$ be the $0$-section, 
and $F\isom \bP^1$ a fiber, of $\bF_2$.
Then after two blow-ups,
 $\widetilde{\Sigma}\subset Bl(\bF_2)$
is identified as a divisor by the
equation
$$
\widetilde{\Sigma} = 2B+5F-4E_2-2E_1
\in \Pic\big(Bl(\bF_2)\big),
$$ 
where $E_i$ is the exceptional divisor introduced
at the $i$-th blow-up 
\cite[Section 5]{OM2}.

\begin{figure}[htb]
\includegraphics[width=3in]{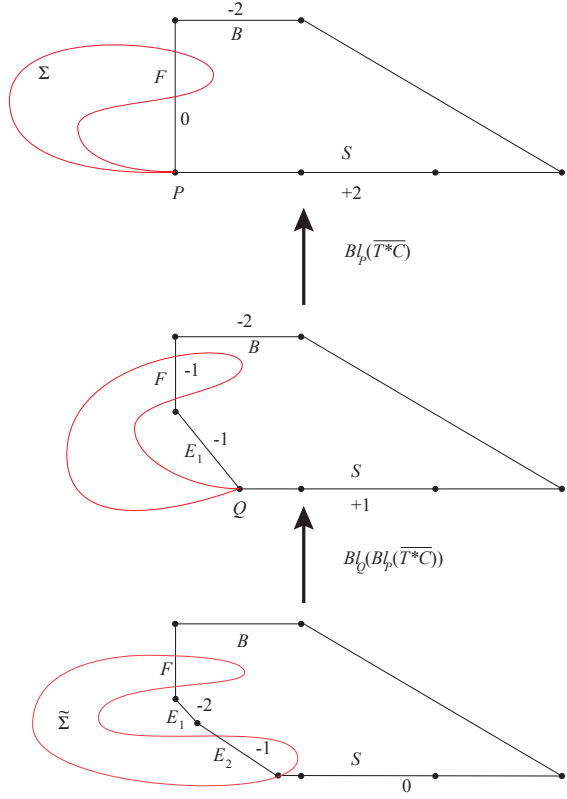}
\caption{Blowing up $\bF_2$ twice. The parabola
in $\bF_2$ has a quintic cusp singularity at
 infinity (top). After the second blow-up, 
the proper transform
becomes non-singular (bottom).
Since all fibers of $\bF_2\lrar \bP^1$ 
are equivalent, the toric picture has only one
representative of the fiber $F=\bP^1$. It is a stretch 
to place $\Sigma$ and $\widetilde{\Sigma}$
in this diagram, because the spectral curve
 is a double cover 
of the base $B$, rather than a degree $4$ covering
that the picture may suggest.}
\label{fig:blow-up}
\end{figure}

Since the desingularization $\widetilde{\Sigma}$
is just a copy of a $\bP^1$, we can choose 
a \textbf{normalization 
coordinate} $t$ on it so that the map
$\tilde{\pi}:\widetilde{\Sigma}\lrar \bP^1$ 
to the base curve $\bP^1$ is given by
\be
\label{t Airy}
\begin{cases}
x=\frac{4}{t^2}\\
y = -\frac{2}{t}
\end{cases},
\qquad
\begin{cases}
u = \frac{t^2}{4}\\
w = \frac{t^5}{32}
\end{cases}.
\ee
With respect to the normalization coordinate, 
define a homogeneous polynomial of
degree $6g-6+3n$ for $2g-2+n>0$ by
\be\label{Fgn Airy}
F_{g,n}^A(t_1,\cdots,t_n) 
:= \frac{(-1)^n}{2^{2g-2+n}}
\sum_{\substack{d_1+\dots+d_n\\
=3g-3+n}}
\la \tau_{d_1}\cdots \tau_{d_n}\ra_{g,n}
\prod_{i=1}^n 
|2d_i-1|!!\left(
\frac{t_i}{2}
\right)^{2d_i+1}
\ee
as a function on $(\widetilde{\Sigma})^n$,
and an $n$-linear differential form
\be
\label{Wgn Airy}
W_{g,n}^A(t_1,\dots,t_n):=d_{t_1}\cdots d_{t_n}
F_{g,n}^A(t_1,\dots,t_n).
\ee
For \emph{unstable geometries}
$(g,n) = (0,1)$ and $(0,2)$, we need to define
differential forms separately:
\begin{align}
\label{W01A}
&W_{0,1}^A(t) := \eta =ydx = \frac{16}{t^4}dt,
\\
\label{W02A}
&W_{0,2}^A(t_1,t_2):=
\frac{dt_1\cdot dt_2}{(t_1-t_2)^2}.
\end{align}
The definition of $W_{0,1}^A$ encodes 
the geometry of the singular spectral
curve $\Sigma\subset \bF_2$ embedded
in the Hirzebruch surface, and
$W_{0,2}^A$ depends only on the intrinsic 
geometry of the normalization $\widetilde{\Sigma}$.
Then we have

\begin{thm}[Topological recursion for
the intersection numbers, \cite{DMSS}]
\begin{multline}
\label{Airy TR}
W_{g,n}^A(t_1,\dots,t_n)
\\
=
-\frac{1}{2\pi i}\int_{\gam}
\left(
\frac{1}{t+t_1}+\frac{1}{t-t_1}
\right)
\frac{t^4}{64}\;\frac{1}{dt}\;dt_1
\Bigg[
W_{g-1,n+1}^A(t,-t,t_2, \dots, t_n)
\\
+
\sum_{\substack{g_1+g_2=g\\
I\sqcup J=\{2,\dots,n\}}} ^{\text{no $(0,1)$ terms}}
W_{g_1,|I|+1}^A(t,t_I)W_{g_2,|J|+1}^A(-t,t_J)
\Bigg],
\end{multline}
where the integral is taken with respect to 
the contour of Figure~\ref{fig:contourC},
and the sum is over all partitions of $g$ and
set partitions of $\{2,3,\dots,n\}$ without
including $g_1=0$ and $I=\emptyset$, or
$g_2=0$ and $J=\emptyset$. The notation 
$\frac{1}{dt}$ represents the ratio
operation, which acts on 
a differential $1$-form to produce a
global meromorphic function. When
acted on the quadratic differential, 
$\frac{1}{dt}$ yields a $1$-form.
\end{thm}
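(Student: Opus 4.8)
The plan is to deduce \eqref{Airy TR} from the \textbf{Virasoro constraints} (the DVV recursion) for the intersection numbers \eqref{intersection}, i.e.\ from the Witten--Kontsevich theorem \cite{W1991,K1992}. In the normalization adapted to the double factorials of \eqref{Fgn Airy}, the DVV recursion reads
\begin{multline*}
(2d_1+1)!!\,\la\tau_{d_1}\tau_{d_2}\cdots\tau_{d_n}\ra_{g,n}
=\sum_{j=2}^n\frac{(2d_1+2d_j-1)!!}{(2d_j-1)!!}\,
\la\tau_{d_1+d_j-1}\prod_{i\ne 1,j}\tau_{d_i}\ra_{g,n-1}
\\
+\frac12\sum_{a+b=d_1-2}(2a+1)!!\,(2b+1)!!
\Bigg[\la\tau_a\tau_b\prod_{i\ne 1}\tau_{d_i}\ra_{g-1,n+1}
+\sum_{\substack{g_1+g_2=g\\ I\sqcup J=\{2,\dots,n\}}}
\la\tau_a\prod_{i\in I}\tau_{d_i}\ra_{g_1}\la\tau_b\prod_{j\in J}\tau_{d_j}\ra_{g_2}\Bigg].
\end{multline*}
The claim is then that \eqref{Airy TR} is exactly the image of this recursion under the Laplace-type correspondence $\tau_d\mapsto |2d-1|!!\,(t/2)^{2d+1}$ implicit in \eqref{Fgn Airy}.

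First I would identify the integration kernel with the Eynard--Orantin kernel. Integrating \eqref{W02A} in the second variable gives $\int_{-t}^{t}W_{0,2}^A(t_1,\cdot)=\big(\tfrac1{t_1-t}-\tfrac1{t_1+t}\big)\,dt_1$, which up to sign is $\big(\tfrac1{t+t_1}+\tfrac1{t-t_1}\big)\,dt_1$, while \eqref{W01A} gives $W_{0,1}^A(t)-W_{0,1}^A(-t)=\tfrac{32}{t^4}\,dt$, whose reciprocal supplies $\tfrac{t^4}{32}\,\tfrac1{dt}$. Thus the kernel $\big(\tfrac1{t+t_1}+\tfrac1{t-t_1}\big)\tfrac{t^4}{64}\,\tfrac1{dt}\,dt_1$ of \eqref{Airy TR} is, up to sign, the recursion kernel $\tfrac12\big(\int_{-t}^{t}W_{0,2}^A(t_1,\cdot)\big)\big/\big(W_{0,1}^A(t)-W_{0,1}^A(-t)\big)$ attached to the simple ramification point $t=\infty$ (where $x=0$) of the covering $\tilde\pi\colon\widetilde\Sigma\to\bP^1$ of \eqref{t Airy}; the contour $\gam$ encircles this point.

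Second, with the kernel identified, I would expand the right-hand side of \eqref{Airy TR} by residues. Every stable form $W_{g',n'}^A$ is, by \eqref{Fgn Airy}--\eqref{Wgn Airy}, a polynomial differential (each $F_{g',n'}^A$ is homogeneous in the $t_i$ with only the monomials $t_i^{2d_i+1}$); hence the bracketed integrand is rational in $t$, holomorphic at $t=0$, with poles only at $t=\pm t_j$ and at $t=\infty$. The residue theorem reduces $-\tfrac1{2\pi i}\oint_{\gam}$ to the residue at $t=\infty$, and expanding the kernel $\tfrac1{t\pm t_1}=\sum_{k\ge0}(\mp t_1)^k/t^{k+1}$ there converts the descendant monomials into the double-factorial coefficients of DVV. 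Carrying this out term by term reproduces the recursion above: the quadratic term $\sum W^A_{g_1}W^A_{g_2}$ yields both the stable splitting sum over $g_1+g_2=g$, $I\sqcup J$ and---through its $(0,2)$-factors $W_{0,2}^A(t,t_j)$, which are precisely the contributions permitted by the ``no $(0,1)$ terms'' restriction---the linear sum $\sum_{j=2}^n$; the term $W_{g-1,n+1}^A(t,-t,\dots)$ yields the non-separating contribution $\la\tau_a\tau_b\cdots\ra_{g-1,n+1}$. The exclusion of $g_1=0,\ I=\emptyset$ is exactly the absence of $W_{0,1}^A$ (equivalently $\tau_{-1}$) on the right-hand side of DVV.

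Finally I would anchor the induction on $2g-2+n$ by the direct computations $W_{0,3}^A=-\tfrac1{16}\,dt_1dt_2dt_3$ (from $\la\tau_0^3\ra_{0,3}=1$) and $W_{1,1}^A=-\tfrac1{128}\,t_1^2\,dt_1$ (from $\la\tau_1\ra_{1,1}=\tfrac1{24}$), checking \eqref{Airy TR} in these cases; granting the statement for all $(g',n')$ with $2g'-2+n'<2g-2+n$, the residue identity expresses $W_{g,n}^A$ through the lower forms exactly as DVV expresses the intersection numbers through lower ones, closing the induction. The \textbf{main obstacle} is the residue computation of the previous paragraph: one must verify that $\Res_{t=\infty}$ of the kernel against the polynomial forms produces \emph{precisely} the coefficients $\tfrac{(2d_1+2d_j-1)!!}{(2d_j-1)!!}$ and $(2a+1)!!(2b+1)!!$ of DVV, while correctly bookkeeping (i) the normalization $(-1)^n/2^{2g-2+n}$ and the symmetry factor built into \eqref{Fgn Airy}, (ii) the ratio operation $\tfrac1{dt}$ acting on the quadratic differential in the bracket, and (iii) the sign and orientation relating the contour about $t=\infty$ to the residues at the kernel poles $t=\pm t_1$. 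This is delicate but purely combinatorial once the dictionary $\tau_d\leftrightarrow |2d-1|!!\,(t/2)^{2d+1}$ is fixed; it is the Laplace transform, in the sense of \cite{DMSS}, of the Virasoro constraints.
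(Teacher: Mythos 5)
Your proposal is correct in substance, but it takes a genuinely different route from the paper's. You treat the Witten--Kontsevich theorem in its DVV/Virasoro form as the input and convert it into the residue formula \eqref{Airy TR} via the dictionary $\tau_{d_i}\leftrightarrow|2d_i-1|!!\,(t_i/2)^{2d_i+1}$ of \eqref{Fgn Airy}; your kernel identification from \eqref{W01A}--\eqref{W02A}, your anchor values $W_{0,3}^A=-\frac{1}{16}\,dt_1dt_2dt_3$ and $W_{1,1}^A=-\frac{1}{128}\,t_1^2\,dt_1$, and your observation that the $(0,2)$ factors inside the quadratic term generate the linear sum of DVV are all accurate, and the double-factorial bookkeeping you isolate as the main obstacle is indeed the whole of the remaining verification (it is exactly the computation carried out, e.g., in \cite{EO2007,MSaf,Zhou1}). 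The paper, following \cite{DMSS}, goes the other way around: \eqref{Airy TR} is obtained in Section~\ref{sect:Catalan} as the $t\to\infty$ scaling limit of the Catalan topological recursion \eqref{Catalan TR}, visible in the limit $\frac{(t^2-1)^3}{t^2}\to t^4$ together with the vanishing of the $t=0$ contribution to the contour; \eqref{Catalan TR} itself descends by Laplace transform from the purely combinatorial edge-contraction recursion \eqref{Catalan recursion}, and the intersection numbers enter only through the identification \eqref{Fgn highest} of the top-degree coefficients, which rests on lattice-point asymptotics and Kontsevich's volume theorem rather than on Virasoro constraints. So your argument buys directness and brevity at the cost of assuming the full DVV recursion, while the paper's route keeps the combinatorial origin of the kernel in view and obtains the Virasoro constraints as a by-product rather than a hypothesis. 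Two points to watch if you write yours out: with the orientations of Figure~\ref{fig:contourC} one has $\frac{1}{2\pi i}\oint_{\gamma}=\Res_{t=\infty}$, so the right-hand side of \eqref{Airy TR} equals $-\Res_{t=\infty}$, equivalently the sum of residues at \emph{all} finite poles --- which include $t=\pm t_j$ for $j\ge 2$, contributed by the $W_{0,2}^A$ factors inside the bracket, not only the kernel poles $t=\pm t_1$; and the factor $2d_i+1$ produced by differentiating $t_i^{2d_i+1}$ in \eqref{Wgn Airy} is what upgrades $|2d_i-1|!!$ to $(2d_i+1)!!$, which is needed for the DVV coefficients to come out exactly.
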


\begin{figure}[htb]
\centerline{\epsfig{file=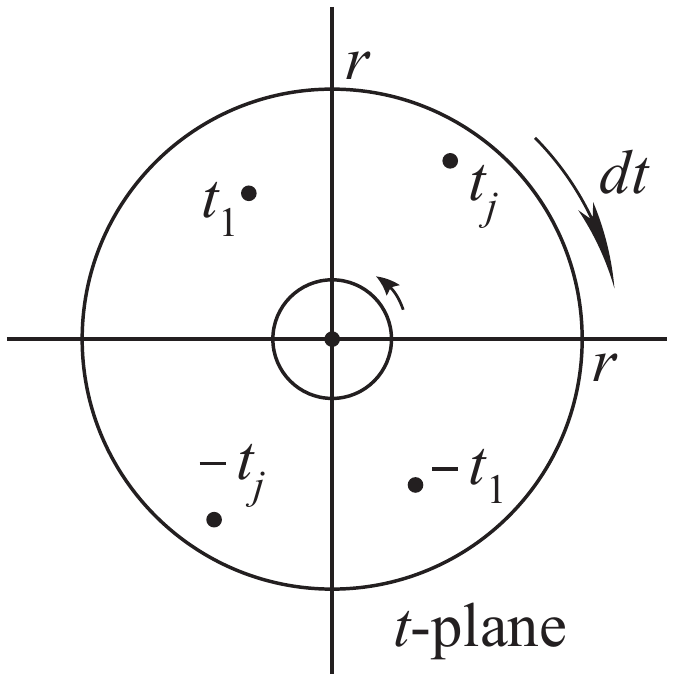, width=1.5in}}
\caption{The integration contour $\gamma$. This contour 
encloses an annulus bounded by two concentric 
circles centered at the origin. The outer one 
is a small circle around $\infty\in \widetilde{\Sigma}
=\bP^1$,
 and the inner one a small circle around the origin. 
 Both circles are positively oriented on 
 $\widetilde{\Sigma}$. Using the $t$-plane as 
 the affine chart, the small loop around
 $\infty$ becomes a circle of a large radius
 $r>\!>\infty$  with the
 opposite orientation, as in the figure.
 Geometrically, $t=0$ and $t=\infty$ correspond
 to the two simple ramification points
 of the Galois covering 
 $\tilde{\pi}:\widetilde{\Sigma}
 \lrar \bP^1$, and the contour $\gam$ consists
 of two small loops around these ramification points.}
\label{fig:contourC}
\end{figure}

This is an example of the 
axiomatic mechanism of \cite{EO2007} 
called the \emph{topological recursion}.
We give a more geometric interpretation
of the formula
in the later sections.
The derivation of the topological 
recursion \eqref{Airy TR}
from \eqref{Fgn Airy} is 
the subject of Section~\ref{sect:Catalan}
where the origin of the recursion 
formula is identified via graph
enumeration.
The power of this formula is that 
all $W_{g,n}^A$ for $2g-2+n>0$
are calculated from the initial values
\eqref{W01A} and \eqref{W02A}. For example,
\begin{align*}
W_{1,1}^A(t_1) &=
-\left[\frac{1}{2\pi i}\int_{\gam}
\left(
\frac{1}{t+t_1}+\frac{1}{t-t_1}
\right)
\frac{t^4}{64}\;\frac{1}{dt}
W_{0,2}^A(t,-t)
\right]dt_1
\\
&=
-\left[\frac{1}{2\pi i}\int_{\gam}
\left(
\frac{1}{t+t_1}+\frac{1}{t-t_1}
\right)
\frac{t^4}{64}\frac{(-dt)}{4t^2}
\right]dt_1
\\
&=
-\frac{1}{128} t_1^2dt_1
\\
&=
-\frac{3}{16}\la \tau_1\ra_{1,1}t_1^2dt_1.
\end{align*}
Thus we find $\la \tau_1\ra_{1,1} = \frac{1}{24}$.

The functions $F_{g,n}^A$ for 
$2g-2+n>0$ can be calculated 
by integration:
\be
\label{Fgn from Wgn}
F_{g,n}^A(t_1,\dots,t_n)
=\int_0 ^{t_1}\cdots\int_0^{t_n}
W_{g,n}^A(t_1,\dots,t_n).
\ee
Note that 
$$
t\rar 0 \Longleftrightarrow x\rar \infty.
$$
Therefore, we are considering the expansion 
of quantities at the essential singularity of 
$Ai(x)$. 
It is surprising to see that
 the topological recursion indeed 
determines all intersection numbers 
\eqref{intersection}!
Now define
\be
\label{principal specialization Airy}
S_m(x)=\sum_{2g-2+n=m-1}\frac{1}{n!}
F_{g,n}^A\big(t(x),\dots,t(x)\big),
\ee
where we choose a branch of $\pi:\Sigma \lrar \bP^1$,
and consider $t=t(x)$ as a function in $x$.
It coincides with \eqref{Airy Sm}.
Since the \emph{moduli} spaces $\Mbar_{0,1}$ and
$\Mbar_{0,2}$ do not exist, we 
do not have an expression \eqref{Fgn Airy} for
these unstable geometries.
So let us formally apply
\eqref{Fgn from Wgn} to 
\eqref{W01A}:
\be
\label{F01A}
F_{0,1}^A(t):= \int_0^{t}\frac{16}{t^4} dt
=-\frac{16}{3}t^{-3}
= -\frac{2}{3}x^{\frac{3}{2}} = S_0(x).
\ee
We do not have this type of integration procedure
to produce $S_1(x)$ from $W_{0,2}^A$. So we
simply define $S_1(x)$ by solving 
\eqref{S1}.
Then the residues in \eqref{Airy TR} can be
concretely computed, and produce
a system of recursive partial differential
equations among the $F_{g,n}^A$s. 
Their \textbf{principal
specialization}
\eqref{principal specialization Airy} produces 
 \eqref{Sm'}! Therefore, we 
 obtain \eqref{rainbow formula}.

In this context, the topological recursion is 
 the process of quantization, because it
actually constructs the function 
$\Psi(x,\hbar)$ by giving a closed
formula for the WKB analysis, and hence
the differential operator \eqref{Airy quantum}
that annihilates it, all from the classical curve
$x=y^2$.

\subsection{Non-Abelian Hodge 
correspondence and quantum curves}

Then what is the quantum curve?
Since it is a second order differential equation
with a deformation parameter $\hbar$, and 
its semi-classical limit is the spectral curve
of a Higgs bundle, which is a rank $2$ bundle
in our example, we can easily \emph{guess} that
it should be the result of the \textbf{non-Abelian
Hodge correspondence}. Since the 
quantization procedure is a \emph{holomorphic}
correspondence, while the non-Abelian Hodge
correspondence is \emph{not} holomorphic
as a map, we do not expect that these two 
are the same. 

To have a glimpse of the geometric effect 
of quantization, let us start with a Higgs bundle
$(E,\phi)$ of 
\eqref{E} and \eqref{Airy Higgs}.
The transition function of the vector bundle
$$
E=\cO_{\bP^1}(-1)\dsum \cO_{\bP^1}(1)
$$ 
on $\bP^1=U_\infty \cup U_0$
defined on $\bC^*=U_\infty \cap U_0$
is given by 
$
\begin{bmatrix}
x\\
&\frac{1}{x}
\end{bmatrix},
$
where $U_0=\bP^1\setminus \{\infty\}$ and
$U_\infty = \bA^1 = \bP^1\setminus \{0\}$.
The trivial extension 
$$
0\lrar \cO_{\bP^1}(-1)\lrar E\lrar \cO_{\bP^1}(1)\lrar 0
$$
has a unique $1$-parameter family of deformations
as the extension of $\cO_{\bP^1}(1)$ by $\cO_{\bP^1}(-1)$:
$$
0\lrar \cO_{\bP^1}(-1)\lrar E_\hbar\lrar \cO_{\bP^1}(1)\lrar 0,
$$
where $\hbar\in \Ext^1\left(\cO_{\bP^1}(1),\cO_{\bP^1}(-1)\right)
\isom H^1\left(\bP^1,K_{\bP^1}\right) \isom \bC$, and the
transition function of $E_{\hbar}$ is given by
$$
\begin{bmatrix}
x&\hbar\\
&\frac{1}{x}
\end{bmatrix}.
$$
Since 
$$
\begin{bmatrix}
1\\
-\frac{1}{\hbar x}&1
\end{bmatrix}
\begin{bmatrix}
x &\hbar\\
&\frac{1}{x}
\end{bmatrix}
\begin{bmatrix}
&-\hbar\\
\frac{1}{\hbar}&x
\end{bmatrix}
=
\begin{bmatrix}
1\\
&1
\end{bmatrix},
$$
\be
\label{Ehbar}
E_\hbar \isom 
\begin{cases}
\cO_{\bP^1}(-1)\dsum \cO_{\bP^1}(1) 
\quad \hbar =0\\
\cO_{\bP^1}\dsum \cO_{\bP^1}
\qquad \qquad  \; \hbar \ne 0.
\end{cases}
\ee
The Higgs field \eqref{Airy Higgs}
satisfies the transition relation
$$
-\begin{bmatrix}
&\frac{1}{u^5}\\
1&
\end{bmatrix}du
=\begin{bmatrix}
x&\\
&\frac{1}{x}
\end{bmatrix}
\begin{bmatrix}
&x\\
1
\end{bmatrix}dx
\begin{bmatrix}
x&\\
&\frac{1}{x}
\end{bmatrix}
^{-1},
$$
where $u=1/x$ is a coordinate on $U_\infty$.
Because of the relation
$$
-\begin{bmatrix}
&\frac{1}{u^5}\\
1&
\end{bmatrix}du
=\begin{bmatrix}
x&\hbar\\
&\frac{1}{x}
\end{bmatrix}
\begin{bmatrix}
&x\\
1
\end{bmatrix}dx
\begin{bmatrix}
x&\hbar\\
&\frac{1}{x}
\end{bmatrix}
^{-1}
-d \begin{bmatrix}
x&\hbar\\
&\frac{1}{x}
\end{bmatrix}
\begin{bmatrix}
x&\hbar\\
&\frac{1}{x}
\end{bmatrix}
^{-1},
$$
somewhat miraculously,
$\nabla^\hbar= \hbar d+\phi$ 
with the same Higgs fields as a connection 
matrix
becomes
an $\hbar$-connection in 
$E_\hbar$:
\be\label{hbar conn}
\nabla^\hbar= \hbar d+\phi :E_\hbar
\lrar K_{\bP^1}(5)\tensor E_\hbar.
\ee
It gives the Higgs field at $\hbar = 0$.
A flat section with respect to $\nabla^\hbar$ for
$\hbar\ne 0$ can be obtained by solving
\be\label{flat}
\left(\hbar \frac{d}{dx} +\begin{bmatrix}
&x\\
1
\end{bmatrix}
\right)
\begin{bmatrix}
-\hbar \Psi(x,\hbar)'\\
\Psi(x,\hbar)
\end{bmatrix}
=
\begin{bmatrix}
0\\
0
\end{bmatrix},
\ee
where $\Psi(x,\hbar)'$ is the $x$-derivative.
Clearly, \eqref{flat} is equivalent to 
\eqref{Airy quantum}.

For $\hbar=1$, \eqref{hbar conn}
is a holomorphic flat connection on 
$E_1|_{\bA^1}=\cO_{\bA^1}^{\dsum 2}$, the restriction of the 
vector bundle $E_1$ on the affine coordinate
neighborhood. Therefore, 
$\left.\left(E_\hbar,\nabla^{\hbar}\right)\right|_{\hbar = 1}$ defines a $\cD$-module
on $E_1|_{\bA^1}$. As a holomorphic $\cD$-module
over $\bA^1$,
we have an isomorphism
$$
\left.\left(\cO_{\bA^1}^{\dsum 2},\nabla^{\hbar=1}
\right)\right|_{\bA^1}\isom 
\cD\big/\big(\cD\cdot P(x,1)\big),
$$
where 
\be
\label{P Airy}
P(x,\hbar) := \left(\hbar\frac{d}{dx}\right)^2 -x,
\ee
and $\cD$ denotes the sheaf of linear differential
operators with holomorphic coefficients
on $\bA^1$.
The above consideration indicates how we
constructs a quantum curve as a $\cD$-module
from a given particular Higgs bundle.

Hitchin's original idea \cite{H1}
of constructing \emph{stable} Higgs 
bundles is to solve a system of differential 
equations, now known as \emph{Hitchin's equations}.
The stability condition for the Higgs bundles
can be translated into a system of nonlinear 
elliptic partial differential equations defined on  a \textbf{Hermitian} vector bundle 
$E\lrar C$ over a compact Riemann surface
$C$. It takes the following
form:
\be
\label{Hitchin}
\begin{aligned}
F(D) +[\phi,\phi^{\dagger_h}] &= 0
\\
D^{0,1}\phi &=0.
\end{aligned}
\ee
 Here, $h$ is
a Hermitian metric in $E$,
$D$ is a unitary connection in $E$ with respect
to $h$, $D^{0,1}$ is the $(0,1)$-component of 
the covariant differentiation with respect to the
complex structure of the curve $C$,
$F(D)$ is the curvature of $D$,  $\dagger_h$ is the 
Hermitian conjugation with respect $h$,
 and
$\phi$ is a differentiable Higgs field on $E$.
Solving Hitchin's equation is equivalent to 
constructing a $1$-parameter family of
flat connections of the form
\be
\label{nabla}
D(\zeta) = \frac{\phi}{\zeta}+D+\phi^{\dagger_h}
\zeta, \qquad \zeta\in \bC^*
\ee
(see \cite{GMN, N,S}).
The non-Abelian Hodge correspondence
is the association of 
$\left(\widetilde{E},D(1)^{1,0}\right)$
  to the given stable holomorphic
Higgs 
bundle $(E,\phi)$, i.e., a solution to Hitchin's equations.
Here, $D(1)= \phi + D + \phi^{\dagger_h}$,
and $\widetilde{E}$ is a holomorphic
vector bundle with the complex structure
given by the flat connection $D(1)^{0,1}$.
The connection $D(1)^{1,0}$ is then a
holomorphic connection in $\widetilde{E}$.

A new idea that relates the 
non-Abelian Hodge correspondence and
\emph{opers}
is emerging \cite{DFKMMN}. The role
of the topological recursion in this context, and
the identification of opers as 
globally defined quantum curves, are
being developed. Since it is beyond
our scope of the current lecture notes, it will be 
discussed elsewhere.

So far, we have considered $\Psi(x,\hbar)$ as
a formal ``function.'' What is it indeed? 
Since our vector bundle $E$ is of the form
\eqref{E}, the shape of the equation 
\eqref{flat}
suggests that 
\be
\label{Psi in Khalf}
\Psi (x,\hbar)\in 
\widehat{H}^0\big(\bP^1,K_{\bP^1}^{-\half}
\big),
\ee
where the hat sign indicates that we really 
do not have any good space to store
the formal wave function $\Psi (x,\hbar)$.
The reason for the appearance of 
$K_{\bP^1}^{-\half}$ as its home is 
understood as follows. 
Since $W_{g,n}^A$ for $2g-2+n$ is an
$n$-linear differential form, 
\eqref{Fgn from Wgn} tells us that
$F_{g,n}^A$ is just a number. 
Therefore, $S_m(x)$ that is determined
by \eqref{principal specialization Airy}
for $m\ge 2$ is also just a number. 
Here, by a ``number'' we mean a genuine
function in $x$. 
The differential equation \eqref{S0}
should be  written 
\be
\label{S0 geometry}
(dS_0(x))^{\tensor 2} = x(dx)^2 =q(x)
\ee
as an equation of quadratic differentials. 
Here, 
$$
q(x)\in K_{\bP^1}^{\tensor 2}
$$
 can be actually any meromorphic 
quadratic 
differential on $\bP^1$, including  
$x(dx)^2$, so that
$$
\phi = \begin{bmatrix}
&1
\\
q
\end{bmatrix}
\in K_{\bP^1}\tensor \End(E)
$$
is a meromorphic 
Higgs field on the vector bundle $E$ in the
same way.
Similarly, \eqref{S1} should be interpreted as
\be
\label{S1 geometry}
S_1(x) =-\half \int d\log (dS_0)
= -\half \log \sqrt{q(x)} = -\frac{1}{4}
\log q(x).
\ee
Now recall the conjugate 
differential equation \eqref{conjugate}.
Its solution takes the form
\be
\label{conjugate solution}
\frac{1}{\sqrt[4]{q(x)}}
\exp\left(\sum_{m=2}^\infty
S_m(x)\hbar^{m-1}
\right),
\ee
and as we have noted, the exponential factor
is just a number. 
Therefore, the geometric behavior of this 
solution is determined by the factor
$$
\frac{1}{\sqrt[4]{q(x)}} \in K_{\bP^1}^{-\half},
$$
which is a meromorphic section of the
negative half-canonical sheaf.

We recall that $\Psi(x,\hbar)$ has another 
factor $\exp\big(S_0(x)/\hbar\big)$.
It should be understood as a ``number''
defined on the spectral curve
$\Sigma$,
because $dS_0(x) = \eta$ is the tautological 
$1$-form on $T^*\bP^1$ restricted to the
spectral curve, and $S_0(x)$ is its integral
on the spectral curve. Therefore, 
this factor tells us that the equation
 \eqref{Airy quantum} should be considered
 on $\Sigma$. Yet its local $x$-dependence is
 indeed determined by 
 $K_{\bP^1}^{-\half}$.
 
We have thus a good answer for
Question~\ref{quest: spectral} now.  
The main part of the asymptotic expansion
\eqref{Airy 0,1} tells us what geometry
we should consider.
It tells us 
what  the Hitchin spectral curve should be, and it
also 
 includes the information of
Higgs bundle $(E, \phi)$ itself.

\begin{rem}
The Airy example, and another example we consider
in the next section, are in many ways special, in the
sense that the $S_2(x)$-term of the WKB expansion
is given by integrating the solutions $W_{1,1}$
and $W_{0,3}$ of the topological 
recursion \eqref{Airy TR}. In general, 
the topological recursion mechanism of
computing $W_{1,1}$
and $W_{0,3}$ from $W_{0,2}$ does not 
correspond to the WKB equation for $S_2$.
As discovered in \cite{OM1}, the topological 
recursion in its PDE form is equivalent to the 
WKB equations for all $S_m(x)$ in the
range of  $m\ge 3$. But the PDE recursion, 
which we discuss in detail in the later sections, 
does not determine $S_2$. It requires a new way 
of viewing the topological recursion
in its differential equation formulation:
\textbf{we consider $F_{1,1}$ and $F_{0,3}$
as the initial condition for topological recursion},
rather than $W_{0,1}$ and $W_{0,2}$,
which have been more 
commonly considered as the starting
point for the topological recursion.
\end{rem}

\subsection{The Lax operator for Witten-Kontsevich
KdV equation}

Surprisingly, the operator $P(x,1)$ 
of \eqref{P Airy} at $\hbar = 1$ 
is the 
initial value of the 
\emph{Lax operator} for the KdV equations
that appears in the work of Witten \cite{W1991}
and Kontsevich \cite{K1992}.
Witten considered a different generating function
of the intersection numbers \eqref{intersection}
given by
\be\label{Witten}
\begin{aligned}
F(s_0,s_1,s_2,\dots)
&=
\left<
\exp\left(
\sum_{d=0}^\infty {s_d \tau_d}
\right)
\right>
\\
&=
\sum_{k_0,k_1,k_2,\dots = 0} ^\infty
\left< \tau_0^{k_0}\tau_1^{k_1}\tau_2 ^{k_2}\cdots
\right>
\prod_{j=0}^\infty \frac{s_j^{k_j}}{k_j !}
\\
&=
\sum_{g=0}^\infty \sum_{n=1}^\infty
\frac{1}{n!}
\sum_{\substack{d_1+\cdots +d_n\\
=3g-3+n}}
\la \tau_{d_1}\cdots \tau_{d_n}\ra_{g,n}
s_{d_1}\cdots s_{d_n}
\\
&=
\la\tau_0^3\ra_{0,3}\frac{s_0^3}{3!}
+ \la \tau_1\ra_{1,1}s_1 +
\la \tau_0^4\ra_{0,4}\frac{s_0^4}{4!} +\cdots.
\end{aligned}
\ee
Define 
$$
t_{2j+1}:=\frac{s_j}{(2j+1)!!}, 
$$
and
\be
\label{Witten KdV}
u(t_1,t_3,t_5,\dots) := 
\left(\frac{\partial}{\partial s_0}\right)^2
F(s_0,s_1,s_2,\dots).
\ee
Then $u(t_1,t_3,t_5,\dots)$ satisfies the
system of KdV equations, whose first equation
is
\be
\label{KdV}
u_{t_3}=\frac{1}{4}u_{t_1t_1t_1}
+3 u u_{t_1}.
\ee

The system of KdV equations are the  deformation 
equation for the universal
iso-spectral family of  second order 
 ordinary differential operators of the form
\be\label{Lax}
L(X,t):=
\left(\frac{d}{dX}\right)^2 + 2u(X+t_1,t_3,t_5,\dots)
\ee
in $X$ and deformation parameters 
$t=(t_1,t_3,t_5,\dots)$. The operator is 
often referred to as the \emph{Lax operator}
for the KdV equations. The expression
$$
\sqrt{L} = \frac{d}{dX} + 
u\cdot  \left(\frac{d}{dX}\right)^{-1}
-\half u' \cdot \left(\frac{d}{dX}\right)^{-2}+\cdots
$$
makes
sense in the ring of pseudo-differential operators,
where $'$ denotes the $X$-derivative. 
The KdV equations are the system of 
\emph{Lax equations}
$$
\frac{\partial L}{\partial t_{2m+1}}
= \left[ \left(\sqrt{L}^{2m+1}\right) _+,L\right],
\qquad m\ge 0,
$$
where $_+$ denotes the differential operator part
of a pseudo-differential operator. The commutator
on the right-hand side explains the invariance of
the eigenvalues of the Lax operator $L$ with respect
to the deformation parameter $t_{2m+1}$. 
The $t_1$-deformation is the translation
$u(X)\longmapsto u(X+t_1)$, 
and the $t_3$-deformation is given by
the KdV equation \eqref{KdV}.

For the particular function \eqref{Witten KdV}
defined by the intersection numbers
\eqref{Witten}, 
 the \emph{initial}
value of the Lax operator is 
$$
\left(\frac{d}{dX}\right)^2+2u(X+t_1,t_3,t_5,\dots)
\big|_{t_1=t_3=t_5=\cdots = 0} 
=\left(\frac{d}{dX}\right)^2+2X.
$$
In terms of yet another
variable $x=-\frac{2}{ \sqrt[3]{4}}X$, the 
initial value becomes
$$
\frac{1}{\sqrt[3]{4}}
\left(\left(\frac{d}{dX}\right)^2+2X\right)
=
\left(\frac{d}{dx}\right)^2-x 
= P(x,\hbar)\big|_{\hbar=1},
$$
which is the Airy differential operator.

Kontsevich \cite{K1992}
used the matrix Airy function
to obtain all intersection numbers.
The topological recursion replaces
the asymptotic 
analysis of matrix integrations with 
a series of residue calculations on 
the spectral curve $x=y^2$.

\subsection{All things considered}

What we have in front of us is an interesting example
of a theory yet to be constructed. 
\begin{itemize}
\item
We have generating functions of 
quantum invariants, such as Gromov-Witten 
invariants. 
They are 
symmetric functions. 

\item We take the \emph{principal specialization}
of these functions, and form a generating function
of the specialized  generating functions. 

\item
This function then solves a $1$-dimensional 
stationary Schr\"odinger equation. The equation
is what we call a \emph{quantum curve}.

\item From this Schr\"odinger equation (or a
quantum curve), 
we construct an algebraic curve,  a
\emph{spectral curve} in the sense of Hitchin,
through the process of
\emph{semi-classical limit}.

\item The 
differential version of 
the \emph{topological recursion} \cite{OM1,OM2}
applied to the spectral curve then recovers
the starting quantum invariants.

\item The spectral curve can be also  
expressed as
the Hitchin spectral curve of a 
particular meromorphic
Higgs bundle.

\item Then the quantum curve is equivalent to 
the $\hbar$-connection in the $\hbar$-deformed
vector bundle
on the base curve, on which
the initial Higgs bundle is defined. 

\item The topological recursion therefore 
constructs a  flat section, although formal,
 of the
$\hbar$-connection from the Hitchin spectral curve.
At least locally, the $\hbar$-connection itself
is thus constructed by the topological recursion.

\end{itemize}
We do not have a general theory yet. In particular,
we do not have a global definition of 
quantum curves. As mentioned above,
right at this moment, the notion 
of \emph{opers} is emerging as a mathematical 
definition of quantum curves, at least for the
case of smooth spectral covers in 
the cotangent bundle $T^*C$ of a smooth 
curve $C$ of genus greater than $1$. 
We will report our finding in this exciting 
direction elsewhere.
Here, we present what we know as of 
now. 

The idea of topological recursion was
devised for a totally different context. 
In the authors' work \cite{OM1}, for the first time
the formalism of Eynard and Orantin was
placed in the Higgs bundle context. 
The formalism depends purely on the
geometry of the Hitchin spectral curve.
Therefore, quantities that the topological recursion 
computes should represent the geometric
information. Then in \cite{OM2}, we have shown
through examples
that quantization of singular spectral curves
are related to certain enumerative geometry 
problems, when the quantum curve is analyzed 
at its singular point and the function 
$\Psi(x,\hbar)$, which should be actually considered as
a formal section of $K_C^{-\half}$, is expanded at its
essential singularity. 
In the example described above, the
corresponding counting 
problem  is computing
 the intersection numbers of certain
 cohomology classes on $\Mbar_{g,n}$.
The original topological recursion of \cite{EO2007}
is generalized to singular spectral curves
in \cite{OM2} for this purpose.

The question we still do not know its answer
is how to directly connect the 
Higgs bundle information with the 
geometric structure whose quantum invariants
are captured by the topological recursion.

Since the time of inception of the topological 
recursion \cite{CEO, EO2007}, 
numerous papers  have been produced,
in both mathematics and physics.
It is far beyond the authors' ability to 
make any meaningful comments on this vast body
of literature in the present article. Luckily,
interested readers can find useful information
in Eynard's  talk 
at the ICM 2014 \cite{E2014}.
Instead of
attempting the impossible, we review here
a glimpse of \textbf{geometric developments} 
inspired by the topological recursion
that have taken place in the last few years.

The geometry community's 
keen attention was triggered when a concrete 
\textbf{remodeling conjecture} 
was formulated by string theorists, first by
 Mari\~no \cite{Mar},  and  then in a more
 precise and generalized framework by 
Bouchard, Klemm, Mari\~no and 
Pasquetti \cite{BKMP1, BKMP2}.
The conjecture states that  
open Gromov-Witten invariants of an arbitrary
toric Calabi-Yau orbifold of dimension $3$ 
can be calculated by the topological recursion
formulated on the \emph{mirror curve}.
A physical argument  for the correctness of
the conjecture is
pointed out in \cite{OSY}.
Bouchard and Mari\~no \cite{BM}
then derived a new
conjectural formula for simple Hurwitz numbers
from the remodeling conjecture.
The correctness of the
Hurwitz number conjecture can be easily
checked by a computer for many concrete
examples. At the same time, it was clear that
the conjectural formula was totally different
from the combinatorial 
formula known as the \textbf{cut-and-join equation}
of
\cite{Goulden,GJ,V}.

After many computer experiments,
one of the authors noticed that the conjectural 
formula of Bouchard and Mari\~no was
exactly the \emph{Laplace transform} of a 
particular variant of the 
cut-and-join equation. Once the precise
relation between the knowns and unknowns 
is identified, often the rest is straightforward, 
even though  a technical difficulty still remains. The 
conjecture for simple Hurwitz numbers
of \cite{BM} was  solved in 
\cite{EMS,MZ}. Its generalization 
to the orbifold Hurwitz numbers 
is then established in \cite{BHLM}.
In each case, the Laplace transform plays the 
role of the mirror symmetry, changing the
combinatorial problem on the A-model side to 
a complex analysis problem on the B-model side.
The first case of the remodeling conjecture
for $\bC^3$ was solved, using the same idea,
shortly afterwords in \cite{Zhou0}.
The remodeling conjecture in its full generality is 
recently solved in its final form by Fang, Liu, 
and Zong (announced in \cite{FLZ}), based on an earlier work of
\cite{EO3}.

Independent of these developments,
the relation between the topological recursion 
and combinatorics of enumeration of various
graphs drawn on oriented topological surfaces
has been studied by the Melbourne group of
mathematicians, including Do, Leigh, Manesco,
Norbury, and Scott (see, for example,
\cite{DoMan, N1,N2, NS}). The authors'
earlier papers \cite{CMS, DMSS, MP2012}
are inspired by their work.
A surprising  observation of the Melbourne group, 
formulated in a conjectural formula,
is that the Gromov-Witten invariants of 
$\bP^1$ themselves should
satisfy the topological recursion. 
Since $\bP^1$ is not a Calabi-Yau manifold,
this conjecture does not follow from the 
remodeling conjecture. 

The $GW(\bP^1)$ conjecture of \cite{NS}
is solved by the Amsterdam group of 
mathematicians, consisting of 
Dunin-Barkowski,  Shadrin, and 
Spitz, in collaboration with 
Orantin \cite{DOSS}. 
Their discovery,
that the topological recursion on 
a disjoint union of open discs as its spectral 
curve is equivalent to \emph{cohomological 
field theory},
has become a key technique of many later
works  \cite{ABO,DKOSS,FLZ}.

The technical difficulty of the topological 
recursion lies in the evaluation of residue calculations
involved in the formula. When the spectral 
curve is an open disc, this difficulty does not
occur. But if the global structure of the spectral 
curve has to be considered, then one needs 
a totally different idea. The work
of \cite{BHLM,EMS,MZ} has overcome the
 complex analysis difficulty
  in dealing with 
  simple and orbifold Hurwitz numbers.
  There, the key idea is the use of the 
  (piecewise) \emph{polynomiality}
  of these numbers through the ELSV
  formula \cite{ELSV} and its orbifold
  generalization \cite{JPT}. The paper 
   \cite{DKOSS} proves the  converse:
  they first prove the polynomiality 
  of the Hurwitz numbers without
  assuming the relation to 
  the intersection numbers over
  $\Mbar_{g,n}$, and then
  establish the ELSV formula from the 
  topological recursion, utilizing a technique of
  \cite{MZ}.

  The topological recursion is a byproduct of
  the study of random matrix theory/matrix
  models \cite{CEO, EO2007}. A recursion 
  of the same nature appeared earlier in the work of
  Mirzakhani \cite{Mir1, Mir2} on the
  Weil-Petersson volume of the moduli space
  of bordered hyperbolic surfaces. The Laplace 
  transform of the Mirzakhani recursion 
  is an example of the topological recursion. Its
  spectral curve, the sine curve, was first identified
  in \cite{MSaf} as the intertwiner  of 
  two representations of the Virasoro
  algebra.
  
  The notion of quantum curves goes back to 
\cite{ADKMV}. It is further developed in the
physics community by Dijkgraaf, Hollands,
 Su\l kowski, and Vafa \cite{DHS, DHSV, Hollands}.
The  geometry community
was piqued by \cite{DFM,GS}, which speculated on
the relation between the topological recursion,
quantization of the $SL(2,\bC)$-character
variety of the fundamental group of 
a knot complement in $S^3$,  the
AJ-conjecture due to \cite{Gar, GarLe},
and the $K_2$-group of algebraic $K$-theory.
Although it is tantalizingly interesting, so far
no mathematical results have been established in 
this direction for hyperbolic knots, or even it
may be impossible (see for example, \cite{BE}), 
mainly due to the 
reducible nature
of the spectral curve in this particular context. 
For torus knots, see also \cite{BEM}.
A rigorous construction of quantization of 
spectral curves was established for a few
examples in \cite{MS} for the first time,
but without any relation to knot
invariants. One of the examples
there will be treated in these lectures below. 
By now, we have
many more mathematical examples  of 
quantum curves \cite{BHLM,DoMan,
DMNPS,MSS,Zhou1,Zhou2}.
We note that in many of these examples, the 
spectral curves have a global parameter, 
even though the curves are not necessarily the rational
projective curve. Therefore, the situation is
in some sense still the ``genus $0$ case.''
The difficulty of quantization
 lies in 
dealing with complicated entire functions,
and the fact that the quantum curves are
\textbf{difference equations}, rather than 
differential equations of  finite order.
  
  We are thus  led to another question.
  
  \begin{quest} What  can we do when
  we have a different situation, where
  the spectral curve of the theory
  is global, compact, and 
  of a high genus?
  \end{quest}
  
  It comes as a surprise that 
  there is a system of recursive
  partial differential equations,
  resembling the residue calculation formula
  for the topological recursion,
  when the
  spectral curve of the topological recursion
  is precisely a Hitchin spectral curve
  associated with a rank $2$ Higgs bundle.
  The result of the calculation then leads us to 
  a  construction of a quantum
  curve. In this way a connection between
  quantization of Hitchin spectral curves 
  and the topological recursion is discovered in
  \cite{OM1}.
  
  If we regard the topological recursion as a
  method of calculation of quantum invariants, 
  then we need to allow singular spectral curves,
  as we have seen earlier. 
The simplest quantization of the 
singular Hitchin spectral 
curve is then obtained by the topological 
recursion again, but this time, it has to be
 applied to the
normalization of the singular spectral curve
constructed
by a particular way of using blow-ups of the
ambient compactified cotangent bundle. This is 
the content of \cite{OM2}, and we obtain
a \textbf{Rees $\cD$-module} as the result 
of quantization. 

We note here that 
what people call
by quantization is not unique. Depending
on the purpose, one needs to use a different
guideline for quantization. The result would be a
different differential equation, but still having the
same semi-classical limit.

For example, it has been rigorously proved  in
\cite{IS} that surprisingly
the quantization procedure of
\cite{OM1,OM2},
including the desingularization of the spectral curve,
  \emph{automatically} leads
to an iso-monodromic deformation family,
for the case of the Painlev\'e I equation. 
Their global parameter is essentially the 
normalization coordinate of the singular 
elliptic curve.
In their work \cite{IS},  
they ask what one obtains if the straightforward
topological recursion is applied for the quantization 
of a singular elliptic curve with a prescribed 
parameter in a particular way. 
They then find that the quantum curve 
is a Schr\"odinger equation whose coefficients
have nontrivial 
dependence on $\hbar$, yet it is an iso-spectral family
with respect to the parameter.
This work, and also the numerous 
mathematical examples of quantum curves
that have been already
constructed, suggest that the idea of 
 using
$\cD$-modules for the definition of
quantum curves (\cite{DHS,OM2})
is not the final word. Differential operators 
of an infinite order, or difference operators
mixed with differential operators, also have to be
 considered. 

For the mirror curves of toric Calabi-Yau
orbifolds of dimension $3$ appearing
in the context of the remodeling 
conjecture \cite{BKMP1, 
BKMP2, FLZ, Mar}, the 
conjectural quantum curves
acquire a very different nature. It has 
deep connections to number theory
and  quantum dilogarithm functions
\cite{KMar}.

A suggestion of using 
deformation quantization modules for
quantum curves is made by
Kontsevich in late 2013 in a private communication 
to the authors. An interesting 
work 
toward this direction is proposed by Petit \cite{Petit}.

The relation of the quantization discussed in these
lectures with the
\textbf{non-Abelian Hodge correspondence}
and \textbf{opers} is being investigated as of 
now \cite{DFKMMN}. 
A coordinate-free
global definition of  quantum curves
is emerging, and a direct relationship among
quantum curves, non-Abelian Hodge correspondence,
and opers is being developed.

The story is expanding its horizon. 
We have come to the other side of the rainbow.
And there we find ourselves on Newton's seashore.
So far we have  found only a few 
smoother pebbles or 
prettier shells, whilst...

\section{From Catalan numbers to
the topological recursion}
\label{sect:Catalan}

Let us consider a function $f(X)$ in one
variable, where $X$ is an $N\times N$ 
Hermitian matrix. 
One of the main problems of matrix integration theory
is to calculate the \emph{expectation value}
\be\label{expectation}
\la \tr f(X)\ra := 
\frac{
\int_{H_{N\times N}}\tr f(X) e^{V(X)}dX}
{\int_{H_{N\times N}} e^{V(X)}dX},
\ee
where the \emph{potential function}
$V(X)$ is given, such as the Gaussian potential
\be\label{Gaussian}
V(X) = -\half \tr(X^2),
\ee
so that 
$$
C_N=\int_{H_{N\times N}} e^{V(X)}dX
$$
 is
finite. 
The integration measure
 $dX$ is the standard $U(N)$-invariant
 Lebesgue measure 
of the space $H_{N\times N}=\bR^{N^2}$
of Hermitian matrices of size $N$.
When a Gaussian potential  \eqref{Gaussian}
is chosen, $e^V(X)dX$ is 
a probability measure after an appropriate
normalization, and
$\la \tr (X^m)\ra$ is the $m$-th 
\emph{moment}.
If we know all the moments, then we can 
calculate the expectation value of any 
polynomial function $f(X)$. 
Therefore, the problem changes into
calculating a  generating function 
of the moments
\be\label{moment generating}
\frac{1}{C_N}
\sum_{m=0}^\infty
\frac{1}{z^{m+1}}
\int_{H_{N\times N}}
 \tr(X^m)e^{V(X)}dX.
\ee
For a norm bounded matrix $X$, we have
$$
tr \left(\frac{1}{z-X} \right) = 
\sum_{m=0}^\infty
\frac{1}{z^{m+1}}
 \tr(X^m).
 $$
 Therefore, the  \emph{resolvent} of 
 a random matrix $X$,
\be\label{resolvent}
\left<
\tr\left(\frac{1}{z-X}\right)
\right>
=
\frac{1}{C_N}
\int_{H_{N\times N}}
\tr \left(\frac{1}{z-X} \right)e^{V(X)}dX,
\ee
looks the same as 
 \eqref{moment generating}.
 But they are not the same. 
For example, let us consider
the $N=1$ case and
write $X=x\in \bR$. Then
the formula
$$
\frac{1}{\sqrt{2\pi}}\int_{-\infty}^\infty \frac{1}{z-x}\;e^{-\half x^2}
dx
\overset{?}{=} 
\sum_{m=0} ^\infty \frac{(2m-1)!!}{z^{2m+1}}
=
\frac{1}{\sqrt{2\pi}}
\sum_{m=0}^\infty
\frac{1}{z^{m+1}}
\int_{-\infty}^\infty 
x^m\;e^{-\half x^2}
dx
$$
is valid only as the asymptotic expansion 
of the analytic function
$$
\frac{1}{\sqrt{2\pi}}\int_{-\infty}^\infty \frac{1}{z-x}\;e^{-\half x^2}
dx
$$
in $z$
for $Im(z)\ne 0$ and $Re(z)\rar +\infty$.
Still we can see that the information of 
the generating function of the moment
\eqref{moment generating}
can be extracted from the resolvent
\eqref{resolvent} if we apply the
technique of  asymptotic 
expansion of a holomorphic function
near at an essential singularity.
The asymptotic method of matrix integrals leads
to many interesting formulas, such as calculating
the orbifold Euler characteristic $\chi(\cM_{g,n})$
of the moduli space of smooth pointed curves
\cite{HZ}. We refere to \cite{M1998,MP1998} 
for introductory materials of these topics.

More generally, we can consider
\emph{multi-resolvent correlation functions}
\be\label{multi-resolvent}
\left<\prod_{i=1}^n
\tr\left(\frac{1}{z_i-X}\right)
\right>
=
\frac{1}{C_N}
\int_{H_{N\times N}}
\prod_{i=1}^n
\tr \left(\frac{1}{z_i-X} \right)e^{V(X)}dX.
\ee
When we say  ``calculating'' the expectation value
\eqref{multi-resolvent},
we wish to identify
it as a 
\emph{holomorphic} 
function 
in all the parameters, i.e.,
 $(z_1,\dots,z_n)$, the coefficients of 
 the potential $V$, 
and  the matrix size $N$. In particular, 
the analytic dependence on 
the parameter $N$ is an  important
feature we wish to determine.

It is quite an involved problem in analysis,
and we do not attempt to follow this route
in these lecture notes.
The collaborative effort of the random matrix
community has devised a recursive 
method of solving this analysis problem
(see, for example,
\cite{CEO, E2014, EO2007}), which is 
now known as 
the \textbf{topological recursion}.
One thing we can easily expect here is that since
\eqref{multi-resolvent}
is an analytic function in 
$(z_1,\dots,z_n)$,
 there must be an obvious relation between the 
topological recursion  and
algebraic geometry. 

What is amazing is that the exact same 
recursion formula
happens to appear in the context of many
different
enumerative geometry problems, again and again.
Even though the counting problems are different,
the topological recursion always takes the 
same general formalism. Therefore, to understand
the nature of this formalism, it suffices to 
give the simplest non-trivial example. 
This is what we wish to accomplish in this 
section.

\subsection{Counting  graphs on 
a surface}

Let us start with the following problem:

\begin{prob} Find the number of
 distinct cell-decompositions of
 a given closed oriented topological surface
of genus $g$, with  the specified number of
$0$-cells, and  the number of
$1$-cells that are incident to each $0$-cell.
\end{prob}

Denote by $C_g$ a compact,
$2$-dimensional, oriented
 topological manifold of genus $g$ 
without
boundary. Two cell-decompositions of $C_g$ 
are identified
if there is an orientation-preserving
 homeomorphism of $C_g$ onto itself
that brings one to the other. If there is such a map
for the same cell-decomposition, then it is an 
automorphism of the data. The $1$-\emph{skeleton}
of a cell-decomposition, which we denote by $\gam$,
is a \emph{graph} drawn on $C_g$. We call a
$0$-cell a \emph{vertex}, a $1$-cell an \emph{edge},
and a $2$-cell a \emph{face}. The midpoint of
an edge separates the edge into
 two \emph{half-edges} joined together at the midpoint.
The \emph{degree} of a vertex is the number of
half-edges incident to it. 
For the purpose of counting, we label all vertices. 
To be more precise, we give a total ordering to the
set of vertices. Most of the time we simply use
$[n]=\{1,\dots,n\}$ to label the set of $n$ vertices.

The $1$-skeleton $\gam$ is usually called a \emph{ribbon graph}, which is a graph with 
a cyclic order assigned to incident half-edges at 
each vertex. The face-labeled ribbon graphs 
 describe an orbifold cell-decomposition of 
$\cM_{g,n}\times \bR_+^n$. Since we label 
vertices of $\gam$, there is a slight difference
as to what the graph represents. It is the \emph{dual} 
graph of a ribbon graph, and its vertices are
labeled. To emphasize the dual nature, we call
$\gam$ a \textbf{cell graph}.

Most cell graphs do not have any non-trivial
automorphisms. If there is one, then it induces
a cyclic permutation of half-edges at a vertex, 
since we label all vertices. Therefore, if we pick
one of the incident half-edges at each vertex,
 assign an outgoing arrow to it, and require that
an automorphism also fix the arrowed half-edges,
then the graph has no non-trivial automorphisms. 
For a counting problem, no automorphism is 
a desirable situation because the bijective 
counting method 
works better there. Let us call such a graph an 
\textbf{arrowed cell graph}.
Now the refined problem:

\begin{prob}
Let $\vec{\Gam}_{g,n}(\mu_1,\dots,\mu_n)$
denote the set of arrowed cell graphs drawn 
on a closed, connected, 
oriented surface of genus $g$ with 
$n$ labeled vertices of degrees 
$\mu_1,\dots,\mu_n$. Find its cardinality
\be
\label{Cgn}
C_{g,n}(\mu_1,\dots,\mu_n):=
|\vec{\Gam}_{g,n}(\mu_1,\dots,\mu_n)|.
\ee
\end{prob}

When 
$\gam\in \vec{\Gam}_{g,n}(\mu_1,\dots,\mu_n)$,
we say $\gam$ has \emph{type} $(g,n)$. 
Denote by $c_\a(\gam)$ the number of $\a$-cells
of the cell-decomposition
associated with $\gam$. Then we have
$$
2-2g = c_0(\gam)-c_1(\gam)+c_2(\gam), \qquad
c_0(\gam)  =n, \qquad
2 c_1(\gam) = \mu_1+\cdots+\mu_n.
$$
Therefore,  
$\vec{\Gam}_{g,n}(\mu_1,\dots,\mu_n)$ 
is a finite set.

\begin{figure}[htb]
\includegraphics[height=0.8in]{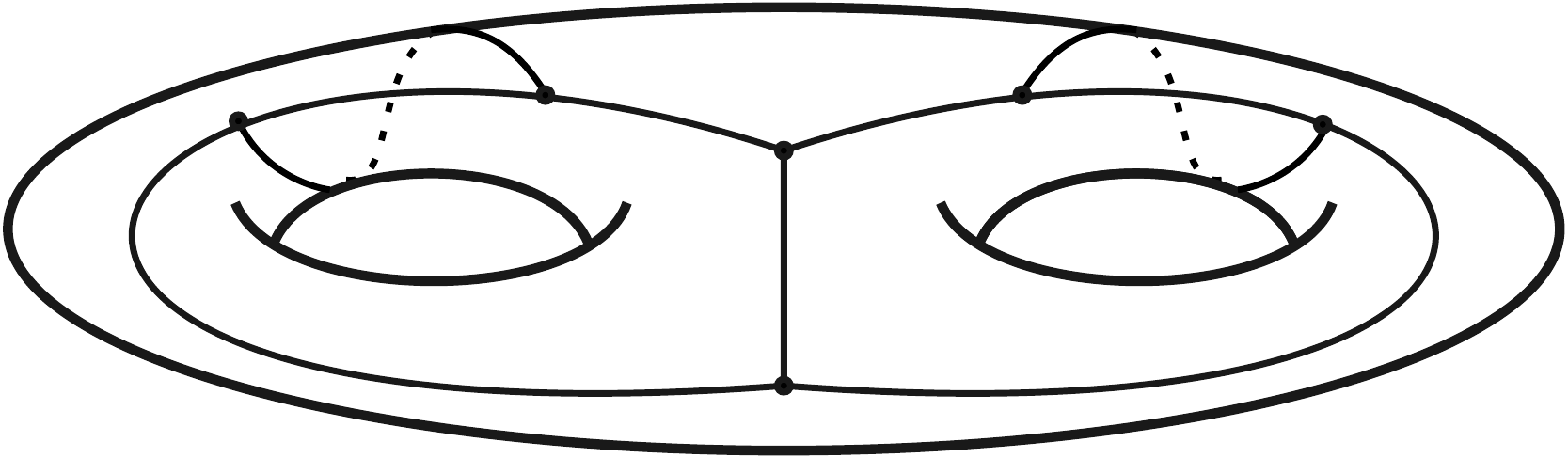}
\caption{A cell graph of type $(2,6)$.}
\label{figcell26}
\end{figure}

\begin{ex}
An arrowed cell graph of type $(0,1)$ is 
a collection of loops drawn on a plane
as in Figure~\ref{figcell01}. If we assign a pair
of parenthesis to each loop, starting from
 the arrowed one
as $($, and go around the unique vertex counter-clock
wise, then we obtain the parentheses pattern $(((\;)))$. 
Therefore, 
\be
\label{Catalan}
C_{0,1}(2m) = C_{m} = \frac{1}{m+1}
\binom{2m}{m}
\ee
is the $m$-th Catalan number. 
Thus it makes sense to call 
$C_{g,n}(\mu_1,\cdots,\mu_n)$  
\textbf{generalized Catalan numbers}.
Note that it is a symmetric
function  in $n$ integer variables.
\end{ex}

\begin{figure}[htb]
\includegraphics[height=0.8in]{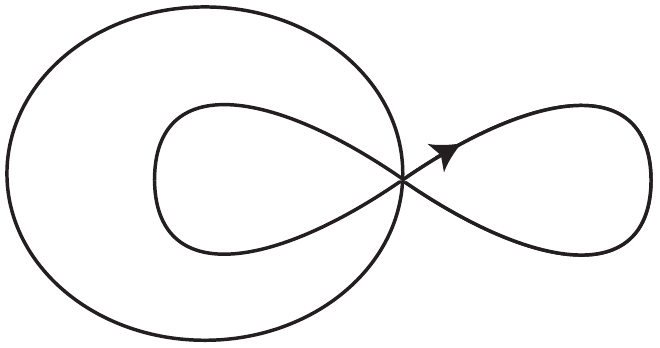}
\caption{A cell graph of type $(0,1)$ with a
vertex of degree $6$.}
\label{figcell01}
\end{figure}

\begin{thm}[Catalan Recursion, \cite{DMSS, WL}]
The generalized Catalan numbers 
satisfy the following equation.
\begin{multline}
\label{Catalan recursion}
C_{g,n}(\mu_1,\dots,\mu_n) 
=\sum_{j=2}^n \mu_j 
C_{g,n-1}(\mu_1+\mu_j-2,\mu_2,\dots,
\widehat{\mu_j},\dots,\mu_n)
\\
+
\sum_{\a+\b = \mu_1-2}
\left[
C_{g-1,n+1}(\a,\b,\mu_2,\cdots,\mu_n)+
\sum_{\substack{g_1+g_2=g\\
I\sqcup J=\{2,\dots,n\}}}
C_{g_1,|I|+1}(\a,\mu_I)C_{g_2,|J|+1}(\b,\mu_J)
\right],
\end{multline}
where $\mu_I=(\mu_i)_{i\in I}$ for 
an index set $I\subset[n]$,
$|I|$ denotes the cardinality of $I$, and
the third sum in the formula is for 
all  partitions of $g$ and 
set partitions of $\{2,\dots,n\}$. 
\end{thm}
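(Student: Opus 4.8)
The plan is to prove \eqref{Catalan recursion} by a direct bijective analysis of the arrowed half-edge at the first vertex, the kind of edge-removal argument that makes the right-hand side reproduce the three-term shape of the Eynard--Orantin recursion. Fix $\gam\in\vec{\Gam}_{g,n}(\mu_1,\dots,\mu_n)$, let $p_1$ be the first vertex, and let $e$ be the unique edge carrying the arrowed half-edge $X_0$ at $p_1$. Since the surface is oriented, the half-edges at each vertex carry a canonical cyclic order, and the arrows kill all nontrivial automorphisms; hence every graph produced below is counted without stabilizer corrections, so set-theoretic bijections translate directly into identities among the cardinalities $C_{g,n}$. I would partition $\vec{\Gam}_{g,n}(\mu_1,\dots,\mu_n)$ according to the location of the second half-edge of $e$: either it sits at a distinct vertex $p_j$ with $j\in\{2,\dots,n\}$, or $e$ is a loop based at $p_1$.

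First I would treat the case where $e$ joins $p_1$ to $p_j$. Here I remove $e$ and glue $p_1$ and $p_j$ along the removed edge, reading the cyclic order of the merged vertex as the $\mu_1-1$ half-edges of $p_1$ following $X_0$, then the $\mu_j-1$ half-edges of $p_j$ following the partner of $X_0$. The merged vertex has degree $\mu_1+\mu_j-2$; the genus is unchanged, since by the Euler count $2-2g=c_0-c_1+c_2$ losing one vertex and one edge leaves $g$ fixed; and the vertex count drops to $n-1$. This produces an element of $\vec{\Gam}_{g,n-1}(\mu_1+\mu_j-2,\mu_2,\dots,\widehat{\mu_j},\dots,\mu_n)$. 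The one point requiring care is that the merge forgets the position of the arrow of $p_j$, while the seam is recoverable from the known value $\mu_1$; reconstructing $\gam$ therefore requires a choice among the $\mu_j$ half-edges of $p_j$ for its arrow, so the forgetful map is exactly $\mu_j$-to-one. Summing over $j$ gives the first sum on the right.

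Next I would treat the loop case. Writing the cyclic order at $p_1$ as $X_0,\,a_1,\dots,a_\a,\,X_0',\,b_1,\dots,b_\b$ with $X_0'$ the second half-edge of $e$, cutting $e$ splits $p_1$ into two vertices of degrees $\a$ and $\b$ with $\a+\b=\mu_1-2$, the new arrows being inherited from the positions immediately following $X_0$ and $X_0'$. A second Euler computation shows that removing the loop and splitting the vertex raises $2-2g$ by $2$, so the global effect is governed entirely by whether the cut keeps the surface connected. If it stays connected, the genus drops to $g-1$ and the vertex count rises to $n+1$, yielding $C_{g-1,n+1}(\a,\b,\mu_2,\dots,\mu_n)$; if it disconnects, the surface breaks into two oriented pieces of genera $g_1,g_2$ with $g_1+g_2=g$, carrying the $\a$-vertex together with a subset $I$ and the $\b$-vertex together with the complementary subset $J$ of $\{2,\dots,n\}$, yielding $C_{g_1,|I|+1}(\a,\mu_I)\,C_{g_2,|J|+1}(\b,\mu_J)$. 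Regluing along the marked half-edges reconstructs $\gam$ uniquely, so these operations are honest bijections and no extra multiplicity appears, in contrast to the first case.

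I expect the main obstacle to be the careful bookkeeping of the ribbon (cyclic) structure and the arrows under the merge and cut operations: verifying that the induced arrows make the reconstruction maps well defined and bijective, pinning down exactly where the factor $\mu_j$ enters and why no analogous factor appears in the loop case, and confirming that the connected/disconnected dichotomy is exhaustive and mutually exclusive (with appropriate conventions for the boundary terms $\a=0$ or $\b=0$). Once these local-to-global consistency checks are in place, assembling the three contributions and adding the resulting cardinalities yields \eqref{Catalan recursion}.
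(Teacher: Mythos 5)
Your proposal is correct and follows essentially the same argument as the paper: a case split on whether the arrowed half-edge at $p_1$ meets another vertex $p_j$ (contract/merge, with the forgotten arrow at $p_j$ giving the $\mu_j$-to-one count) or is a loop (cut/split into vertices of degrees $\a,\b$, with the connected versus disconnected dichotomy giving the $C_{g-1,n+1}$ and product terms). Your observation that no multiplicity factor arises in the loop case because the ordered pair $(\a,\b)$ records the arrow's position on the loop is exactly the paper's explanation for the absence of the usual factor $\half$.
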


\begin{proof}
Let $\gam$ be an arrowed cell graph counted
by the left-hand side of \eqref{Catalan recursion}.
Since all  vertices of $\gam$ are labeled, we write
the vertex set by $\{p_1,\dots,p_n\}$.
We take a look at the half-edge incident to $p_1$ that
carries an arrow. 

\begin{case}
The arrowed half-edge  extends to an edge $E$ that
connects $p_1$ and $p_j$ for some $j>1$.
\end{case}

In this case, we contract the edge and join the two
vertices $p_1$ and $p_j$ together. By this process
we create a new vertex of degree $\mu_1+\mu_j-2$.
To make the counting bijective, we need to be able
to go back from the contracted graph to the original,
provided that we know $\mu_1$ and $\mu_j$.
Thus we place an arrow to the half-edge 
next to $E$ around $p_1$ with respect to the
counter-clockwise cyclic order that comes from 
the orientation of the surface. In this process
we have $\mu_j$ different arrowed graphs 
that produce the same result, because we must
remove the arrow placed around the vertex $p_j$
in the original graph. This gives  
the first line of the right-hand side of 
\eqref{Catalan recursion}. See 
Figure~\ref{fig:case1}.

\begin{figure}[htb]
\includegraphics[width=2.5in]{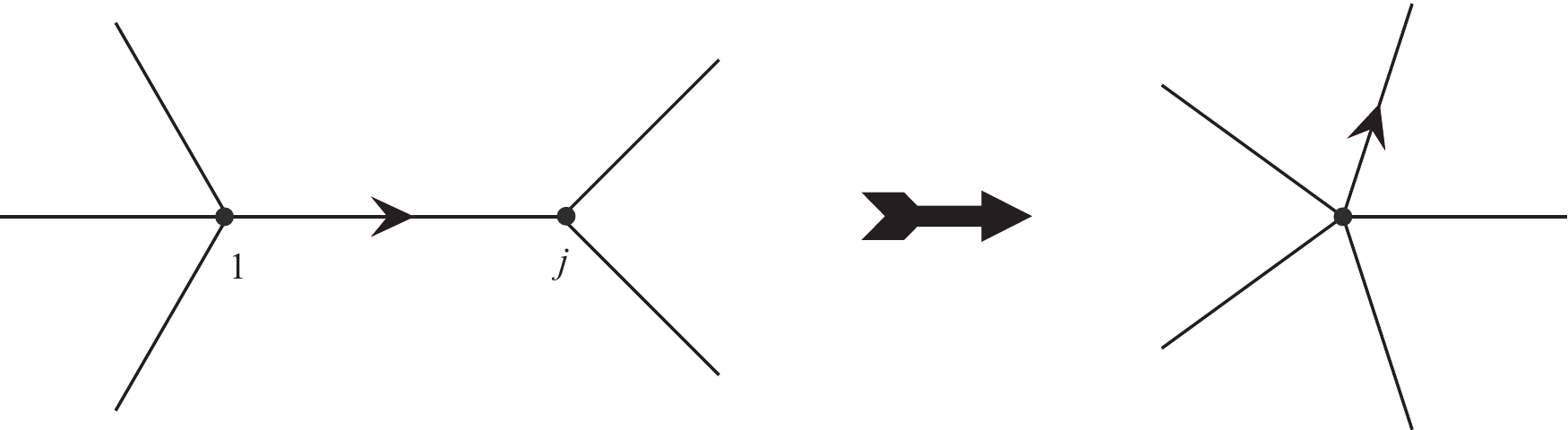}
\caption{The process of contracting the arrowed edge
$E$ that connects vertices $p_1$ and $p_j$, $j>1$.}
\label{fig:case1}
\end{figure}

\begin{case}
The arrowed half-edge at $p_1$ is  a loop
$E$ that goes out from, and comes back to, $p_1$.
\end{case}

The process we apply is again contracting the loop $E$.
The loop $E$ separates all other incident 
half-edges at $p_1$ into 
two groups, one consisting of $\a$ of them placed on
one side of the loop, and the other consisting of 
$\b$ half-edges placed on the other side. It can 
happen that $\a=0$ or $\b=0$. 
Contracting a loop on a surface causes pinching. 
Instead of creating a pinched (i.e., singular) surface, 
we separate the double point into two new vertices
of degrees $\a$ and $\b$. 
Here again we need to remember the place of the 
loop $E$. Thus we put an arrow to the half-edge
next to the loop in each group.
See Figure~\ref{fig:case2}.

\begin{figure}[htb]
\includegraphics[width=2.5in]{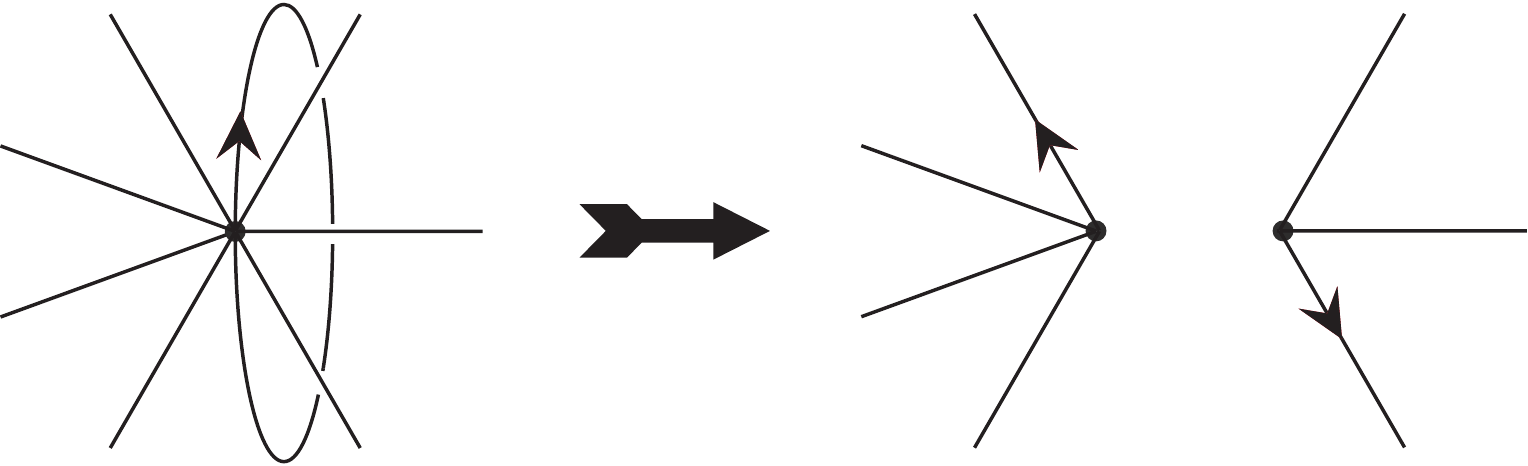}
\caption{The process of contracting the arrowed loop
$E$ that is attached to  $p_1$.}
\label{fig:case2}
\end{figure}

After the pinching and separating the double 
point, the original surface of genus $g$ with 
$n$ vertices $\{p_1,\dots,p_n\}$ may change its
topology. It may have genus $g-1$, or it splits into
two pieces of genus $g_1$ and $g_2$. 
The second line of \eqref{Catalan recursion}
records all such cases. 
Normally we would have a factor  $\half$ 
in front of the second line of the formula. 
We do not have it here because the arrow on the loop
could be in two different directions. Placing the
arrow on the other half-edge of the loop
is equivalent to interchanging $\a$ and $\b$.

This completes the proof.
\end{proof}

\begin{rem}
For $(g,n) = (0,1)$, the above formula reduces to 
\begin{equation}
\label{Cm}
C_{0,1}(\mu_1) =
\sum_{\a+\b=\mu_1-2} C_{0,1}(\a) C_{0,1}(\b).
\end{equation}
Since the degree of the unique vertex 
is always even for type $(0,1)$ graphs,
by defining $C_{0,1}(0)=1$, \eqref{Cm}
gives the Catalan recursion.
 Only for $(g,n)=(0,1)$,
 this irregular case of $\mu_1=0$ 
happens, because a degree $0$ single vertex is
\emph{connected}, and gives a cell-decomposition
of $S^2$. All other cases,
if one of the vertices has degree $0$, then the 
Catalan number $C_{g,n}(\mu_1,\dots,\mu_n)$ 
is simply $0$ because there is no corresponding
\emph{connected}
cell decomposition.
\end{rem}

\begin{rem}
Eqn.~\eqref{Catalan recursion} is a recursion 
with respect to 
$$
2g-2+n +\sum_{i=1} ^n \mu_i.
$$
The values are therefore  completely determined
by the initial value $C_{0,1}(0) = 1$. 
The formula does not give a recursion of
a \emph{function} $C_{g,n}(\mu_1,\dots,\mu_n)$,
because the same type $(g,n)$ appears on the 
right-hand side.
\end{rem}

The classical Catalan recursion \eqref{Cm}
determines all values of $C_{0,1}(2m)$, 
but the closed formula \eqref{Catalan}
requires a different strategy. Let us introduce
a generating function
\be
\label{z}
z = z(x) = \sum_{m=0}^\infty C_{0,1}(2m)
x^{-2m-1}.
\ee
From \eqref{Cm} we have
$$
z^2 = \sum_{m=0}^\infty 
\left(\sum_{a+b=m} C_{0,1}(2a)C_{0,1}(2b)
\right) x^{-2m-2}
=
\sum_{m=0}^\infty C_{0,1}(2m+2)x^{-2m-2}.
$$
Since
$$
xz = \sum_{m=-1}^\infty C_{0,1}(2m+2) x^{-2m-2},
$$
we obtain an equation
 $xz=z^2+1$, or
\be
\label{xz}
x = z + \frac{1}{z}.
\ee
This is the inverse function of 
the complicated-looking generating
function $z(x)$ at the branch
$z\rar 0$ as $x\rar +\infty$!  
Thus the curve \eqref{xz} knows 
everything about the Catalan numbers. 
For example, we can prove the closed 
formula \eqref{Catalan}.
The solution of \eqref{xz} as a quadratic equation
for $z$ that gives the above branch is given by
$$
z(x) = \frac{x-\sqrt{x^2-4}}{2} = \frac{x}{2}
\left(1-\sqrt{1-\frac{4}{x^2}}\right).
$$
The binomial expansion 
of the square root 
$$
\sqrt{1+X} = \sum_{m=0}^\infty
\binom{\half}{m}X^m,
$$
$$
\binom{\half}{m}:=
\frac{\half(\half-1)(\half-2)\cdots(\half-m+1)}{m!}
=(-1)^{m-1}\frac{(2m-3)!!}{2^m m!}
=\frac{(-1)^{m-1}}{4^m}\frac{1}{2m-1}
\binom{2m}{m},
$$
then 
gives the closed formula \eqref{Catalan} for the
Catalan numbers:
\begin{align*}
z(x) &=
\sum_{m=0}^\infty C_{0,1}(2m)
x^{-2m-1}
\\
&=
\frac{x}{2}
\left(1-\sqrt{1-\frac{4}{x^2}}\right)
\\
&=\frac{x}{2}\left(
1-\left(
\sum_{m=0}^\infty \frac{(-1)^{m-1}}{4^m}
\frac{1}{2m-1}\binom{2m}{m}(-1)^m \left(
\frac{4}{x^2}\right)^2
\right)\right)
\\
&=
\half
\sum_{m=1}^\infty \frac{1}{2m-1}\binom{2m}{m}
x^{-2m+1}
\\
&=
\sum_{m=0}^\infty \frac{1}{m+1}\binom{2m}{m}
x^{-2m-1}.
\end{align*}

Another piece of information we obtain from 
\eqref{xz} is the radius of convergence
of the infinite series $z(x)$ of \eqref{z}.
Since
$$
dx =\left(1-\frac{1}{z^2}\right)dz,
$$
the map \eqref{xz} is critical 
(or ramified) at $z=\pm 1$. The critical values
are $x=\pm 2$. On the branch we are considering,
\eqref{z} is the inverse function of \eqref{xz}
for all values of $|x|> 2$. This means 
the series $z(x)$ is absolutely convergent 
on the same domain.

\begin{rem}
In combinatorics, we often consider a generating
function of interesting quantities only as a formal
power series. The idea of topological recursion
tells us that we should consider the
\textbf{Riemann surface} of the 
Catalan number generating function
$z = z(x)$. We then recognize that there is a
global algebraic curve hidden in the scene,
which is the curve of Figure~\ref{fig:z}.
The topological recursion mechanism then tells
us how to calculate all $C_{g,n}(\vec{\mu})$
from this curve alone, known as the 
\textbf{spectral curve}.
\end{rem}

\begin{figure}[htb]
\includegraphics[height=1.4in]{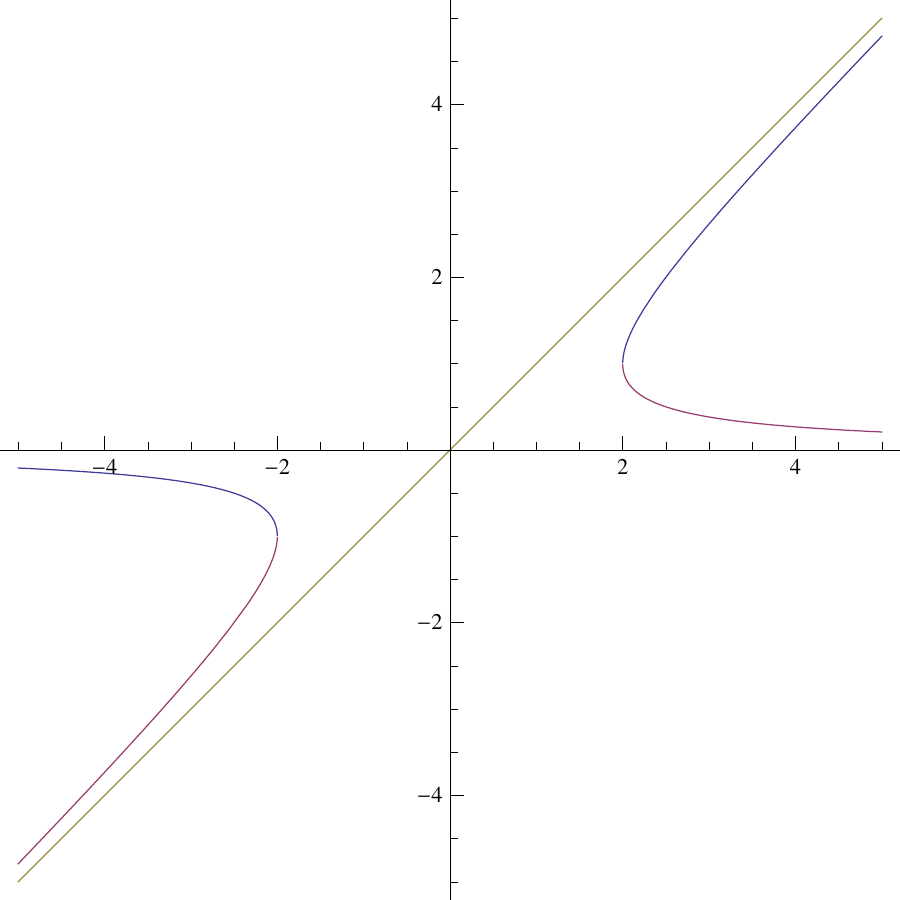}
\caption{The \textbf{Riemsnn surface}
of the Catalan generating function $z=z(x)$.}
\label{fig:z}
\end{figure}

\subsection{The spectral curve of a Higgs
bundle and its desingularization}

To consider the quantization of the curve
\eqref{xz}, we need to place it into a cotangent
bundle. Here again, we use the base curve
$\bP^1$ and the same vector bundle
$E$ of \eqref{E} on it. As a Higgs field, we use
\be
\label{Catalan Higgs}
\phi :=\begin{bmatrix}
-xdx&-(dx)^2\\
1
\end{bmatrix} : E\lrar K_{\bP^1}(4)\tensor E.
\ee
Here, $(dx)^2\in 
H^0\big(\bP^1, K_{\bP^1}^{\tensor 2}(4)\big)$
is the unique (up to a constant factor) 
quadratic differential
with an order $4$ pole at the infinity,
and $xdx\in 
H^0\big(\bP^1, K_{\bP^1}(2)\big)$
is the unique meromorphic differential
with a zero at $x=0$ and a pole of order $3$
at $x=\infty$.
In the affine coordinate $(x,y)$ of the Hirzebruch
surface $\bF^2$ as before, 
the spectral curve $\Sigma$ is given by
\begin{equation}
\label{Hermite-spectral}
\det\big(\eta-\pi^*(\phi)\big) = (y^2 +xy +1)(dx)^2 = 0,
\end{equation}
where $\pi:\bF_2\lrar \bP^1$ is the projection.
Therefore, the generating function $z(x)$ of \eqref{z} 
 gives a parametrization of 
the spectral curve
\be
\label{xy in z}
\begin{cases}
x = z(x)+\frac{1}{z(x)}
\\
y=-z(x).
\end{cases}
\ee
In the other affine coordinate
$(u,w)$ of \eqref{uw}, 
the spectral curve is singular at $(u,w) = (0,0)$:
\begin{equation}
\label{Hermite singularity}
u^4  -uw +w^2 = 0.
\end{equation}

\begin{figure}[htb]
\includegraphics[width=1.5in]{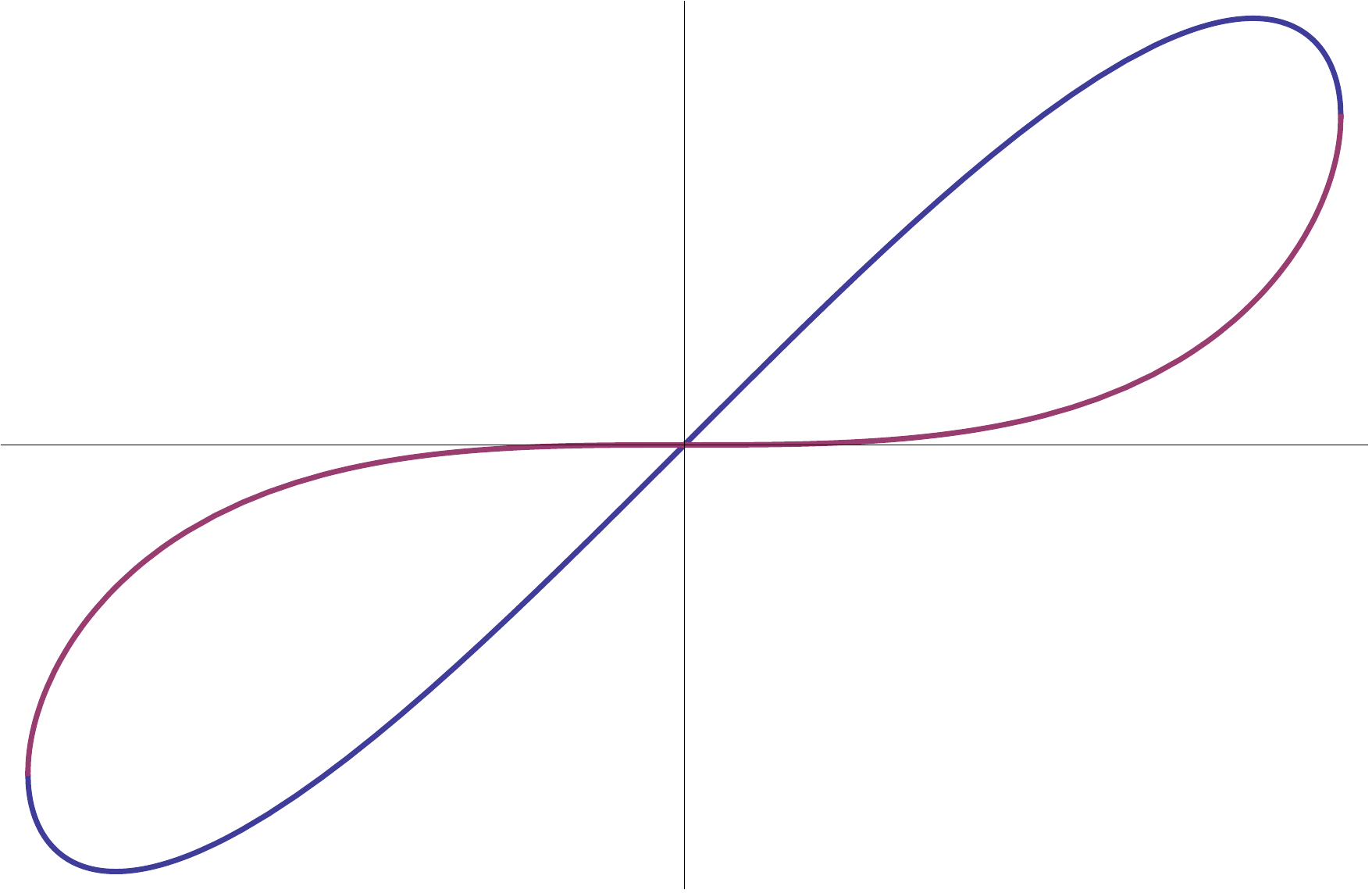}
\caption{The spectral curve $\Sigma$ of
\eqref{Hermite-spectral}. The horizontal 
line is the divisor $w=0$ at infinity, and the vertical
line is the fiber class $u=0$. The spectral curve
intersects with $w=0$ four times. One of the two
curve germ components 
 is given by $w=u$, and
the other by $w=u^3$.}
\label{fig:Hermite spectral}
\end{figure}

Blow up $\bF_2=\overline{T^*\bP^1}$ once
at the nodal singularity $(u,w)=(0,0)$
 of the spectral curve
$\Sigma$,
  and let
 $\widetilde{\Sigma}\lrar \Sigma$ be the proper
 transform of $\Sigma$. 
 \begin{equation}
 \label{blow-up}
\xymatrix{
\widetilde{\Sigma} 
\ar[dd]_{\tilde{\pi}} \ar[rr]^{\tilde{i}}\ar[dr]^{\nu}&&Bl(\overline{T^*\bP^1})
\ar[dr]^{\nu}
\\
&\Sigma \ar[dl]_{\pi}\ar[rr]^{i} &&
\overline{T^*\bP^1}  \ar[dlll]^{\pi}
\\
\bP^1 		}
\end{equation} 
In terms of the coordinate
$w_1$ defined by $w=w_1 u$,  
\eqref{Hermite singularity} becomes
 \begin{equation}
 \label{circle}
 u^2 + \left(w_1-\half\right)^2 = \frac{1}{4},
 \end{equation}
 which is the defining equation for 
 $\widetilde{\Sigma}$.  From this 
 equation we see that  its geometric 
genus is $0$, hence  it is just 
 another $\bP^1$. 
The covering
$\tilde{\pi}: \widetilde{\Sigma}\lrar \bP^1$
is ramified at two points, 
corresponding to the original ramification 
points $(x,y) = (\pm 2,\mp1)$ of 
$\pi: \Sigma\lrar \bP^1$.
The rational parametrization of \eqref{circle} is
given by 
\be
\label{rational}
\begin{cases}
u =\half \cdot \frac{t^2-1}{t^2+1}\\
w_1=\half -\frac{t}{t^2+1},
\end{cases}
\ee
where $t$ is the affine coordinate of 
$\widetilde{\Sigma}$ so that $t=\pm 1$ gives
$(u,w)= (0,0)$. Indeed,
the parameter $t$ is a \textbf{normalization
coordinate} of the spectral curve $\Sigma$:
\begin{equation}
\label{Hermite t}
\begin{cases}
x = 2+\frac{4}{t^2-1}\\
y=-\frac{t+1}{t-1},
\end{cases}
\qquad
\begin{cases}
u = \half \cdot \frac{t^2-1}{t^2+1}\\
w=\frac{1}{4}\cdot \frac{(t-1)^3(t+1)}{(t^2+1)^2}.
\end{cases}
\end{equation}
Although the expression of $x$ and $y$ in terms
of the normalization coordinate is more
complicated than \eqref{xy in z}, 
it is important to note that the 
spectral curve $\widetilde{\Sigma}$ is now
non-singular.

\subsection{The generating function, 
or the Laplace transform}

For all $(g,n)$ except for $(0,1)$ and $(0,2)$,
 let us introduce
the generating function of
\eqref{Cgn}
as follows:
\be
\label{Catalan Fgn}
F_{g,n}^C(x_1,\dots,x_n):=
\sum_{\mu_1\ge 1,\dots,\mu_n\ge 1}
\frac{C_{g,n}(\mu_1,\dots,\mu_n)}
{\mu_1\cdots\mu_n}
x_1 ^{-\mu_1}\cdots x_n^{-\mu_n}.
\ee
If we consider $x_i=e^{w_i}$, then the above
sum is just the discrete Laplace transform of
the  function in $n$ integer variables:
$$
\frac{C_{g,n}(\mu_1,\dots,\mu_n)}
{\mu_1\cdots\mu_n}.
$$
Our immediate goal is to compute the 
Laplace transform as a holomorphic function.
Since the only information we have now
is the generalized Catalan recursion
\eqref{Catalan recursion}, how much can we 
say about this function? Actually, 
the following theorem is proved, all from 
using the recursion alone!

\begin{thm}
\label{thm:FgnC}
Let us consider the generating function
\eqref{Catalan Fgn}
as a function in the normalization coordinates 
$(t_1,\dots,t_n)$ satisfying
$$
x_i = 2+\frac{4}{t_i^2-1}, \qquad i=1, 2, \dots, n,
$$
as identified in \eqref{Hermite t},
and by abuse of notation, we simply write it
as $F_{g,n}^C(t_1,\dots,t_n)$. For the 
range of $(g,n)$ with the stability
condition $2g-2+n>0$,  we have
the following. 
\begin{itemize}
\item The generating function
$F_{g,n}^C(t_1,\dots,t_n)$ is a \textbf{Laurent 
polynomial} in the $t_i$-variables
of the total degree $3(2g-2+n)$. 
\item The reciprocity relation holds:
\be
\label{reciprocity}
F_{g,n}^C(1/t_1,\dots,1/t_n) = 
F_{g,n}^C(t_1,\dots,t_n).
\ee
\item The special values at $t_i=-1$ are given by
\be
\label{initial}
F_{g,n}^C(t_1,\dots,t_n)\big|_{t_i=-1} = 0
\ee
for each $i$.
\item The diagonal value at $t_i=1$ gives
the orbifold Euler characteristic of the
moduli space $\cM_{g,n}$:
\be
\label{Fgn and Euler}
F_{g,n}^C(1,\dots,1) = (-1)^n \rchi(\cM_{g,n}).
\ee
\item The restriction of the Laurent polynomial
$F_{g,n}^C(t_1,\dots,t_n)$
to its highest degree
 terms gives a homogeneous polynomial
 defined by
\be
\label{Fgn highest}
F_{g,n}^{C, \text{ highest}}(t_1,\dots,t_n)
=\frac{(-1)^n}{2^{2g-2+n}}
\sum_{\substack{d_1+\cdots+d_n\\
=3g-3+n}}\la\tau_{d_1}\cdots \tau_{d_n}
\ra_{g,n}
\prod_{i=1}^n |2d_i-1|!!
\left(\frac{t_i}{2}\right)^{2d_i+1}.
\ee
\end{itemize}
\end{thm}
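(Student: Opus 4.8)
The plan is to establish all five assertions simultaneously by induction on $2g-2+n$, using the \emph{Laplace transform} of the Catalan recursion \eqref{Catalan recursion} as the engine. First I would apply the operator $\sum_{\mu_i\ge 1}\frac{1}{\mu_1\cdots\mu_n}x_1^{-\mu_1}\cdots x_n^{-\mu_n}$ to both sides of \eqref{Catalan recursion} and then substitute the normalization coordinates $x_i=2+\frac{4}{t_i^2-1}$ of \eqref{Hermite t}. The left-hand side reproduces $F_{g,n}^C$. On the right, the edge-contraction term (first line) is linear and becomes a rational-kernel operator applied to $F_{g,n-1}^C$, while the loop term (second line) is quadratic, producing $F_{g-1,n+1}^C$ together with the stable products $F_{g_1,\cdot}^CF_{g_2,\cdot}^C$. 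The seeds $W_{0,1}^C=y\,dx$ and $W_{0,2}^C=\frac{dt_1dt_2}{(t_1-t_2)^2}$ are computed directly from $z(x)=\frac{x}{2}(1-\sqrt{1-4/x^2})$ and the rational parametrization \eqref{Hermite t}.

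The computational core is the Laplace transform of the loop convolution $\sum_{\alpha+\beta=\mu_1-2}(\cdots)$. Here I would convert the discrete sum over $(\alpha,\beta)$ into a contour integral and use the fact that $z(x)$ inverts $x=z+1/z$, so that the transform localizes as a residue at the two ramification points $t_1=\pm 1$ of $\tilde{\pi}\colon\widetilde{\Sigma}\lrar\bP^1$. A key bookkeeping point is that the terms containing a $C_{0,1}$ factor (the cases $g_1=0,I=\emptyset$ and $g_2=0,J=\emptyset$, which make \eqref{Catalan recursion} not closed in $(g,n)$) recombine with the left-hand side and, after division by the resulting Jacobian factor, manufacture the recursion kernel. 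The outcome is that $W_{g,n}^C:=d_{t_1}\cdots d_{t_n}F_{g,n}^C$ satisfies an Eynard--Orantin residue recursion whose kernel is an explicit rational $1$-form in $t$ with poles only at $t=\pm 1$ and on the diagonal. Since the kernel is rational and the residues sit at $t=\pm 1$, each step sends Laurent polynomials to Laurent polynomials, and tracking the kernel degree against the inductive hypothesis yields the total-degree bound $3(2g-2+n)$; this settles the first bullet.

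The two symmetry statements then follow from symmetries of the recursion data. The reciprocity \eqref{reciprocity} reflects the involution $t\mapsto 1/t$, which acts on the spectral curve as $(x,y)\mapsto(-x,-y)$; one checks that this leaves $W_{0,1}^C=y\,dx$, $W_{0,2}^C$, and the kernel invariant, so I would verify the relation on the base cases and propagate it through the recursion. The vanishing \eqref{initial} at $t_i=-1$ is the normalization fixing the constant of integration in $F_{g,n}^C=\int W_{g,n}^C$: since $t=-1$ is a ramification point (where $x=\infty$) and is fixed by $t\mapsto 1/t$, I would show inductively that the integrated output vanishes there, the seeds being explicit.

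The last two bullets are the genuinely hard points, and I expect each to require importing one external input rather than the recursion alone. For \eqref{Fgn and Euler}, I would use that $F_{g,n}^C(1,\dots,1)$ is the evaluation at the ramification point $t=1$ of the cell-count generating function; since the cell graphs dual to ribbon graphs stratify $\cM_{g,n}\times\bR_+^n$, this evaluation performs the alternating count of cells, which is the orbifold Euler characteristic by the Harer--Zagier formula \eqref{Mgn Euler}. For the highest-degree identity \eqref{Fgn highest}, the point is that the top-degree part of the Catalan recursion degenerates precisely to the Airy topological recursion \eqref{Airy TR}: in the leading-order limit the rational kernel collapses to $\frac{t^4}{64}\left(\frac{1}{t+t_1}+\frac{1}{t-t_1}\right)\frac{1}{dt}$ and, after the local rescaling at the ramification point, the seeds match, so the homogeneous top pieces satisfy the same recursion with the same initial condition and must coincide with $F_{g,n}^A$ of \eqref{Fgn Airy}. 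The main obstacle is therefore the honest Laplace transform of the loop convolution and its identification as a residue at $t=\pm 1$: once that identity is in place, polynomiality, the degree bound, and the two symmetries are formal, whereas the two boundary evaluations hinge on matching to Harer--Zagier and to the Airy recursion respectively.
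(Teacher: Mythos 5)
Your proposal runs on the right engine (the Laplace transform of \eqref{Catalan recursion} in the coordinate \eqref{Hermite t}), but it contains a load-bearing geometric error. You repeatedly place the ramification points of $\tilde{\pi}:\widetilde{\Sigma}\to\bP^1$ at $t=\pm 1$ and localize your residue recursion there. From \eqref{Hermite t} one has $dx=-8t\,dt/(t^2-1)^2$, so the ramification points are $t=0$ and $t=\infty$, lying over $x=\mp 2$; the points $t=\pm 1$ are the two \emph{distinct} preimages of $x=\infty$ (the node of $\Sigma$), where the covering is unramified — your parenthetical ``$t=-1$ is a ramification point (where $x=\infty$)'' is internally contradictory. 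This is not cosmetic: the correct mechanism for your first bullet is that the recursion \eqref{Catalan TR} takes residues at $t=0,\infty$ and produces forms whose poles are confined to those two points, and a rational function on $\bP^1$ with poles only at $0$ and $\infty$ is precisely a Laurent polynomial (compare $W^C_{1,1}=-\frac{1}{128}\frac{(t_1^2-1)^3}{t_1^4}\,dt_1$). If the residues and poles sat at $t=\pm 1$, as you assert, the outputs would be rational functions with poles at $t_i=\pm 1$ and the Laurent polynomiality claim would \emph{fail}. Your justification of \eqref{initial} rests on the same false premise; the correct statement is simply that $t_i=-1$ is the point over $x_i=\infty$ on the branch where the series \eqref{Catalan Fgn} converges and vanishes termwise, which is why $-1$ is the basepoint of integration in \eqref{FC recursion}.

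The second genuine gap is the fourth bullet. The series \eqref{Catalan Fgn} does not converge at $t_i=1$ (this point also lies over $x_i=\infty$, on the other branch), so ``evaluating the cell-count generating function at $t=1$'' has no direct meaning: one must evaluate the Laurent-polynomial continuation, and nothing in your sketch connects that value to an alternating count of cells — Harer--Zagier computes $\rchi(\cM_{g,n})$ but supplies no link to $F^C_{g,n}$. The paper builds exactly this bridge: the Laplace transform of the lattice point count $N_{g,n}$ of \cite{CMS}, in the coordinates \eqref{wt}, equals the Poincar\'e polynomial \eqref{Finz}, a finite sum over ribbon graphs of products of $z(t_i,t_j)=\frac{(t_i+1)(t_j+1)}{2(t_i+t_j)}$; this $F^P_{g,n}$ satisfies the same differential recursion \eqref{FC recursion} with the same initial values \cite{MP2012}, so uniqueness gives $F^C_{g,n}=F^P_{g,n}$ as in \eqref{C=P}, after which \eqref{initial} (each term has a factor $t_j+1$), \eqref{Fgn and Euler} ($z(1,1)=1$), and \eqref{reciprocity} ($z$ is inversion-invariant) are read off from the closed formula. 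Without this identification, or some substitute bijection between arrowed cell graphs and lattice points, your Euler-characteristic bullet is unproven. Finally, your route to \eqref{Fgn highest} — degenerating \eqref{Catalan TR} to \eqref{Airy TR} and matching with \eqref{Fgn Airy} — is viable, but be aware that it imports Witten--Kontsevich (in the form that the intersection-number forms satisfy \eqref{Airy TR}) and reverses the paper's logic: there, \eqref{Fgn highest} is proved from the Ehrhart-type volume approximation of lattice point counts together with Kontsevich's volume theorem \cite{K1992}, and it is this statement that yields \eqref{Airy TR} in the $t\to\infty$ limit, not the other way around.
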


Thus the function $F_{g,n}^C(t_1,\dots,t_n)$
knows the orbifold Euler characteristic
of $\cM_{g,n}$, and all the cotangent
class intersection numbers \eqref{intersection}
for all values of $(g,n)$ in the stable range!
It is also striking that it is actually a Laurent
polynomial, while the definition 
\eqref{Catalan Fgn} is given only as a 
formal Laurent series.
The reciprocity \eqref{reciprocity}
is the reflection  of the invariance
of the spectral curve $\Sigma$ under the
rotation 
$$
(x,y) \longmapsto (-x,-y).
$$

This surprising theorem is a consequence of 
the Laplace transform of the Catalan 
recursion itself. 

\begin{thm}[Differential recursion, \cite{MZhou}]
\label{thm:Fgn recursion}
The Laplace transform $F^C_{g,n}(t_1,\dots,t_n)$ 
satisfies the following
differential recursion equation
for every $(g,n)$ subject to $2g-2+n\ge 2$.
\begin{multline}
\label{FC recursion}
\frac{\partial}{\partial t_1}F^C_{g,n}(t_1, \dots,t_n)
\\
=
-\frac{1}{16}
\sum_{j=2} ^n
\left[\frac{t_j}{t_1^2-t_j^2}
\left(
\frac{(t_1^2-1)^3}{t_1^2}\frac{\partial}{\partial t_1}
F^C_{g,n-1}(t_1,\dots,\widehat{t_j},\dots,t_n)
\right.\right.
\\
-
\left.\left.
\frac{(t_j^2-1)^3}{t_j^2}\frac{\partial}{\partial t_j}
F^C_{g,n-1}(t_2,\dots,t_n)
\right)
\right]
\\
-\frac{1}{16}
\sum_{j=2} ^n
\frac{(t_1^2-1)^2}{t_1^2}\frac{\partial}{\partial t_1}
F^C_{g,n-1}(t_1,\dots,\widehat{t_j},\dots,t_n)
\\
-
\frac{1}{32}\;\frac{(t_1^2-1)^3}{t_1^2}
\left.
\left[
\frac{\partial^2}{\partial u_1\partial u_2}
F^C_{g-1,n+1}(u_1,u_2,t_2, t_3,\dots,t_n)
\right]
\right|_{u_1=u_2=t_1}
\\
-
\frac{1}{32}\;\frac{(t_1^2-1)^3}{t_1^2}
\sum_{\substack{g_1+g_2=g\\
I\sqcup J=\{2,3,\dots,n\}}}
^{\rm{stable}}
\frac{\partial}{\partial t_1}
F^C_{g_1,|I|+1}(t_1,t_I)
\frac{\partial}{\partial t_1}
F^C_{g_2,|J|+1}(t_1,t_J).
\end{multline}
For a subset $I\subset \{1,2,\dots,n\}$, we denote
 $t_I = (t_i)_{i\in I}$.
The ``stable'' summation means
$2g_1+|I|-1>0$ and $2g_2+|J|-1>0$.

The differential recursion uniquely determines
all $F_{g,n}^C(t_1,\dots,t_n)$ by integrating
the right-hand side of \eqref{FC recursion}
from $-1$ to $t_1$ with respect to the variable
$t_1$.
The initial conditions are
\be
\label{F11}
F_{1,1}^C(t) = -\frac{1}{384}
\frac{(t+1)^4}{t^2}
\left(t-4+\frac{1}{t}\right)
\ee
and
\be
\label{F03}
F_{0,3}^C(t_1,t_2,t_3)= -\frac{1}{16}
(t_1+1)(t_2+1)(t_3+1)\left(1+\frac{1}{t_1\;t_2\;t_3}
\right).
\ee
\end{thm}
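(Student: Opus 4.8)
The plan is to obtain \eqref{FC recursion} as the discrete Laplace transform of the Catalan recursion \eqref{Catalan recursion}. I would multiply both sides of \eqref{Catalan recursion} by the weight $\frac{1}{\mu_1\mu_2\cdots\mu_n}\,x_1^{-\mu_1}\cdots x_n^{-\mu_n}$ and sum over all $\mu_i\ge 1$, so that the left-hand side assembles into $F_{g,n}^C$. Because the recursion singles out the first vertex, it is cleanest to apply $\frac{\partial}{\partial t_1}$ (equivalently $x_1\frac{\partial}{\partial x_1}$ before changing variables), which cancels the $1/\mu_1$ weight and turns each combinatorial building block on the right into a closed-form rational expression in the $x_i$. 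The final step of each computation is the change to the normalization coordinate $x_i = 2+4/(t_i^2-1)$, with $\frac{dx}{dt}=-8t/(t^2-1)^2$; these Jacobian factors are exactly what produce the kernels $(t_1^2-1)^3/t_1^2$, $(t_1^2-1)^2/t_1^2$, and $t_j/(t_1^2-t_j^2)$ in \eqref{FC recursion}.

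The work then splits along the two cases of the combinatorial proof of \eqref{Catalan recursion}. The edge term $\sum_{j\ge2}\mu_j\,C_{g,n-1}(\mu_1+\mu_j-2,\dots)$ (Case~1) couples $x_1$ and $x_j$ through the shifted index $\mu_1+\mu_j-2$; resumming over $\mu_1$ and $\mu_j$ at fixed $\mu_1+\mu_j$ and decomposing by partial fractions separates this into the symmetric mixing contribution, which becomes the $t_j/(t_1^2-t_j^2)$ term, and a diagonal remainder, which becomes the $(t_1^2-1)^2/t_1^2$ term. The loop term (Case~2) contributes the genus-dropping sum $\sum_{\alpha+\beta=\mu_1-2}C_{g-1,n+1}(\alpha,\beta,\dots)$ and the splitting sum: in both, the convolution $\sum_{\alpha+\beta=\mu_1-2}$ Laplace transforms into a product of two first-variable generating functions evaluated on the diagonal, matching $\frac{\partial^2}{\partial u_1\partial u_2}F_{g-1,n+1}\big|_{u_1=u_2=t_1}$ and the bilinear sum over stable splittings. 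Throughout I would handle the unstable data $F_{0,1}$ and $F_{0,2}$ separately, since these are not given by \eqref{Catalan Fgn}; the $(0,1)$ generating function is encoded in the spectral curve $x=z+1/z$ of \eqref{xz}, and its insertion into the convolutions is what fixes the cubic kernel.

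I expect the main obstacle to be the loop (convolution) contributions. The difficulty is twofold: the $1/\mu_1$ weight turns the convolution into an expression that requires one integration, so I must evaluate $\sum_{\mu_1\ge2}\frac{x_1^{-\mu_1}}{\mu_1}\sum_{\alpha+\beta=\mu_1-2}f(\alpha)g(\beta)$ and, after differentiating in $t_1$, identify it with a clean product; and the passage to the $t$-coordinate, where the kernel $(t_1^2-1)^3/t_1^2$ emerges, is most safely verified by a residue computation that uses the ramification of $\tilde\pi\colon\widetilde\Sigma\to\bP^1$ at $t=\pm1$. The bookkeeping of signs and of the ``stable'' and ``no $(0,1)$'' exclusions is delicate and must be matched term by term against \eqref{FC recursion}.

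Finally, uniqueness follows by induction on $2g-2+n$: the right-hand side of \eqref{FC recursion} involves only $F_{g',n'}^C$ with strictly smaller $2g'-2+n'$, so the boundary value $F_{g,n}^C\big|_{t_1=-1}=0$ from \eqref{initial} determines $F_{g,n}^C$ by integrating from $-1$ to $t_1$. Since the recursion is valid only for $2g-2+n\ge2$, the two base cases with $2g-2+n=1$ are checked independently: I would determine the integers $C_{1,1}(\mu)$ and $C_{0,3}(\mu_1,\mu_2,\mu_3)$ from \eqref{Catalan recursion} (which holds combinatorially for every $(g,n)$ and fixes all values by induction on $2g-2+n+\sum_i\mu_i$ starting from $C_{0,1}(0)=1$), form the corresponding generating functions, and re-express them in the normalization coordinate to recover \eqref{F11} and \eqref{F03}.
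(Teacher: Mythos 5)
Your proposal follows essentially the same route as the paper's own proof: Theorem~\ref{thm:Fgn recursion} is obtained there precisely as the Laplace transform of the Catalan recursion \eqref{Catalan recursion}, with the unstable data $F_{0,1}^C$ extracted from the spectral curve $x=z+\frac{1}{z}$ and $F_{0,2}^C$ from the closed formula \eqref{C02} of \cite{KP}, the detailed term-by-term computation being deferred to Zhou's thesis \cite{MZhou}. Two small slips to repair when executing the plan: the ramification points of $\tilde{\pi}:\widetilde{\Sigma}\rar\bP^1$ are $t=0$ and $t=\infty$ (lying over $x=\mp 2$), not $t=\pm 1$, which are the two points over $x=\infty$; and the boundary value $F_{g,n}^C\big|_{t_1=-1}=0$ needed for uniqueness should be read off directly from the definition \eqref{Catalan Fgn} (since $t_1=-1$ corresponds to $x_1=\infty$ and every term carries $x_1^{-\mu_1}$ with $\mu_1\ge 1$), rather than quoted from \eqref{initial}, because the paper derives \eqref{initial} only later as a consequence of this very theorem.
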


\begin{rem}
Theorem~\ref{thm:Fgn recursion} is proved
by Mincheng Zhou in his senior thesis
\cite{MZhou}. It is the result of 
a rather difficult  calculation
of the Laplace transform of the Catalan 
recursion \eqref{Catalan recursion}.
\end{rem}

\begin{rem}
Theorem~\ref{thm:FgnC} has never been stated in 
this format before. Its proof
follows from the results of \cite{OM2, DMSS,  MP2012,MS}, based on  induction
 using \eqref{FC recursion}.
The essential point of the discovery 
of Theorem~\ref{thm:FgnC} is the 
use of the normalization coordinate $t$
of \eqref{Hermite t}. The authors almost
accidentally found the coordinate
transformation 
$$
z(x) = \frac{t+1}{t-1}
$$
in \cite{DMSS}. Then in \cite{OM2}, we
noticed that this coordinate was exactly the
normalization coordinate that was naturally obtained
in the blow-up process \eqref{blow-up}.
\end{rem}

The uniqueness of the solution of \eqref{FC recursion}
allows us to identify the solution $F_{g,n}^C$
with the Laplace transform of the number of 
lattice points in $\cM_{g,n}$, as we see later
in this section. Through this identification,
\eqref{initial} and \eqref{Fgn and Euler} become
obvious. 
The asymptotic behavior \eqref{Fgn highest}
follows from the lattice point approximation of
the Euclidean volume of polytopes, and 
Kontsevich's theorem that identifies the 
Euclidean volume of $\cM_{g,n}$ and 
the intersection numbers \eqref{intersection}
on $\Mbar_{g,n}$.

\subsection{The unstable geometries and the 
initial value of the topological recursion}

The actual computation of the 
 Laplace transform equation \eqref{FC recursion}
 from \eqref{Catalan recursion} requires the evaluation
 of the Laplace transforms of $C_{0,1}(\mu_1)$
 and $C_{0,2}(\mu_1,\mu_2)$. It is done as follows.

Since a degree $0$ vertex is allowed for the $(g,n)
=(0,1)$ unstable geometry, we do not have 
the corresponding $F_{0,1}^C(x)$ in 
\eqref{Catalan Fgn}. Since
$$
d_{x_1}\cdots d_{x_n}F_{g,n}^C =
\sum_{\mu_1\ge 1,\dots,\mu_n\ge 1}(-1)^n
C_{g,n}(\mu_1,\dots,\mu_n)x_1^{-\mu_1-1}
\cdots x_n^{-\mu_n-1}dx_1\cdots dx_n,
$$
we could choose
$$
d_x F_{0,1}^C(x) = -\big(z(x)-x^{-1}\big)dx
$$
as a defining equation for $F_{0,1}^C$. 

In the  light of
\eqref{xy in z}, $ydx = -zdx$ is a natural 
global holomorphic $1$-form on $T^*\bP^1$,
called the \emph{tautological} $1$-form.
Its exterior differentiation $d(ydx) = dy\wedge dx$
 defines the canonical holomorphic symplectic 
structure on $T^*\bP^1$.
Since we are interested in the \emph{quantization}
of the Hitchin spectral curve, we need a
symplectic structure here, which is readily available
for our use from $ydx$.

Therefore, it is reasonable for us to 
define the `function' $F_{0,1}$ by 
\be
\label{F01 definition}
dF_{0,1} = ydx, \qquad \text{or}\qquad
F_{0,1} = \int ydx.
\ee
Although this equation does not determine the constant
term, it does not play any role for our purposes. 
Here, we choose the constant of integration to be $0$.
Since the symplectic structure on $T^*\bP^1$
is non-trivial, we need to interpret the equation 
\eqref{F01 definition} being defined on the 
spectral curve, and be prepared that we may not 
obtain any meromorphic function on the spectral curve.
For the Catalan case, we have to use the 
spectral curve coordinate $z$ or $t$
as a variable of $F_{g,n}^C$, instead of $x$.
Since 
$$
-zdx=-zdz+\frac{dz}{z}, 
$$
we conclude
\be
\label{F01C}
\begin{aligned}
F_{0,1}^C(z)&:= -\half z^2+\log z
\\
&=
-\half \left(\frac{t+1}{t-1}\right)^2
+\log\left(\frac{t+1}{t-1}\right).
\end{aligned}
\ee
We encounter a non-algebraic function here, indeed.

For the computation of the Laplace transform
$F_{0,2}^C$, we need
an explicit formula for $C_{0,2}(\mu_1,\mu_2)$.
Luckily, such computation has been 
carried out in
\cite{KP}, fully utilizing 
the technique of the
\emph{dispersionless Toda lattice hierarchy}.
 It is surprising to see how much integrable system
 consideration is involved in
computing such a simple quantity as 
$C_{0,2}(\mu_1,\mu_2)$! 
The result is the following.

\begin{thm}[Calculation of the $2$-point Catalan numbers, \cite{KP}]
\label{KP}
For every $\mu_1,\mu_2>0$, the genus $0$, 
$2$-point Catalan numbers are given by
\be
\label{C02}
C_{0,2}(\mu_1,\mu_2)
=
2 \frac{\lfloor \frac{\mu_1+1}{2}
\rfloor \lfloor \frac{\mu_2+1}{2}
\rfloor}{\mu_1+\mu_2}
\binom{\mu_1}{\lfloor \frac{\mu_1}{2}\rfloor}
\binom{\mu_2}{\lfloor \frac{\mu_2}{2}\rfloor}.
\ee
\end{thm}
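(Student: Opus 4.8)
The plan is to reduce everything to the spectral curve $x=z+1/z$ already extracted from the genus-zero one-point numbers, and then to compute the two-point generating differential in closed form. First I would specialize the Catalan recursion to type $(0,2)$: only the $j=2$ term survives in the first sum, the $C_{-1,3}$ term is absent, and in the splitting sum the constraints $g_1=g_2=0$ together with $I\sqcup J=\{2\}$ leave exactly the two set partitions $I=\emptyset$ and $I=\{2\}$, which contribute equally. With the convention $C_{0,1}(0)=1$ this gives the recursion in $\mu_1$ alone,
\begin{equation}
C_{0,2}(\mu_1,\mu_2)=\mu_2\,C_{0,1}(\mu_1+\mu_2-2)+2\sum_{\a+\b=\mu_1-2}C_{0,1}(\a)\,C_{0,2}(\b,\mu_2).
\end{equation}

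Next I would pass to generating functions in the spectral coordinate $z=z(x)$, where $x=z+1/z$. Setting $W_{0,2}=d_{x_1}d_{x_2}F_{0,2}^C=\sum_{\mu_1,\mu_2\ge 1}C_{0,2}(\mu_1,\mu_2)x_1^{-\mu_1-1}x_2^{-\mu_2-1}dx_1dx_2$, the recursion above is the loop equation whose unique solution is the second-kind bidifferential of the rational curve with the naive diagonal double pole removed, namely $\frac{dz_1dz_2}{(z_1-z_2)^2}-\frac{dx_1dx_2}{(x_1-x_2)^2}$. Using $x_i-x_j=(z_i-z_j)(z_iz_j-1)/(z_iz_j)$ and $dx_i=\frac{z_i^2-1}{z_i^2}dz_i$, the numerator telescopes via $(z_1z_2-1)^2-(z_1^2-1)(z_2^2-1)=(z_1-z_2)^2$, collapsing the expression to the strikingly simple
\begin{equation}
W_{0,2}(z_1,z_2)=\frac{dz_1\,dz_2}{(z_1z_2-1)^2}.
\end{equation}

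Then I would extract coefficients by residues. Expanding $\frac{1}{(z_1z_2-1)^2}=\sum_{k\ge 0}(k+1)(z_1z_2)^k$ and using $x^\mu=z^{-\mu}(1+z^2)^\mu$, one has $\Res_{z=0}\!\big[z^k x^\mu\,dz\big]=[z^{\mu-k-1}](1+z^2)^\mu=\binom{\mu}{(\mu-k-1)/2}$, so that $C_{0,2}(\mu_1,\mu_2)=\Res_{z_1=0}\Res_{z_2=0}\,x_1^{\mu_1}x_2^{\mu_2}\,W_{0,2}$ becomes
\begin{equation}
C_{0,2}(\mu_1,\mu_2)=\sum_{k\ge 0}(k+1)\binom{\mu_1}{\tfrac{\mu_1-1-k}{2}}\binom{\mu_2}{\tfrac{\mu_2-1-k}{2}},
\end{equation}
where only $k$ with $\mu_i-1-k$ even and nonnegative contribute. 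In particular this vanishes unless $\mu_1\equiv\mu_2\pmod 2$, consistent with the parity constraint $2c_1(\gam)=\mu_1+\mu_2$.

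Finally, writing $\mu_1=2p,\ \mu_2=2q$ (the odd case being identical after shifting indices), the sum becomes $\sum_{m\ge 1}2m\binom{2p}{p-m}\binom{2q}{q-m}$, and the theorem is equivalent to the closed-form binomial identity
\begin{equation}
\sum_{m\ge 1}2m\binom{2p}{p-m}\binom{2q}{q-m}=\frac{pq}{p+q}\binom{2p}{p}\binom{2q}{q}.
\end{equation}
This evaluation is the main obstacle. A direct Vandermonde convolution does not apply, because the sum is truncated at $m\ge 1$: the central binomial coefficients on the right-hand side arise precisely as the boundary term when one telescopes $2m\binom{2p}{p-m}=2p\big[\binom{2p-1}{p-m}-\binom{2p-1}{p-m-1}\big]$ and resums against $\binom{2q}{q-m}$. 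I would close this either by creative telescoping (Zeilberger's algorithm), or—following the reference—by the dispersionless Toda lattice computation, recognizing $C_{0,2}$ as the genus-zero connected two-point resolvent correlator of the associated one-matrix model, whose universal form reproduces the bidifferential above directly. An alternative to the whole scheme is to verify the stated formula by strong induction on $\mu_1$ using the first displayed recursion; this is also self-contained but funnels into the same binomial identity as the inductive step.
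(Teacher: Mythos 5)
Your proposal is correct in outline, and it takes a genuinely different route from the paper: the paper does not prove Theorem~\ref{KP} at all, but cites \cite{KP}, where the formula is obtained via the dispersionless Toda lattice hierarchy, and it likewise quotes the companion formula \eqref{F02C} for $F_{0,2}^C$ from \cite{DMSS}. Your plan---specialize the recursion \eqref{Catalan recursion} to type $(0,2)$, identify the two-point differential with the Bergman kernel minus the pulled-back double pole in $x$, collapse it to $\frac{dz_1\,dz_2}{(z_1z_2-1)^2}$, and extract coefficients by residues---is elementary and self-contained where the paper's source relies on integrable-systems machinery, and as a bonus it makes the parity constraint visible. The algebra is right at every point I can check: the $(0,2)$ specialization, the identity $(z_1z_2-1)^2-(z_1^2-1)(z_2^2-1)=(z_1-z_2)^2$, the residue extraction $\Res_{z=0}\big[z^k x^\mu\,dz\big]=\binom{\mu}{(\mu-k-1)/2}$, and the reduction to the binomial identity.

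Two items still need to be nailed down. First, the claim that your $(0,2)$ recursion ``is the loop equation whose unique solution is'' $\frac{dz_1dz_2}{(z_1-z_2)^2}-\frac{dx_1dx_2}{(x_1-x_2)^2}$ is asserted, not proved; this is exactly \eqref{dF02C} of the paper, quoted there from \cite{DMSS}. You must either re-derive it (take the resolvent transform of your recursion, including the truncation terms generated by the summand $\mu_2\,C_{0,1}(\mu_1+\mu_2-2)$, and check the candidate solves the resulting functional equation) or verify by induction on $\mu_1$ that the coefficients in your third display satisfy the recursion; uniqueness is not an issue, since the recursion determines $C_{0,2}$ from $C_{0,1}$. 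This is real work, of roughly the same size as your final identity. Second, the ``main obstacle'' does close, and more cleanly than you suggest: telescoping works with the certificate $g(m)=\frac{(p+m)(q+m)}{p+q}\binom{2p}{p-m}\binom{2q}{q-m}$, because $g(m+1)=\frac{(p-m)(q-m)}{p+q}\binom{2p}{p-m}\binom{2q}{q-m}$ and $(p+m)(q+m)-(p-m)(q-m)=2m(p+q)$ give $2m\binom{2p}{p-m}\binom{2q}{q-m}=g(m)-g(m+1)$, so your sum telescopes to $g(1)=\frac{pq}{p+q}\binom{2p}{p}\binom{2q}{q}$, as required; the odd case works identically with $g(l)=\frac{(p+l+1)(q+l+1)}{p+q+1}\binom{2p+1}{p-l}\binom{2q+1}{q-l}$, the sum being $g(0)$. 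Finally, note that your computation yields $C_{0,2}(\mu_1,\mu_2)=0$ when $\mu_1\not\equiv\mu_2 \pmod 2$, whereas \eqref{C02} then produces a nonzero non-integer (e.g.\ $4/3$ at $(\mu_1,\mu_2)=(1,2)$); so the theorem implicitly assumes $\mu_1+\mu_2$ even, and your write-up should state explicitly that it proves \eqref{C02} precisely in that case, the count being $0$ otherwise.
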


We refer to \cite{DMSS} for the derivation 
of $F_{0,2}^C$. 
The result is
\be
\begin{aligned}
\label{F02C}
F_{0,2}^C(z_1,z_2) &= -\log(1-z_1z_2)
\\
&=
-\log\big(-2(t_1+t_2)\big)+\log (t_1-1)+\log(t_2-1).
\end{aligned}
\ee
For the purpose of later use, we note that
\be
\label{dF02C}
\begin{aligned}
d_{t_1}d_{t_2}F_{0,2}^C(t_1,t_2)
&=
\frac{dt_1\cdot dt_2}{(t_1+t_2)^2}
\\
&=
\frac{dt_1\cdot dt_2}{(t_1-t_2)^2}
-(\tilde{\pi}\times \tilde{\pi})^*
\frac{dx_1\cdot dx_2}{(x_1-x_2)^2},
\end{aligned}
\ee
where $\tilde{\pi}:\widetilde{\Sigma}\lrar \bP^1$
is the projection of \eqref{blow-up}, i.e., the
variable transformation \eqref{Hermite t}.

\subsection{Geometry of the topological recursion}

Computation of a multi-resolvent
\eqref{multi-resolvent} is one thing. What we have
in front of us is a different story. 
We wish to compute an asymptotic
expansion of a solution to the differential 
equation that is defined on the base curve $C$
and gives the quantization of the 
Hitchin spectral curve of a meromorphic
Higgs bundle $(E,\phi)$. The expansion has
to be done at the essential singularity of the
solution.

\begin{quest}
Is there a mathematical framework suitable 
for such problems?
\end{quest}

The discovery of \cite{OM1, OM2} 
gives an answer: 
\textbf{Generalize the formalism of
Eynard and Orantin of \cite{EO2007} to the 
geometric situation of meromorphic Higgs
bundles. Then this generalized topological 
recursion computes the asymptotic expansion in 
question.}

We are now ready to present the topological 
recursion, continuing our
investigation of the particular example
of Catalan numbers. The point here is that 
\textbf{the topological recursion is a 
universal formula depending only on geometry}. 
Therefore, 
we can arrive at the general formula from 
any example.
\emph{One example rules them all!}

\begin{thm}[The topological recursion for the 
generalized Catalan numbers, \cite{DMSS}]
\label{thm:TR for Catalan}
Define symmetric $n$-linear
differential forms
on $(\widetilde{\Sigma})^n$ for $2g-2+n>0$
by
\be
\label{WgnC}
W_{g,n}^C(t_1,\dots,t_n):=
d_{t_1}\cdots d_{t_n}F_{g,n}^C(t_1,\dots,t_n),
\ee
and for $(g,n)=(0,2)$ by
\be
\label{W02C}
W_{0,2}^C(t_1,t_2) := \frac{dt_1\cdot dt_2}
{(t_1-t_2)^2}.
\ee
Then these differential forms satisfy the 
following integral recursion equation, called the
\textbf{topological recursion}.
\begin{multline}
\label{Catalan TR}
W_{g,n}^C(t_1,\dots,t_n)
=
-\frac{1}{64} \; 
\frac{1}{2\pi i}\int_\gam
\left(
\frac{1}{t+t_1}+\frac{1}{t-t_1}
\right)
\frac{(t^2-1)^3}{t^2}\cdot \frac{1}{dt}\cdot dt_1
\\
\times
\Bigg[
\sum_{j=2}^n
\bigg(
W_{0,2}^C(t,t_j)W_{g,n-1}^C
(-t,t_2,\dots,\widehat{t_j},
\dots,t_n)
+
W_{0,2}^C(-t,t_j)W_{g,n-1}^C
(t,t_2,\dots,\widehat{t_j},
\dots,t_n)
\bigg)
\\
+
W_{g-1,n+1}^C(t,{-t},t_2,\dots,t_n)
+
\sum^{\text{stable}} _
{\substack{g_1+g_2=g\\I\sqcup J=\{2,3,\dots,n\}}}
W_{g_1,|I|+1}^C(t,t_I) W_{g_2,|J|+1}^C({-t},t_J)
\Bigg].
\end{multline}
The last sum is restricted to the stable 
geometries, i.e., the partitions 
should satisfies
$2g_1-1+|I|>0$ and $2g_2-1+|J|$, as 
in \eqref{FC recursion}. The 
contour integral is taken with respect to 
$t$ on the exactly the same
cycle  defined by Figure~\ref{fig:contourC}
as before,
where $t$ is the normalization coordinate
of \eqref{Hermite t}. Note that
the second and the third lines of \eqref{Catalan TR}
is a quadratic differential in the variable $t$. 
\end{thm}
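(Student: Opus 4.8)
The plan is to derive the integral (topological) recursion \eqref{Catalan TR} from the differential recursion \eqref{FC recursion} of Theorem~\ref{thm:Fgn recursion}, which is already established. The logic is a chain of equalities: I evaluate the contour integral on the right of \eqref{Catalan TR} by residues, show that it equals the image of the right-hand side of \eqref{FC recursion} under $d_{t_2}\cdots d_{t_n}$, and then invoke \eqref{FC recursion} itself, which identifies that image with $d_{t_1}\cdots d_{t_n}F^C_{g,n}=W^C_{g,n}$. Because both recursions are triangular in $2g-2+n$, matching the right-hand sides is enough to conclude.

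First I would locate the poles of the integrand on $\widetilde{\Sigma}\cong\mathbb{P}^1$ in the integration variable $t$. By Theorem~\ref{thm:FgnC} every stable $F^C_{g',n'}$ is a Laurent polynomial, so each $W^C_{g',n'}$ in the bracket is a Laurent-polynomial differential whose only poles in $t$ are at the two ramification points $t=0$ and $t=\infty$. The Cauchy kernel $\frac{1}{t+t_1}+\frac{1}{t-t_1}$ adds simple poles at $t=\pm t_1$, the factor $\frac{(t^2-1)^3}{t^2}$ adds a pole at $t=0$, and the factors $W^C_{0,2}(\pm t,t_j)=\pm\frac{dt\,dt_j}{(t\mp t_j)^2}$ add double poles at $t=\pm t_j$. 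Since the contour $\gamma$ of Figure~\ref{fig:contourC} encircles only $t=0,\infty$, the quantity $\frac{1}{2\pi i}\int_\gamma$ equals $\Res_{t=0}+\Res_{t=\infty}$; and because the sum of all residues on the compact curve $\mathbb{P}^1$ vanishes, this equals minus the sum of the residues at the remaining poles $t=\pm t_1$ and $t=\pm t_j$. The worked example $W^A_{1,1}=-\frac{1}{128}t_1^2\,dt_1$ in the excerpt is exactly this computation in the Airy normalization and serves as a template.

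Next I would compute these residues. At the simple poles $t=\pm t_1$ one has $\Res_{t=t_1}\big(\frac{1}{t-t_1}\frac{(t^2-1)^3}{t^2}\big)=\frac{(t_1^2-1)^3}{t_1^2}$, and the $t\mapsto -t$ invariance of the bracket makes the $t=-t_1$ contribution equal to the $t=t_1$ one; multiplied by the overall $-\frac{1}{64}$ this reproduces the prefactor $\frac{(t_1^2-1)^3}{t_1^2}$ multiplying $W^C_{g-1,n+1}(t_1,-t_1,t_2,\dots)$ and the stable quadratic products, matching the last two lines of \eqref{FC recursion} (the absence of a factor $\frac12$ being absorbed by the $\pm t_1$ doubling). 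The genus-preserving $W^C_{0,2}$ terms are more delicate: they receive contributions both from the residues at $t=\pm t_1$ and from the double-pole residues at $t=\pm t_j$, which must be combined. The key identity is
\[
\partial_{t_j}\!\left(\frac{t_j}{t_1^2-t_j^2}\right)
=\frac12\!\left(\frac{1}{(t_1-t_j)^2}+\frac{1}{(t_1+t_j)^2}\right),
\]
which, together with the partial-fraction rearrangement of $\frac{1}{t_1\mp t_j}$, converts the combined residues into the $\frac{t_j}{t_1^2-t_j^2}$ and $\frac{(t_1^2-1)^2}{t_1^2}$ kernels of the first two lines of \eqref{FC recursion}.

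The main obstacle will be this last bookkeeping step. Extracting a double-pole residue at $t=\pm t_j$ forces a $t$-derivative onto the product $\big(\text{kernel}\big)\cdot W^C_{g,n-1}(\mp t,\dots)$; one must verify that the piece in which the derivative lands on $\frac{(t^2-1)^3}{t^2}$ yields precisely the second sum of \eqref{FC recursion}, while the piece landing on $W^C_{g,n-1}$ combines with the $t=\pm t_1$ residues to give the first sum, with all arguments collapsing correctly to $t_1$ and $t_j$. In parallel I need to reconcile the two bookkeeping conventions — \eqref{Catalan TR} writes the $(0,2)$ contributions explicitly and restricts the remaining double sum to stable pairs, whereas the Airy-type formula \eqref{Airy TR} hides them inside a ``no $(0,1)$'' sum — and to track the ratio operation $\frac{1}{dt}$ acting on the quadratic differential in the bracket so that the output is genuinely the $1$-form $W^C_{g,n}$. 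Once this equality of $1$-forms is established the theorem is proved; it is moreover consistent with the statement of Theorem~\ref{thm:Fgn recursion} upon integrating in $t_1$ from $-1$ using the vanishing \eqref{initial} as the constant of integration.
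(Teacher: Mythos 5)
Your proposal is correct and follows essentially the same route as the paper: the paper also obtains \eqref{Catalan TR} from the differential recursion \eqref{FC recursion} by the contour-flip residue argument (the paper's own $W_{1,1}^C$ computation, and the proof of the general differential--integral equivalence theorem in Section~\ref{sect:QC}, are exactly this mechanism), with simple poles at $t=\pm t_1$ contributing evaluations and the double poles of $W_{0,2}^C$ at $t=\pm t_j$ contributing derivatives, the Galois symmetry $t\mapsto -t$ supplying the factor of $2$. Your identification of the poles via the Laurent-polynomial property of Theorem~\ref{thm:FgnC}, the use of the vanishing of the total residue sum on $\widetilde{\Sigma}\cong\bP^1$, and the kernel identity matching the $\frac{t_j}{t_1^2-t_j^2}$ terms are precisely the ingredients of the paper's argument.
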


\begin{rem}
The notation $\frac{1}{dt}$ requires a justification.
We note that two global meromorphic  sections
of the same line bundle is a global meromorphic function.
Here we are taking the ratio of two meromorphic
$1$-forms on the factor  $\widetilde{\Sigma}$
corresponding to the $t$-variable. Thus 
after taking this ratio, the
integrand becomes a meromorphic $1$-form in 
$(-t)$-variable, which is integrated 
along the cycle $\gam$.
\end{rem}

\begin{rem}
The recursion  \eqref{Catalan TR} is 
a genuine induction formula with respect to 
$2g-2+n$. Thus from  $W_{0,2}^C$, we can calculate 
all $W_{g,n}^C$s one by one. This is a big
difference between \eqref{Catalan TR}
and \eqref{Catalan recursion}. The latter
relation contains  terms with $C_{g,n}$ in the 
right-hand side as well, therefore, 
$C_{g,n}$ is not determined as a function by
an induction procedure.
\end{rem}

\begin{rem}
If we apply \eqref{WgnC} to $F_{0,2}^C$ of
\eqref{F02C}, then we obtain \eqref{dF02C},
not \eqref{W02C}. The difference is the pull-back of
the $2$-form $\frac{dx_1\cdot dx_2}{(x_1-x_2)^2}$.
This difference does not affect the recursion formula
\eqref{Catalan TR} for $2g-2+n> 1$. The only
case affected is $(g,n)=(1,1)$. 
The above recursion allows us
to calculate $W_{1,1}^C$ from $W_{0,2}^C$ as
we see below, but  we cannot 
use \eqref{dF02C} in place
of $W_{0,2}^C$ for this case because the specialization $t_1=t,t_2=-t$
does not make sense in 
$d_{t_1}d_{t_2}F_{0,2}(t_1,t_2)$.
\end{rem}

\begin{rem}
In Subsection~\ref{subsect:TR}, we will formulate the 
topological recursion as a universal formalism
for the context of Hitchin spectral curves in 
a coordinate-free manner depending only on 
the geometric setting. There 
we will explain the meaning of 
$W_{0,2}$, and the formula for the 
topological recursion. At this moment, we note that
the origin of the topological recursion is 
the edge-contraction mechanism of the 
Catalan recursion \eqref{Catalan recursion}.
There is a surprising
 relation between the edge-contraction operations
and two dimensional topological quantum field
theories. We refer to \cite{OM3} for more detail.
\end{rem}

\begin{rem}
The integral recursion \eqref{Catalan TR}
and the PDE recursion \eqref{FC recursion}
are equivalent in the range of
 $2g-2+n\ge 2$, since if we know $W_{g,n}^C$
from the integral recursion, then we
can calculate $F_{g,n}^C$ 
by the integration
$$
F_{g,n}^C(t_1,\dots,t_n)
=\int_{-1}^{t_1}\cdots\int_{-1}^{t_n}
W_{g,n}^C.
$$
But the differential recursion does not provide
any mechanism
to calculate $F_{1,1}^C$ and $F_{0,3}^C$.
\end{rem}

To see how the topological recursion works,
let us compute $W_{1,1}^C(t_1)$ from 
\eqref{Catalan TR}.

\begin{align*}
W_{1,1}^C(t_1)
&=
-\frac{1}{64} \; 
\frac{1}{2\pi i}
\left[
\int_\gam
\left(
\frac{1}{t+t_1}+\frac{1}{t-t_1}
\right)
\frac{(t^2-1)^3}{t^2}\cdot \frac{1}{dt}\cdot 
W_{0,2}^C(t, -t)
\right]
dt_1
\\
&=
-\frac{1}{64} \; 
\frac{1}{2\pi i}
\left[
\int_\gam
\left(
\frac{1}{t+t_1}+\frac{1}{t-t_1}
\right)
\frac{(t^2-1)^3}{t^2}\cdot \frac{1}{dt}\cdot 
\left(\frac{-(dt)^2}{(2t)^2}\right)
\right]
dt_1
\\
&=
\frac{1}{256} \; 
\frac{1}{2\pi i}
\left[
\int_\gam
\left(
\frac{1}{t+t_1}+\frac{1}{t-t_1}
\right)
\frac{(t^2-1)^3}{t^2}\cdot \frac{1}{t^2}\cdot dt
\right]dt_1
\\
&=
-\frac{1}{128} \frac{(t_1^2-1)^3}{t_1^4}.
\end{align*}
Here, we changed the contour integral to the
negative of the
residue calculations at $t= \pm t_1$, as indicated in
Figure~\ref{fig:contourC}.
From \eqref{F11}, we find that indeed 
$$
F_{1,1}^C(t_1)=\int_{-1}^{t_1}W_{1,1}^C.
$$
As explained in \cite{DMSS}, we can calculate
$W_{0,3}^C$ from $W_{0,2}^C$  as well,
which recovers the initial condition
\eqref{F03}.

To understand the 
geometry behind the topological recursion, we
need to identify each term of the formula.
First, recall the normalization morphism
$$
\tilde{\pi}:\widetilde{\Sigma}\lrar \bP^1
$$ 
of \eqref{blow-up}. 
We  note that the transformation 
$t\longmapsto -t$ appearing in the 
recursion formula is the 
\textbf{Galois conjugation} of the 
global Galois covering 
$\tilde{\pi}:\widetilde{\Sigma}\lrar \bP^1$.
From \eqref{Hermite t}, we see that
this transformation is induced by the the involution
$y\longmapsto -y$  of
$T^*\bP^1$.
The fixed point set of the Galois conjugation is the
set of ramification points of the covering
$\tilde{\pi}$, and the residue integration 
of \eqref{Catalan TR} is taken around the
two ramification points.

We claim  that
$W_{0,2}^C(t_1,t_2)$ is the 
\emph{Cauchy differentiation kernel} on 
$\widetilde{\Sigma}$.
This  comes from the intrinsic 
geometry of the curve $\widetilde{\Sigma}$.
The Cauchy differentiation kernel on $\bP^1$
is the unique meromorphic symmetric bilinear
differential form 
\be
\label{Cauchy}
c(t_1,t_2) := \frac{dt_1\cdot dt_2}{(t_1-t_2)^2}
\ee
 on
$\bP^1\times \bP^1$
such that
\be
\label{f to df}
df(t_2) = q_* \big( c(t_1,t_2) p^* f(t_1)\big)
\ee
for every rational function $f$ on $\bP^1$.
Here, $p$ and $q$ are the projection maps
 \begin{equation}
 \label{diff kernel P1}
\xymatrix{
&\bP^1 \times \bP^1 
\ar[dl]_{p}\ar[dr]^{q}
\\
\bP^1 &&\bP^1 		}
\end{equation}
to the first and second factors. The push-forward
$q_*$ is defined by the integral
$$
q_* \big( c(t_1,t_2) p^* f(t_1)\big)
=
\frac{1}{2\pi i}\oint
c(t_1,t_2)  f(t_1)
$$
along a small loop in the fiber $q^*(t_2)$
that is centered at its intersection with 
the diagonal of $\Delta\subset\bP^1\times\bP^1$.
In terms of the
other affine coordinate $u_i=1/t_i$
of $\bP^1$, we have
$$
\frac{dt_1\cdot dt_2}{(t_1-t_2)^2}
=
\frac{du_1\cdot du_2}{(u_1-u_2)^2}
=
-\frac{dt_1\cdot du_2}{(t_1u_2-1)^2}
=
-\frac{du_1\cdot dt_2}{(u_1t_2-1)^2}.
$$
Therefore, $c(t_1,t_2)$ is a globally defined 
bilinear meromorphic form
on $\bP^1\times \bP^1$ with poles only along 
the diagonal $\Delta$.

\begin{rem}
\label{rem:omega}
The Cauchy integration formula
$$
f(w) =\frac{1}{2\pi i}\oint_{|z-w|<\epsilon}
f(z)\frac{dz}{z-w}
$$
 is the most
useful formula in complex analysis. 
The Cauchy 
integration kernel $dz/(z-w)$ is a globally defined
meromorphic $1$-form on $\bC$ that has only one
pole, and its order is $1$.
It has to be noted that on a compact Riemann surface
$\Sigma$,
we do not have such a form.
The best we can so is the meromorphic $1$-form
 $\omega_\Sigma^{a-b}(t)$ with the following 
 properties:
 \begin{itemize}
 \item It is a globally defined meromorphic $1$-form
 with a pole of order $1$ and residue $1$ at 
 $a$ and a pole of order $1$ and residue 
 $-1$ at $b$ for some pair $(a,b)$ 
 of distinct points of $\Sigma$.
 \item It is holomorphic in $t$ except for $t=a$
 and $t=b$.
 \end{itemize}
 Since we can add any holomorphic $1$-form
 to $\omega_\Sigma^{a-b}(t)$ without violating
 the characteristic properties, the ambiguity
 of this form is exactly $H^0(\Sigma,K_\Sigma)$.
 Therefore, it is unique on  $\Sigma=\bP^1$, and
 is given by
 \be\label{omega P1}
 \omega_{\bP^1}^{a-b}(t)
 =
 \left(
 \frac{1}{t-a}-\frac{1}{t-b}
 \right)dt.
 \ee
 Note that
 $$
 d_{t_2}\omega_{\bP^1}^{t_2-b}(t_1)
 =c(t_1,t_2).
 $$
 for any $b$.
 \end{rem}

Let us go back to the topological recursion 
for the Catalan numbers \eqref{Catalan TR}.
Since all $W_{g,n}^C$ are determined by
$W_{0,2}^C$ using the recursion, the remaining
quantity we need to identify  is the 
\emph{integration kernel}.
Recall the tautological $1$-form
$$
\eta_{\bP^1}:= ydx
$$
on the tangent bundle $T^*\bP^1$. We can 
see from \eqref{blow-up} that its pull-back
to the normalized 
spectral curve $\widetilde{\Sigma}$ is given by
\be
\label{eta P1}
\begin{aligned}
\eta(t) &= -\left(\frac{t+1}{t-1}\right)
d\left(\frac{4}{t^2-1}\right)
\\
&=\frac{8t}{(t+1)(t-1)^3}\; dt.
\end{aligned}
\ee
The factor 
$$
-\frac{1}{64} 
\left(
\frac{1}{t+t_1}+\frac{1}{t-t_1}
\right)
\frac{(t^2-1)^3}{t^2}\cdot \frac{1}{dt}\cdot dt_1
$$
in \eqref{Catalan TR} is called 
the integration kernel, which is equal to 
\be
\label{kernel P1}
\half\; \frac{\omega_{\bP^1}^{(-t)-t}(t_1)}
{\eta(-t)-\eta(t)}.
\ee
The integration kernel appears in this form
in more general situations.

\subsection{The quantum curve for 
Catalan numbers}

There is yet another important role  the 
differential recursion 
\eqref{FC recursion} plays. This is the derivation of 
the quantum curve equation for the Catalan case. 
In terms of the coordinates $(x,y)$ of 
\eqref{Hermite-spectral}, the spectral curve
is $y^2+xy+1 = 0$. The generating function 
$z(x)$ of \eqref{z}, the normalization coordinate
$t$ of \eqref{Hermite t}, and the base curve 
coordinate $x$ on $\bP^1$ are related by 
\be
\label{t in x}
t =t(x):= \frac{z(x)+1}{z(x)-1},
\ee
which we consider as a function of $x$ that gives
the branch of $\widetilde{\Sigma}$ determined
by
$$
\lim_{x\rar +\infty} t(x) = -1.
$$

\begin{thm}[Quantum curve for generalized
Catalan numbers, \cite{MS}]
\label{thm:QC Catalan}
Define
\be
\label{Psi Catalan}
\Psi(t,\hbar):=
\exp\left(\sum_{g=0}^\infty \sum_{n=1}^\infty
\frac{1}{n!} \hbar^{2g-2+n}F_{g,n}^C(t,t,\dots,t)
\right),
\ee
incorporating \eqref{FC recursion}, \eqref{F01C}, 
and \eqref{F02C}.
Then as a function in $x$ through \eqref{t in x},
$\Psi\big(t(x),\hbar\big)$ satisfies the
following differential equation
\be
\label{QC Catalan}
\left[
\left(
\hbar\frac{d}{dx} 
\right)^2
+ x\left(
\hbar\frac{d}{dx} 
\right)
+1
\right]
\Psi\big(t(x),\hbar\big) = 0.
\ee
The  semi-classical limit of \eqref{QC Catalan}
using $S_0(x)=F_{0,1}^C\big(t(x)\big)$
coincides with the spectral curve 
$$
y^2+xy + 1 = 0
$$
of \eqref{Hermite-spectral}.
\end{thm}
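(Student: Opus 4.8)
The plan is to treat both assertions by the WKB device already used for the Airy function in Section~\ref{sect:intro}, feeding in the differential recursion \eqref{FC recursion} at the inductive step. Write the principal specialization as
\[
\Psi\big(t(x),\hbar\big) = \exp\Big(\sum_{m\ge 0}\hbar^{m-1}S_m(x)\Big),
\qquad
S_m(x) = \sum_{2g-2+n=m-1}\frac{1}{n!}F_{g,n}^C\big(t(x),\dots,t(x)\big),
\]
so that $S_0 = F_{0,1}^C$ and $S_1 = \tfrac12 F_{0,2}^C$. Setting $P_1 := \sum_{m\ge 0}\hbar^m S_m'$, one has $\hbar\,\Psi'/\Psi = P_1$ and $(\hbar\,d/dx)^2\Psi = (\hbar P_1' + P_1^2)\Psi$, so the quantum curve equation \eqref{QC Catalan} is equivalent to the single power-series identity $\hbar P_1' + P_1^2 + x P_1 + 1 = 0$.

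First I would read off the coefficient of $\hbar^0$, namely $(S_0')^2 + x\,S_0' + 1 = 0$. Since $dF_{0,1} = y\,dx$ by \eqref{F01 definition}, we have $S_0'(x) = y$, and this relation is precisely $y^2 + xy + 1 = 0$, the spectral curve \eqref{Hermite-spectral}; this settles the semi-classical claim. The coefficient of $\hbar^k$ for $k\ge 1$ yields the \textbf{WKB recursion}
\[
(2S_0' + x)\,S_k' = -\,S_{k-1}'' - \sum_{\substack{a+b=k\\ a,b\ge 1}}S_a'\,S_b',
\]
and since $2S_0'+x=\pm\sqrt{x^2-4}$ is generically nonzero, this determines $S_k'$ uniquely from the lower data. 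The theorem thus reduces to checking that the $S_k$ built from the $F_{g,n}^C$ obey this recursion.

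The core step is to match the WKB recursion with the diagonal specialization of \eqref{FC recursion}. I would set $t_1=\dots=t_n=t$ in \eqref{FC recursion}, weight by $1/n!$, and sum over all $(g,n)$ with $2g-2+n=k-1$; by symmetry $\frac{d}{dt}F_{g,n}^C(t,\dots,t) = n\,\partial_{t_1}F_{g,n}^C|_{\mathrm{diag}}$, which absorbs the $1/n!$ factors correctly. Converting $\partial/\partial t$ to $\partial/\partial x$ through the Jacobian of \eqref{Hermite t} — the prefactors $(t^2-1)^3/t^2$ and $(t^2-1)^2/t^2$ in \eqref{FC recursion} are tailored to make this change of variable transparent — the bilinear term of \eqref{FC recursion} should reproduce the product $\sum_{a+b=k}S_a'S_b'$, while the three $j$-sums, after the diagonal restriction, collapse into $x\,S_k' + S_{k-1}''$ once the unstable inputs $F_{0,1}^C$ and $F_{0,2}^C$ of \eqref{F01C}, \eqref{F02C} are folded in.

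I expect the main obstacle to be exactly this principal-specialization bookkeeping: verifying that the per-$j$ linear sums in \eqref{FC recursion}, restricted to the diagonal with the $1/n!$ weights, reorganize into the precise combination $x\,S_k' + S_{k-1}''$, and in particular that the unstable terms $F_{0,1}^C, F_{0,2}^C$ — which enter the WKB equation but fall outside the stable range $2g-2+n\ge 2$ of \eqref{FC recursion} — supply the missing contributions. A separate, genuinely independent check is the low range $k\le 2$, where the differential recursion does not apply: as the Remark after \eqref{F03} warns, the recursion does not produce $S_2$. Here I would verify directly from the explicit formulas \eqref{F01C}, \eqref{F02C}, \eqref{F11}, \eqref{F03} that $S_1=\tfrac12 F_{0,2}^C(t,t)$ satisfies $(2S_0'+x)S_1'=-S_0''$ and that $S_2 = F_{1,1}^C(t) + \tfrac16 F_{0,3}^C(t,t,t)$ satisfies $(2S_0'+x)S_2' = -S_1'' - (S_1')^2$. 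With these base cases secured and the specialized recursion matching WKB for $k\ge 3$, the uniqueness of the solution of the WKB recursion completes the argument.
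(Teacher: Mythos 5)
Your proposal is correct and takes essentially the same route as the paper's own proof: expand the conjugated operator in powers of $\hbar$ to obtain the WKB system \eqref{sclC}--\eqref{h m+1C}, identify the $\hbar^0$ equation with the spectral curve via $y=S_0'(x)$ and \eqref{F01C}, verify the low-order cases directly from the explicit formulas \eqref{F02C}, \eqref{F11}, \eqref{F03}, and recover the equations of higher order as the principal specialization of the differential recursion \eqref{FC recursion} through Lemma~\ref{lem:principal specialization} --- precisely the bookkeeping the paper defers to \cite{OM2}. Your explicit isolation of the $S_2$ equation as a base case needing separate verification (since $F_{1,1}^C$ and $F_{0,3}^C$ lie outside the stable range of \eqref{FC recursion}) matches the paper's own remarks that these serve as initial data rather than outputs of the recursion.
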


\begin{rem}
Since $t=t(x)$ is a complicated function,
it is surprising  to see that 
$\Psi(t,\hbar)$ satisfies such a simple equation as
\eqref{QC Catalan}.
\end{rem}

\begin{rem}
The definition \eqref{Psi Catalan} and
the meaning of \eqref{QC Catalan} is the
same as in the situation we have
 explained in the Introduction. 
First, we define
\be
\label{Sm Catalan}
S_m(x):=\sum_{2g-2+n=m-1}\frac{1}{n!}
F_{g,n}^C\big(t(x),t(x),\dots,t(x)\big)
\ee
for every $m\ge 0$. We then 
interpret \eqref{QC Catalan}
as
\be
\label{Catalan conjugate}
\left[
e^{-\frac{1}{\hbar}S_0(x)}
\left(
\left(
\hbar\frac{d}{dx} 
\right)^2
+ x\left(
\hbar\frac{d}{dx} 
\right)
+1
\right)
e^{\frac{1}{\hbar}S_0(x)}
\right]
\exp\left(\sum_{m=1}^\infty \hbar^{m-1}S_m(x)
\right) = 0.
\ee
\end{rem}

To prove Theorem~\ref{thm:QC Catalan},
let us recall a lemma from \cite{MS}:

\begin{lem}
\label{lem:principal specialization}
Consider an open coordinate neighborhood
$U\subset \Sigma$ of a projective algebraic curve
$\Sigma$ with a coordinate $t$.
Let $f(t_1,\dots,t_n)$ be a 
meromorphic symmetric function
in $n$ variables defined on $\Sigma^n$.
Then on the coordinate neighborhood $U^n$, we have
\begin{equation}
\begin{aligned}
\label{dfdt}
\frac{d}{dt}f(t,t,\dots,t) 
&= n
\left.
\left[
\frac{\partial}{\partial u}f(u,t,\dots,t)
\right]
\right|_{u=t};
\\
\frac{d^2}{dt^2}f(t,t,\dots,t) 
&=
n\left.\left[
\frac{\partial^2}{\partial u ^2} 
f(u,t,\dots,t)
\right]\right|_{u=t}
\\
&\qquad
+
n(n-1)
\left.\left[
\frac{\partial^2}{\partial u_1 \partial u_2} 
f(u_1,u_2,t,\dots,t)
\right]\right|_{u_1=u_2=t}.
\end{aligned}
\end{equation}
For a meromorphic function in one variable 
$f(t)$ on  $\Sigma$,
we have
\begin{equation}
\label{lhopital}
\lim_{t_2\rar t_1}
\left[
\omega_\Sigma^{t_2-b}(t_1)\big(
f(t_1)-f(t_2)
\big)
\right]
=d_1f(t_1),
\end{equation}
where $\omega_\Sigma^{t_2-b}(t_1)$ is the $1$-form
of  Remark~\ref{rem:omega}.
\end{lem}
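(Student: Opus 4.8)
The plan is to treat the two assertions separately. The principal-specialization identities \eqref{dfdt} are a pure application of the multivariable chain rule combined with the symmetry of $f$, while the limit formula \eqref{lhopital} follows from the local pole structure of the normalized differential $\omega_\Sigma^{t_2-b}(t_1)$ near the diagonal.

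For the first identity in \eqref{dfdt}, I would set $g(t):=f(t,\dots,t)$ and differentiate by the chain rule, obtaining $g'(t)=\sum_{i=1}^n (\partial_{t_i}f)(t,\dots,t)$. Because $f$ is symmetric, each of the $n$ summands restricted to the diagonal equals the single value $\bigl[\partial_u f(u,t,\dots,t)\bigr]\big|_{u=t}$, which yields the stated factor of $n$. For the second identity I would differentiate once more, producing $g''(t)=\sum_{i,j}(\partial_{t_i}\partial_{t_j}f)(t,\dots,t)$, and then split the double sum into the $n$ diagonal terms $i=j$ and the $n(n-1)$ off-diagonal terms $i\neq j$. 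Symmetry again collapses each class to one representative value, giving precisely the two terms with coefficients $n$ and $n(n-1)$ in \eqref{dfdt}. The only point requiring attention is the bookkeeping of the off-diagonal multiplicity, which is exactly $n(n-1)$.

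For \eqref{lhopital}, I would use the characterization of $\omega_\Sigma^{t_2-b}(t_1)$ from Remark~\ref{rem:omega}: as a form in $t_1$ it has a simple pole of residue $+1$ at $t_1=t_2$ and is otherwise holomorphic near the diagonal. Thus in the coordinate $t$ one may write
\[
\omega_\Sigma^{t_2-b}(t_1)=\left(\frac{1}{t_1-t_2}+h(t_1,t_2)\right)dt_1,
\]
where $h$ is holomorphic in $(t_1,t_2)$ in a neighborhood of the diagonal; for $\Sigma=\bP^1$ one has simply $h(t_1,t_2)=-1/(t_1-b)$ by \eqref{omega P1}. Multiplying by $f(t_1)-f(t_2)$, the $h$-contribution tends to $h(t_1,t_1)\cdot 0=0$, while the polar part gives the difference quotient $\bigl(f(t_1)-f(t_2)\bigr)/(t_1-t_2)$, whose limit as $t_2\to t_1$ is $f'(t_1)$. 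Hence the whole expression tends to $f'(t_1)\,dt_1=d_1f(t_1)$.

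The computation is essentially routine; the one thing I expect to need care is verifying that the remainder $h(t_1,t_2)$ is jointly regular at $t_1=t_2$, so that its product with the vanishing factor $f(t_1)-f(t_2)$ contributes nothing in the limit, and that the dependence on the auxiliary base point $b$ disappears — which it does, precisely because the $b$-pole is absorbed into this regular remainder. Beyond this local bookkeeping I anticipate no serious obstacle.
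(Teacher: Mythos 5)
Your proof is correct. Note that the paper does not actually prove this lemma — it is recalled from \cite{MS} — and your argument (the chain rule together with symmetry bookkeeping for \eqref{dfdt}, and the local expansion $\omega_\Sigma^{t_2-b}(t_1)=\bigl(\frac{1}{t_1-t_2}+h(t_1,t_2)\bigr)dt_1$ with $h$ jointly regular near the diagonal for \eqref{lhopital}) is exactly the standard one; the regularity of $h$ that you flag is a known property of normalized differentials of the third kind, whose dependence on the pole point $t_2$ is holomorphic away from $t_1=t_2$ and $t_1=b$.
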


\begin{proof}[Proof of Theorem~\ref{thm:QC Catalan}]
Differential forms can be restricted to a subvariety,
but partial differential equations cannot be
restricted to a subvariety in general. Therefore,
it is non-trivial that \eqref{FC recursion} has
any meaningful restriction 
to the diagonal of $(\bP^1)^n$.
Our strategy is the following.
First, we expand \eqref{Catalan conjugate}
with respect to powers of $\hbar$, and derive
an infinite system of ordinary differential 
equations for a finite collection of $S_m(x)$'s.
(This method is known as the WKB analysis.)
We then prove that these ODEs are exactly
the \textbf{principal specialization} of
\eqref{FC recursion}, using \eqref{dfdt}.

Let 
\be
\label{Free}
F(x,\hbar):=
\sum_{m=0}^\infty
\hbar^{m-1} S_m(x).
\ee
Unlike the ill-defined  expression \eqref{Psi Catalan},
$F(x,\hbar)$ is just a generating function of
$S_m(x)$'s.
Then \eqref{QC Catalan}, interpreted as
\eqref{Catalan conjugate}, is equivalent to 
\begin{equation}
\label{F diff}
\hbar^2 \frac{d^2}{dx^2}F + 
\hbar^2\frac{dF}{dx}
\frac{dF}{dx} +x \hbar \frac{dF}{dx} +1=0.
\end{equation}
The $\hbar$-expansion of \eqref{F diff} 
gives
\begin{align}
\label{sclC}
&\hbar^0{\text{-terms}}:
\quad (S_0'(x))^2 +xS_0'(x) + 1=0,
\\
\label{consistencyC}
&\hbar^1{\text{-terms}}:\quad
2S_0'(x)S_1'(x) + S_0''(x)+xS_1'(x)=0,
\\
\label{h m+1C}
&\hbar^{m+1}{\text{-terms}}:\quad
S_m''(x) +\sum_{a+b=m+1}
S_a'(x)S_b'(x)+xS_{m+1}'(x)=0, \quad m\ge 1,
\end{align}
where $'$ denotes the $x$-derivative.
The WKB method is to solve these equations
iteratively and find $S_m(x)$ for all $m\ge 0$. Here,
\eqref{sclC} is the
\textbf{semi-classical limit}
of \eqref{QC Catalan}, and 
\eqref{consistencyC} is the 
\emph{consistency condition} we need for solving
the WKB expansion.
Since  $dS_0(x)=ydx$, we have $y=S_0'(x)$. Thus 
\eqref{sclC} follows from  the 
spectral curve  equation
\eqref{Hermite-spectral} and the definition of
$F_{0,1}^C(t)$ of \eqref{F01C}.

Recalling $x=z+1/z$, we obtain
$$
\frac{d}{dx} = \frac{z^2}{z^2-1}\frac{d}{dz}.
$$
In $z$-coordinate, $F_{0,2}^C(z,z) = -\log(1-z^2)$,
which follows from \eqref{F02C}.
Therefore,
$$
S'_1(x) = -\half \frac{z^2}{z^2-1}\frac{d}{dz}
\log(1-z^2)
=
-\frac{z^3}{(z^2-1)^2}.
$$
We can then calculate
\begin{align*}
2S_0'(x)S_1'(x) + S_0''(x)+xS_1'(x)
&=
2z \frac{z^3}{(z^2-1)^2}-\frac{z^2}{z^2-1}-
\left(z+\frac{1}{z}\right)\frac{z^3}{(z^2-1)^2}
\\
&=\frac{1}{(z^2-1)^2}\left(2z^4 -z^2(z^2-1)
-z^4-z^2
\right)
\\
&=0.
\end{align*}
Therefore, \eqref{consistencyC} holds.
We refer to \cite{OM2} for the proof of
\eqref{h m+1C} from \eqref{FC recursion}
and \eqref{lem:principal specialization}.
\end{proof}

\begin{rem}
The  solution $\Psi\big(t(x),\hbar\big)$ is 
a formal section of $K_{\bP^1}^{-\half}$, as before. 
Note that the quantum curve \eqref{QC Catalan}
has an irregular singularity at $x=\infty$, 
and hence its solution has an essential 
singularity at  infinity. The 
expression \eqref{Psi Catalan} gives 
the asymptotic expansion of a solution 
around 
$$
t\rar -1 \Longleftrightarrow x\rar\infty.
$$
The expansion in $1/x$ for
$\hbar>0$ is given in
\eqref{Catalan Psi expansion} below, using
the Tricomi confluent hypergeometric function.
\end{rem}

\subsection{Counting lattice points  on the
moduli space $\cM_{g,n}$}

The topological recursion 
\eqref{Catalan TR} is a consequence of
\eqref{FC recursion}, and the PDE recursion 
\eqref{FC recursion} is essentially the Laplace
transform of the combinatorial formula
\eqref{Catalan recursion}. 
Then from where does the relation to 
the intersection numbers 
\eqref{Fgn highest} arise?

To see this relation, we need to consider the
dual of the cell graphs. They are commonly 
known as Grothendieck's dessins d'enfants
(see for example, \cite{SL}), 
or ribbon graphs, as mentioned 
earlier. 
Recall that a ribbon graph has unlabeled 
vertices and edges, but faces are labeled. 
A \emph{metric} ribbon graph is a ribbon graph with a
positive real number (the length) assigned to each edge.
For a given ribbon graph $\Gamma$ with $e=e(\Gamma)$
edges, the space of metric
ribbon graphs is $\bR_+ ^{e(\Gamma)}/\Aut (\Gamma)$,
where the
automorphism group acts by permutations of edges
(see \cite[Section~1]{MP1998}).
We restrict ourselves to the case that
$\Aut (\Gamma)$ fixes each $2$-cell of the cell-decomposition.
We also require that every vertex of a ribbon graph has degree
(i.e., valence) $3$ or more.
Using the canonical holomorphic coordinate systems on
a topological surface of \cite[Section~4]{MP1998}
 and the Strebel differentials \cite{Strebel}, we have
an isomorphism of topological orbifolds 
\begin{equation}
\label{M=RG}
{\cM}_{g,n}\times \bR_+ ^n \isom RG_{g,n}.
\end{equation}
Here
$$
RG_{g,n} = \coprod_{\substack{\Gamma {\text{ ribbon graph}}\\
{\text{of type }} (g,n)}}
\frac{\bR_+ ^{e(\Gamma)}}{\Aut (\Gamma)}
$$
is the orbifold consisting of metric ribbon graphs of a given
topological type $(g,n)$.
The gluing of orbi-cells
 is done by making
the length of a non-loop edge tend to $0$. The space
 $RG_{g,n}$ is a smooth orbifold
(see \cite[Section~3]{MP1998} and \cite{STT}).
We denote by $\pi:RG_{g,n}\longrightarrow
\bR_+ ^n$ the natural projection via \eqref{M=RG}, which
is the assignment of the collection of perimeter lengths
of each boundary to a given metric ribbon graph.

Consider a ribbon graph $\Gamma$
whose faces are labeled
by $[n]=\{1,2\dots,n\}$. For the moment let us give a label to
each edge of $\Gamma$ by an index set $[e] = \{1,2,\dots,e\}$.
The edge-face incidence matrix  is then defined by
\begin{equation*}
\begin{aligned}
A_\Gamma &= \big[
a_{i\eta}\big]_{i\in [n],\;\eta\in [e]};\\
a_{i\eta} &= \text{ the number of times edge $\eta$ appears in
face $i$}.
\end{aligned}
\end{equation*}
Thus $a_{i\eta} = 0, 1,$ or $2$, and the sum of the
entries in each column is
always $2$. The $\Gamma$ contribution of the space
$\pi^{-1}(p_1,\dots,p_n) = RG_{g,n}(\bp)$
 of metric ribbon graphs with
a prescribed perimeter $\mathbf{p}=(p_1,\dots,p_n)$ is the orbifold
polytope
$$
P_\Gamma (\mathbf{p})/\Aut(\Gamma),\qquad
P_\Gamma (\mathbf{p})= \{\mathbf{x}\in \bR_+ ^e\;|\;
A_\Gamma \mathbf{x} = \mathbf{p}\},
$$
where $\mathbf{x}=(\ell_1,\dots,\ell_e)$ is the collection of
edge lengths of the metric ribbon graph $\Gamma$. We have
\begin{equation*}
\sum_{i\in [n]} p_i= \sum_{i\in [n]}
\sum_{\eta\in [e]}a_{i\eta}\ell_\eta =
2\sum_{\eta\in [e]}
\ell_\eta.
\end{equation*}

We recall the topological recursion for
the number of
 metric ribbon graphs $RG_{g,n} ^{\bZ_+}$
whose edges have
integer lengths, following \cite{CMS}.
We call such a ribbon graph
an \emph{integral ribbon graph}.
We can interpret an integral ribbon graph as
Grothendieck's \emph{dessin d'enfant} by considering
an edge of integer length as a chain of edges of length one
connected by bivalent vertices, and reinterpreting
 the notion of
$\Aut(\Gamma)$ suitably.
Since we do not go into the number theoretic aspects of
dessins, we stick to the more geometric
notion of integral ribbon graphs.

\begin{Def}
\label{def:N}
The weighted number $\big| RG_{g,n} ^{\bZ_+}(\bp)\big|$
of integral ribbon graphs with
prescribed perimeter lengths
$\bp\in\bZ_+ ^n$ is defined by
\begin{equation}
\label{Ngn}
N_{g,n}(\bp) =
\big| RG_{g,n} ^{\bZ_+}(\bp)\big|
=\sum_{\substack{\Gamma {\text{ ribbon graph}}\\
{\text{of type }} (g,n)}}
\frac{\big|\{\bx\in \bZ_+ ^{e(\Gamma)}\;|\;A_\Gamma \bx = \bp\}
\big|}{|\Aut(\Gamma)|}.
\end{equation}
\end{Def}

Since the finite set
$\{\bx\in \bZ_+ ^{e(\Gamma)}\;|\;A_\Gamma \bx = \bp\}$
is a collection of lattice points in the polytope $P_\Gamma(\bp)$
with respect to the canonical integral structure $\bZ\subset
\bR$ of the real numbers, $N_{g,n}(\bp)$ can be thought of
counting the
number of \emph{lattice points}
in $RG_{g,n}(\bp)$ with a weight factor
$1/|\Aut(\Gamma)|$ for each ribbon graph.
The function $N_{g,n}(\bp)$ is a symmetric function in
$\bp = (p_1,\dots,p_n)$
because the summation runs over all ribbon graphs of topological
type $(g,n)$.

\begin{rem}
Since the integral vector $\bx$ is restricted to
take strictly positive values,
we would have $N_{g,n}(\bp) = 0$ if we were to
substitute $\bp=0$.
This normalization is natural from the point of view of
lattice point counting and Grothendieck's
\emph{dessins d'enphants}. However, we do not
make such a substitution in these lectures because we
consider $\bp$ as a strictly positive integer vector.
This situation is similar to Hurwitz theory
 \cite{EMS, MZ}, where  a partition $\mu$ is
 a strictly positive integer vector that plays the role of our
 $\bp$. We note that  a different
 assignment of values was suggested in \cite{N1, N2}.
\end{rem}

For brevity of notation, we denote by $p_I = (p_i)_{i\in I}$
for a subset $I\in [n]=\{1,2\dots,n\}$. The cardinality of $I$ is
denoted by $|I|$. The following topological recursion
formula was proved in \cite{CMS} using the idea of
ciliation of a ribbon graph.

\begin{thm}[\cite{CMS}]
\label{thm:integralrecursion}
The number of integral ribbon graphs
with prescribed boundary lengthes satisfies the
topological recursion formula
\begin{multline}
\label{integralrecursion}
p_1  N_{g,n}(p_{[n]})
=
\half
\sum_{j=2} ^n
\Bigg[
\sum_{q=0} ^{p_1+p_j}
q(p_1+p_j-q)  N_{g,n-1}(q,p_{[n]\setminus\{1,j\}})
\\
+
H(p_1-p_j)\sum_{q=0} ^{p_1-p_j}
q(p_1-p_j-q)N_{g,n-1}(q,p_{[n]\setminus\{1,j\}})
\\
-
H(p_j-p_1)\sum_{q=0} ^{p_j-p_1}
q(p_j-p_1-q)
N_{g,n-1}(q,p_{[n]\setminus\{1,j\}})
\Bigg]
\\
+\half \sum_{0\le q_1+q_2\le p_1}q_1q_2(p_1-q_1-q_2)
\Bigg[
N_{g-1,n+1}(q_1,q_2,p_{[n]\setminus\{1\}})
\\
+\sum_{\substack{g_1+g_2=g\\
I\sqcup J=[n]\setminus\{1\}}} ^{\rm{stable}}
N_{g_1,|I|+1}(q_1,p_I)
N_{g_2,|J|+1}(q_2,p_J)\Bigg].
\end{multline}
Here
$$
H(x) = \begin{cases}
1 \qquad x>0\\
0 \qquad x\le 0
\end{cases}
$$
is the Heaviside function,
and the last sum is taken for all partitions
$g=g_1+g_2$ and $I\sqcup J= \{2,3,\dots,n\}$
 subject to the stability conditions
$2g_1-1+{I}>0$ and $2g_2-1+|J|>0$.
\end{thm}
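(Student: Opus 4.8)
The plan is to prove \eqref{integralrecursion} by a weight-exact bijective surgery on integral ribbon graphs, following the \emph{ciliation} idea, in close parallel with the proof of the Catalan recursion \eqref{Catalan recursion}. First I would reinterpret the left-hand side: since face $1$ of an integral ribbon graph has integer perimeter $p_1$, its boundary cycle carries exactly $p_1$ distinguished unit lattice points, so $p_1 N_{g,n}(\bp)$ is the weighted count, in the sense of \eqref{Ngn}, of \emph{ciliated} integral ribbon graphs of type $(g,n)$ equipped with one marked boundary point on face $1$. Running along the boundary of face $1$ from the cilium in the orientation direction, I would single out the first edge $e$ met after the nearest vertex; this $e$ is the edge on which the surgery will act, and the marked point records how far along $e$ the cilium sits.

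The core of the argument is the analysis of removing $e$, organized by how $e$ meets the faces. As an edge of the ribbon graph, $e$ borders two face-sides; either these belong to face $1$ and to a distinct face $j$, or both belong to face $1$. In the first case the surgery fuses faces $1$ and $j$ into a single face of a ribbon graph of type $(g,n-1)$, which produces the $j$-sum in the first three lines of \eqref{integralrecursion}; in the second case the surgery either creates a handle, lowering the genus and yielding the $N_{g-1,n+1}$ term, or it disconnects the surface, yielding the product $N_{g_1,|I|+1}N_{g_2,|J|+1}$. This dichotomy is exactly the ribbon-graph dual of the two Cases in the proof of \eqref{Catalan recursion}, and the stability restriction on the last sum reflects the nonexistence of the unstable pieces.

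The quantitative content is the conversion of each surgery into the lattice-point kernels. Each face created or enlarged by the surgery acquires its own cilium — a choice among its $q$ (resp.\ $q_1$, $q_2$) boundary lattice points — while the length of the removed edge $e$ supplies the complementary factor; convolving these choices over all admissible edge lengths is what produces $q(p_1+p_j-q)$ in the fusion case and $q_1q_2(p_1-q_1-q_2)$ in the self-fusion case, summed over the ranges $0\le q\le p_1+p_j$ and $0\le q_1+q_2\le p_1$. The two Heaviside-corrected terms $H(p_1-p_j)$ and $H(p_j-p_1)$ distinguish the cyclic orders in which the oriented boundaries of faces $1$ and $j$ can be concatenated and enforce positivity of the resulting segment lengths. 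Unlike \eqref{Catalan recursion}, where the arrow absorbed the factor $\half$, here the single cilium on face $1$ does not break the symmetry of the two sides of the cut, so the prefactor $\half$ and the symmetrization over $I\sqcup J$ genuinely appear.

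The main obstacle will be making the surgery a \emph{weight-exact} bijection rather than merely a combinatorial correspondence: I must verify that the $1/|\Aut(\Gamma)|$ normalization of \eqref{Ngn} is respected under removal of $e$ (with the cilium, together with the induced marking on the new faces, killing residual automorphisms), that no graph is double-counted, and that the Heaviside thresholds and summation bounds match the positivity constraints on edge lengths precisely. I would pin these down by tracking the edge-face incidence matrix $A_\Gamma$ through the surgery and checking the perimeter identity $\sum_{i} p_i = 2\sum_{\eta}\ell_\eta$ before and after each move. As an independent consistency check, I would confirm that the discrete Laplace transform of \eqref{integralrecursion} reproduces the differential recursion \eqref{FC recursion}, since $F_{g,n}^C$ is identified with the Laplace transform of $N_{g,n}$, and \eqref{FC recursion} is in turn tied to \eqref{Catalan recursion}.
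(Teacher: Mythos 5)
You should first be aware that the paper contains no proof of this statement: Theorem \ref{thm:integralrecursion} is imported from \cite{CMS}, with only the remark that the proof there uses ``the idea of ciliation of a ribbon graph.'' Your overall frame --- reading $p_1 N_{g,n}(\bp)$ as a weighted count of ciliated graphs in the sense of \eqref{Ngn}, and performing an edge surgery whose outcomes (merging face $1$ with a face $j$, creating a handle, disconnecting) produce the three groups of terms --- is therefore the same strategy as the one the paper attributes to the cited source, and it runs parallel to the paper's bijective proof of the Catalan recursion \eqref{Catalan recursion}.

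The genuine gap is in the quantitative step, exactly where your sketch is vaguest: the Heaviside terms. A preliminary bookkeeping point: on the support of $N_{g,n-1}$ the parity of $q$ is forced (perimeters sum to twice the total edge length), so $p_1+p_j-q=2\ell$ is \emph{twice} the length $\ell$ of the removed edge; the prefactor $\half$ in the $j$-sum is absorbed by this factor of $2$, not by a symmetry of the cut --- your symmetry explanation is correct only for the $(q_1,q_2)$-sum, where the two newly created faces are interchangeable. Now the serious issue: combining the three $j$-terms on the parity support, the effective kernel equals $\half q(p_1+p_j-q)=q\ell$ when $q\ge |p_1-p_j|$, but it equals $q(p_1-q)$ when $q<p_1-p_j$, and $qp_1$ when $q<p_j-p_1$, and these are nonzero. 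Your surgery --- remove the edge of length $\ell$ shared by faces $1$ and $j$, record the two attachment points ($q$ choices) and the cilium position on the edge ($\ell$ choices) --- yields exactly $q\ell$, and it is only admissible when both boundary arcs $p_1-\ell$ and $p_j-\ell$ are nonnegative, i.e.\ when $q\ge|p_1-p_j|$; it yields nothing below that threshold. Hence the Heaviside-corrected sums do \emph{not} ``enforce positivity of the resulting segment lengths'': they \emph{add} contributions precisely in the regime where your bijection produces none (for instance, in the recursion for $p_1N_{1,2}(7,1)$ they contribute $4\cdot 3\cdot N_{1,1}(4)\neq 0$ at $q=4$, a value your surgery cannot reach). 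Those contributions come from degenerate surgeries your proposal never confronts: e.g.\ if the shared edge is a loop based at a trivalent vertex, its removal leaves a degree-one vertex, and restoring the requirement that all vertices have degree at least three forces a cascade of further deletions that shrinks the merged perimeter below $|p_1-p_j|$, with a nontrivial effect on the $1/|\Aut(\Gamma)|$ weights. Identifying and weighting these degenerations --- or reorganizing the surgery so that they cannot occur --- is the actual content of the proof, and tracking $A_\Gamma$ and the perimeter identity, as you propose, will not by itself surface them. Your closing remark, that the Laplace transform of \eqref{integralrecursion} should reproduce \eqref{FC recursion}, is in fact the germ of a complete argument available inside this paper (prove the Heaviside-free recursion \eqref{Catalan recursion} bijectively, take Laplace transforms, identify the generating functions, then invert the transform); but as written you invoke it only as a consistency check, not as the proof.
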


For a fixed $(g,n)$ in the stable range, {\it i.e.}, $2g-2+n>0$, we
choose $n$ variables $t_1,t_2,\dots,t_n$, and
define the function
\be
\label{zt}
z(t_i,t_j) = \frac{(t_i+1)(t_j+1)}{2(t_i+t_j)}.
\ee
An edge $\eta$ of a ribbon graph $\Gamma$ bounds two
faces, say $i_\eta$ and $j_\eta$. These two faces may be
actually the same. Now we define the
\emph{Poincar\'e polynomial} 
\cite{MP2012}
of $RG_{g,n}$ in the $z$-variables
by
\begin{equation}
\label{Poincare}
F_{g,n}^P(t_1,\dots,t_n) =
\sum_{\substack{\Gamma {\text{ ribbon graph}}\\
{\text{of type }} (g,n)}}
\frac{(-1)^{e(\Gamma)}}{|\Aut(\Gamma)|}
\prod_{\substack{\eta \text{ edge}\\
\text{of }\Gamma}}
z\!\left(t_{i_\eta},t_{j_\eta}
\right),
\end{equation}
which is a polynomial in $z(t_i,t_j)$ but actually a
symmetric rational function in $t_1,\dots,t_n$.

Let us consider the \emph{Laplace transform}
 \begin{equation}
 \label{Lgn}
 L_{g,n}(w_1,\dots,w_n) \overset{\rm{def}}{=}
 \sum_{\bp\in\bZ_{+} ^n} N_{g,n}(\bp) e^{-\la \bp,w\ra}
 \end{equation}
 of the number of
integral ribbon graphs $N_{g,n}(\bp)$,
where $\la \bp,w\ra=p_1w_1+\cdots+p_nw_n$, and
the summation is taken over all integer vectors $\bp\in\bZ_+^n$
of strictly positive entries.
We shall prove that after the coordinate change
of \cite{CMS}
from the $w$-coordinates to the $t$-coordinates
defined by
\begin{equation}
\label{wt}
e^{-w_j} = \frac{t_j+1}{t_j-1}, \qquad j=1,2,\dots,n,
\end{equation}
 the Laplace transform
$L_{g,n}(w_{[n]})$ becomes the Poincar\'e
polynomial
\begin{equation*}
F_{g,n}^P(t_1,\dots,t_n) =
L_{g,n}\big(w_1(t),\dots,w_n(t)\big).
\end{equation*}

The Laplace transform $L_{g,n}(w_N)$ can be evaluated
using the definition
of the number of integral ribbon graphs
\eqref{Ngn}.
Let $a_\eta$ be the $\eta$-th column of the
incidence matrix $A_\Gamma$ so that
\begin{equation*}
A_\Gamma = \big[a_1\big|a_2\big|\cdots\big|a_{e(\Gamma)}\big].
\end{equation*}
Then
\begin{multline}
\label{LinA}
L_{g,n}(w_{[n]}) =
\sum_{\bp\in\bZ_+^n}
N_{g,n}(\bp) e^{-\la \bp,w\ra}
\\
=
\sum_{\substack{\Gamma {\text{ ribbon graph}}\\
{\text{of type }} (g,n)}}
\sum_{\bp\in\bZ_+^n}
\frac{1}{|\Aut(\Gamma)|}
\big|\{\bx\in \bZ_+ ^{e(\Gamma)}\;|\;A_\Gamma \bx = \bp\}
\big|
 e^{-\la \bp,w\ra}
 \\
 =
 \sum_{\substack{\Gamma {\text{ ribbon graph}}\\
{\text{of type }} (g,n)}}
\frac{1}{|\Aut(\Gamma)|}
\sum_{\bx\in\bZ_+^{e(\Gamma)}}
e^{-\la A_\Gamma \bx,w\ra}
\\
 =
 \sum_{\substack{\Gamma {\text{ ribbon graph}}\\
{\text{of type }} (g,n)}}
\frac{1}{|\Aut(\Gamma)|}
\prod_{\substack{\eta \text{ edge}\\
\text{of }\Gamma}}
\sum_{\ell_\eta=1}^\infty
e^{-\la a_\eta,w\ra\ell_\eta}
\\
 =
 \sum_{\substack{\Gamma {\text{ ribbon graph}}\\
{\text{of type }} (g,n)}}
\frac{1}{|\Aut(\Gamma)|}
\prod_{\substack{\eta \text{ edge}\\
\text{of }\Gamma}}
\frac{
e^{-\la a_\eta,w\ra}}
{1-e^{-\la a_\eta,w\ra}}.
\end{multline}
Every edge $\eta$ bounds two faces, which we call face
$i_\eta ^+$ and face $i_\eta ^-$. When $a_{i\eta}=2$,
these faces are the same.
We then calculate
\begin{equation}
\label{aw}
\frac{
e^{-\la a_\eta,w\ra}}
{1-e^{-\la a_\eta,w\ra}}
=
-z\!\left(t_{i_\eta ^+},t_{i_\eta ^-}
\right).
\end{equation}
This follows from (\ref{wt}) and
\begin{align*}
\frac{e^{-(w_i+w_j)}}{1-e^{-(w_i+w_j)}}
&=
\frac{\frac{(t_i+1)(t_j+1)}{(t_i-1)(t_j-1)}}
{1-\frac{(t_i+1)(t_j+1)}{(t_i-1)(t_j-1)}}
=
-\frac{(t_i+1)(t_j+1)}{2(t_i+t_j)}
=-z(t_i,t_j),
\\
\frac{e^{-2w_i}}{1-e^{-2w_i}}
&=
-\frac{(t_i+1)^2}{4t_i}
=-z(t_i,t_i).
\end{align*}
Note that since $z(t_i,t_j)$ is a symmetric
function, which face is
named $i_\eta ^+$ or $i_\eta ^-$ does not matter.
From \eqref{LinA} and \eqref{aw}, we have
established

\begin{thm}[The Poincar\'e polynomials and
the Laplace transform, \cite{MP2012}]
\label{thm:Finz}
The Laplace transform $L_{g,n}(w_{[n]})$ in terms
of the $t$-coordinates \eqref{wt}
 is the Poincar\'e polynomial
\begin{equation}
\label{Finz}
F_{g,n}^P(t_{[n]}) =
 \sum_{\substack{\Gamma {\text{ ribbon graph}}\\
{\text{of type }} (g,n)}}
\frac{(-1)^{e(\Gamma)}}{|\Aut(\Gamma)|}
\prod_{\substack{\eta \text{ edge}\\
\text{of }\Gamma}}
z\!\left(t_{i_\eta ^+},t_{i_\eta ^-}
\right).
\end{equation}
\end{thm}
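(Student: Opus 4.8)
The plan is to unfold both definitions and reduce the computation to a product over edges, as a standard generating-function manipulation. First I would substitute the lattice-point formula \eqref{Ngn} for $N_{g,n}(\bp)$ into the definition \eqref{Lgn} of the Laplace transform and interchange the summations over $\bp\in\bZ_+^n$ and over ribbon graphs $\Gamma$ of type $(g,n)$, which is legitimate since the sum over $\Gamma$ is finite and, on a suitable domain in $w$, the remaining sums are absolutely convergent. For each fixed $\Gamma$ the inner expression $\sum_{\bp}\big|\{\bx\in\bZ_+^{e(\Gamma)}:A_\Gamma\bx=\bp\}\big|\,e^{-\la\bp,w\ra}$ then collapses: every $\bx\in\bZ_+^{e(\Gamma)}$ determines a unique perimeter vector $\bp=A_\Gamma\bx$, so the constrained count disappears and the sum becomes simply $\sum_{\bx\in\bZ_+^{e(\Gamma)}}e^{-\la A_\Gamma\bx,w\ra}$. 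This is exactly the passage displayed in the middle lines of \eqref{LinA}.

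The next step is the factorization over edges. Writing $A_\Gamma=\big[a_1\big|\cdots\big|a_{e(\Gamma)}\big]$ by its columns, I would use $\la A_\Gamma\bx,w\ra=\sum_\eta \la a_\eta,w\ra\,\ell_\eta$, so the exponential factors as a product over edges and the sum over $\bx=(\ell_1,\dots,\ell_{e(\Gamma)})$ decouples into one geometric series per edge,
\[
\sum_{\ell_\eta=1}^\infty e^{-\la a_\eta,w\ra\,\ell_\eta}
=\frac{e^{-\la a_\eta,w\ra}}{1-e^{-\la a_\eta,w\ra}},
\]
yielding the last line of \eqref{LinA}. Since each column $a_\eta$ of the edge-face incidence matrix has entry-sum $2$, the quantity $\la a_\eta,w\ra$ is forced to be either $w_i+w_j$, when $\eta$ bounds two distinct faces $i_\eta^+=i$ and $i_\eta^-=j$, or $2w_i$, when $\eta$ bounds a single face twice; this dichotomy is the only possibility and covers both cases uniformly.

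Finally I would impose the coordinate change \eqref{wt}, $e^{-w_j}=(t_j+1)/(t_j-1)$, and verify the two elementary identities recorded in \eqref{aw}, namely that each edge factor equals $-z(t_{i_\eta^+},t_{i_\eta^-})$ with $z$ as in \eqref{zt}; these are purely algebraic simplifications. Extracting the sign $-1$ from each of the $e(\Gamma)$ edges produces the global factor $(-1)^{e(\Gamma)}$, and the resulting product of $z$-values reproduces the Poincar\'e polynomial \eqref{Poincare}, which is precisely \eqref{Finz}. The only genuine subtlety, and hence the main obstacle, is analytic rather than combinatorial: the geometric series converge only where $\mathrm{Re}\,\la a_\eta,w\ra>0$ for every edge, so the chain of equalities holds a priori on that domain, and one must then argue that the equality persists in the $t$-coordinates. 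This continuation is immediate here because the sum over $\Gamma$ is finite and, after the substitution \eqref{wt}, each summand is manifestly a rational function of the $t_j$; the two sides therefore agree as rational functions, not merely on the domain of convergence.
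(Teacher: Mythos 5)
Your proposal is correct and follows essentially the same route as the paper: unfolding \eqref{Ngn} into \eqref{Lgn}, swapping the (finite) sum over ribbon graphs with the sum over $\bp$, collapsing the constrained lattice count into a free sum over $\bx$, factoring the exponential edge by edge into geometric series, and then applying the substitution \eqref{wt} to identify each edge factor with $-z\!\left(t_{i_\eta^+},t_{i_\eta^-}\right)$ as in \eqref{aw}, which is precisely the chain \eqref{LinA} in the paper. Your closing remark on convergence and continuation as rational functions is a sound refinement of a point the paper leaves implicit, but it does not change the argument.
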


\begin{cor}
\label{cor:F1}
The evaluation of $F_{g,n}^P(t_{[n]})$ at
$t_1=\cdots=t_n=1$ gives the
Euler characteristic of $RG_{g,n}$
\begin{equation}
\label{euler}
F_{g,n}^P(1,1\dots,1) = \chi\left(RG_{g,n}\right)
=(-1)^n\chi\left(\cM_{g,n}\right).
\end{equation}
Furthermore, if we evaluate at $t_j=-1$ for any $j$, then we have
\begin{equation}
\label{Fj}
F_{g,n}^P(t_1,t_2\dots,t_n)\big|_{t_j=-1} = 0
\end{equation}
as a function in the rest of the variables $t_{[n]\setminus\{j\}}$.
\end{cor}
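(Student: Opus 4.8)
The plan is to read both evaluations directly off the explicit formula \eqref{Finz} for the Poincar\'e polynomial, using the cell-decomposition \eqref{M=RG}. For \eqref{euler}, I would first record that the building block satisfies $z(1,1) = \frac{(1+1)(1+1)}{2(1+1)} = 1$, so that upon setting $t_1=\cdots=t_n=1$ every edge factor in \eqref{Finz} collapses to $1$ and
$$
F_{g,n}^P(1,\dots,1) = \sum_{\Gamma}\frac{(-1)^{e(\Gamma)}}{|\Aut(\Gamma)|},
$$
the sum running over ribbon graphs of type $(g,n)$. The key step is then to identify this alternating weighted count as the orbifold Euler characteristic of $RG_{g,n}$: in the decomposition underlying \eqref{M=RG} each $\Gamma$ contributes an open cell $\bR_+^{e(\Gamma)}/\Aut(\Gamma)$ of dimension $e(\Gamma)$, and the additive (compactly supported) Euler characteristic assigns it exactly the weight $(-1)^{e(\Gamma)}/|\Aut(\Gamma)|$. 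This is the standard ribbon-graph computation, which I would cite (cf.\ \cite{HZ, MP1998}) rather than reprove.

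It then remains to extract the factor $(-1)^n$ from the product structure $RG_{g,n}\cong \cM_{g,n}\times\bR_+^n$. Since the additive Euler characteristic is multiplicative on products and assigns $\bR_+\cong\bR$ the value $-1$, while $\cM_{g,n}$ is a complex orbifold so that its additive and topological orbifold Euler characteristics agree, I obtain
$$
\chi(RG_{g,n}) = \chi(\cM_{g,n})\cdot(-1)^n,
$$
which is \eqref{euler}. I expect this sign bookkeeping to be the main obstacle: one must be careful that the combinatorial sum computes the compactly supported rather than the naive homotopy Euler characteristic, since $\bR_+^n$ is contractible and would otherwise contribute $1$ instead of $(-1)^n$. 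Once this is pinned down, everything else in the first part is a direct substitution.

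For \eqref{Fj} the argument is immediate from the numerator of $z$. Setting $t_j=-1$ annihilates the factor $(t_j+1)$, so $z(t_j,t_k)$ vanishes identically in the remaining variables, and $z(-1,-1)=0$ as well because its numerator is $(t_j+1)^2$ while the denominator $2(t_j+t_j)=-4$ stays nonzero. Because face $j$ has strictly positive perimeter $p_j\ge 1$, it is incident to at least one edge, so for every graph $\Gamma$ of type $(g,n)$ the product $\prod_\eta z(t_{i_\eta^+},t_{i_\eta^-})$ in \eqref{Finz} contains at least one factor carrying $(t_j+1)$. I would finish by checking that no spurious cancellation occurs: the only denominators that could vanish at $t_j=-1$ are those of edges bounding faces $j$ and some $k$ with $t_k=1$, which does not happen when the remaining $t_k$ are kept generic. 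Hence each graph's contribution vanishes, and therefore so does the whole sum, giving \eqref{Fj}.
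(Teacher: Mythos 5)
Your proposal is correct and follows essentially the same route as the paper: evaluate \eqref{Finz} directly, using $z(1,1)=1$ to reduce $F_{g,n}^P(1,\dots,1)$ to the weighted alternating cell count $\sum_\Gamma (-1)^{e(\Gamma)}/|\Aut(\Gamma)|$ for \eqref{euler}, and the factor $(t_j+1)$ forced by the $j$-th face having at least one boundary edge for \eqref{Fj}. The only difference is one of detail: the paper declares the Euler characteristic identification ``immediate,'' whereas you spell out the compactly supported Euler characteristic bookkeeping (additivity over orbi-cells, $\chi_c(\bR_+^n)=(-1)^n$, and $\chi_c=\chi$ for the even-dimensional orbifold $\cM_{g,n}$) and the harmless denominator check at $t_j=-1$ — both worthwhile clarifications, but not a different argument.
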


\begin{proof}
The Euler characteristic calculation immediately follows
from $z(1,1) = 1$.

Consider a ribbon graph $\Gamma$ of type $(g,n)$. Its $j$-th
face
 has at least one edge on its boundary. Therefore,
$$
\prod_{\eta \text{ edge of }\Gamma}
z\!\left(t_{i_\eta ^+},t_{i_\eta ^-}
\right)
$$
has a factor $(t_j+1)$ by \eqref{zt}. It holds for every ribbon
graph $\Gamma$ in the summation of \eqref{Finz}.
Therefore, \eqref{Fj} follows.
\end{proof}

The following theorem is established in 
\cite{MP2012}. Its proof is the computation 
of the Laplace transform of the 
lattice point recursion
\eqref{integralrecursion}.

\begin{thm}[Differential recursion 
for the Poincar\'e polynomials, \cite{MP2012}]
The Poincar\'e polynomials $F_{g,n}^P(t_1,\dots,t_n)$
satisfy exactly the same differential recursion 
\eqref{FC recursion} with the same
initial values of $F_{1,1}^P$ and $F_{0,3}^P$.
\end{thm}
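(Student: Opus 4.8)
The plan is to obtain the asserted recursion as the Laplace transform of the lattice-point recursion \eqref{integralrecursion}, exactly parallel to the way Theorem~\ref{thm:Fgn recursion} is deduced from the Catalan recursion \eqref{Catalan recursion}. By Theorem~\ref{thm:Finz}, after the substitution \eqref{wt} the generating function $L_{g,n}(w_{[n]})$ of \eqref{Lgn} is precisely the Poincar\'e polynomial $F_{g,n}^P(t_{[n]})$, so it suffices to apply the operator $\sum_{\bp\in\bZ_+^n}(\,\cdot\,)e^{-\la\bp,w\ra}$ to both sides of \eqref{integralrecursion} and rewrite everything in the $t$-coordinates. On the left-hand side, $\sum_{\bp}p_1 N_{g,n}(\bp)e^{-\la\bp,w\ra}=-\partial_{w_1}F_{g,n}^P$, and differentiating \eqref{wt} gives $\partial w_1/\partial t_1=2/(t_1^2-1)$, hence $-\partial_{w_1}=\tfrac12(1-t_1^2)\,\partial_{t_1}$. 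Thus the left-hand side becomes $\tfrac12(1-t_1^2)\,\partial_{t_1}F_{g,n}^P$, and after dividing the whole transformed identity by $\tfrac12(1-t_1^2)$ I expect to recover the normalization $\partial_{t_1}F^P_{g,n}$ that stands on the left of \eqref{FC recursion}.

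Next I would transform the right-hand side. The handle term and the splitting terms involve only the variable $p_1$: writing $p_1=q_1+q_2+r$ decouples the summation, so under the Laplace transform the pair $(q_1,q_2)$ is carried into $\partial_{u_1}\partial_{u_2}F^P_{g-1,n+1}|_{u_1=u_2=t_1}$, and into the products $\partial_{t_1}F^P_{g_1,|I|+1}\,\partial_{t_1}F^P_{g_2,|J|+1}$ for the split, while the polynomial weight $q_1q_2(p_1-q_1-q_2)$ summed against $e^{-p_1w_1}$ produces, after \eqref{wt}, the rational prefactor $\tfrac{(t_1^2-1)^3}{t_1^2}$. The technical core here is a short list of auxiliary identities giving the Laplace transforms of these polynomially-weighted partial sums as explicit rational functions of $t$; they are evaluated once by generating-function and partial-fraction manipulations. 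The delicate point is that the chain rule through \eqref{wt} must be applied both to the outer variable $t_1$ and to the inner variables $u_1,u_2$ before specializing $u_1=u_2=t_1$, so that the prefactors assemble correctly.

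The genuinely delicate part is the first family of sums, the $\sum_{j=2}^n$ terms, because there the summation range depends on $p_1\pm p_j$ and carries the Heaviside functions $H(p_1-p_j)$ and $H(p_j-p_1)$. Here the Laplace transform is a true two-variable convolution in $(p_1,p_j)$ rather than a product, and the three pieces must be summed jointly. I would show that the $H$-truncated pieces, once transformed through \eqref{wt}, recombine so that the off-diagonal contribution collapses to the symmetric kernel $\tfrac{t_j}{t_1^2-t_j^2}\big(\tfrac{(t_1^2-1)^3}{t_1^2}\partial_{t_1}F^P_{g,n-1}-\tfrac{(t_j^2-1)^3}{t_j^2}\partial_{t_j}F^P_{g,n-1}\big)$, while the ``diagonal'' remainder isolates the extra term $\tfrac{(t_1^2-1)^2}{t_1^2}\partial_{t_1}F^P_{g,n-1}$ of the second line of \eqref{FC recursion}. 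Matching all prefactors then yields \eqref{FC recursion} verbatim for $F^P_{g,n}$ in the range $2g-2+n\ge2$. I expect this Heaviside recombination, and the bookkeeping that disentangles the $\tfrac{t_j}{t_1^2-t_j^2}$ kernel from the diagonal term, to be the main obstacle; it is exactly the step where \eqref{wt} must be used in full.

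Finally, the recursion determines $F^P_{g,n}$ only for $2g-2+n\ge2$, so the two base cases must be checked by hand. There are only finitely many ribbon graphs of types $(1,1)$ and $(0,3)$, so I would enumerate them, evaluate the Poincar\'e polynomial \eqref{Finz} on each as the product of the edge weights $z(t_i,t_j)$, and verify that the totals coincide with $F^C_{1,1}$ in \eqref{F11} and $F^C_{0,3}$ in \eqref{F03}. Combined with the recursion and with the uniqueness of its solution asserted in Theorem~\ref{thm:Fgn recursion}, this not only proves the stated theorem but also yields the identity $F^P_{g,n}=F^C_{g,n}$ for every stable $(g,n)$ as an immediate corollary.
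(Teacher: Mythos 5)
Your plan is, in outline, exactly the proof the paper points to (it defers the details to \cite{MP2012}): identify $F_{g,n}^P$ with the Laplace transform \eqref{Lgn} via Theorem~\ref{thm:Finz}, push the lattice-point recursion \eqref{integralrecursion} through the coordinate change \eqref{wt} term by term, and settle the base cases $(1,1)$ and $(0,3)$ by finite enumeration of ribbon graphs; your closing remark that uniqueness of the solution then yields $F_{g,n}^P=F_{g,n}^C$ is precisely the corollary the paper draws next. Your left-hand-side conversion $-\partial_{w_1}=\tfrac12(1-t_1^2)\,\partial_{t_1}$ is also correct.

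There is, however, one missing idea, and without it the ``auxiliary identities'' that you correctly identify as the technical core come out wrong: the \emph{parity constraint} on lattice-point counts. By \eqref{Ngn}, $N_{g,n}(\bp)=0$ unless $p_1+\cdots+p_n$ is even, since the total perimeter of a ribbon graph equals twice the number of edges; accordingly, the identity \eqref{integralrecursion} is an identity about graph counts and may only be summed against $e^{-\la \bp,w\ra}$ over the even-total lattice. This propagates into every dummy sum: for the handle and splitting terms, on the support of $N_{g-1,n+1}(q_1,q_2,p_{[n]\setminus\{1\}})$ (and of the product terms) one has $q_1+q_2\equiv p_2+\cdots+p_n \pmod 2$, so after writing $p_1=q_1+q_2+r$ the variable $r$ runs over \emph{even} non-negative integers only. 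This is not cosmetic:
\[
\sum_{r\ge 0} r\,e^{-r w_1}=\frac{e^{-w_1}}{(1-e^{-w_1})^2}=\frac{t_1^2-1}{4},
\qquad
\sum_{\substack{r\ge 0\\ r\ \text{even}}} r\,e^{-r w_1}
=\frac{2e^{-2w_1}}{(1-e^{-2w_1})^2}
=\frac{(t_1^2-1)^2}{8\,t_1^2}.
\]
Only the second expression, combined with the chain-rule factors $\tfrac12(u_i^2-1)$ on the inner variables and the division by the left-hand factor $-\tfrac12(t_1^2-1)$, produces the kernel $-\tfrac1{32}\,\tfrac{(t_1^2-1)^3}{t_1^2}$ of \eqref{FC recursion}; the unrestricted sum gives $-\tfrac1{16}(t_1^2-1)^2$, with no $t_1^{-2}$, so the computation as you describe it would fail to reproduce \eqref{FC recursion}. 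The same parity bookkeeping, applied to the $q$-sums in the $j$-terms (which are supported on $q\equiv p_1\pm p_j \pmod 2$), is the sole source of the denominators $t_1^2$ and $t_j^2$ in the first lines of \eqref{FC recursion}, and it must be woven into your Heaviside recombination as well. Once the parity restriction is built into the generating-function identities, the rest of your plan, including the graph-by-graph check that $F_{1,1}^P$ agrees with \eqref{F11}, goes through.
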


Since the recursion uniquely determines all
$F_{g,n}$'s for $2g-2+n>0$, we have 
the following:

\begin{cor} For every $(g,n)$ in the 
stable range $2g-2+n>0$, the two 
functions are identical:
\be
\label{C=P}
F_{g,n}^C(t_1,\dots,t_n) = 
F_{g,n}^P(t_1,\dots,t_n).
\ee
\end{cor}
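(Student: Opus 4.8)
The plan is to obtain the identity \eqref{C=P} entirely from the \emph{uniqueness} of the solution to the differential recursion \eqref{FC recursion}, by an induction on the complexity $m=2g-2+n$. By Theorem~\ref{thm:Fgn recursion}, the Catalan Laplace transforms $F_{g,n}^C$ satisfy \eqref{FC recursion} for every $(g,n)$ with $2g-2+n\ge 2$, together with the explicit initial data $F_{1,1}^C$ and $F_{0,3}^C$ of \eqref{F11} and \eqref{F03}, and each stable $F_{g,n}^C$ is then pinned down by integrating the right-hand side of \eqref{FC recursion} in $t_1$ from $-1$ to $t_1$. The differential recursion theorem for the Poincar\'e polynomials (stated just above the corollary) asserts that the $F_{g,n}^P$ satisfy the \emph{same} recursion \eqref{FC recursion} with the \emph{same} initial values, so that $F_{1,1}^P=F_{1,1}^C$ and $F_{0,3}^P=F_{0,3}^C$. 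Both families are therefore solutions of one and the same recursive system, and the corollary follows once we verify that this system determines its solution uniquely.

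For the induction, the base case $m=1$ is exactly $(g,n)=(1,1)$ and $(0,3)$, where $F_{g,n}^C=F_{g,n}^P$ holds by the coincidence of \eqref{F11} and \eqref{F03}; these are the only stable pairs with $2g-2+n=1$. For the inductive step $m\ge 2$, every term on the right-hand side of \eqref{FC recursion} is assembled from functions $F_{g',n'}$ of strictly smaller complexity $2g'-2+n'<m$, so by the inductive hypothesis the right-hand sides built from $F^C$ and from $F^P$ are \emph{literally} the same expression. Integrating this common right-hand side in $t_1$ from $-1$ to $t_1$, as prescribed in Theorem~\ref{thm:Fgn recursion}, produces $F_{g,n}^C$ on one side and $F_{g,n}^P$ on the other, hence $F_{g,n}^C=F_{g,n}^P$, completing the induction and establishing \eqref{C=P} throughout the stable range.

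The one point that genuinely needs care, and the step I would flag as the crux, is the constant of integration: equality of the $t_1$-derivatives alone does not force equality of the functions, so one must match the two solutions along $t_1=-1$. This is handled by the shared lower limit $-1$ in the integration prescription, which is equivalent to the boundary vanishing conditions available on both sides: for the Catalan family this is \eqref{initial} of Theorem~\ref{thm:FgnC}, namely $F_{g,n}^C|_{t_1=-1}=0$, and for the Poincar\'e family the corresponding vanishing $F_{g,n}^P|_{t_1=-1}=0$ is precisely \eqref{Fj} of Corollary~\ref{cor:F1}. Since both functions are the integral from $-1$ of an identical integrand, the integration constants automatically agree. Thus the entire argument reduces to verifying the matching of initial data and the strict reduction of complexity in \eqref{FC recursion}; no new computation of the combinatorial or enumerative content is required, all of the analytic labor having been absorbed into the two recursion theorems cited above.
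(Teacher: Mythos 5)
Your proposal is correct and follows essentially the same route as the paper: both families satisfy the differential recursion \eqref{FC recursion} with identical initial data $F_{1,1}$ and $F_{0,3}$, and the uniqueness clause of Theorem~\ref{thm:Fgn recursion} (integration from $t_1=-1$) forces the two solutions to coincide. Your explicit induction on $2g-2+n$ and the remark on matching integration constants via the vanishing at $t_1=-1$ merely spell out what the paper leaves implicit in the phrase ``the recursion uniquely determines all $F_{g,n}$'s.''
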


Because of this identification, we see that
\eqref{euler} implies \eqref{Fgn and Euler}, 
and \eqref{initial} follows from \eqref{Fj}.
We can also see how \eqref{Fgn highest} 
holds.

The limit $t_i\rar \infty$ corresponds to $x_i\rar 2$
through the normalization coordinate of
\eqref{Hermite t}, and $x_i=2$ corresponds to a
branch point of 
$\tilde{\pi}:\widetilde{\Sigma}\lrar 
\bP^1$ of \eqref{blow-up}.
The defining equation 
\eqref{Catalan Fgn} of $F_{g,n}^C$ 
does not tell us what we obtain by 
taking the limit $x_i\rar 2$. 
The geometric meaning of the 
$t_i\rar \infty$ limit becomes crystal clear
by the equality \eqref{C=P}. Let us take a look
at the definition of the Poincar\'e polynomial 
\eqref{Poincare}. 
The fact that it is a Laurent polynomial follows
from the recursion \eqref{FC recursion} by induction.
If $|t_i|>|t_j|>\!>1$, then 
$$
z(t_i,t_j) =\frac{(t_i+1)(t_j+1)}{2(t_i+t_j)}
\sim \half t_j.
$$
Therefore, the highest degree part of 
$F_{g,n}^P$ comes from the graphs of 
type $(g,n)$ with the largest number of
edges. Denoting the number of vertices of
a ribbon graph $\Gam$ by $v(\Gam)$, 
we have
$$
2-2g-n = v(\Gam) -e(\Gam).
$$
To maximize $e(\Gam)$, we need to maximize
$v(\Gam)$, which is achieved by 
taking a trivalent graph (since we do not allow
degree $1$ and $2$ vertices in our ribbon graph).
By counting the number of half-edges of a trivalent
graph, we obtain $2e(\Gam) = 3v(\Gam)$.
Hence we have 
\be
\label{e Gam}
e(\Gam) = 6g-6+3n.
\ee
This is the degree of $F_{g,n}^P$, which 
agrees with the degree of \eqref{Fgn highest}, 
and also consistent with the dimension 
of \eqref{M=RG}.

Fix a point $\bp\in \bZ_+^n$, and scale it 
by a large integer $\lam>\!>1$. Then from
\eqref{Ngn} we see that the number 
 of lattice points
in the polytope $P_\Gam(\lam\bp)/\Aut(\Gam)$
that is counted as a part of $N_{g,n}(\lam \bp)$
is the same as the number of scaled 
lattice points $\bx\in \frac{1}{\lam}\bZ_+^n$
in $P_\Gam(\bp)/\Aut(\Gam)$. As $\lam\rar \infty$,
the number of lattice points can be approximated
by the Euclidean volume of the polytope
(cf.~theory of Ehrhart polynomials).

For a fixed $(w_1,\dots,w_n)$ with $Re(w_j)>0$,
the contribution from large $\bp$'s in the
 Laplace transform $L_{g,n}(w_1,\dots,w_n)$
 of \eqref{Lgn} is small.
 The asymptotic behavior of $L_{g,n}$ as
 $w\rar 0$ picks up the large perimeter
 contribution of $N_{g,n}(\bp)$, or the 
 counting of the lattice points of smaller and smaller
 mesh size. Since 
 $$
 t_j\rar \infty \Longleftrightarrow w_j\rar 0,
 $$
 the large $t_j$ behavior of $F_{g,n}^P$, 
 which is a homogeneous polynomial
 of degree $6g-6+3n$, reflects 
 the information of the volume of
 $\cM_{g,n}$ in its coefficients. 
 From Kontsevich \cite{K1992}, 
 we learn that the volume is exactly the 
 intersection number appearing
 in \eqref{Fgn highest}.

The topological recursion for the Airy case
\eqref{Airy TR} is the $t\rar \infty$ limit
of the Catalan topological recursion 
\eqref{Catalan TR}, as we see from the limit
$$
\frac{(t^2-1)^3}{t^2} \lrar t^4.
$$
Since the integrand of \eqref{Airy TR} has no
poles at $t=0$, the  small circle of the contour 
$\gam$ does not contribute any residue. Thus we
have derived the Airy topological recursion from 
the Catalan topological recursion.

\section{Quantization of spectral curves}
\label{sect:QC}

Quantum curves assemble information of 
quantum invariants in a compact manner. 
The global nature of  quantum curves is
not well understood at this moment of
writing. 
In this section, we focus on explaining 
the relation between the 
\textbf{PDE version} of topological recursion
discovered in \cite{OM1,OM2},
and the \emph{local}
expression of 
quantum curves,  suggested for example, in 
\cite{GS}. 
We give  the 
precise definition of the 
PDE  topological recursion
in a geometric and coordinate-free manner. 
The discovery of \cite{OM1,OM2} is 
to connect the ideas from  topological recursion 
with the Higgs bundle theory for the first time.
Global definition of the quantum curves
is being established
by the authors, based on a recent work 
\cite{DFKMMN}, and will be reported elsewhere.

As we have seen in the previous sections, for
the examples of topological recursion such as  
the Catalan numbers and the Airy function case,
the integral 
topological recursion is always a consequence
of a corresponding recursion of free energies
$F_{g,n}$ in the form of partial differential 
equations. Although we do not discuss them in
these lectures, the situation is also true for
the case of various Hurwitz numbers
\cite{BHLM, EMS, MZ}. 
Since quantum curve is a differential equation,
it is more natural to expect that the PDE recursion
is directly related to quantum curves than the
integral topological recursion. 

This consideration motivates the authors'
discovery of PDE topological recursion 
\cite{OM1,OM2}. We find that the most
straightforward quantization of Hitchin spectral curves
is obtained from the PDE recursion. Here, it has
to be remarked that if one uses the
integral topological recursion for Hitchin
spectral curves, that is also introduced in 
\cite{OM1,OM2}, then
the quantization process produces 
a differential equation whose coefficients depend
on all powers of $\hbar$, and thus the result 
is totally different from what we achieve.
This shows that the integral topological 
recursion, which is closer to the original
idea of \cite{EO2007}, and the PDE topological
recursion of \cite{OM1,OM2} are inequivalent
for the case of Hitchin spectral curves of genus
greater than $0$.

From a geometric point of view, our quantization 
is a natural notion. Therefore, we believe the
introduction of PDE topological recursion is 
crucial for building a theory of quantum curves.
It is also consistent from a physics point of view.
Teschner \cite{Teschner} relates quantization 
of Hitchin moduli spaces with the quantization 
of Hitchin spectral curves in the way we do here.

\subsection{Geometry of non-singular
Hitchin spectral curves
of rank $2$}
\label{subsect:non-singular}

We wish to  transplant the idea of 
\cite{EO2007} to Hitchin spectral curves.
Our first task is 
to determine the  differential form $W_{0,2}$ 
that gives the initial value 
of the topological recursion. It can reflect
many aspects of the Hitchin spectral curve.
In these lectures, we choose 
 the Cauchy differentiation kernel as $W_{0,2}$,
which depends only on the intrinsic geometry 
of the Hitchin spectral curve, 
for our purpose. Of course one can make other 
choices for different purposes, such as the kernel
associated with a connection of a spectral line 
bundle on the Hitchin spectral curve.

Even for the Cauchy differentiation kernel,
there is no canonical choice. It depends on
 a symplectic basis for 
the homology of the spectral curve.
We give one particular choice 
in this subsection.

In this subsection, we consider  a
smooth projective curve $C$ of  genus $g(C)\ge 2$
defined over $\bC$.
As before, $K_C$ is the canonical bundle 
on $C$. The cotangent bundle 
$\pi:T^*C\lrar C$ is the total space of $K_C$,
and there is the tautological section
\be
\label{eta}
\eta\in H^0(T^*C,\pi^*K_C),
\ee
which is a globally defined holomorphic $1$-form
on $T^*C$. 
\begin{equation}
 \label{tautological section}
\xymatrix{
&\pi^*(K_C)\ar[dl] 
\\
 T^*C \ar[dr]^\pi
&& K_C \ar[dl]
\\
&C 		}
\end{equation} 
Choose a \emph{generic} quadratic differential
$s\in H^0\big(C,K_C^{\tensor 2}\big)$,
so that the spectral curve
\begin{equation}
 \label{Sigma_s}
\Sigma_s\subset T^*C
\end{equation} 
that is defined by 
 \begin{equation}
 \label{r=2}
 \eta^{\tensor 2} + \pi^*s = 0
 \end{equation}
is non-singular. 
 Our
 spectral curve $\Sigma=\Sigma_s$ 
 is a double sheeted ramified
 covering of $C$ defined by 
  \eqref{r=2}.
The genus of the spectral curve is
$g(\Sigma) = 4g(C)-3$. 
This is because a generic $s$ has 
$\deg(K_C^{\tensor 2})=4g(C)-4$
simple zeros, which correspond to branch
points of the covering $\pi:\Sigma\lrar C$. 
Thus the genus is calculated 
by the Riemann Hurwitz formula
$$
2\left(2-2g(C)-\big(4g(C)-4\big)\right)
= 2-2g(\Sigma) - \big(4g(C)-4\big).
$$
 The cotangent bundle $T^*C$ has a natural
 involution
 \begin{equation}
 \label{eq:sigma}
 \sigma:T^*C\supset T_x^*C\owns (x,y)\longmapsto
 (x,-y)\in T_x^*C\subset T^*C,
 \end{equation}
which preserves the spectral curve $\Sigma$.
The action of this involution 
is the  deck-transformation.
  Indeed,  the  covering $\pi:\Sigma\lrar C$
  is a \textbf{Galois covering} with the
  Galois group  $\bZ/2\bZ = \la \sigma\ra$.

 Let $R\subset \Sigma$ denote the ramification divisor
 of this covering. Because $\Sigma$
 is non-singular, $R$ is supported at
 $4g(C)-4$ distinct points that are determined by
 $s=0$ on $C$. We consider $C$ as the $0$-section
 of $T^*C$. 
  Thus both $C$ and $\Sigma$ 
 are  divisors of $T^*C$. Hence $R$ is 
 also defined
  as $C\cdot \Sigma$ in $T^*C$
  supported on $C\cap \Sigma$. 
 Note that $\eta$ vanishes
 only along $C\subset T^*C$. As a holomorphic
 $1$-form on $\Sigma$, $\iota^*\eta$ has 
 $2g(\Sigma)-2 = 8g(C)-8$ zeros on 
 $\Sigma$. Thus it
 has a degree $2$ zero at each point of $\supp(R)$.
 
 As explained in the earlier sections using examples,
 we wish to choose a differential form $W_{0,2}$
 for our geometric situation. For this purpose, let us
recall  the differential form 
$\omega_\Sigma^{a-b}$ of
 Remark~\ref{rem:omega}. Since we can add 
 any holomorphic $1$-form to 
 $\omega_\Sigma^{a-b}$, we need to impose
 $g(\Sigma)$ independent conditions to 
 make it unique. If we have a principal
 polarization of the period matrix for $\Sigma$,
 then one obvious choice would be to impose 
 \be
 \label{omega normalization}
 \oint_{a_j}\omega_\Sigma^{a-b} =0
 \ee
 for  all ``$A$-cycles'' $a_j$, $j=1,2,\dots,g(\Sigma)$,
 following Riemann himself.
 The reason for this canonical choice
 is that we can make it for a family of 
 spectral curves 
 $$
 \{\Sigma_s\}_{ s\in U},
 $$
 where  
 $U\subset H^0\big(
 C,K_C^{\tensor 2}\big)$ is a
 contractible  open neighborhood 
 of $s$.
 Since the base curve $C$ is fixed, we can  
 choose and fix a symplectic basis for 
 $H_1(C,\bZ)$:
  \be
  \label{symplectic basis on C}
  \la A_1,\dots,A_g;B_1,\dots,B_g\ra  =
  H_1(C,\bZ).
  \ee
  From this choice, we construct a canonical 
  symplectic basis
  for $H^1(\Sigma,\bZ)$ as follows.
 Let us label points of 
 $R=\{p_1,p_2,\dots,p_{4g-4}\}$, where 
 $g=g(C)$. We can 
 connect $p_{2i}$ and $p_{2i+1}$, $i=1, \dots,
 2g-3$, with a simple
 path on $\Sigma$ 
 that is mutually non-intersecting so that
 $\pi^*(\overline{p_{2i}p_{2i+1}})$,
 $i=1, \dots,
 2g-3$, 
 form a part of the basis for $H_1(\Sigma,\bZ)$. 
 We denote these cycles by $\a_1,\dots,\a_{2g-3}$.
 Since $\pi$ is locally homeomorphic away from 
 $R$, we have $g$ cycles $a_1,\dots,a_g$ on
 $\Sigma_s$ so that $\pi_*(a_j) = A_j$ for
 $j=1,\dots,g$, where $A_j$'s are 
 the $A$-cycles of $C$
 chosen as \eqref{symplectic basis on C}. 
 We define the $A$-cycles of $\Sigma$ to be the
 set 
 \begin{equation}
 \label{A on Sigma}
 \{a_1,\dots,a_g,\sigma_*(a_1),\dots,\sigma_*(a_g),
 \a_1,\dots,\a_{2g-3}\}\subset H_1(\Sigma,\bZ),
\end{equation}
where $\sigma$ is the Galois conjugation
(see Figure~\ref{fig:Sigma}).
Clearly, this set can be extended into a symplectic
basis for $H_1(\Sigma,\bZ)$. 
This choice of the symplectic basis 
trivializes the homology bundle
$$
\big\{H_1(\Sigma_s,\bZ)\big\}_{s\in U}
\lrar U\subset H^0(
 C,K_C^{\tensor 2})
 $$
 locally on the contractible neighborhood $U$.

\begin{figure}[htb]
\includegraphics[height=1.7in]{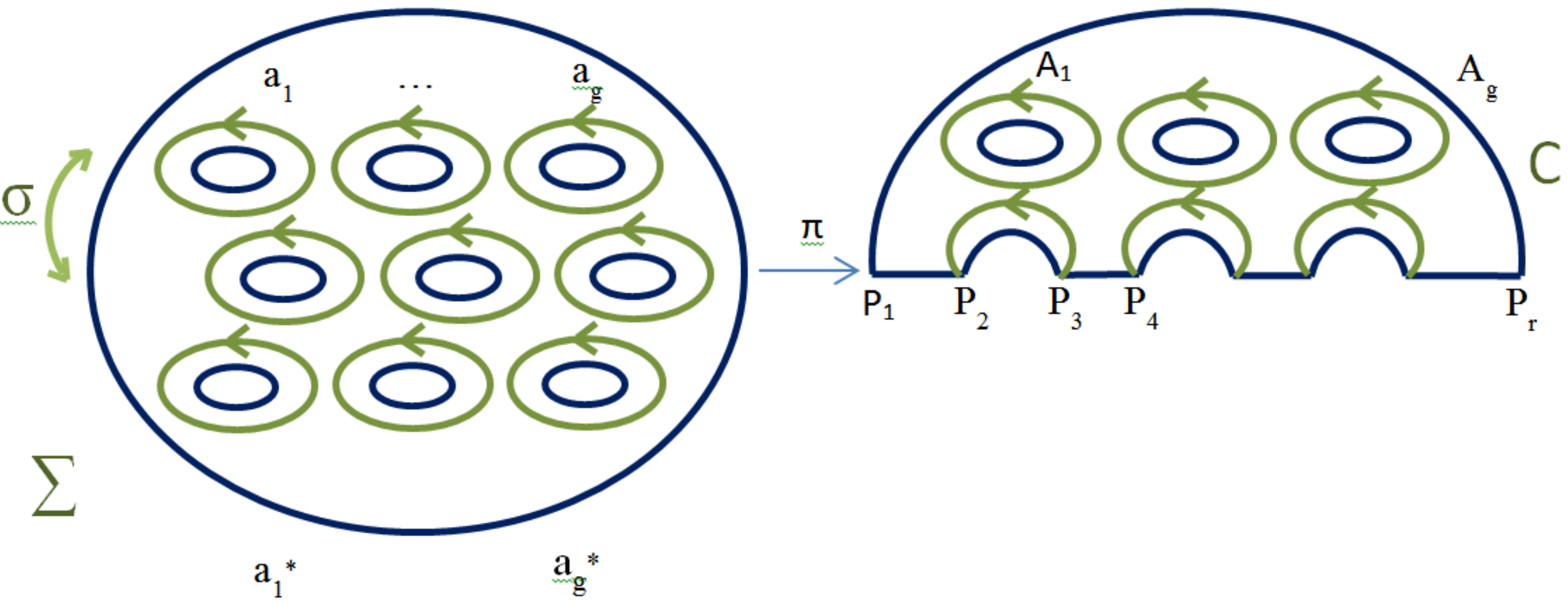}
\caption{The choice of a symplectic basis for
$H_1(\Sigma,\bZ)$ from that of 
$H_1(C,\bZ)$.}
\label{fig:Sigma}
\end{figure}

\subsection{The generalized 
topological recursion for the Hitchin
spectral curves}
\label{subsect:TR}

The topological recursion of \cite{BKMP1, EO2007},
that  has initiated the explosive developments
on this subject in 
recent years, is restricted to non-singular 
spectral curves
in $\bC^2$ or $(\bC^*)^2$. A systematic
generalization of the formalism to the 
case of Hitchin spectral curves is given 
for the first time in \cite{OM1, OM2}.
First, we gave the definition for non-singular
Hitchin spectral curves associated with 
holomorphic Higgs bundles in \cite{OM1}.
We then extended the consideration to 
meromorphic Higgs bundles and singular spectral 
curves in \cite{OM2}. In each case, however, the
actual evaluation of the generalized topological 
recursion for the purpose of quantization of 
the Hitchin spectral curves is limited to 
Higgs bundles of rank $2$. This is due to 
many technical difficulties, and at this point
we still do not have a better understanding 
of the theory in its full generality.

The purpose of this subsection is thus 
to present the theory in the way
 we know as of now, with the   scope limited to
  what seems to work.
 Many aspects of the story can be immediately
 generalized. Mainly for the sake of simplicity 
 of presentation, we concentrate on the case of
 rank $2$ 
 Higgs bundles.
 
 The geometric setup is the following. 
We have a smooth projective
curve $C$ defined over
$\bC$ of an arbitrary genus, and  a meromorphic
Higgs bundle $(E,\phi)$. Here, the vector bundle $E$
 of rank $2$   is a special one of the form
\be
\label{E general}
E = K_C^{\half}\dsum K_C^{-\half}.
\ee
As a meromorphic
Higgs field, we use
\be
\label{phi}
\phi =
\begin{bmatrix}
-s_1&s_2\\
1&
\end{bmatrix}: E\lrar K_C(D)\tensor E
\ee
with poles at an effective divisor $D$ of $C$,
where
\begin{align*}
s_1&\in H^0\big(C,K_C(D)\big),\\
s_2&\in H^0\big(C,K_C^{\tensor 2}(D)\big).
\end{align*}
Although $\phi$ involves a quadratic differential 
in its coefficient, 
since
\begin{align*}
\begin{bmatrix}
-s_1&s_2\\
1&
\end{bmatrix}:
K_C^{\half}\dsum K_C^{-\half}
\lrar
\left(K_C^{\frac{3}{2}}
\dsum K_C^\half\right) \tensor \cO_C(D)
\lrar K_C(D)\tensor \left(
K_C^{\half}\dsum K_C^{-\half}
\right),
\end{align*}
we have
$
\phi\in H^0\big(C,K_C(D)\tensor \End(E)\big).
$

\begin{rem}
We use a particular section $(E,\phi)$
 of the \emph{Hitchin
fibration} given by the form of \eqref{phi}.
This section is often called a \emph{Hitchin
section}. 
This choice is suitable
for the WKB analysis
explained below. The result of our 
quantization through WKB constructs a 
\textbf{formal} section of $K_C^{-\half}$,
and the relation to an $\hbar$-connection on $C$
makes our choice necessary.
The theory being developed as of now
(relation to opers 
\cite{DFKMMN}) also requires the choice 
of a Hitchin section.
\end{rem}

Denote by 
\be
\label{compact T*C}
\overline{T^*C}:= \bP(K_C\dsum \cO_C)
\overset{\pi}{\lrar} C
\ee
the compactified cotangent bundle of $C$
(see \cite{BNR, KS2013}),
which is a ruled surface on the base $C$.
The Hitchin spectral curve 
\begin{equation}
\label{spectral}
\xymatrix{
\Sigma \ar[dr]_{\pi}\ar[r]^{i} 
&\overline{T^*C}\ar[d]^{\pi}
\\
&C		}
\end{equation} 
 for a meromorphic
Higgs bundle is defined as the divisor 
of zeros on $\overline{T^*C}$
of the characteristic polynomial of
$\phi$:
\be
\label{char poly}
\Sigma = \left(\det(\eta - \pi^*\phi)\right)_0,
\ee
where $\eta \in H^0(T^*C, \pi^*K_C)$ is the 
tautological $1$-form on $T^*C$ extended as
a meromorphic $1$-form on the
compactification $\overline{T^*C}$.

\begin{Def}[Integral topological 
recursion for a degree $2$ 
covering]
\label{def:general TR}
Let  $\tilde{\pi}:\widetilde{\Sigma}\lrar C$ be a
degree $2$ covering of $C$
by a non-singular
curve $\widetilde{\Sigma}$. We denote by 
$R$ the
ramification divisor of $\tilde{\pi}$. In this
case the covering
$\tilde{\pi}$ is a Galois covering with
the Galois group 
$\bZ/2\bZ = \la \tilde{\sigma} \ra$,
and $R$ is the fixed-point divisor of the involution
$\tilde{\sigma}$.
The \textbf{integral topological recursion}
is an inductive 
mechanism of constructing meromorphic
differential forms $W_{g,n}$ on the Hilbert scheme 
$\widetilde{\Sigma}^{[n]}$
of  $n$-points on $\widetilde{\Sigma}$
for all $g\ge 0$ and 
$n\ge 1$ in the \emph{stable range}
$2g-2+n>0$, from  given initial data $W_{0,1}$
and $W_{0,2}$. 

\begin{itemize}
\item $W_{0,1}$ is a meromorphic $1$-form
on $\widetilde{\Sigma}$.

\item $W_{0,2}$ is defined to be
\begin{equation}
\label{W02}
W_{0,2}(z_1,z_2) = d_1d_2 
\log E_{\widetilde{\Sigma}}(z_1,z_2),
\end{equation}
where $E_{\widetilde{\Sigma}}(z_1,z_2)$ is 
the normalized Riemann prime
form on 
$\widetilde{\Sigma}\times \widetilde{\Sigma}$
(see \cite[Section~2]{OM1}).
\end{itemize}
Let $\omega^{a-b}(z)$ be a normalized
Cauchy kernel on $\widetilde{\Sigma}$
of Remark~\ref{rem:omega},
which has simple poles at $z=a$ of residue $1$ 
and at $z=b$ of residue $-1$. 
Then 
$$
d_1 \omega^{z_1-b}(z_2) = W_{0,2}(z_1,z_2).
$$
Define
\begin{equation}
\label{Omega}
\Omega := \tilde{\sigma}^*W_{0,1}-W_{0,1}.
\end{equation}
Then $\tilde{\sigma}^*\Omega = -\Omega$, hence
$\supp (R)\subset \supp(\Omega)$,
where $\supp(\Omega)$ denotes the support of both
zero and pole divisors of $\Omega$.
The inductive formula of the topological 
recursion is then given by the following:
\begin{multline}
\label{integral TR}
W_{g,n}(z_1,\dots,z_n)
=\half \frac{1}{2\pi \sqrt{-1}} 
\sum_{p\in \supp(\Omega)}
\oint_{\gam_p}\omega^{\tilde{z}-z}(z_1) 
\\
\times
\frac{1}{\Omega(z)}
\left[W_{g-1,n+1}(z,\tilde{z}, z_2,\dots,z_n)
+\sum_{\substack{g_1+g_2=g\\
I\sqcup J=\{2,\dots,n\}}}^{\rm{No }\; (0,1)}
W_{g_1,|I|+1}(z,z_I)W_{g_2,|J|+1}(\tilde{z},z_J)
\right].
\end{multline}
Here, 
\begin{itemize}
\item $\gam_p$ is a positively oriented small 
loop around a  point $p\in \supp (\Omega)$;
\item the integration is taken with respect to  
 $z\in \gam_p$ for each $p\in \supp (\Omega)$;
\item $\tilde{z} = \tilde{\sigma}(z)$ is the Galois
conjugate of $z\in \widetilde{\Sigma}$;
\item the ratio of two global meromorphic $1$-forms
on the same curve makes sense as a global 
meromorphic function. The operation 
$1/\Omega$ applied to a meromorphic
$1$-form produces this ratio;
\item ``No $(0,1)$'' means that
$g_1=0$ and $I=\emptyset$, or $g_2=0$ and
$J=\emptyset$, are excluded in the summation;
\item the sum runs over all partitions of $g$ and
set partitions of $\{2,\dots,n\}$, other than those 
containing the $(0,1)$ geometry; 
\item $|I|$ is the cardinality of the subset 
$I\subset \{2,\dots,n\}$; and
\item $z_I=(z_i)_{i\in I}$.
\end{itemize}
\end{Def}

The main idea of \cite{OM2} for dealing
with singular spectral curve is the 
following.

\begin{itemize}
\item The integral topological recursion  of 
\cite{OM1,EO2007} is extended to the
 curve $\Sigma$ of \eqref{char poly},
  as \eqref{integral TR}. 
For this purpose,
we blow up $\overline{T^*C}$
several times as in \eqref{blow-up general}
below to construct the normalization
$\widetilde{\Sigma}$.
The blown-up $Bl(\overline{T^*C})$ is
 the minimal 
resolution of the support $ \Sigma \cup C_\infty$
of the
\emph{total} divisor
 \be
 \label{total}
  \Sigma - 
2C_\infty = \left(\det(\eta - \pi^*\phi)\right)_0
-\left(\det(\eta - \pi^*\phi)\right)_\infty
 \ee
 of the characteristic polynomial, where
\begin{equation}
\label{C-infinity}
C_\infty := \bP(K_C\dsum\{0\}) = 
\overline{T^*C}\setminus T^*C
\end{equation}
is the divisor at infinity. Therefore,
in  $Bl(\overline{T^*C})$, 
the proper transform $\widetilde{\Sigma}$
of $\Sigma$
is smooth and does not intersect
with
the proper transform of $C_\infty$.
\begin{equation}
 \label{blow-up general}
\xymatrix{
\widetilde{\Sigma} 
\ar[dd]_{\tilde{\pi}} \ar[rr]^{\tilde{i}}\ar[dr]^{\nu}&&Bl(\overline{T^*C})
\ar[dr]^{\nu}
\\
&\Sigma \ar[dl]_{\pi}\ar[rr]^{i} &&
\overline{T^*C}  \ar[dlll]^{\pi}
\\
C 		}
\end{equation} 

\item The genus of the normalization 
$\widetilde{\Sigma}$ is given by
$$
g(\widetilde{\Sigma}) = 2g(C)-1+\half \delta,
$$
where $\delta$ is the sum of the  number of cusp singularities
of $\Sigma$ 
and the ramification points 
of $\pi:\Sigma\lrar C$.

\item  The generalized topological recursion 
Definition~\ref{def:general TR}
  requires 
a globally defined meromorphic $1$-form $W_{0,1}$
on $\widetilde{\Sigma}$ and 
a symmetric meromorphic $2$-form $W_{0,2}$
on the product $\widetilde{\Sigma}
\times \widetilde{\Sigma}$ as the initial data.
We choose
\begin{equation}
\begin{cases}
\label{W0102}
W_{0,1} = \tilde{i}^*\nu^*\eta\\
W_{0,2} = d_1d_2 \log E_{\widetilde{\Sigma}},
\end{cases}
\end{equation}
where
$E_{\widetilde{\Sigma}}$ is a normalized 
Riemann prime form on 
${\widetilde{\Sigma}}$. The form $W_{0,2}$
depends only on the intrinsic geometry of 
the smooth curve $\widetilde{\Sigma}$. The
geometry of \eqref{blow-up general}
is  encoded in $W_{0,1}$. 
The integral topological recursion
 produces a symmetric meromorphic $n$-linear
differential form $W_{g,n}(z_1,\dots,z_n)$
on $\widetilde{\Sigma}$ for every $(g,n)$
subject to $2g-2+n>0$ from the initial data 
\eqref{W0102}.
\end{itemize}

The key discovery of \cite[(4.7)]{OM1} is that
we should use a partial differential equation 
version of the topological recursion, instead of
\eqref{integral TR}, to construct a quantization
of $\Sigma$.

\subsection{Quantization of Hitchin spectral curves}
\label{sect:quantum}

The passage from the geometry
of Hitchin spectral curve $\Sigma$ of \eqref{char poly}
 to the quantum curve
 \be
\label{Sch}
\left(\left(\hbar\frac{d}{dx}\right)^2
-\tr \, \phi(x) \left(\hbar\frac{d}{dx}\right)
+ \det \phi(x))\right)
\Psi(x,\hbar)=0
\ee
 is a system of PDE recursion 
replacing the
integration formula
 \eqref{integral TR}. 

\begin{Def}[Free energies]
\label{def:free energy}
The \textbf{free energy} of type $(g,n)$ is
a function $F_{g,n}(z_1,\dots,z_n)$
defined on the universal covering
$\cU^n $ of $\widetilde{\Sigma}^n$ such that
$$
d_1\cdots d_n F_{g,n} = W_{g,n}.
$$
\end{Def}

\begin{rem}
The free energies may contain 
logarithmic singularities, since it is an integral
of a meromorphic function. 
For example, $F_{0,2}$ is the Riemann prime
form itself considered as a function on 
$\cU^2$, which has logarithmic singularities 
along the diagonal \cite[Section~2]{OM1}.
\end{rem}

\begin{Def}[Differential  recursion for
a degree $2$ covering]
The \textbf{differential  
recursion} is the
following partial differential equation
for all $(g,n)$ subject to $2g-2+n\ge 2$:
\begin{multline}
\label{differential TR}
d_1 F_{g,n}(z_1,\dots,z_n)
\\
= \sum_{j=2}^n 
\left[
\frac{\omega^{z_j-\sigma(z_j)}
(z_1)}{\Omega(z_1)}\cdot 
d_1F_{g,n-1}\big(z_{[\hat{j}]}\big)
-
\frac{\omega^{z_j-\sigma(z_j)}
(z_1)}{\Omega(z_j)}\cdot 
d_jF_{g,n-1}\big(z_{[\hat{1}]}\big)
\right]
\\
+\frac{1}{\Omega(z_1)}
d_{u_1}d_{u_2}
\left.
\left[F_{g-1,n+1}
\big(u_1,u_2,z_{[\hat{1}]}\big)
+\sum_{\substack{g_1+g_2=g\\
I\sqcup J=[\hat{1}]}}^{\text{stable}}
F_{g_1,|I|+1}(u_1,z_I)F_{g_2,|J|+1}(u_2,z_J)
\right]
\right|_{\substack{u_1=z_1\\u_2=z_1}}.
\end{multline}
Here, $1/\Omega$ is again the ratio operation,
and the  index subset $[\hat{j}]$ denotes
the exclusion of $j\in \{1,2,\dots,n\}$.
The Cauchy integration kernel $\omega^{a-b}(z)$
on the spectral curve $\widetilde{\Sigma}$ is
normalized  differently than the $A$-cycle
normalization we did earlier. This time we 
impose that 
\be
\label{normalization}
\lim_{a\rar b} \omega^{a-b}(z) = 0.
\ee
\end{Def}

\begin{rem}
As pointed out in \cite[Remark 4.8]{OM1}, 
\eqref{differential TR} is a globally defined
coordinate-free 
equation, written in terms of exterior differentiations
and the ratio operation,
on  $\widetilde{\Sigma}$.
\end{rem}

\begin{thm}[The relation between 
the differential recursion and the  integral
recursion, \cite{OM2}]
Suppose that $F_{g,n}$ 
for $2g-2+n>0$ are  globally meromorphic
on $\widetilde{\Sigma}^{[n]}$ with poles 
located only along the divisor of 
$\widetilde{\Sigma}^{[n]}$ when one of the factors 
lies in the  zeros of 
$\Omega$.  Define
$W_{g,n} := d_1\cdots d_n F_{g,n}$
for $2g-2+n>0$, and 
use \eqref{W02} and \eqref{Omega}
for $(g,n)$ in the unstable range. If 
$F_{g,n}$s are symmetric and satisfy the differential  
recursion \eqref{differential TR}, and if
$W_{1,1}$ and $W_{0,3}$ satisfy
the initial equations
of the integral topological recursion
\eqref{integral TR}, then 
$W_{g,n}$s for all valued of 
$(g,n)$ satisfy the integral topological 
recursion. 
\end{thm}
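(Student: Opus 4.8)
The plan is to run an induction on $2g-2+n$ and to show that the residue formula \eqref{integral TR} collapses, after a global application of the residue theorem on the compact normalization $\widetilde{\Sigma}$, to the $d_2\cdots d_n$-image of the partial differential recursion \eqref{differential TR}. The base cases $2g-2+n=1$, i.e.\ $(g,n)=(1,1)$ and $(0,3)$, are supplied by hypothesis, so I only treat the stable step $2g-2+n\ge 2$, where \eqref{differential TR} is assumed for $F_{g,n}$ and all lower forms $W_{g',n'}=d_1\cdots d_{n'}F_{g',n'}$ are available. First I would apply $d_2\cdots d_n$ to \eqref{differential TR}. Using symmetry of the $F$'s, the left-hand side becomes $W_{g,n}(z_1,\dots,z_n)$; on the right, each $d_j$ hitting the Cauchy kernel $\omega^{z_j-\sigma(z_j)}(z_1)$ produces the bilinear form $W_{0,2}$ (via $d_1\omega^{z_1-b}(z_2)=W_{0,2}(z_1,z_2)$), while the remaining derivatives turn $F_{g,n-1}$ and $F_{g-1,n+1}$ into $W_{g,n-1}$ and $W_{g-1,n+1}$. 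This rewrites \eqref{differential TR} as an identity for $W_{g,n}$ purely in terms of lower $W$'s, kernels, and diagonal restrictions.

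The heart of the argument is to recognize this identity as the residue evaluation of \eqref{integral TR}. For fixed $z_2,\dots,z_n$ I would form the meromorphic $1$-form in $z$
$$\cI(z):=\omega^{\tilde z-z}(z_1)\,\frac{1}{\Omega(z)}\left[W_{g-1,n+1}(z,\tilde z,z_2,\dots,z_n)+\sum_{\substack{g_1+g_2=g\\I\sqcup J=\{2,\dots,n\}}}^{\rm{No }\; (0,1)}W_{g_1,|I|+1}(z,z_I)\,W_{g_2,|J|+1}(\tilde z,z_J)\right],$$
so that the right-hand side of \eqref{integral TR} is $\half(2\pi\sqrt{-1})^{-1}\sum_{p\in\supp(\Omega)}\oint_{\gam_p}\cI(z)$, a sum of residues at the zeros of $\Omega$. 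The pole hypothesis on the $F_{g,n}$'s guarantees that the bracket divided by $\Omega(z)$ has poles in $z$ only along $\supp(\Omega)$, while the kernel $\omega^{\tilde z-z}(z_1)$ contributes exactly two further simple poles, at $z=z_1$ and $z=\sigma(z_1)$. Since the sum of all residues of a meromorphic $1$-form on the compact curve $\widetilde{\Sigma}$ is zero, I obtain
$$\half\,\frac{1}{2\pi\sqrt{-1}}\sum_{p\in\supp(\Omega)}\oint_{\gam_p}\cI(z)=-\half\left(\Res_{z=z_1}+\Res_{z=\sigma(z_1)}\right)\cI(z).$$

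It then remains to evaluate the two residues and match them with the output of the first step. Since $\tilde\sigma^*\Omega=-\Omega$ and the bracket in $\cI$ is invariant under $z\leftrightarrow\tilde z$, the Galois involution identifies $\Res_{z=\sigma(z_1)}\cI$ with $\Res_{z=z_1}\cI$, cancelling the factor $\half$. To compute $\Res_{z=z_1}\cI$ I would use the normalization \eqref{normalization}, the identity $d_1\omega^{z_1-b}(z_2)=W_{0,2}(z_1,z_2)$, and the l'H\^opital-type formula \eqref{lhopital} of Lemma~\ref{lem:principal specialization}. As $z\to z_1$ one has $\tilde z\to\sigma(z_1)$, so the summands with $g_1=0$, $I=\{j\}$, carrying a factor $W_{0,2}(z,z_j)$, collapse to the $(0,2)$-kernel terms that reproduce the first sum of \eqref{differential TR}, whereas $W_{g-1,n+1}(z,\tilde z,z_2,\dots,z_n)$ together with the genuinely stable products collapses, through the simultaneous limit of both arguments, to the diagonal $d_{u_1}d_{u_2}F_{g-1,n+1}\big|_{u_1=u_2=z_1}$ term and the stable splitting sum. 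The result coincides with the $d_2\cdots d_n$-image of \eqref{differential TR} computed in the first step, which equals $W_{g,n}$; hence the right-hand side of \eqref{integral TR} equals $W_{g,n}$, completing the induction.

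The hard part will be the residue computation of the third step: inside a single multidifferential $W_{g-1,n+1}(z,\tilde z,\dots)$ the two active arguments $z$ and $\tilde z$ are Galois-conjugate yet tend to the \emph{non}-conjugate limits $z_1$ and $\sigma(z_1)$, and one must show that the pole of $\omega^{\tilde z-z}(z_1)$ against the simple zero of $\Omega$ at a ramification point conspires to produce the \emph{diagonal} second-order restriction $d_{u_1}d_{u_2}[\cdots]\big|_{u_1=u_2=z_1}$ rather than two decoupled limits. This requires a careful local model of $\omega^{\tilde z-z}(z_1)$, of $1/\Omega(z)$, and of the prime-form kernel $W_{0,2}$ near the ramification divisor $R$, together with a clean separation of the $(0,2)$ contributions from the stable splitting terms; verifying that the pole hypothesis genuinely excludes spurious contributions at the loci $z=z_j$ is the remaining delicate point.
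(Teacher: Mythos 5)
Your overall skeleton---induction on $2g-2+n$, applying $d_2\cdots d_n$ to \eqref{differential TR}, converting the residue sum over $\supp(\Omega)$ in \eqref{integral TR} into residues at the remaining poles via the residue theorem on the compact curve $\widetilde{\Sigma}$, and using the Galois involution together with $\tilde{\sigma}^*\Omega=-\Omega$ to pair conjugate residues---is exactly the strategy of the paper's proof (which itself defers details to \cite[Theorem~4.7]{OM1}). The problem is a concrete error in your pole bookkeeping, and it breaks your third and fourth steps.

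You assert that the bracket divided by $\Omega(z)$ has poles in $z$ only along $\supp(\Omega)$, so that the kernel $\omega^{\tilde z-z}(z_1)$ contributes the only additional poles, at $z=z_1$ and $z=\sigma(z_1)$. This is false. The sum in \eqref{integral TR} excludes only $(0,1)$ pieces, so it contains the $(0,2)$ terms $W_{0,2}(z,z_j)\,W_{g,n-1}\big(\tilde z,z_{[\hat{1},\hat{j}]}\big)$ and $W_{0,2}(\tilde z,z_j)\,W_{g,n-1}\big(z,z_{[\hat{1},\hat{j}]}\big)$ for each $j=2,\dots,n$. The pole hypothesis in the theorem constrains only the stable $F_{g,n}$; the unstable $W_{0,2}$ is fixed by \eqref{W02} and always carries its double pole along the diagonal. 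Hence the integrand $\cI(z)$ has additional double poles at $z=z_j$ and $z=\sigma(z_j)$, and the residue theorem actually gives
\begin{equation*}
\sum_{p\in\supp(\Omega)}\oint_{\gam_p}\cI(z)
= -\,2\pi\sqrt{-1}\,\left[\left(\Res_{z=z_1}+\Res_{z=\sigma(z_1)}\right)\cI(z)
+\sum_{j=2}^n\left(\Res_{z=z_j}+\Res_{z=\sigma(z_j)}\right)\cI(z)\right].
\end{equation*}
These extra residues are not spurious contributions to be excluded; they are indispensable. A residue at a double pole differentiates the cofactor, and it is precisely the residues at $z=z_j,\sigma(z_j)$ that generate the first sum of \eqref{differential TR}---in particular the terms $\frac{\omega^{z_j-\sigma(z_j)}(z_1)}{\Omega(z_j)}\,d_jF_{g,n-1}\big(z_{[\hat{1}]}\big)$, whose denominator $\Omega(z_j)$ and derivative in $z_j$ can never arise from a residue taken at $z=z_1$: any such residue carries $1/\Omega(z_1)$ and an evaluation, not a $z_j$-derivative. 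Your step 4, which tries to recover that first sum by evaluating the $(0,2)$ summands at $z=z_1$, instead produces $W_{0,2}(z_1,z_j)W_{g,n-1}(\sigma(z_1),\dots)/\Omega(z_1)$, a different quantity. The paper's own proof states exactly this division of labor: the simple poles at $z=z_1,\tilde z_1$ give evaluations, while the double poles coming from $W_{0,2}(z,z_j)$ and $W_{0,2}(\tilde z,z_j)$ contribute as differentiation of the factors they multiply, and only after adding all of these contributions does one obtain \eqref{differential TR}. Your closing concern about matching the evaluation at $(z_1,\sigma(z_1))$ with the diagonal restriction $d_{u_1}d_{u_2}[\cdots]\big|_{u_1=u_2=z_1}$ is a legitimate further delicate point, but the omitted residues at $z=z_j,\sigma(z_j)$ are the gap that must be repaired before that analysis can even begin.
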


\begin{rem}
The assumption of the theorem holds
for $g(C)=0$,  and therefore, for all
the examples we discuss in these lectures. But
we are not establishing the 
general \textbf{equivalence}
of the integral topological recursion
\eqref{integral TR} and 
the PDE recursion 
\eqref{differential TR}. They are  never
equivalent when $g(\widetilde{\Sigma})>0$. 
Actually, what is assumed in the
above theorem is that $W_{g,n}$ is exact,
i.e., integrable by definition. In particular, 
this implies that $W_{g,n}$ has $0$-period for
every topological cycle. This does not happen
if we start with \eqref{integral TR} in general.
Therefore, the above theorem serves  only as
a heuristic motivation for our discovery of
\eqref{differential TR} in \cite{OM1,OM2}.
\end{rem}

\begin{proof}
Although the context of the statement is slightly
different, the proof is essentially the same as that
of
\cite[Theorem~4.7]{OM1}. The crucial assumption
we have made is that $\tilde{\pi}:\widetilde{\Sigma}
\lrar C$ is a Galois covering. Therefore,
the Galois conjugation $\tilde{\sigma}:
\widetilde{\Sigma}\lrar \widetilde{\Sigma}$ is a
globally defined holomorphic mapping.
To calculate the residues 
in the integral recursion \eqref{integral TR},
we need the global analysis of 
$$
\omega^{\tilde{z}-z}(z_1)\in
H^0\big(\widetilde{\Sigma}\times \widetilde{\Sigma},
q_2^*K_{\widetilde{\Sigma}}
\boxtimes q_2^*\cO_{\widetilde{\Sigma}}
(\tilde{z}+z)\big),
$$
where $q_1$ and $q_2$ are projections
\begin{equation}
\label{omega analysis}
\xymatrix{
&\widetilde{\Sigma}\times \widetilde{\Sigma}
\ar[dl]_{q_1}\ar[dr]^{q_2}
\\
z\in \widetilde{\Sigma}&&
 \widetilde{\Sigma}\owns z_1		.}
\end{equation} 
The residue integration is done at each 
point  $p\in \supp(\Omega)$.  
The poles of 
the integrand of \eqref{integral TR}
that are enclosed in the 
union 
$$
\gam = \bigcup_{p\in \supp(\Omega)}\gam_p
$$
of the contours on the complement of 
$\supp(\Omega)$
 are located at 
\begin{enumerate}
\item $z=z_1,z=\tilde{z_1}$ from 
$\omega^{\tilde{z}-z}(z_1)$; and
\item $z=z_j,z=\tilde{z_j}$, $j=2,\dots,n$,
from $W_{0,2}(z,z_j)$ and $W_{0,2}(\tilde{z},z_j)$
that appear in the second line of 
\eqref{integral TR}.
\end{enumerate}
The integrand has other poles at 
$\supp(\Omega)$ that includes the ramification 
divisor $R$, but they are not enclosed in $\gam$.
The local behavior of $\omega^{\tilde{z}-z}(z_1)$
at $z=z_1,z=\tilde{z_1}$ is well understood, 
and residues of the integrand of
\eqref{integral TR} are simply the evaluation 
of the differential form at $z=z_1,z=\tilde{z_1}$.
The double poles coming from 
$W_{0,2}(z,z_j)$ and $W_{0,2}(\tilde{z},z_j)$
contribute as differentiation of the factor it is 
multiplied to. Adding all contributions
from the poles, we obtain
\eqref{differential TR}.
\end{proof}

Now let us consider a spectral curve 
$\Sigma\subset 
\overline{T^*C}$
 of \eqref{spectral}
defined by a pair of meromorphic sections
$s_1=-\tr\phi$ of $K_C$ and 
$s_2=\det\phi$ of $K_C^{\tensor 2}$.
Let
$\widetilde{\Sigma}$ be the desingularization of 
$\Sigma$ in \eqref{blow-up}.
We  apply the topological recursion
 \eqref{integral TR}
to the covering $\tilde{\pi}:\widetilde{\Sigma}\lrar C$.
 The geometry of the 
spectral curve $\Sigma$ provides us with a
canonical choice of the initial differential forms
\eqref{W0102}. 
At this point we pay  special attention to the fact that 
the topological recursions \eqref{integral TR}
and \eqref{differential TR} are both defined 
on the spectral curve 
$\widetilde{\Sigma}$, while we wish to 
construct a differential equation on $C$. 
Since the free energies are defined on the
universal covering of $\widetilde{\Sigma}$,
we need to have a mechanism to relate a
coordinate on the desingularized spectral curve
and that of the base curve $C$.

To analyze the singularity structure of
$\Sigma$, let us consider the discriminant of the 
defining equation \eqref{r=2} of the
spectral curve.

 \begin{Def}[Discriminant divisor]
The \textbf{discriminant divisor} of
the spectral curve 
\be
\label{spectral general}
\eta^{\tensor 2}+\pi^*s_1\eta +\pi^* s_2=0
\ee
 is a
 divisor on $C$ defined by
\begin{equation}
\label{discriminant}
\Delta:=\left(\frac{1}{4}s_1^2 -s_2
\right) = \Delta_0-\Delta_\infty.
\end{equation}
Here,
\be
\label{Delta 0}
\Delta_0= \sum_{i=1}^m m_iq_i , \quad m_i>0,
\quad q_i\in C,
\ee
is the divisor of zeros, and 
\be
\label{Delta infinity}
\Delta_\infty= \sum_{j=1}^n n_jp_j, \quad n_j>0,
\quad p_j\in C,
\ee
is the divisor of $\infty$.
\end{Def}

Since 
$
\frac{1}{4}s_1^2 -s_2
$
is a meromorphic section of $K_C^{\tensor 2}$,
we have
\begin{equation}
\label{deg Delta}
\deg \Delta = 
\sum_{i=1}^m m_i - \sum_{j=1}^n  n_j  = 4g-4.
\end{equation}
 
 \begin{thm}[Geometric genus formula, \cite{OM2}]
\label{thm:geometric genus formula}
Let us define an invariant of the discriminant 
divisor by
\begin{equation}
\label{delta}
\delta=|\{i\;|\; m_i \equiv 1 \mod 2\}|+
|\{j\;|\; n_j \equiv 1 \mod 2\}|.
\end{equation}
Then    
the geometric genus of the 
spectral curve $\Sigma$ 
of \eqref{spectral general}
is given by
\begin{equation}
\label{pg}
g(\widetilde{\Sigma})
:= p_g(\Sigma) = 2g-1+\half \delta.
\end{equation}
We note that   \eqref{deg Delta}
 implies  $\delta \equiv 0\mod 2$.
\end{thm}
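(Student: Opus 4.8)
The plan is to compute the geometric genus via the Riemann--Hurwitz formula for the normalized double cover $\tilde{\pi}:\widetilde{\Sigma}\lrar C$, after first reducing the defining equation to a square-root normal form that makes the branching a purely local matter. First I would complete the square in \eqref{spectral general}: setting $\tilde{\eta}:=\eta+\half\pi^*s_1$ (a meromorphic form extending the tautological $1$-form), the equation becomes
\[
\tilde{\eta}^{\tensor 2}=\pi^*\left(\frac{1}{4}s_1^2-s_2\right)=\pi^*\Delta .
\]
Thus, away from the divisor $C_\infty$ at infinity, $\Sigma$ is locally the double cover $y^2=\Delta$, a square root of the meromorphic quadratic differential $\Delta$, with Galois involution $y\longmapsto -y$. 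All ramification of $\tilde{\pi}$ is concentrated over $\supp(\Delta)$.

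Next, Riemann--Hurwitz for the degree-$2$ covering $\tilde{\pi}$ reads
\[
2g(\widetilde{\Sigma})-2=2\bigl(2g-2\bigr)+\deg R,
\]
where $R$ is the ramification divisor, so that $g(\widetilde{\Sigma})=2g-1+\half\deg R$. Comparing with \eqref{pg}, it therefore suffices to prove that $\deg R=\delta$, i.e.\ that the ramification points of $\tilde{\pi}$ are exactly the points of $\supp(\Delta)$ of \emph{odd} multiplicity, each contributing ramification index $2$ (hence $e_P-1=1$).

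The heart of the argument is the local analysis at $\supp(\Delta)$. Near a zero $q_i$ with a local coordinate $t$ in which $\Delta\sim t^{m_i}$, the equation $y^2=t^{m_i}$ splits into two disjoint smooth branches $y=\pm t^{m_i/2}\cdot(\text{unit})$ when $m_i$ is even, so that after normalization the fiber consists of two unramified points; when $m_i$ is odd it is a single cuspidal branch whose normalization is ramified of index $2$. The analysis at a pole $p_j$ is identical after passing to $\overline{T^*C}$ and applying the resolution \eqref{blow-up general}: the local model near $C_\infty$ is $w^2=u^{n_j}$, exactly as in the quintic cusp \eqref{quintic cusp} (odd, ramified) and the node \eqref{Hermite singularity} (even, two separated points) of the worked examples. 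Summing the contribution $1$ over all odd $m_i$ and all odd $n_j$ gives $\deg R=\delta$ by \eqref{delta}, and the genus formula follows. Finally, the parity statement is immediate from \eqref{deg Delta}:
\[
\delta\equiv\sum_i m_i+\sum_j n_j\equiv\sum_i m_i-\sum_j n_j=\deg\Delta=4g-4\equiv 0\pmod 2 .
\]

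The main obstacle I expect is the clean bookkeeping at infinity: one must check that the blow-ups of \eqref{blow-up general} genuinely separate the even-order branches and desingularize the odd-order cusps \emph{without} creating or destroying ramification of $\tilde{\pi}$, and that the proper transform $\widetilde{\Sigma}$ meets neither the exceptional divisors nor $C_\infty$ in a way that changes the count. Making this precise in general (rather than example by example) is the delicate point; the two examples treated earlier in the lectures are the natural guide for organizing the local computation.
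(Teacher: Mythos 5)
Your proof is correct and is essentially the argument behind the theorem as stated: the lecture notes themselves defer the proof to \cite{OM2}, and the proof there proceeds exactly as you do --- completing the square so that $\widetilde{\Sigma}$ becomes the smooth model of the double cover of $C$ determined by $\sqrt{\Delta}$, identifying the branch points of $\tilde{\pi}:\widetilde{\Sigma}\lrar C$ with the zeros and poles of $\Delta$ of odd order via the local models $y^2=t^{m_i}$ and $w^2=u^{n_j}$, and then applying Riemann--Hurwitz together with the parity count from \eqref{deg Delta}. The ``delicate point'' you flag at infinity is not actually an obstacle: the geometric genus and the ramification of $\tilde{\pi}$ depend only on the quadratic extension $K(C)\big(\sqrt{\Delta}\big)\supset K(C)$ of function fields (the parity of $\ord_p\Delta$ is intrinsic because the transition functions of $K_C^{\tensor 2}$ are squares), so $\tilde{\pi}$ is ramified exactly at the points where $\ord_p\Delta$ is odd no matter how the blow-ups of \eqref{blow-up general} realize $\widetilde{\Sigma}$ inside a surface; that construction is needed only to embed the normalization, not to compute its genus.
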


Take an arbitrary point $p\in C\setminus 
\supp(\Delta)$, and a local coordinate $x$ around 
$p$. 
By choosing a small 
disc $V$ 
around $p$, we can make the inverse image of 
$\tilde{\pi}:\widetilde{\Sigma}\lrar C$ 
consist of two isomorphic discs. Since $V$ is
away from the critical values of $\tilde{\pi}$,
the inverse image consists of two discs in 
the original spectral curve $\Sigma$. 
Note that we choose an eigenvalue $\a$ of
$\phi$ on $V$ in our main construction. 
We are thus specifying one of the inverse image discs
here. Let us name the disc $V_\a$ that
corresponds to 
$\a$.

At this point apply the WKB analysis to the differential
equation \eqref{Sch} with 
 the WKB expansion of the solution
\begin{equation}
\label{WKBa}
\Psi^\a (x,\hbar) = \exp\left(\sum_{m=0}^\infty
\hbar^{m-1} S_m\big(x(z)\big)\right) = 
\exp F^\a(x,\hbar),
\end{equation}
where we choose a coordinate $z$ of $V_\a$
so that the function $x = x(z)$ represents the 
projection $\pi:V_\a \lrar V$.
The equation $P\Psi^\a = P e^{F^\a} = 0$ reads
\begin{equation}
\label{F alpha}
\hbar^2 \frac{d^2}{dx^2}F^\a + 
\hbar^2\frac{dF^\a}{dx}
\frac{dF^\a}{dx} +s_1 \hbar \frac{dF^\a}{dx} +s_2=0.
\end{equation}
The $\hbar$-expansion of \eqref{F alpha} 
gives
\begin{align}
\label{scl}
&\hbar^0{\text{-terms}}:
\quad (S_0'(x))^2 +s_1S_0'(x) + s_2=0,
\\
\label{consistency}
&\hbar^1{\text{-terms}}:\quad
2S_0'(x)S_1'(x) + S_0''(x)+s_1S_1'(x)=0,
\\
\label{h m+1}
&\hbar^{m+1}{\text{-terms}}:\quad
S_m''(x) +\sum_{a+b=m+1}
S_a'(x)S_b'(x)+s_1S_{m+1}'(x)=0, \quad m\ge 1,
\end{align}
where $'$ denotes the $x$-derivative.
The WKB method is to solve these equations
iteratively and find $S_m(x)$ for all $m\ge 0$. Here,
\eqref{scl} is the
\textbf{semi-classical limit}
of \eqref{Sch}, and 
\eqref{consistency} is the 
\emph{consistency condition} we need to solve
the WKB expansion, the same as before.
Since the $1$-form $dS_0(x)$ is a local section
of $T^*C$, we identify $y=S_0'(x)$. Then 
\eqref{scl} is the local expression of the 
spectral curve equation 
\eqref{r=2}.
This expression is the same everywhere
for $p\in C\setminus \supp(\Delta)$. We note 
$s_1$ and $s_2$ are globally defined.
Therefore, we
recover the spectral curve $\Sigma$ from 
the differential operator of \eqref{Sch}.

\begin{thm}[Main theorem]
The differential topological recursion provides a 
formula for each $S_m(x)$, 
$m\ge 2$, and constructs a formal solution 
to the quantum curve \eqref{Sch}.
\begin{itemize}

\item The quantum curve associated with 
the Hitchin spectral curve $\Sigma$ is defined
as a differential equation  
 on $C$. On each 
coordinate neighborhood $U\subset C$ with
coordinate $x$, a generator of the quantum curve
is given by 
$$
P(x,\hbar) = \left(\hbar\frac{d}{dx}\right)^2
-\tr \, \phi(x) \left(\hbar\frac{d}{dx}\right)
+ \det \phi(x).
$$
In particular, the \textbf{semi-classical limit}
of the quantum curve recovers the singular
spectral curve $\Sigma$, not its normalization
$\widetilde{\Sigma}$.

\item 
The \textbf{all-order WKB expansion} 
\be
\label{WKB}
\Psi(x,\hbar) = \exp\left(\sum_{m=0}^\infty
\hbar^{m-1} S_m(x)\right)
\ee
of a solution to the 
Schr\"odinger equation
$$
\left(\left(\hbar\frac{d}{dx}\right)^2
-\tr \, \phi(x) \left(\hbar\frac{d}{dx}\right)
+ \det \phi(x))\right)
\Psi(x,\hbar)=0,
$$
near  each critical value of 
${\pi}:{\Sigma}\lrar C$,
can be obtained by 
the  \textbf{principal 
specialization} of the differential recursion
\eqref{differential TR}, 
after determining the first three terms.
The procedure is the following. We determine
$S_0, S_1$, and $S_2$ 
 by  solving
\be
\begin{aligned}
\label{S012}
\left(S_0'(x)\right)^2 -\tr\phi(x) S_0'(x)+\det \phi(x) 
&=0,\\
2 S_0'(x)S_1'(x)+S_0''(x)-\tr \phi(x)S_1'(x)
&=0,
\\
S_1''(x)+\sum_{a+b=2}S_a'(x)S_b'(x)
-\tr \phi(x) S_2'(x)
&=0.
\end{aligned}
\ee
Then find $F_{1,1}(z)$ and $F_{0,3}(z_1,z_2,z_3)$
so that 
$$
S_2(x) = F_{1,1}\big(z(x)\big)+\frac{1}{6}
F_{0,3}\big(z(x),z(x),z(x)\big).
$$
This can be achieved as follows. 
First  integrate
$W_{1,1}(z)$ of \eqref{integral TR} to 
construct $F_{1,1}(z)$. We do the same
for the solution 
$W_{0,3}(z_1,z_2,z_3)$ of \eqref{integral TR}.
We now define
  \be
  \label{F03}
  F_{0,3}(z_1,z_2,z_3) 
  = \int\!\!\int\!\!\int W_{0,3}(z_1,z_2,z_3)
  + 2\left(f(z_1)+f(z_2)+f(z_3)\right),
  \ee
 where 
 \be
 \label{adjust}
 f(z) := \widetilde{S}_2(z)
 - \left(F_{1,1}(z) +
 \frac{1}{6}\int^z\!\!\int^z\!\!\int^z
  W_{0,3}(z_1,z_2,z_3)\right),
 \ee
 and $\widetilde{S}_2(z)$ is the lift of $S_2(x)$ to
 $\widetilde{\Sigma}$.

 \item Suppose we have symmetric meromorphic
functions $F_{g,n}(z_1,\dots,z_n)$ 
that solve the differential 
recursion \eqref{differential TR} on
the universal covering
$\varpi:\cU\lrar \widetilde{\Sigma}$
with these $F_{1,1}$ and $F_{0,3}$
as  initial values.

\item Let
\be
\label{Sm in Fgn}
S_m(x) = \sum_{2g-2+n=m-1} \frac{1}{n!}
F_{g,n}\big(z(x)\big),\qquad m\ge 3,
\ee
where $F_{g,n}\big(z(x)\big)$ is the principal 
specialization of $F_{g,n}(z_1,\dots,z_n)$
evaluated at a local section $z=z(x)$ of
$\tilde{\pi}:\widetilde{\Sigma}\lrar C$.
Then the \textbf{wave function} $\Psi(x,\hbar)$,
a formal section of the line bundle
$K_C^{-\half}$ on $C$, solves
\eqref{Sch}.

\item The canonical ordering of the quantization of
the local functions on $T^*C$ is automatically
chosen in  \eqref{differential TR}
and the principal specialization \eqref{Sm in Fgn}.
This selects the canonical ordering in 
\eqref{Sch}.

\end{itemize}
\end{thm}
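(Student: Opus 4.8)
The plan is to reduce the whole statement to the verification of the WKB equations \eqref{scl}--\eqref{h m+1} for the functions $S_m(x)$ defined by the principal specialization \eqref{Sm in Fgn}, since a formal series $\Psi(x,\hbar)$ of the form \eqref{WKB} annihilated by $P(x,\hbar)$ is equivalent, after conjugation by $\exp\big(\tfrac1\hbar S_0\big)$ and collecting powers of $\hbar$, to these equations holding order by order. The geometric assertions of the first bullet---that the \textbf{semi-classical limit} recovers the \emph{singular} curve $\Sigma$ rather than its normalization $\widetilde{\Sigma}$, and that $\Psi$ is a formal section of $K_C^{-\half}$---follow once \eqref{scl} is identified with the defining equation \eqref{spectral general}: since $s_1=-\tr\phi$ and $s_2=\det\phi$ are globally defined sections of $K_C(D)$ and $K_C^{\tensor2}(D)$ on the base $C$, the operator $P(x,\hbar)$ is intrinsically attached to $C$, and putting $y=S_0'(x)$ in \eqref{scl} reproduces $\eta^{\tensor2}+\pi^*s_1\eta+\pi^*s_2=0$ on every coordinate patch away from $\supp(\Delta)$.

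First I would dispose of the low-order terms. Equation \eqref{scl} is solved by $S_0=\int W_{0,1}=\int \tilde{i}^*\nu^*\eta$, which is single-valued on the chosen sheet $V_\a$ and reproduces the spectral curve; the consistency condition \eqref{consistency} then determines $S_1$ up to an additive constant. The delicate point, already flagged in the remark following Theorem~\ref{thm:QC Catalan}, is that the topological recursion produces $W_{1,1}$ and $W_{0,3}$ from $W_{0,2}$ by a mechanism that does \emph{not} coincide with the WKB equation for $S_2$. I would therefore treat $F_{1,1}$ and $F_{0,3}$ as genuine initial data: integrate the solutions $W_{1,1}$ and $W_{0,3}$ of \eqref{integral TR}, and then force agreement with the independently computed $S_2$ by the adjustment \eqref{adjust}. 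The content to check here is that the correction term $2\big(f(z_1)+f(z_2)+f(z_3)\big)$ keeps $F_{0,3}$ symmetric and does not disturb any instance of the differential recursion; this is a bookkeeping verification using that a sum of one-variable functions is killed by the mixed exterior derivatives $d_1d_2d_3$, so $W_{0,3}$ is unchanged while $\tfrac16 F_{0,3}(z,z,z)+F_{1,1}(z)$ is forced to equal $\widetilde{S}_2(z)$.

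The heart of the argument is the inductive step for $m\ge 3$. Here I would take the principal specialization $z_1=\cdots=z_n=z(x)$ of the differential recursion \eqref{differential TR}, multiply by $\tfrac1{n!}$, and sum over all $(g,n)$ with $2g-2+n$ fixed. The key tool is Lemma~\ref{lem:principal specialization}: formula \eqref{dfdt} converts the coincidence limit $\big[\partial_{u_1}\partial_{u_2}F_{g-1,n+1}\big]_{u_1=u_2=z_1}$ appearing in the handle term of \eqref{differential TR}, together with the diagonal second derivative, into $\tfrac{d^2}{dx^2}$ of the specialized free energy, producing the second-derivative term $S''$ of \eqref{h m+1}; the splitting sum $\sum_{g_1+g_2=g}F_{g_1,|I|+1}F_{g_2,|J|+1}$ reorganizes, after specialization and collecting contributions by the total weight $2g_i-1+n_i$, into the convolution $\sum_{a+b}S_a'S_b'$; and the loop-insertion sum over $j$, via the coincidence limit \eqref{lhopital} of the Cauchy kernel $\omega^{z_j-\sigma(z_j)}(z_1)$ and the ratio operation $1/\Omega$ with $\Omega=\tilde{\sigma}^*W_{0,1}-W_{0,1}$ encoding $2S_0'+s_1=2y-\tr\phi=y_+-y_-$, produces the remaining first-derivative term with coefficient $s_1$. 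Matching these three families against \eqref{h m+1} completes the induction, exactly as in the Catalan computation referred to \cite{OM2}.

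I expect the main obstacle to be precisely this reorganization of the kernel term. One must show that the global residue-and-ratio operation $1/\Omega$, when restricted to the diagonal, reproduces the leading coefficient $\pm1/(2S_0'+s_1)$ of the WKB recursion uniformly on $C\setminus\supp(\Delta)$, and that the additional poles of the integrand along the ramification divisor $R\subset\supp(\Omega)$ contribute nothing after specialization. This is where the \textbf{Galois} structure of $\tilde{\pi}:\widetilde{\Sigma}\lrar C$ and the normalization \eqref{normalization} of $\omega^{a-b}$ are essential, and it is the only step that genuinely requires the detailed local analysis of \cite{OM2}; the recurring difficulty that partial differential equations do not restrict to subvarieties is circumvented exactly because \eqref{differential TR} was designed so that its principal specialization is well-defined. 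Finally, the automatic choice of operator ordering in the last bullet is not a separate claim but a byproduct: because \eqref{scl}--\eqref{h m+1} arise from conjugating the \emph{specific} operator with $\tr\phi$ and $\det\phi$ in the displayed positions, the matching forces the quantization $y\mapsto\hbar\,d/dx$ to the left of multiplication by $x$, i.e.\ the canonical ordering in \eqref{Sch}.
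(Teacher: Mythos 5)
Your proposal is correct and follows essentially the same route as the paper: reduce the claim to the WKB hierarchy \eqref{scl}--\eqref{h m+1} by conjugation, install $F_{1,1}$ and the adjusted $F_{0,3}$ of \eqref{adjust} as initial data so that their principal specialization matches the $S_2$ determined by \eqref{S012}, and verify that the principal specialization of \eqref{differential TR}, via Lemma~\ref{lem:principal specialization} and the identification of $\Omega$ with $\pm(2S_0'+s_1)\,dx$ on the diagonal, reproduces \eqref{h m+1} for all higher orders. This is exactly the strategy the paper carries out explicitly for the Catalan model (proof of Theorem~\ref{thm:QC Catalan}) and delegates to \cite{OM1,OM2} for general Hitchin spectral curves.
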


\begin{rem}
We do not have a closed formula for $F_{1,1}$ and 
$F_{0,3}$ from the given geometric data. 
Except for the case of 
$g(C)=0$, they are \textbf{not} given by integrating 
$W_{1,1}$ and $W_{0,3}$ of the integral
topological recursion.
\end{rem}

\begin{rem}
The differential recursion \eqref{differential TR}
assumes $F_{1,1}$ and $F_{0,3}$ as the initial
values. The equation itself does not give any
condition for them. The discovery of \cite{OM1,OM2}
is that the WKB equations for $S_m(x)$
are consequences of 
\eqref{differential TR} for all $m\ge 2$.
We note that there is an alternative way 
of constructing a quantization of the spectral curve.
From the geometric data, first choose
$W_{0,1}$ and $W_{0,2}$ as in \eqref{W0102},
and solve the integral topological recursion 
\eqref{integral TR}. Then define a set of alternative
free energies by
\be
\label{alt}
F_{g,n}^{\rm{alt}}(z_1,\dots,z_n)
= \int\!\!\cdots\!\!\int W_{g,n}(z_1,\dots,z_n)
\ee
for all values of $(g,n)$. Then use the same
\eqref{Sm in Fgn} and \eqref{WKB}
to define a wave function $\Psi^{\rm{alt}}(x,\hbar)$.
The differential equation that annihilates
this alternative wave function is another
quantum curve. We emphasize that 
$\Psi^{\rm{alt}}(x,\hbar)$ \textbf{does not satisfy}
our quantum curve equation \eqref{Sch}.
It is obvious because our definition 
\eqref{adjust} of $S_2$
 is different. The alternative
quantum curve is a second order differential 
equation, but it cannot be given by a 
closed formula, unlike \eqref{Sch}. It is also 
noted that every coefficient of this alternative
differential operator contains terms 
depending on all orders of $\hbar$. 
Therefore, the mechanism described in these
lecture notes provides a totally different
notion of quantum curves. We have shown 
that the differential recursion \eqref{differential TR}
is the passage from the starting spectral curve
to the quantum curve
\eqref{Sch}. This picture is consistent
with the construction of opers in \cite{DFKMMN}
and a physics point of view \cite{Teschner}.
\end{rem}

\subsection{Classical differential equations}
\label{subsect:classical}

If quantum curves are natural objects, then 
where do we see them in classical mathematics?
Indeed, they appear as classical differential equations.
Riemann and Poincar\'e 
found 
the interplay between algebraic geometry 
of  curves in a ruled surface and 
the asymptotic expansion
   of an analytic solution 
   to a differential equation defined on the 
base curve of the ruled surface.  
We look at these classical subjects from a 
new point of view.
Let us now  recall the definition of 
regular and irregular singular points  of 
a second order differential
equation.

\begin{Def}
\label{def:regular and irregular} 
Let
\begin{equation}
\label{second}
\left(\frac{d^2}{dx^2}+s_1(x)\frac{d}{dx}+s_2(x)
\right)\Psi(x) = 0
\end{equation}
be a second order differential equation
defined around a neighborhood of $x=0$ on a
small disc $|x|< \epsilon$ with meromorphic 
coefficients $s_1(x)$ and $s_2(x)$ with poles  
at $x=0$. Denote by $k$ (reps.\ $\ell$) the 
order of the pole of 
$s_1(x)$ (resp.\ $s_2(x)$) at $x=0$. 
If $k \le 1$  and $\ell\le 2$, then \eqref{second}
has a \textbf{regular singular point} at $x=0$.
Otherwise, consider the \emph{Newton polygon}
 of the order of poles of the coefficients of
 \eqref{second}. It is the upper part of
 the convex hull of three
 points $(0,0), (1, k), (2,\ell)$. As a convention,
 if $s_j(x)$ is identically $0$, then
 we assign $-\infty$ as its pole order. Let $(1,r)$
 be the intersection point of
 the Newton polygon and the line $x=1$.
 Thus
 \begin{equation}
 \label{irregular class}
r= \begin{cases}
 k \qquad 2k\ge \ell,\\
\frac{\ell}{2} \qquad 2k\le \ell.
 \end{cases}
 \end{equation}
 The differential equation \eqref{second} 
 has an \textbf{irregular singular point of class}
 $r-1$ at $x=0$ if $r>1$.
\end{Def}

To illustrate the scope of interrelations among the
geometry of meromorphic 
Higgs bundles, their spectral curves, the singularities
of  quantum curves, $\hbar$-connections,
and the quantum invariants, let us tabulate 
five examples here (see Table~\ref{tab:examples}).
The differential operators of these equations 
are listed in the third column. 
In the first three rows, the quantum curves are
examples of 
classical differential equations known as 
 Airy,  Hermite,  the Gau\ss\ hypergeometric
equations. 
The fourth and the fifth
rows are added to show that it is \emph{not}
the singularity of the spectral curve that
determines the singularity
of the quantum curve.
In each example, the Higgs bundle $(E,\phi)$ 
we are 
considering consists of the base curve $C=\bP^1$
and the rank $2$ vector bundle $E$  on ${\bP^1}$
of \eqref{E}. For this situation, the two 
topological recursions \eqref{integral TR}
and \eqref{differential TR} are equivalent.

\begin{table}[htb]
\label{tab:examples}
  \centering
  
  \begin{tabular}{|c|c|c|}

\hline 

Higgs Field & Spectral Curve  & Quantum Curve
\tabularnewline
\hline \hline
$\begin{bmatrix}
&x(dx)^2\\
1
\end{bmatrix}$ & 
$\begin{matrix}
y^2-x=0\\
w^2-u^5=0\\
\Sigma = 2C_0+5F\\
p_a=2,p_g=0
\end{matrix}
$ & 
$\begin{matrix}
\text{Airy}\\
\left(\hbar\frac{d}{dx}\right)^2 - x\\
\text{Class $\frac{3}{2}$ irregular singularity}\\
\text{at $\infty$}
\end{matrix}
$
 \tabularnewline
\hline 
$\begin{bmatrix}
-xdx&-(dx)^2\\
1
\end{bmatrix}$ & 
$\begin{matrix}
y^2+xy+1=0\\
w^2-uw+u^4=0\\
\Sigma = 2C_0+4F\\
p_a=1, p_g=0
\end{matrix}$ &
$\begin{matrix}
\text{Hermite}\\
\left(\hbar\frac{d}{dx}\right)^2 +x\hbar\frac{d}{dx}
+1\\
\text{Class $2$ irregular singularity}\\
\text{at $\infty$}
\end{matrix}$
  \tabularnewline
\hline 
 $\begin{bmatrix}
\frac{2x-1}{x(1-x)}dx&\frac{(dx)^2}{4(1-x)}\\ \\
\frac{1}{x}
\end{bmatrix}$ & 
$\begin{matrix}
y^2
+\frac{2x-1}{x(x-1)}y+\frac{1}{4x(x-1)}=0
\\
w^2+4(u-2)uw
\\
-4u^2(u-1)=0
\\
\Sigma = 2C_0+4F\\
p_a=1,p_g=0
\end{matrix}
$ &
$\begin{matrix}
\text{Gau\ss\ Hypergeometric}\\
\left(\hbar\frac{d}{dx}\right)^2
+\frac{2x-1}{x(x-1)}
\hbar\frac{d}{dx}+\frac{1}{4x(x-1)}\\
\text{Regular singular points}\\
\text{at $x=0,1,\infty$}
\end{matrix}
$
  \tabularnewline
\hline 

 $\begin{bmatrix}
-dx&-\frac{(dx)^2}{x+1}\\
1
\end{bmatrix}$ & 
$
\begin{matrix}
y^2+y+\frac{1}{x+1}=0
\\
w^2-u(u+1)w\\
+u^3(u+1)=0
\\
\Sigma = 2C_0+4F\\
p_a=1,p_g=0
\end{matrix}
$ &
$\begin{matrix}
\left(\hbar\frac{d}{dx}\right)^2+ \hbar\frac{d}{dx}
+\frac{1}{x+1}\\
\text{Regular singular point at $x=-1$}\\
\text{and a class $1$ irregular singularity}\\
\text{at $x=\infty$}
\end{matrix}
$

  \tabularnewline
\hline 

$\begin{bmatrix}
-\frac{2x^2}{x^2-1}dx&\frac{(dx)^2}{x^2-1}\\
1
\end{bmatrix}$ & 
$
\begin{matrix}
(x^2-1)y^2+2x^2y-1=0
\\
\text{non-singular}
\\
\Sigma = 2C_0+4F\\
p_a=p_g=1
\end{matrix}
$ &
$\begin{matrix}
\left(\hbar\frac{d}{dx}\right)^2+
2\frac{x^2}{x^2-1} \hbar\frac{d}{dx}
-\frac{1}{x^2-1}\\
\text{Regular singular points at $x=\pm1$}\\
\text{and a class $1$ irregular singularity}\\
\text{at $x=\infty$}
\end{matrix}
$

  \tabularnewline
\hline 

\end{tabular}
\bigskip

  \caption{Examples of quantum curves.}
\end{table}

The first column of the table shows the Higgs field
$\phi:E\lrar K_{\bP^1}(5)\tensor E$. 
Here, $x$ is the affine coordinate
of $\bP^1\setminus \{\infty\}$. 
Since our vector bundle is a
specific direct sum of 
line bundles, the quantization is simple in each case,
due to the fact that the $\hbar$-deformation
$E_\hbar$ of $E$ satisfies the
condition as described in \eqref{Ehbar}. 
Thus our quantum curves are equivalent to
$\hbar$-connections
in the trivial bundle. 
Except for the Gau\ss\ hypergeometric case, 
the connections are
given by
\begin{equation}
\label{hbar-connection intro}
\nabla^\hbar = \hbar d-\phi,
\end{equation}
where $d$ is the exterior differentiation operator
acting on  the trivial bundle $E_\hbar$, $\hbar\ne 0$.

For the third example of a
Gau\ss\ hypergeometric equation,
we use a particular choice of parameters
so that 
the $\hbar$-connection becomes  an $\hbar$-deformed
Gau\ss-Manin connection
 of \eqref{Gauss-Manin}.
 More precisely,
  for every
  $x\in \cM_{0,4}$, we consider the elliptic
  curve $E(x)$ ramified over $\bP^1$ at four points
  $\{0,1,x,\infty\}$, and its two periods 
   given by the elliptic integrals
  \cite{KZ}
  \begin{equation}
\label{elliptic periods}
\omega_1(x)=
\int_1^\infty \frac{ds}{\sqrt{s(s-1)(s-x)}},
\qquad
\omega_2(x) = 
\int_x^1 \frac{ds}{\sqrt{s(s-1)(s-x)}}.
\end{equation}
The quantum curve in this case is 
  an $\hbar$  \textbf{-deformed meromorphic
Gau\ss-Manin connection}
\begin{equation}
\label{Gauss-Manin}
\nabla^\hbar_{GM} =\hbar d -
\begin{bmatrix}
\left(-\frac{2x-1}{x(x-1)}
+\frac{\hbar}{x}\right)dx&-\frac{(dx)^2}{4(x-1)}
\\ \\
\frac{1}{x}
\end{bmatrix}
\end{equation}
in the $\hbar$-deformed 
vector bundle $K_{\Mbar_{0,4}}^{\half}
\dsum K_{\Mbar_{0,4}}^{-\half}$ of rank $2$
over $\Mbar_{0,4}$. Here, $d$ again denotes
the exterior differentiation acting on this 
trivial  vector bundle.
 The restriction $\nabla^1_{GM}$ 
 of the connection at $\hbar=1$ is equivalent to
the Gau\ss-Manin connection that
characterizes 
   the two periods of \eqref{elliptic periods},
   and the Higgs field is the classical limit
   of the connection matrix
   at $\hbar\rar 0$:
   \begin{equation}
   \label{Gauss-Manin-Higgs}
\phi =    \begin{bmatrix}
-\frac{2x-1}{x(x-1)}dx&-\frac{(dx)^2}{4(x-1)}
\\ \\
\frac{1}{x}
\end{bmatrix}.
\end{equation}
  The spectral curve 
  $\Sigma \subset \overline{T^*\Mbar_{0,4}}$ 
  as a moduli space consists
  of the data 
  $\big(E(x), \a_1(x), \a_2(x)\big)$,
  where $\a_1(x)$ and $\a_2(x)$ are the two
  eigenvalues of the  Higgs field $\phi$. 
  The spectral curve 
  $\Sigma \subset \overline{T^*\Mbar_{0,4}} = \bF^2$ 
  as a divisor in the Hirzebruch surface 
  is  determined by the characteristic 
  equation 
  \be
  \label{char intro}
  y^2 + \frac{2x-1}{x(x-1)}y
  + \frac{1}{4x(x-1)} = 0
  \ee
  of the Higgs field. Geometrically,
  $\Sigma$ is a singular rational curve with one
  ordinary double point at $x=\infty$.
  The quantum curve
  is a \textbf{quantization} of the characteristic equation
  \eqref{char intro}
  for the eigenvalues $\a_1(x)$ and $\a_2(x)$ 
 of $\phi(x)$. It is an $\hbar$-deformed
   \textbf{Picard-Fuchs equation}
   $$
   \left(\left(\hbar \frac{d}{dx}\right)^2 
   + \frac{2x-1}{x(x-1)} \left(\hbar \frac{d}{dx}\right)
   + \frac{1}{4x(x-1)}\right) \omega_i(x,\hbar) =0,
   $$
    and its semi-classical limit agrees with
   the singular spectral curve
  $\Sigma$. As a second order differential equation,
  the quantum curve has two independent
  solutions corresponding
  to the two eigenvalues. At $\hbar=1$, 
  these solutions are
  exactly the two periods $\omega_1(x)$ and
  $\omega_2(x)$ of the Legendre family of 
  elliptic curves $E(x)$. 
  The topological recursion 
  produces  asymptotic
  expansions of these periods  as
  functions in $x\in \Mbar_{0,4}$, at which the
  elliptic
  curve $E(x)$ degenerates to a nodal rational 
  curve.

This is a singular connection with simple poles
at $0,1,\infty$, 
and has an explicit $\hbar$-dependence in
the connection matrix. 
The Gau\ss-Manin connection $\nabla^1_{GM}$ at 
$\hbar = 1$ is equivalent to
the Picard-Fuchs equation
that characterizes the periods \eqref{elliptic periods}
of the Legendre family of elliptic curves $E(x)$
defined by the cubic equation
\begin{equation}
\label{Legendre}
t^2 = s(s-1)(s-x), \qquad x\in \cM_{0,4} = 
\bP^1\setminus \{0,1,\infty\}.
\end{equation}

The second column gives the spectral curve 
of the Higgs bundle $(E,\phi)$. 
Since the Higgs fields have 
poles, the spectral curves are no longer contained in 
the cotangent bundle $T^*\bP^1$. We need  
 the compactified cotangent bundle
\begin{equation*}
\overline{T^*\bP^1} = \bP(K_{\bP^1}\dsum
\cO_{\bP^1}) = \bF_2,
\end{equation*}
which is a Hirzebruch surface.
The parameter $y$
is the fiber coordinate of the cotangent line
$T^*_x\bP^1$. 
The first line of the second column is the
equation of the spectral curve in the $(x,y)$ 
affine coordinate of $\bF_2$.
All but the last 
example produce a singular
spectral curve. 
Let $(u,w)$ be a coordinate system
 on another affine
chart of $\bF_2$ defined by
\begin{equation*}
\begin{cases}
x = {1}/{u}\\
ydx = v du, \qquad w = 1/v.
\end{cases}
\end{equation*}
The singularity of  $\Sigma$ 
in the $(u,w)$-plane is given by the 
second line of the second column. 
The third line of the second column gives
$\Sigma\in \NS(\bF_2)$ as an element of 
the N\'eron-Severy group of 
$\bF_2$. Here, $C_0$ is the class of the 
zero-section of $T^*\bP^1$, and $F$ represents
the fiber class of $\pi:\bF_2\lrar \bP^1$. 
We also give the arithmetic and geometric
genera of the spectral curve.

A solution $\Psi(x,\hbar)$
to the first 
example is  given
by the \textbf{Airy function}
\begin{equation}
\label{Airy}
Ai(x,\hbar) =\frac{1}{2\pi} \hbar^{-\frac{1}{6}}
\int_{-\infty} ^\infty
\exp\left({\frac{ipx}{\hbar^{2/3}}}+{i\frac{p^3}{3}}
\right)dp,
\end{equation}
which is an entire function in $x$ for $\hbar\ne 0$,
as discussed earlier in these lectures. 
 The expansion 
coordinate $x^{\frac{3}{2}}$ of \eqref{Fgn Airy}
indicates the class of the irregular singularity of
the Airy differential equation.

The solutions to the second example are given by 
confluent hypergeometric functions, such as
${}_1F_1\left(\frac{1}{2\hbar};
\half;-\frac{x^2}{2\hbar}\right),
$
where 
\begin{equation}
\label{Kummer}
{}_1F_1(a;c;z) :=
\sum_{n=0}^\infty \frac{(a)_n}{(c)_n}\;
\frac{z^n}{n!}
\end{equation}
is the 
\textbf{Kummer confluent hypergeomtric function},
and the 
\textbf{Pochhammer symbol} $(a)_n$
is defined by
\begin{equation}
\label{Poch}
(a)_n :=a (a+1)(a+2)\cdots (a+n-1).
\end{equation}
For $\hbar>0$, 
the topological recursion 
determines the asymptotic 
expansion of a particular entire solution known as
 a \textbf{Tricomi confluent 
hypergeomtric function} 
\begin{multline*}
\Psi^\Catalan(x,\hbar) 
\\
= 
\left(-\frac{1}{2\hbar}\right)^{\frac{1}{2\hbar}}
\left(
\frac{\Gamma[\half]}{\Gamma[\frac{1}{2\hbar}
+\half]}
{}_1F_1\left(\frac{1}{2\hbar};
\half;-\frac{x^2}{2\hbar}\right)
+\frac{\Gamma[-\half]}{\Gamma[\frac{1}{2\hbar}]}
\sqrt{-\frac{x^2}{2\hbar}}
{}_1F_1\left(\frac{1}{2\hbar}
+\half;
\frac{3}{2};-\frac{x^2}{2\hbar}\right)
\right).
\end{multline*}
The expansion is given in the form
\begin{equation}
\begin{aligned}
\label{Catalan Psi expansion}
\Psi^{\Catalan}(x,\hbar)
&=
\left(\frac{1}{x}\right)^{\frac{1}{\hbar}}
\sum_{n=0}^\infty 
\frac{\hbar^n \left(\frac{1}{\hbar}\right)_{2n}}
{(2n)!!}\cdot  \frac{1}{x^{2n}}
\\
&=
\exp\left(
\sum_{g=0}^\infty 
\sum_{n=1}^\infty \frac{1}{n!}\hbar^{2g-2+n}
F_{g,n}^C(x,\dots,x)
\right),
\end{aligned}
\end{equation}
where $F_{g,n}^C$ is defined by 
\eqref{Catalan Fgn}, in terms of 
 generalized Catalan numbers.
The expansion variable $x^2$ in
\eqref{Catalan Psi expansion}
indicates the class of 
irregularity of the Hermite differential equation 
at $x=\infty$. The cases for $(g,n)=(0,1)$ and $(0,2)$
require again a special treatment, as we discussed
earlier.

The Hermite
differential equation
 becomes simple for $\hbar=1$,
and we have the asymptotic expansion
\begin{multline}
\label{error asymptotic}
i\sqrt{\frac{\pi}{2}}e^{-\half x^2}\left[
1-\erf\left(\frac{ix}{\sqrt{2}}\right)\right]
= \sum_{n=0}^\infty \frac{(2n-1)!!}{x^{2n+1}}
\\
=\exp\left(
\sum_{2g-2+n\ge -1}\frac{1}{n!}
\sum_{\mu_1,\dots,\mu_n>0}
\frac{C_{g,n}(\mu_1,\dots,\mu_n)}
{\mu_1\cdots\mu_n}
\prod_{i=1}^n x^{-(\mu_1+\cdots +\mu_n)}
\right).
\end{multline}
Here, 
$
\erf(x) :=\frac{2}{\sqrt{\pi}} \int_0^x e^{-z^2} dz
$
is the   Gau\ss\ error function.
\begin{figure}[htb]
\centerline{\epsfig{file=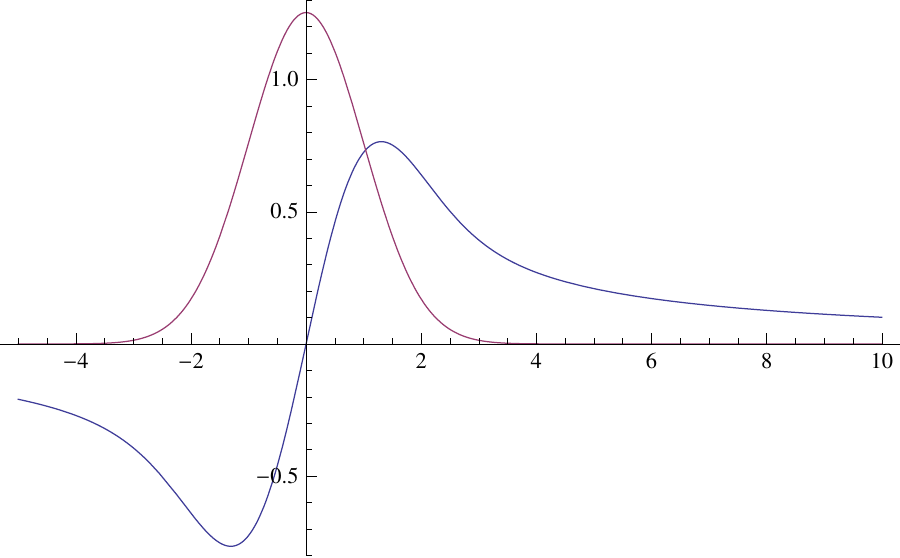, width=3in}}
\caption{The imaginary part and the real part
of $\Psi^{\Catalan}(x,1)$. For $x>>0$, the imaginary
part dies down, and only the real part has a
non-trivial asymptotic
expansion. Thus \eqref{error asymptotic} 
is a series with real coefficients. 
}
\label{fig:erf}
\end{figure}

One of the
two independent solutions to the third
example, the Gau\ss\ hypergeometric
equation, that is holomorphic
 around $x=0$, is given by 
\begin{equation}
\label{Gauss 0}
\Psi^\Gauss(x,\hbar)
= {}_2F_1
\left(-\frac{\sqrt{(h-1)(h-3)}}
   {2h}+\frac{1}{h}-
   \frac{1}{2},\frac{\sqrt{(h-1)(h-3)}}
   {2h}+\frac{1}{h}-\frac{1}{2};
   \frac{1}{h};x\right),
\end{equation}
where 
\begin{equation}
\label{Gauss}
{}_2F_1(a,b;c;x):=
\sum_{n=0}^\infty \frac{(a)_n(b)_n}{(c)_n}\;
\frac{x^n}{n!}
\end{equation}
is the \textbf{Gau\ss\ hypergeometric
function}.
The topological recursion
calculates the B-model genus
expansion of the
periods of the Legendre family of elliptic curves
\eqref{Legendre}
at the point where the 
elliptic curve degenerates to a nodal
rational curve.
For example, the procedure
applied to the spectral curve
$$
y^2
+\frac{2x-1}{x(x-1)}y+\frac{1}{4x(x-1)}=0
$$
with a choice of  
$$
\eta=\frac{-(2 x-1) - \sqrt{3x^2 - 3 x + 1}}
{2x (x-1)} dx,
$$
which is an eigenvalue $\a_1(x)$ of the Higgs
field $\phi$,
gives a genus expansion at $x=0$:
\begin{equation}
\label{Gauss expansion}
\Psi^\Gauss (x,\hbar)
=\exp\left(\sum_{g=0}^\infty
\sum_{n=1}^\infty
\frac{1}{n!}\hbar^{2g-2+n}F_{g,n}^\Gauss(x)\right).
\end{equation}
At $\hbar=1$, we have a topological recursion
expansion of the period 
$\omega_1(x)$ defined in \eqref{elliptic periods}:
\begin{equation}
\label{period expansion}
\frac{\omega_1(x)}{\pi} = \Psi^\Gauss (x,1)
=\exp\left(\sum_{g=0}^\infty
\sum_{n=1}^\infty
\frac{1}{n!}F_{g,n}^\Gauss(x)\right).
\end{equation}

A subtle point we notice here is that 
while the Gau\ss\ hypergeometric equation has
regular singular points at $x=0,1,\infty$, the
Hermite equation has an irregular singular point
of class $2$
at $\infty$. The spectral curve of each case has
an ordinary double point at $x=\infty$. But the 
crucial difference lies in the intersection of 
the spectral curve $\Sigma$ with the divisor
$C_\infty$.
For the Hermite case we have $\Sigma\cdot C_\infty
= 4$ and the intersection occurs all at once at
$x=\infty$.
For the Gau\ss\ hypergeometric case, the
intersection $\Sigma\cdot C_\infty = 4$
occurs once each at $x=0,1$, and twice at $x=\infty$. 
This \emph{confluence} of regular singular
points is the source of the irregular singularity
 in the Hermite differential equation.
 
 The fourth row indicates an example of 
 a quantum curve that has one regular singular
 point at $x=-1$ and one irregular singular point
 of class $1$ at $x = \infty$. The spectral curve
 has an ordinary double point at $x=\infty$, the same
 as the Hermite case. As 
 Figure~\ref{fig:spectral 2 and 4} shows, 
 the class of the irregular singularity at $x=\infty$
 is determined by how the spectral curve
 intersects with $C_\infty$. 
 
 \begin{figure}[htb]
\centerline{\epsfig{file=figcatalanspectral.pdf, height=1.2in}\qquad
\epsfig{file=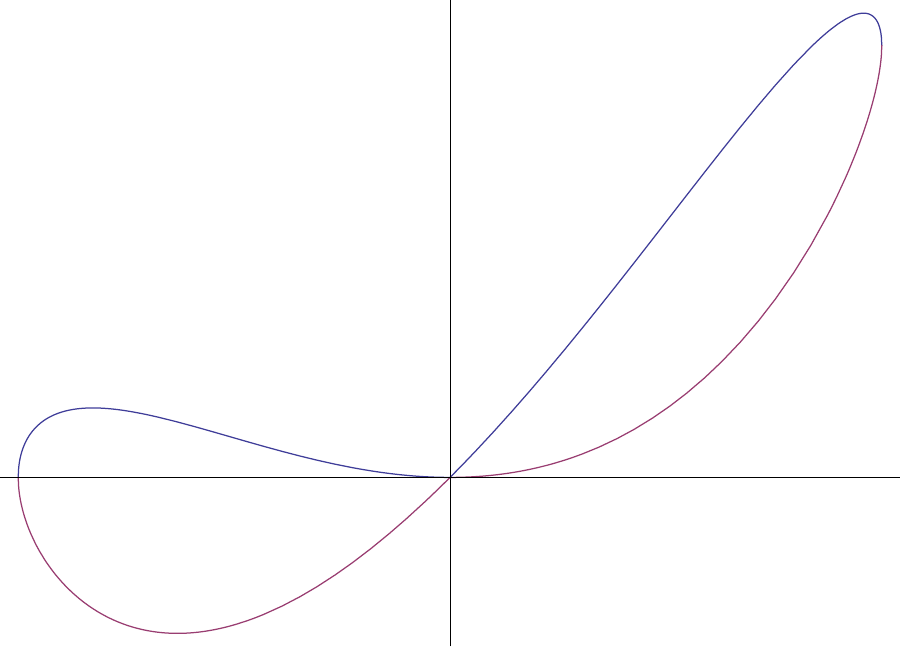, height=1.2in}}
\caption{The spectral curves  of the second
and the fourth examples.
The horizontal 
line is the divisor $C_\infty$, and the vertical
line is the fiber class $F$ at 
$x=\infty$. The spectral curve
intersects with $C_\infty$ a total of four times.
The curve on the right has a triple intersection 
at $x=\infty$, while the one on the left intersects
all at once.}
\label{fig:spectral 2 and 4}
\end{figure}

The existence of the irregular singularity in 
the quantum curve associated with a spectral
curve has nothing to do with the singularity
of the spectral curve. The fifth example shows
a non-singular spectral curve of genus $1$
(Figure~\ref{fig:spectral 5}), for which the quantum
curve has a
class $1$ irregular singularity at $x=\infty$.

\begin{figure}[htb]
\centerline{\epsfig{file=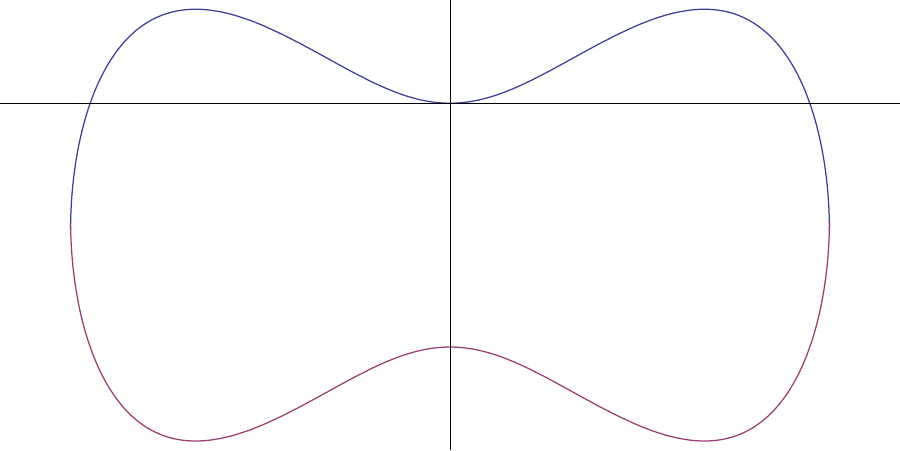, height=1.2in}}
\caption{The spectral curve  of the fifth
 example, which is non-singular. The corresponding
 quantum curve has two regular singular points
 at $x=\pm1$, and a class $1$ irregular singular 
 point at $x=\infty$.
}
\label{fig:spectral 5}
\end{figure}

\section{Difference operators as quantum 
curves}
\label{sect:difference}

Quantum curves often appear as  infinite-order
differential operators, or \emph{difference}
 operators. In this section we present three
 typical examples: simple Hurwitz numbers,
 special double Hurwitz numbers, and 
 the Gromov-Witten invariants of $\bP^1$.
 These examples do not come from the usual
 Higgs bundle framework, because the 
 rank of the Higgs bundle corresponds to the
 order of the quantum curves as
 a differential operator. Therefore, we ask:
 
 \begin{quest}
 What is the geometric structure generalizing
 the Hitchin spectral curves that correspond
 to difference operators as their quantization?
 \end{quest}

In these lectures, we do not address this question,
leaving it for a future investigation.
We are content with giving examples here.
We refer to \cite{KMar}
for a new and different perspective for 
the notion of quantum curves for difference
operators.

\subsection{Simple and 
orbifold Hurwitz numbers}
\label{sub:Hurwitz}

The \emph{simple Hurwitz number}
 $H_{g,n}(\vec{\mu})$ counts the automorphism
weighted number of the topological types of 
simple Hurwitz covers of $\bP^1$ of type 
$(g,\vec{\mu})$. 
A holomorphic map $\varphi:C\lrar \bP^1$ is a
\emph{simple Hurwitz cover}
 of type $(g,\vec{\mu})$  
if $C$ is a complete nonsingular algebraic curve
defined
over $\bC$ of genus $g$, $\varphi$ has $n$ labeled 
poles of orders 
$\vec{\mu}=(\mu_1,\dots,\mu_n)$, and all 
other critical 
points of $\varphi$ are unlabeled
simple ramification points.

In a similar way, we consider the \emph{orbifold
Hurwitz number} $H_{g,n}^{(r)}(\vec{\mu})$
 for every positive integer
$r>0$ to be the automorphism weighted 
count of the topological types of 
smooth orbifold morphisms $\varphi:C\lrar \bP^1[r]$
with the same pole structure as the simple
Hurwitz number case. 
Here, $C$ is a connected $1$-dimensional
orbifold (a \emph{twisted} curve)
modeled on a nonsingular 
curve of genus $g$ with $(\mu_1+\cdots+\mu_n)/r$
stacky points of the type $\big[p\big/(\bZ/r\bZ)\big]$.
We impose  that the inverse image of the
morphism 
$\varphi$ of the unique stacky point 
$\big[0\big/(\bZ/r\bZ)\big]\in \bP^1[r]$
coincides with the set of stacky points of $C$.
When $r=1$ we recover the 
simple Hurwitz number:
$H_{g,n}^{(1)}(\vec{\mu}) = 
H_{g,n}(\vec{\mu})$.

\begin{thm}[Cut-and-join equation, \cite{BHLM}]
The orbifold
Hurwitz numbers
$H_{g,n}^{(r)}(\mu_1,\dots,\mu_n)$
satisfy the following equation.
\begin{multline}
\label{eq:CAJ}
s H_{g,n}^{(r)}(\mu_1,\dots,\mu_n)
= \half\sum_{i\ne j} (\mu_i+\mu_j)
H_{g,n-1}^{(r)}
\left(
\mu_i+\mu_j,\mu_{[\hat{i},\hat{j}]}
\right)
\\
+\half\sum_{i=1}^n
\sum_{\a+\b=\mu_i}
\a\b 
\left[
H_{g-1,n+1}^{(r)}
\left(
\a,\b,\mu_{[\hat{i}]}
\right)
+\sum_{\substack{g_1+g_2=g\\
I\sqcup J=[\hat{i}]}}
H_{g_1,|I|+1}^{(r)}\big(\a,\mu_I\big)
H_{g_2,|J|+1}^{(r)}\big(\b,\mu_J\big)
\right].
\end{multline}
Here 
\begin{equation}
\label{eq:s}
s=s(g,\vec{\mu}) = 2g-2+n +
 \frac{\mu_1+\cdots+\mu_n}{r}
\end{equation}
 is the number of 
simple ramification point given by the
Riemann-Hurwitz formula.
As before, we use the convention that for any
subset $I\subset [n] = \{1,2,\dots,n\}$, 
$\mu_I=(\mu_i)_{i\in I}$. The hat notation
$\hat{i}$ indicates that the index $i$ is removed. 
The last summation is over all 
partitions of $g$ and set partitions of
$[\hat{i}] = \{1, \dots, i-1,i+1,\dots,n\}$. 
\end{thm}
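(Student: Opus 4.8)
The plan is to reduce the identity \eqref{eq:CAJ} to a purely combinatorial statement about factorizations in the symmetric group, and then to prove that statement by removing a single simple ramification point and analyzing the induced \emph{cut-and-join} operation on the ramification profile over $\infty$. Throughout I set $d=\mu_1+\cdots+\mu_n$, the degree of $\varphi$, and let $s=s(g,\vec\mu)$ be as in \eqref{eq:s}. First I would record the monodromy description: by the Riemann existence theorem, isomorphism classes of orbifold covers $\varphi:C\lrar \bP^1[r]$ of the prescribed type are, after weighting by $1/|\Aut|$, in bijection with tuples $(\sigma_\infty,\tau_1,\dots,\tau_s,\sigma_0)$ in $S_d$ satisfying $\sigma_\infty\tau_1\cdots\tau_s\sigma_0=1$, where $\sigma_\infty$ has cycle type $\vec\mu$ with its $n$ cycles \emph{labeled}, each $\tau_k$ is a transposition, $\sigma_0$ has cycle type $(r^{d/r})$ (the orbifold ramification over the stacky point), and the subgroup they generate acts transitively on $\{1,\dots,d\}$. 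The essential structural point is that the orbifold data lives entirely in $\sigma_0$, which the operation below leaves unchanged; the recursion therefore proceeds exactly as in the $r=1$ case, with $r$ surviving only through the value of $s$.

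Next comes the core counting argument. I would count the weighted set of pairs consisting of such a cover together with a distinguished simple branch point; since each cover has $s$ of them, this weighted count equals $s\,H^{(r)}_{g,n}(\vec\mu)$, the left-hand side. I then classify these pairs by deleting the distinguished transposition $\tau$ and examining how it acts on $\sigma_\infty$. If $\tau$ joins two distinct cycles of $\sigma_\infty$, of lengths $\mu_i$ and $\mu_j$, the reduced cover has these two parts fused into one part $\mu_i+\mu_j$; the Riemann-Hurwitz count shows the reduced cover has genus $g$ and $n-1$ marked poles, producing the term $H^{(r)}_{g,n-1}(\mu_i+\mu_j,\mu_{[\hat i,\hat j]})$. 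Counting from the reduced side, the number of transpositions that split a cycle of length $\mu_i+\mu_j$ back into the two labeled parts of lengths $\mu_i,\mu_j$ is exactly $\mu_i+\mu_j$, which yields the weight in the first line (the factor $\half$ compensating the ordered sum over $i\neq j$). If instead $\tau$ splits a single cycle of $\sigma_\infty$ of length $\mu_i$ into two cycles of lengths $\alpha,\beta$ with $\alpha+\beta=\mu_i$, the reduced profile carries the new labeled parts $\alpha,\beta$; read from the reduced side the transposition \emph{merges} two cycles, of which there are $\alpha\beta$, giving the weight in the second line. Deleting $\tau$ may keep the reduced cover connected—dropping the genus to $g-1$ and raising the pole count to $n+1$, giving $H^{(r)}_{g-1,n+1}(\alpha,\beta,\mu_{[\hat i]})$—or disconnect it into two covers of genera $g_1+g_2=g$ with the remaining poles distributed as $I\sqcup J=[\hat i]$, producing the product sum.

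The genus and pole bookkeeping in each branch is forced by the Riemann-Hurwitz count \eqref{eq:s}, which I would verify case by case. The whole scheme can alternatively be repackaged as the statement that multiplication by the class of transpositions in the center of $\bC[S_d]$ acts, under the Frobenius characteristic map, as the classical cut-and-join differential operator, with the connected-versus-disconnected dichotomy handled by passing to the logarithm of the disconnected generating series; this is cleaner but requires setting up the characteristic-map machinery. The main obstacle, and the part demanding real care rather than routine bookkeeping, is the precise matching of the combinatorial weights with the automorphism weighting: one must check that the passage between covers and monodromy tuples modulo simultaneous conjugation, the creation of two new labeled poles in the split case, the fusion of two labeled poles in the join case, and the symmetry factors $\half$ all conspire to give exactly the coefficients $\mu_i+\mu_j$ and $\alpha\beta$, with no spurious contribution from $|\Aut|$. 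Verifying that the orbifold ramification profile $(r^{d/r})$ over the stacky point is genuinely inert under cutting and joining—so that no extra $r$-dependent weight appears beyond $s$—is the remaining point that must be argued rather than assumed.
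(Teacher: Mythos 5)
Your proposal is correct, and it is the classical monodromy version of the cut-and-join argument: pass via the Riemann existence theorem to transitive factorizations $\sigma_\infty\tau_1\cdots\tau_s\sigma_0=1$ with $\sigma_0$ of cycle type $(r^{d/r})$, mark one transposition, and classify according to whether it joins two labeled cycles of $\sigma_\infty$ (weight $\mu_i+\mu_j$, genus preserved) or cuts one (weight $\a\b$, genus drops by one or the cover disconnects), with Riemann--Hurwitz forcing the genus bookkeeping in each branch. The points you flag as requiring care are exactly the right ones, and they do work out: the orbit-counting identity (number of tuples divided by $d!$ equals the automorphism-weighted count of covers), the label assignment in the degenerate cases $\mu_i=\mu_j$ and $\a=\b$ (where the count of transpositions alone is halved but the two label assignments restore the stated weights), and the inertness of $\sigma_0$, whose length-$r$ cycles distribute over the two components in the disconnected case, so that each component is again an orbifold cover with $r\mid d_1$, $r\mid d_2$ and the only $r$-dependence survives through $s$.

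Be aware, however, that the paper does not prove this theorem in the text: it quotes \cite{BHLM}, and the remark following the statement says the cut-and-join equation is derived as an \emph{edge-contraction formula}, exactly in the same way as the Catalan recursion \eqref{Catalan recursion}, whose proof is given in Section~\ref{sect:Catalan}, with details in \cite{OM3}. That route is the graph-side shadow of yours: covers are encoded combinatorially, and removing the distinguished simple branch point becomes contraction of an arrowed edge, with Case 1 of the Catalan proof (edge joining $p_1$ to $p_j$) matching your join case and Case 2 (loop at $p_1$) matching your cut case, including the disconnection alternative. What the edge-contraction packaging buys is rigidity: the arrow kills automorphisms, so the count is an honest bijection and the weights $\mu_i+\mu_j$ and $\a\b$ appear as counts of placements of the arrow after contraction, with no quotient by $S_d$-conjugation to track. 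What your packaging buys is independence from any graph model and a transparent explanation of why the recursion is literally the $r=1$ recursion with only $s$ modified, since the transposition never touches $\sigma_0$. Either way the content is the same, and your outline, with the acknowledged bookkeeping carried out, is a complete proof.
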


\begin{rem}
There is a combinatorial description, in the same
manner we have done for the Catalan numbers in 
Section~\ref{sect:Catalan}, for simple and 
orbifold Hurwitz numbers. The cut-and-join
equation is derived as the edge-contraction 
formula, exactly in the same way for the
Catalan recursion \eqref{Catalan recursion}. 
See 
\cite{OM3} for more detail.
\end{rem}

We regard $H_{g,n}^{(r)}(\vec{\mu})$ 
as a function in $n$ integer 
variables $\vec{\mu}\in\bZ_+^n$. 
Following the recipe of \cite{DMSS, EMS, MS}
that is explained in the earlier sections, 
we define the \emph{free energies} as the Laplace transform
\begin{equation}
\label{Fgn}
F^{(r)}_{g,n}(z_1,\dots,z_n) =
\sum_{\vec{\mu}\in \bZ_+^n}
H_{g,n}^{(r)}(\vec{\mu})\; e^{-\la \vec{w},\vec{\mu}\ra}.
\end{equation}
Here, $\vec{w}=(w_1,\dots,w_n)$ is the 
vector of the Laplace
dual coordinates of $\vec{\mu}$, 
$\la \vec{w},\vec{\mu}
\ra=w_1\mu_1+\cdots +w_n\mu_n$,
and  variables $z_i$ and $w_i$
for each $i$
are related by the $r$-\emph{Lambert}
function
\begin{equation}
\label{r-Lambert}
e^{-w} = z e^{-z^r}.
\end{equation}
It is often convenient to use a different variable
$
x= z e^{-z^r}$,
with which the plane
analytic curve called the \textbf{$r$-Lambert curve} is 
given by 
\be
\label{rLamb curve}
\begin{cases}
x = ze^{-z^r}\\
y=z^r.
\end{cases}
\ee
Then 
the free energies $F^{(r)}_{g,n}$ of \eqref{Fgn}
are  generating functions of the 
orbifold Hurwitz numbers. By abuse
of notation, we also write
\begin{equation}
\label{Fgn in x}
F^{(r)}_{g,n}(x_1,\dots,x_n)
 = \sum_{\vec{\mu}\in \bZ_+^n}
H_{g,n}^{(r)}(\vec{\mu}) 
\prod_{i=1}^n x_i^{\mu_i}
= \sum_{\vec{\mu}\in \bZ_+^n}
H_{g,n}^{(r)}(\vec{\mu}) 
\prod_{i=1}^n \left(z_i e^{-z_i ^r}\right)^{\mu_i}.
\end{equation}
For 
every $(g,n)$, the power series \eqref{Fgn in x}
in $(x_1,\dots,x_n)$
is convergent and defines an analytic function.

\begin{thm}[Differential recursion for Hurwitz
numbers, \cite{BHLM}]
\label{thm:Fgn recursionH}
In terms of the $z$-variables, the 
free energies are calculated as follows.
\begin{align}
\label{F01}
F^{(r)}_{0,1}(z) &= \frac{1}{r}z^r-\half z^{2r},
\\
\label{F02}
F^{(r)}_{0,2}(z_1,z_2) &=\log\frac{z_1-z_2}{x_1-x_2}
-(z_1^r+z_2^r),
\end{align}
where $x_i = z_i e^{-z_i ^r}$.
 For $(g,n)$ in the stable range, 
 i.e., when $2g-2+n>0$, the free energies 
 satisfy the differential recursion equation
 \begin{multline}
 \label{diffrecursion}
 \left(
 2g-2+n+\frac{1}{r}\sum_{i=1}^n
 z_i \frac{\partial}{\partial z_i}
 \right)
 F^{(r)}_{g,n}(z_1,\dots,z_n) 
 \\
 =
 \half\sum_{i\ne j}\frac{z_iz_j}{z_i-z_j}
 \left[
 \frac{1}{(1-rz_i^r)^2}\frac{\partial}{\partial z_i}
 F^{(r)}_{g,n-1}\big(z_{[\hat{j}]}\big) -
  \frac{1}{(1-rz_j^r)^2}\frac{\partial}{\partial z_j}
 F^{(r)}_{g,n-1}\big(z_{[\hat{i}]}\big)
 \right]
 \\
 +
 \half \sum_{i=1}^n \frac{z_i^2}{(1-rz_i^r)^2}
 \left.
 \frac{\partial^2}{\partial u_1\partial u_2}
 F^{(r)}_{g-1,n+1}\big(
 u_1,u_2,z_{[\hat{i}]}
 \big)\right|_{u_1=u_2=z_i}
 \\
 +
 \half \sum_{i=1}^n \frac{z_i^2}{(1-rz_i^r)^2}
 \sum_{\substack{
 g_1+g_2=g\\I\sqcup J=[\hat{i}]}} ^{\rm{stable}}
 \left(
 \frac{\partial}{\partial z_i} 
 F^{(r)}_{g_1,|I|+1}(z_i,z_I)
 \right)
 \left(
  \frac{\partial}{\partial z_i} 
 F^{(r)}_{g_2,|J|+1}(z_i,z_J)
 \right).
 \end{multline}
\end{thm}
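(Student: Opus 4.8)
The plan is to obtain all three assertions as the Laplace transform of the cut-and-join equation \eqref{eq:CAJ}, in the same manner as the Catalan computation of Section~\ref{sect:Catalan}. Concretely, I would multiply \eqref{eq:CAJ} by $\prod_{i=1}^n x_i^{\mu_i}$, with $x_i=z_i e^{-z_i^r}$ as in the $r$-Lambert relation \eqref{r-Lambert}, and sum over all $\vec\mu\in\bZ_+^n$, invoking the definition \eqref{Fgn in x} of $F^{(r)}_{g,n}$. The convergence of these series in the stable range, asserted just after \eqref{Fgn in x}, is what allows summation to be interchanged with the differential operations below, so I would record it as a standing hypothesis.

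For the left-hand side, the ramification count $s=s(g,\vec\mu)$ splits as $(2g-2+n)+\frac{1}{r}\sum_i\mu_i$. The constant part factors out of the sum, while each factor $\mu_i$ acts on the series \eqref{Fgn in x} as the Euler operator $x_i\partial_{x_i}$. Passing to the normalization coordinate through \eqref{r-Lambert} and applying the chain rule turns $\frac{1}{r}\sum_i x_i\partial_{x_i}$ into the first-order $z$-differential operator appearing on the left of \eqref{diffrecursion}; this is a routine change of variables.

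The two sums on the right are transformed separately. The cut term $\frac{1}{2}\sum_i\sum_{\alpha+\beta=\mu_i}\alpha\beta[\cdots]$ is a convolution in the single index $\mu_i$, so under the Laplace transform it factorizes: the $H_{g-1,n+1}$ contribution becomes the second mixed derivative $\partial_{u_1}\partial_{u_2}F^{(r)}_{g-1,n+1}$ restricted to $u_1=u_2=z_i$, the genus-splitting terms become the corresponding products of first derivatives, and the two weights $\alpha,\beta$ supply two Euler operators, which after \eqref{r-Lambert} produce the prefactor $\frac{z_i^2}{(1-rz_i^r)^2}$. The join term $\frac{1}{2}\sum_{i\ne j}(\mu_i+\mu_j)H^{(r)}_{g,n-1}(\mu_i+\mu_j,\mu_{[\hat i,\hat j]})$ is the delicate step: two formal variables $x_i,x_j$ must be merged into one before $F^{(r)}_{g,n-1}$ can be recognized, and the weight $\mu_i+\mu_j$ must be realized as a derivative. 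I would carry this out by a partial-fraction (Cauchy-kernel) rearrangement, writing the merged diagonal sum in terms of $\frac{z_iz_j}{z_i-z_j}$ together with the single $z_i$- and $z_j$-derivatives of $F^{(r)}_{g,n-1}$; this reproduces the first line on the right of \eqref{diffrecursion} and carries essentially all of the bookkeeping.

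Finally, the unstable data \eqref{F01} and \eqref{F02} lie outside the recursion and serve as its initial conditions, so I would establish them by direct evaluation. For $F^{(r)}_{0,1}$ one sums the explicit genus-zero one-point orbifold Hurwitz numbers after inverting the $r$-Lambert series $x=ze^{-z^r}$, and for $F^{(r)}_{0,2}$ one identifies the two-point generating function with the stated logarithmic expression, both being computations of Lagrange-inversion type. The main obstacle I anticipate is precisely the join term: verifying that the merge of two variables, combined with the Cauchy-kernel rearrangement, yields exactly the antisymmetric $\frac{z_iz_j}{z_i-z_j}$ structure, whereas the cut term and the Euler operator on the left transform almost mechanically.
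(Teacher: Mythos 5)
Your overall strategy --- taking the Laplace transform of the cut-and-join equation \eqref{eq:CAJ} in the variables $x_i=z_ie^{-z_i^r}$ --- is exactly the route the paper indicates (see the remark following the theorem, which attributes \eqref{diffrecursion} to precisely this computation), and your treatment of the cut term and of the unstable functions \eqref{F01}--\eqref{F02} by direct summation is sound. However, there is a genuine gap: you never account for the \emph{unstable} terms in the quadratic sum of \eqref{eq:CAJ}. That sum runs over \emph{all} splittings $g_1+g_2=g$, $I\sqcup J=[\hat i]$, including $(g_1,|I|+1)=(0,1)$ and $(0,2)$, whereas the last line of \eqref{diffrecursion} is restricted to stable splittings. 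Those unstable contributions are not bookkeeping noise; they are precisely what produces the shape of the formula you are trying to prove.

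Concretely, your claim that the chain rule turns $\frac{1}{r}\sum_i x_i\partial_{x_i}$ into the operator on the left of \eqref{diffrecursion} is false: from $x=ze^{-z^r}$ one gets $x\partial_x=\frac{z}{1-rz^r}\,\partial_z$, not $z\partial_z$. The stated Euler operator emerges only after you move the $(0,1)$ terms $\alpha\beta\, H^{(r)}_{0,1}(\alpha)H^{(r)}_{g,n}(\beta,\mu_{[\hat i]})$ --- which involve the \emph{same} $F^{(r)}_{g,n}$ as the left-hand side --- across the equation; since $x\partial_x F^{(r)}_{0,1}=z^r$ by \eqref{F01}, the combination $\left(\frac{1}{r}-z_i^r\right)\frac{z_i}{1-rz_i^r}\,\partial_{z_i}F^{(r)}_{g,n}=\frac{1}{r}z_i\partial_{z_i}F^{(r)}_{g,n}$ yields the left side of \eqref{diffrecursion}. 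Similarly, the join term alone produces the $x$-variable kernel $\frac{x_ix_j}{x_i-x_j}$ acting on $x$-derivatives of $F^{(r)}_{g,n-1}$; only after it is combined with the $(0,2)$ terms $\alpha\beta\, H^{(r)}_{0,2}(\alpha,\mu_j)H^{(r)}_{g,n-1}(\beta,\cdot)$, using \eqref{F02}, does one obtain the $z$-variable kernel $\frac{z_iz_j}{z_i-z_j}$ together with the $(1-rz_i^r)^{-2}$ prefactors in the first right-hand line of \eqref{diffrecursion}. So the unstable data \eqref{F01}--\eqref{F02} are not merely initial conditions sitting outside the recursion, as you describe them: they enter the derivation itself, and without the absorption step your argument cannot reach either the stated left-hand side or the stated join line.
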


\begin{rem}
\label{rem:Fgn recursion}
Since $F^{(r)}_{g,n}(z_1,\dots,z_n)\big|_{z_i=0}=0$
for every $i$, the differential recursion 
\eqref{diffrecursion}, which is a 
linear first order partial differential 
equation, uniquely determines
$F^{(r)}_{g,n}$  inductively for all
$(g,n)$ subject to $2g-2+n>0$. 
This is a generalization of the  result of \cite{MZ}
to the orbifold case.
\end{rem}

\begin{rem}
The differential recursion 
of Theorem~\ref{thm:Fgn recursionH} is 
obtained by taking the Laplace 
transform of the cut-and-join 
equation for $H_{g,n}^{(r)}(\vec{\mu})$. 
The $r$-Lambert curve itself, \eqref{rLamb curve}, 
is obtained by
computing the Laplace transform of 
$H_{0,1}^{(r)}(\mu)$, and solving the
differential equation that arises from the 
cut-and-join 
equation. See also \cite{OM3}
for a different formulation of the $r$-Hurwitz
numbers using the graph enumeration formulation
and a universal mechanism to obtain the spectral 
curve.
\end{rem}

The differential recursion produces
two results, as we have seen for the 
case of the Catalan numbers.
One is the quantum curve by taking the 
principal specialization, and the other
the topological recursion of \cite{EO2007}.

\begin{thm}[Quantum curves for $r$-Hurwitz
numbers, \cite{BHLM}]
\label{thm:quantum curve}
We introduce the partition function,
or the wave function, of the orbifold
Hurwitz numbers  as
\begin{equation}
\label{partition}
\Psi^{(r)}(z,\hbar) = 
\exp\left(
\sum_{g=0} ^\infty \sum_{n=1} ^\infty
\frac{1}{n!}\hbar^{2g-2+n}F^{(r)}_{g,n}(z,z,\dots,z)
\right).
\end{equation}
It satisfies the following system of (an infinite-order)
linear differential equations.
\begin{align}
\label{P}
 \left(
 \hbar D - e^{r \left(-w+\frac{r-1}{2}\hbar\right)}
 e^{r\hbar D}
 \right)
 \Psi^{(r)}(z,\hbar) &=0,
\\
\label{Q}
\left(
\frac{\hbar}{2}
D^2-\left(\frac{1}{r}+\frac{\hbar}{2}
\right) D
-\hbar\frac{\partial}{\partial \hbar}
\right)
\Psi^{(r)}(z,\hbar) &=0,
\end{align}
where
\begin{equation*}
D=\frac{z}{1-rz^r}\frac{\partial}
{\partial z}=x\frac{\partial}{\partial x}
= -\frac{\partial}{\partial w}.
\end{equation*}
Let the differential operator of \eqref{P}
(resp.~\eqref{Q})
 be denoted by $P$  (resp.~$Q$). Then 
we have the commutator relation
\begin{equation}
\label{[P,Q]}
[P,Q]=P.
\end{equation}
The semi-classical limit of each of the 
equations \eqref{P} or \eqref{Q}
recovers the $r$-Lambert curve
\eqref{rLamb curve}.
\end{thm}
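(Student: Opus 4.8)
The plan is to prove all four assertions by \textbf{principal specialization}, following the template of Theorem~\ref{thm:QC Catalan} and the Main theorem of Section~\ref{sect:quantum}. Set $G:=\log\Psi^{(r)}=\sum_{g,n}\frac{1}{n!}\hbar^{2g-2+n}F^{(r)}_{g,n}(z,\dots,z)$ and recall that in the degree variable $x=ze^{-z^r}$ the operator $D=x\frac{\partial}{\partial x}=\frac{z}{1-rz^r}\frac{\partial}{\partial z}=-\frac{\partial}{\partial w}$ satisfies $DG=\sum_{g,n}\frac{1}{n!}\hbar^{2g-2+n}\sum_{\vec\mu}|\mu|\,H^{(r)}_{g,n}(\vec\mu)\,x^{|\mu|}$ and $\hbar\partial_\hbar G=\sum_{g,n}\frac{2g-2+n}{n!}\hbar^{2g-2+n}\sum_{\vec\mu}H^{(r)}_{g,n}(\vec\mu)\,x^{|\mu|}$. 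Because $D$ is a derivation, dividing $Q\Psi^{(r)}=0$ by $\Psi^{(r)}$ turns \eqref{Q} into an identity among $G,DG,D^2G,(DG)^2$ and $\hbar\partial_\hbar G$, while \eqref{P} becomes an identity additionally involving the shift $e^{r\hbar D}$. I would verify each identity by comparing its $\hbar$- (or degree-) graded pieces with the principal specialization of the two recursions already available, the cut-and-join equation \eqref{eq:CAJ} and the differential recursion \eqref{diffrecursion}, the unstable contributions $F^{(r)}_{0,1},F^{(r)}_{0,2}$ of \eqref{F01}--\eqref{F02} furnishing the initial WKB data.

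Equation \eqref{Q} I expect to be precisely the principal specialization of the cut-and-join equation. Indeed, $\frac{1}{r}DG+\hbar\partial_\hbar G$ equals $\sum_{g,n}\frac{1}{n!}\hbar^{2g-2+n}\sum_{\vec\mu}s\,H^{(r)}_{g,n}(\vec\mu)x^{|\mu|}$ with $s$ the Riemann--Hurwitz count \eqref{eq:s}, i.e. the generating series of the left-hand side of \eqref{eq:CAJ}. On the right-hand side, the cutting operation $\sum_{\a+\b=\mu_i}\a\b[\,\cdots\,]$ that splits one part into two is, after principal specialization, converted by the second formula of Lemma~\ref{lem:principal specialization} into $\tfrac{\hbar}{2}\big(D^2G+(DG)^2\big)$: its $\partial_u^2$-piece produces the genus-dropping term $F^{(r)}_{g-1,n+1}$ (hence $D^2G$) and its $\partial_{u_1}\partial_{u_2}$-piece produces the splitting term $\sum F^{(r)}_{g_1}F^{(r)}_{g_2}$ (hence $(DG)^2$), while the joining term and the boundary of the convolution account for the linear remainder $-\tfrac{\hbar}{2}DG$. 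Matching coefficients of $\hbar^{P}x^{d}$ then gives \eqref{Q}; the only delicate point is the exact accounting of the $1/n!$ automorphism weights, of the marked-point factors implicit in Lemma~\ref{lem:principal specialization}, and of the range $\a,\b\ge 1$.

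For the genuine quantum curve \eqref{P} it is cleanest to pass to the degree grading. Expanding $\Psi^{(r)}=\sum_{d\ge 0}\psi_d(\hbar)\,x^d$ at $x=0$ (so $\psi_0=1$ and $\psi_d=0$ unless $r\mid d$) and using $e^{-rw}=x^r$ with $e^{r\hbar D}x^d=e^{r\hbar d}x^d$, equation \eqref{P} is equivalent to the step-$r$ recursion
\[
\hbar\,d\,\psi_d=\exp\!\Big(\tfrac{r(r-1)}{2}\hbar+r\hbar(d-r)\Big)\,\psi_{d-r},
\]
which I would establish from \eqref{diffrecursion} by the WKB argument: with $S_m(x)=\sum_{2g-2+n=m-1}\frac{1}{n!}F^{(r)}_{g,n}(z,\dots,z)$, expanding $e^{-S_0/\hbar}Pe^{S_0/\hbar}$ applied to $\exp\big(\sum_{m\ge 1}\hbar^{m-1}S_m\big)$ produces a hierarchy of ordinary differential equations for the $S_m$ that coincides, through Lemma~\ref{lem:principal specialization}, with the principal specialization of \eqref{diffrecursion}; the novelty compared with the second-order Airy and Catalan cases is that the shift $e^{r\hbar D}$ feeds \emph{all} orders of $D$ into the expansion, so the full Taylor series of the shift must be organized. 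The semi-classical limit is simply the leading order: from \eqref{F01} one gets $DS_0=-\partial_wF^{(r)}_{0,1}=z^r=y$, so the symbol of $\hbar D$ is $y$, that of $e^{r\hbar D}$ is $e^{ry}$ and the prefactor tends to $x^r$, whence the $\hbar^0$-part of \eqref{P} reads $y=x^re^{ry}$, i.e. $ye^{-ry}=x^r$, which under $y=z^r$ is exactly the $r$-Lambert curve \eqref{rLamb curve}; the same curve arises from the $\hbar^{-1}$-relation $\tfrac12y^2-\tfrac1ry+S_0=0$ of \eqref{Q}.

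The commutator relation \eqref{[P,Q]} is a purely algebraic identity, independent of $\Psi^{(r)}$, which I would check from $[D,w]=-1$, $[\partial_\hbar,\hbar]=1$, $DA=A(D+r)$ and $[\partial_\hbar,A]=cA+rAD$, where $A:=e^{-rw+c\hbar}e^{r\hbar D}$ is the second summand of $P$ and $c=\tfrac{r(r-1)}{2}$. A short computation gives $[\hbar D,Q]=\hbar D$ (only the $-\hbar\partial_\hbar$ summand of $Q$ contributes), and, using $D^2A=A(D+r)^2$, one finds that the two terms proportional to $AD$ cancel and
\[
[A,Q]=A\Big(1+\hbar\big(-\tfrac{r^2}{2}+\tfrac r2+c\big)\Big)=A,
\]
the bracket vanishing precisely because $c=\tfrac{r(r-1)}{2}$; subtracting yields $[P,Q]=\hbar D-A=P$. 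The main obstacle in the whole proof is none of these closed computations but the combinatorial matching of the middle paragraphs, above all the reorganization of the principal specialization of the cut-and-join and differential recursions into the clean first-order difference operator \eqref{P}, where one must control exactly the automorphism weights, the $r$-divisibility of the degree, and the boundary terms of the convolution sums.
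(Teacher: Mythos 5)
Your closed computations are right --- the commutator algebra ($DA=A(D+r)$, $[\partial_\hbar,A]=cA+rAD$, $[\hbar D,Q]=\hbar D$, $[A,Q]=A$, hence \eqref{[P,Q]}, with $A$ and $c=\tfrac{r(r-1)}{2}$ in your notation), the reduction of \eqref{P} to the step-$r$ recursion $\hbar d\,\psi_d=e^{c\hbar+r\hbar(d-r)}\psi_{d-r}$, and both semi-classical limits --- and your overall route (principal specialization of \eqref{eq:CAJ} and \eqref{diffrecursion}) is the one the lectures point to, since Theorem~\ref{thm:quantum curve} is quoted from \cite{BHLM} without proof. But there are two genuine defects. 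First, your dictionary between the terms of \eqref{eq:CAJ} and those of \eqref{Q} is wrong, and since this matching of graded pieces is exactly the verification you propose, it would fail as described. With $G=\log\Psi^{(r)}$, the term $(DG)^2$ in $D^2\Psi^{(r)}/\Psi^{(r)}=D^2G+(DG)^2$ arises because $D$ is a derivation acting on $e^G$ (it is not a piece of Lemma~\ref{lem:principal specialization} at all), and it is this term that matches the splitting sum $\sum H^{(r)}_{g_1}H^{(r)}_{g_2}$. Inside $D^2G$, the \emph{mixed} piece $\partial_{u_1}\partial_{u_2}F^{(r)}_{g-1,n+1}$ matches the genus-dropping cut term, because the two parts $\alpha,\beta$ occupy two \emph{distinct} slots of $H^{(r)}_{g-1,n+1}$ and produce the off-diagonal weight $\lambda_1\lambda_2$; the \emph{diagonal} piece $\partial_u^2$ matches the \emph{join} term, because $(\mu_i+\mu_j)H^{(r)}_{g,n-1}(\mu_i+\mu_j,\dots)$ enters the generating series with weight $\nu_1(\nu_1-1)$ (there being $\nu_1-1$ compositions $\nu_1=\alpha+\beta$ with $\alpha,\beta\ge1$), whose $\nu_1^2$ part is the diagonal second derivative and whose $-\nu_1$ part is the $-\tfrac{\hbar}{2}DG$ remainder. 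You have attached the genus-dropping term to the diagonal piece and the splitting term to the mixed piece; the totals coincide, which is why \eqref{Q} is nevertheless true, but the proof as you describe it does not close until this dictionary is corrected.

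Second, and more seriously, \eqref{P} --- the actual quantum curve and the heart of the theorem --- is never proved: after the correct reduction to the step-$r$ recursion, you appeal to ``the WKB argument'' applied to \eqref{diffrecursion}, but organizing the full Taylor expansion of the shift $e^{r\hbar D}$ against the principal specialization of \eqref{diffrecursion} is precisely where all the work in \cite{BHLM,MSS} lies, and your proposal gives no indication of how to carry it out. Note that you can close this gap with ingredients you already have, avoiding that resummation entirely: by \eqref{[P,Q]} and \eqref{Q}, $\Phi:=P\Psi^{(r)}$ satisfies $(Q+1)\Phi=0$, so each coefficient $\phi_d$ of $x^d$ obeys $\hbar\partial_\hbar\phi_d=\bigl(\tfrac{\hbar}{2}d(d-1)+1-\tfrac{d}{r}\bigr)\phi_d$ and is therefore either $0$ or of exact $\hbar$-order $1-\tfrac{d}{r}$. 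On the other hand, $P\Psi^{(r)}=\Psi^{(r)}\cdot T$ with $T=\hbar DG-e^{c\hbar}x^r\exp\bigl(G(e^{r\hbar}x,\hbar)-G(x,\hbar)\bigr)$; one checks that $T$ lies in $\bC[[x,\hbar]]$, has no $x$-constant term, and that its $\hbar^0$-part equals $DS_0-x^re^{rDS_0}$, which vanishes identically by your own semi-classical identity. Since the coefficient of $x^{rk}$ in $\Psi^{(r)}$ has $\hbar$-order at least $-k$ (the only negative powers come from $S_0/\hbar$, and $S_0\in x^r\bC[[x^r]]$), every $\phi_d$ has $\hbar$-order at least $2-\tfrac{d}{r}>1-\tfrac{d}{r}$, forcing $\phi_d=0$ for all $d$, i.e.\ \eqref{P}. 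This substitutes your already-proved commutator relation for the unexecuted WKB computation.
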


\begin{rem}
\label{rem:MSS}
The Schr\"odinger equation \eqref{P}
is first established in \cite{MSS}.
\end{rem}

\begin{rem}
\label{rem:r=1}
The above theorem
is a generalization of
  \cite[Theorem~1.3]{MS}
  for an arbitrary $r>0$. The restriction
  $r=1$ reduces to the simple Hurwitz case.
\end{rem}

\begin{rem}
Unlike the situation of Hitchin spectral curves,
the results of the quantization of the 
analytic spectral curves are a 
\textbf{difference-differential equation},
and a PDE containing the differentiation 
with respect to the deformation parameter
$\hbar$.
\end{rem}

Now let us define
\begin{equation}
\label{Wgnr}
W_{g,n}^{(r)}(z_1,\dots,z_n):=
d_1 d_2\cdots d_n F_{g,n}^{(r)}(z_1,\dots,z_n).
\end{equation}
Then we have

\begin{thm}[Topological recursion for orbifold
Hurwitz numbers, \cite{BHLM}]
\label{thm:r-EO}
For the stable range $2g-2+n>0$, the symmetric
differential forms \eqref{Wgnr}
satisfy the following
integral recursion 
formula.
\begin{multline}
\label{r-EO}
W_{g,n}^{(r)}(z_1,\dots,z_n)
=\frac{1}{2\pi i}\sum_{j=1} ^{r}
\oint_{\gam_j}K_j(z,z_1)
\Bigg[
W_{g-1,n+1}^{(r)}\big(z,s_j(z),z_2,\dots,z_n\big)
\\
+
\sum_{i=2}^n 
\left(
W_{0,2}^{(r)}(z,z_i)
\tensor
W_{g,n-1}^{(r)}
\big(s_j(z),z_{[\hat{1},\hat{i}]}\big)
+
W_{0,2}^{(r)}\big(s_j(z),z_i\big)
\tensor
W_{g,n-1}^{(r)}
\big(z,z_{[\hat{1},\hat{i}]}\big)
\right)
\\
+
\sum_{\substack{g_1+g_2=g\\
I\sqcup J=\{2,\dots,n\}}}
^{\rm{stable}}
W_{g_1,|I|+1}^{(r)}\big(z,z_I\big)
\tensor
W_{g_2,|J|+1}^{(r)}\big(s_j(z),z_J\big)
\Bigg].
\end{multline}
Here, the integration is taken with respect to 
$z$ along a small simple closed loop $\gam_j$
around $p_j$, and 
$\{p_1,\dots,p_r\}$ are the critical points of
the $r$-Lambert function
$x(z)=z e^{-z^r}$ at $1-rz^r=0$.
Since $dx=0$ has a simple
zero at each $p_j$,  the map
$x(z)$ is locally a double-sheeted covering
around $z=p_j$. 
We denote by $s_j$ the deck transformation 
on a small neighborhood of $p_j$. Finally,
the integration kernel is defined by
\begin{equation}
\label{r-kernel}
K_j(z,z_1)= \half\;
\frac{1}{W_{0,1}^{(r)}\big(s_j(z_1)\big)
-W_{0,1}^{(r)}(z_1)}
\tensor
\int_{z}^{s_j(z)}W_{0,2}^{(r)}(\;\cdot\;,z_1).
\end{equation}
\end{thm}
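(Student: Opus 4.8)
The plan is to derive the integral recursion \eqref{r-EO} from the already-established differential recursion \eqref{diffrecursion} of Theorem~\ref{thm:Fgn recursionH}, by recasting the right-hand side of \eqref{diffrecursion} as a sum of contour integrals around the critical points $p_1,\dots,p_r$ of the $r$-Lambert curve \eqref{rLamb curve}. The first observation I would record is that, for $2g-2+n>0$, the free energies $F_{g,n}^{(r)}$ are rational in the global coordinate $z$, with all poles located along $1-rz^r=0$; consequently each $W_{g,n}^{(r)}=d_1\cdots d_n F_{g,n}^{(r)}$ of \eqref{Wgnr} is a rational meromorphic differential on $\bP^1_z$ whose only singularities in $z$ occur at the $p_j$, together with the diagonal poles supplied by $W_{0,2}^{(r)}$. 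This rationality is exactly what makes the global residue theorem on $\bP^1_z$ available, and it is the engine of the whole argument.

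Next I would carry out the local analysis at each $p_j$. Since $dx=(1-rz^r)e^{-z^r}\,dz$ has a simple zero there, the map $x(z)$ is locally a double cover and the deck involution $s_j$ fixing $p_j$ is holomorphic on the neighbourhood of $p_j$ enclosed by $\gam_j$. I would then expand the kernel $K_j$ of \eqref{r-kernel}, whose denominator $W_{0,1}^{(r)}(s_j(z))-W_{0,1}^{(r)}(z)$ has a simple zero at $p_j$, together with the prime-form factor $\int_z^{s_j(z)}W_{0,2}^{(r)}(\cdot,z_1)$. The residue of $K_j(z,z_1)$ against the bracketed correlators then produces, via the coincidence-limit identities of Lemma~\ref{lem:principal specialization}, precisely the derivative combinations $\frac{z_i^2}{(1-rz_i^r)^2}\,\partial_{u_1}\partial_{u_2}F_{g-1,n+1}^{(r)}\big|_{u_1=u_2=z_i}$ and the stable quadratic products that appear on the right-hand side of \eqref{diffrecursion}; the factor $z_i^2/(1-rz_i^r)^2$ is the local incarnation of the vanishing of $dx$ at $p_j$.

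The matching is then completed by a contour-deformation step. Enlarging the loops $\gam_j$ and invoking the residue theorem, the only further poles of the integrand in $z$ lie at $z=z_1$, coming from the diagonal pole of $\int_z^{s_j(z)}W_{0,2}^{(r)}(\cdot,z_1)$, and at $z=z_i$ for $i\ge 2$, coming from the explicit factors $W_{0,2}^{(r)}(z,z_i)$ and $W_{0,2}^{(r)}(s_j(z),z_i)$. The residue at $z=z_1$ reconstructs the left-hand side $W_{g,n}^{(r)}(z_1,\dots,z_n)$ through the normalization \eqref{normalization}, while the residues at $z=z_i$ assemble the merging terms; the initial data \eqref{F01} and \eqref{F02} enter through $W_{0,1}^{(r)}$ and $W_{0,2}^{(r)}$. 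Applying $d_1\cdots d_n$ to \eqref{diffrecursion} and comparing term by term with this residue expansion yields \eqref{r-EO}, and the induction on $2g-2+n$ closes since \eqref{diffrecursion} already determines all stable $F_{g,n}^{(r)}$ from the initial data.

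I expect the principal obstacle to be twofold. First, one must verify that the contour at $z=\infty$ contributes nothing, which is not automatic because the $x$-$z$ change of variables carries the transcendental factor $e^{-z^r}$; controlling the decay of the integrand there is the delicate analytic point. Second, and more structurally, the two recursions have different left-hand sides — \eqref{diffrecursion} carries the Euler-type operator $2g-2+n+\frac1r\sum_i z_i\partial_{z_i}$, whereas \eqref{r-EO} produces $W_{g,n}^{(r)}$ directly — so the reconciliation of these forms, namely showing that the scaling operator is absorbed into the residue bookkeeping at the $p_j$, is where the real work lies. Once these two points are settled, the equivalence with the integral recursion follows.
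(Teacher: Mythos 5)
Your overall strategy --- deriving the residue formula \eqref{r-EO} from the differential recursion \eqref{diffrecursion}, i.e.\ from the Laplace transform of the cut-and-join equation, with the rationality of $W_{g,n}^{(r)}$ (poles only along $1-rz^r=0$) as the enabling input --- is exactly the route the paper attributes to \cite{EMS,BHLM}, so the plan points in the right direction. The execution, however, breaks at your contour-deformation step. You propose to ``enlarge the loops $\gam_j$ and invoke the residue theorem,'' trading the integrals around the $p_j$ for residues at $z=z_1$ and $z=z_i$. That requires the integrand to be a globally defined meromorphic form in $z$, and it is not: both the kernel $K_j(z,z_1)$ and the arguments $s_j(z)$ inside the $W$'s involve the local deck transformation $s_j$, which exists only on a small neighborhood of $p_j$. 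The covering $x(z)=ze^{-z^r}$ is transcendental and is \emph{not} Galois; its local deck transformations do not extend (for $r=1$ this is the familiar branching of the Lambert function). Note that the paper's proof of the corresponding equivalence statement in the Hitchin setting leans on precisely the opposite feature: there $\tilde{\pi}:\widetilde{\Sigma}\lrar C$ is a global Galois covering, $\tilde{\sigma}$ is globally defined, and only for that reason can the contour be deformed and the residues at $z_1,\tilde{z}_1,z_i,\tilde{z}_i$ be collected. For the $r$-Lambert curve this argument is unavailable; this is the ``complex analysis difficulty'' that \cite{EMS,BHLM} overcome by running the argument in the other direction: use polynomiality to write $W_{g,n}^{(r)}$, as a rational form in the variable $z_1$, as the sum of its \emph{principal parts} at the $p_j$, and then compute each principal part purely locally from the differential recursion, never moving any contour away from $p_j$. (Your first worry, decay at $z=\infty$ caused by $e^{-z^r}$, is actually a non-issue, since the stable $W_{g,n}^{(r)}$ are rational; the real global obstruction is the non-existence of a global $s_j$.)

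The second gap is one you flag yourself but leave open: the left-hand side of \eqref{diffrecursion} is $\big(2g-2+n+\frac{1}{r}\sum_i z_i\partial_{z_i}\big)F_{g,n}^{(r)}$, not $d_1\cdots d_n F_{g,n}^{(r)}$, so ``applying $d_1\cdots d_n$ and comparing term by term'' does not by itself produce \eqref{r-EO}; one must invert this Euler-type operator while tracking the singular behavior at the $p_j$. That inversion is exactly what the principal-part machinery of \cite{EMS} accomplishes, and it constitutes the core of the proof in \cite{BHLM} rather than residue bookkeeping. In sum, you have identified the correct starting point and the correct obstacles, but the step your derivation relies on fails as stated, and the step you defer is the substance of the theorem.
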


\begin{rem}
As mentioned earlier,
the significance of the integral formalism
is its universality. The differential equation
\eqref{diffrecursion} takes a 
 different form
depending on the counting problem, whereas
the integral formula \eqref{r-EO}
depends only on the 
choice of the spectral curve. 
\end{rem}

\begin{rem}
The proof is based on the idea of \cite{EMS}. 
The notion of the \emph{principal part}
of  meromorphic differentials
 plays a key role in converting the
Laplace transform of the cut-and-join
equation into a residue formula. 
\end{rem}

\subsection{Gromov-Witten invariants
of the projective line}
\label{sub:P1}

Hurwitz numbers and Gromov-Witten invariants
of $\bP^1$ are closely related \cite{OP}.
However, their relations to the topological recursion
is rather different. For example,
the topological recursion for stationary 
Gromov-Witten invariants of $\bP^1$
was conjectured by Norbury and Scott
\cite{NS} as a concrete formula,
but its proof \cite{DOSS} is done in a 
very different way than that of \cite{EMS,MZ}.
This is based on the fact that 
we do not have a counterpart of the 
cut-and-join equation for the case of  
the Gromov-Witten invariants of $\bP^1$.
Nonetheless, the quantum curve exists.

Let $\Mbar_{g,n}(\bP^1,d)$ denote the 
moduli space of stable maps of degree $d$
from an $n$-pointed genus $g$ curve to
$\bP^1$. This is an algebraic stack of 
dimension $2g-2+n +2d$. The dimension
reflects the fact that a generic map
from an algebraic curve to $\bP^1$ has 
only simple ramifications, and the number
of such ramification points,
which we
can derive from the Riemann-Hurwitz 
formula, gives the dimension 
of this stack. The  descendant 
Gromov-Witten invariants of 
$\bP^1$ are defined by 
\begin{equation}
\label{GW}
\left< \prod_{i=1}^n
\tau_{b_i}(\alpha_i)\right>_{g,n} ^d
:=
\int _{[\Mbar_{g,n}(\bP^1,d)]^{vir}}
\prod_{i=1}^n \psi_i ^{b_i} ev_i^*(\alpha_i),
\end{equation}
where 
$[\Mbar_{g,n}(\bP^1,d)]^{vir}$ is the virtual 
fundamental class of the moduli space,
\begin{equation*}
ev_i:\Mbar_{g,n}(\bP^1,d)\lrar \bP^1
\end{equation*}
is a natural morphism defined by
evaluating a stable map at the $i$-th marked 
point of the source curve, $\alpha_i\in H^*(\bP^1,\bQ)$
is a cohomology class of the target $\bP^1$,
and $\psi_i$ is  the tautological cotangent 
class in $H^2(\Mbar_{g,n}(\bP^1,d),\bQ)$.
We denote by $1$ the generator of
$H^0(\bP^1,\bQ)$, and by $\omega\in
H^2(\bP^1,\bQ)$ the Poincar\'e dual to the 
point class.
We assemble the Gromov-Witten invariants
into particular generating functions as follows.
For every $(g,n)$ in the stable sector
$2g-2+n>0$, we define the \emph{free energy} of 
type $(g,n)$
by
\begin{equation}
\label{FgnP1}
F_{g,n}(x_1,\dots,x_n)
:= 
\left< \prod_{i=1}^n \left(
-\frac{\tau_0(1)}{2} - \sum_{b=0}^\infty
\frac{b!\tau_b(\omega)}{x_i^{b+1}}
\right)
\right>_{g,n}.
\end{equation}
Here the degree $d$ is determined by the 
dimension condition of the 
cohomology classes to be integrated
over the virtual fundamental class.
We note that \eqref{FgnP1} contains 
the class $\tau_0(1)$. 
For unstable geometries, we introduce two 
functions
\begin{align}
\label{S0P1}
S_0(x) &:=
x-x\log x +
\sum_{d=1}^\infty 
\left< -\frac{(2d-2)!\tau_{2d-2}(\omega)}
{x^{2d-1}}
\right>_{0,1}^d,
\\
\label{S1P1}
S_1(x) &:=
-\half \log x+
\half \sum_{d=0}^\infty 
\left< 
\left(
-\frac{\tau_0(1)}{2} - \sum_{b=0}^\infty
\frac{b!\tau_b(\omega)}{x^{b+1}}
\right)^2
\right>_{0,2}^d,
\end{align}
utilizing an earlier work of \cite{DMSS}.
Then we have

\begin{thm}[The quantum curve for the
Gromov-Witten invariants of $\bP^1$, \cite{DMNPS}]
\label{thm:P1}
The wave function
\begin{equation}
\label{Psi-P1}
\Psi(x,\hbar)
:=\exp\left(
\frac{1}{\hbar}S_0(x) + S_1(x)
+\sum_{2g-2+n>0}\frac{\hbar^{2g-2+n}}{n!}
F_{g,n}(x,\dots,x)
\right)
\end{equation}
satisfies the quantum curve equation
of an infinite order
\begin{equation}
\label{qcP1}
\left[
\exp\left(
\hbar\frac{d}{dx}
\right)
+
\exp\left(
-\hbar\frac{d}{dx}
\right)
-x
\right]
\Psi(x,\hbar) = 0.
\end{equation}
Moreover, 
the free energies $F_{g,n}(x_1,\dots,x_n)$
as functions in $n$-variables,
 and hence 
all the Gromov-Witten invariants \eqref{GW},
can be recovered 
from the equation
\eqref{qcP1} alone,
using the mechanism of the
\textbf{topological recursion}
of \cite{EO2007}.
\end{thm}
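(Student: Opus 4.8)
The plan is to establish the two assertions separately: first that the formal series $\Psi(x,\hbar)$ of \eqref{Psi-P1} is annihilated by the difference operator in \eqref{qcP1}, and then that the semi-classical geometry of \eqref{qcP1} together with the topological recursion reconstructs every invariant \eqref{GW}. Throughout I would treat $\exp(\pm\hbar\,d/dx)$ as the shift operator $\Psi(x)\mapsto\Psi(x\pm\hbar)$, so that \eqref{qcP1} is literally the difference equation $\Psi(x+\hbar,\hbar)+\Psi(x-\hbar,\hbar)=x\,\Psi(x,\hbar)$.

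For the first assertion I would run the difference-equation analogue of the WKB argument used in the proof of Theorem~\ref{thm:QC Catalan}. Writing $S_m(x)=\sum_{2g-2+n=m-1}\frac{1}{n!}F_{g,n}(x,\dots,x)$ for $m\ge 2$ and keeping $S_0,S_1$ as in \eqref{S0P1}--\eqref{S1P1}, so that $\Psi=\exp\big(\sum_{m\ge0}\hbar^{m-1}S_m\big)$, I divide the difference equation by $\Psi(x,\hbar)$ and Taylor-expand $S_m(x\pm\hbar)$. This reduces \eqref{qcP1} to the scalar identity
\begin{equation*}
e^{\Phi_+(x,\hbar)}+e^{\Phi_-(x,\hbar)}=x,\qquad
\Phi_\pm(x,\hbar)=\sum_{m\ge0}\sum_{k\ge1}\frac{(\pm\hbar)^k\hbar^{m-1}}{k!}\,S_m^{(k)}(x).
\end{equation*}
Collecting powers of $\hbar$ then yields a tower of ordinary differential equations for the $S_m$. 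The lowest order ($\hbar^0$) gives $2\cosh\!\big(S_0'(x)\big)=x$, which is the semi-classical limit; with $y=S_0'$ this is exactly the Norbury--Scott spectral curve $x=z+1/z$, $y=\log z$ of \cite{NS}. I would verify this directly from the genus-zero one-point stationary invariants entering \eqref{S0P1} (as $x\to\infty$ one has $z\to0$ and $S_0'\sim-\log x\sim\log z$), and check the next-order equation from \eqref{S1P1}.

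For $m\ge2$ the plan is to show, by induction on $2g-2+n$, that the $\hbar^{m}$-order equation coincides with the principal specialization of the Eynard--Orantin recursion satisfied by the correlation differentials $W_{g,n}=d_1\cdots d_nF_{g,n}$. The passage from the multilinear $W_{g,n}$ to the one-variable $S_m$ is handled by the diagonal-restriction identities of Lemma~\ref{lem:principal specialization}, while the validity of the recursion itself is supplied by the theorem of \cite{DOSS}, which asserts that the stationary Gromov--Witten theory of $\bP^1$ is governed by the topological recursion on precisely this curve. The hard part — and the genuine departure from the Hitchin (finite-order, differential-operator) case treated in Section~\ref{sect:quantum} — is that each order of $\hbar$ now involves arbitrarily high derivatives $S_m^{(k)}$ coming from the shift operators, so one must match the full shift-operator expansion, term by term, with the residue/loop-equation form of the recursion rather than with a finite-order differential recursion.

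For the second assertion, the computation above already shows that the semi-classical limit of \eqref{qcP1} is intrinsically the curve $x=2\cosh y$, independent of the wave function. Applying the topological recursion of \cite{EO2007} to this spectral curve produces the differentials $W_{g,n}$, and \cite{DOSS} identifies them with the generating differentials of the stationary invariants $\la\prod_i\tau_{b_i}(\omega)\ra_{g,n}$; integrating recovers the free energies $F_{g,n}$ of \eqref{FgnP1}, and the non-stationary $\tau_0(1)$-insertions are restored from the stationary sector through the string equation. Hence all invariants \eqref{GW} are determined by \eqref{qcP1} alone. The principal obstacle throughout remains the infinite-order character of the quantum curve: the clean order-by-order correspondence between WKB and topological recursion that holds for differential operators must be re-established for difference operators, which is where I expect the bulk of the technical effort to concentrate.
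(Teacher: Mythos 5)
Your handling of the second assertion is essentially the paper's own: take the semi-classical limit of \eqref{qcP1} to get the curve $x=z+1/z$, $y=\log z$ of \eqref{spectralP1}, run the topological recursion on it, and invoke \cite{DOSS} (the proof of the Norbury--Scott conjecture \cite{NS}) to identify the resulting $W_{g,n}$ with the generating differentials of the stationary invariants. The gap is in your first assertion. You propose to prove the difference equation \eqref{qcP1} by matching, order by order in $\hbar$, the shift-operator WKB tower with the principal specialization of the Eynard--Orantin recursion, with \cite{DOSS} supplying the recursion's validity. But the bridge you need --- that the $\hbar^m$-order equations of a \emph{difference} operator are the principal specialization of the integral topological recursion --- is exactly what is not available, and you give no mechanism for it. In every case treated in this paper (Airy, Catalan, Hurwitz), that bridge is never built directly from the integral recursion: it passes through an intermediate PDE recursion for the free energies (\eqref{FC recursion}, \eqref{differential TR}, \eqref{diffrecursion}), obtained as the Laplace transform of a combinatorial cut-and-join-type equation, and only the principal specialization of that PDE recursion yields the WKB system. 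For $GW(\bP^1)$ precisely this intermediary is missing: the paper notes at the start of Section~\ref{sub:P1} that there is no counterpart of the cut-and-join equation for the Gromov--Witten invariants of $\bP^1$, and the remark following Theorem~\ref{thm:P1} states explicitly that in the $GW(\bP^1)$ context the topological recursion does not play any role in establishing \eqref{qcP1}. So what you label ``the hard part'' where ``the bulk of the technical effort'' will concentrate is not a technical refinement of a known argument; it is the open problem posed at the beginning of Section~\ref{sect:difference}, and deferring it leaves the first assertion unproved.

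The route actually taken in \cite{DMNPS}, which the paper sketches after the theorem, bypasses the topological recursion entirely for this step. Using the Gromov--Witten/Hurwitz correspondence of \cite{OP}, the wave function \eqref{Psi-P1} is rewritten as a sum over partitions, and the equation \eqref{qcP1} is shown to be \emph{equivalent} to the elementary recursion \eqref{XP1} for the rational functions $X_d(x,\hbar)$ of \eqref{XdP1}; that recursion is then verified combinatorially. The recursion/\cite{DOSS} input enters only in the ``moreover'' part, exactly as in your second assertion. To salvage your route you would have to construct a PDE (or difference) recursion for the $F_{g,n}$ of \eqref{FgnP1} playing the role of \eqref{differential TR}, and prove both that it is compatible with the Eynard--Orantin recursion and that its principal specialization reproduces the full shift-operator expansion of \eqref{qcP1}; none of these steps exists in the literature cited here.
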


\begin{rem}
The appearance of the extra terms in
$S_0$ and $S_1$, in particular,
the $\log x$ terms, is trickier than the 
cases studied in these lectures.
We refer to \cite[Section 3]{DMNPS}.
\end{rem}

\begin{rem}Put 
\begin{equation}
\label{SmP1}
S_m(x) := \sum_{2g-2+n=m-1}\frac{1}{n!}
F_{g,n}(x,\dots,x).
\end{equation}
Then our wave function is of the form
\begin{equation}
\label{WKB-P1}
\Psi(x,\hbar) = \exp\left( \sum_{m=0}^\infty
\hbar^{m-1} S_m(x)\right),
\end{equation}
which provides the WKB approximation 
of the quantum curve equation 
\eqref{qcP1}. Thus the significance of 
\eqref{FgnP1} is that the 
exponential generating function 
\eqref{Psi-P1} of the
descendant Gromov-Witten invariants of 
$\bP^1$ gives the  solution to the 
WKB analysis in a closed formula
for the difference equation
\eqref{qcP1}.
\end{rem}

\begin{rem}
For the case of Hitchin spectral curves 
\cite{OM1,OM2},
the Schr\"odinger-like equation
\eqref{qcP1} is a direct consequence 
of the generalized topological recursion. In the
$GW(\bP^1)$ context, the topological recursion does 
not play any role in establishing 
\eqref{qcP1}. 
\end{rem}

We can
recover the classical mechanics  corresponding 
to \eqref{qcP1} by taking its 
semi-classical limit, which is the 
singular perturbation limit
\begin{multline}
\label{SCL-P1}
\lim_{\hbar\rar 0}
\left(
e^{-\frac{1}{\hbar}S_0(x)}
\left[
\exp\left(
\hbar\frac{d}{dx}
\right)
+
\exp\left(
-\hbar\frac{d}{dx}
\right)
-x
\right]
e^{\frac{1}{\hbar}S_0(x)}
e^{ \sum_{m=1}^\infty
\hbar^{m-1} S_m(x)}
\right)
\\
=
\left(e^{S_0'(x)}+e^{-S_0'(x)}-x
\right)
e^{S_1(x)}=0.
\end{multline}
In terms of new variables
$y(x) = S_0'(x)$ and $z(x) = e^{y(x)}$, 
the semi-classical limit  gives us 
an equation for the spectral curve
$$
z\in \Sigma = \bC^*\subset \bC \times \bC^*
\overset{\exp}{\longleftarrow} T^*\bC = \bC^2
\owns (x,y)
$$ by
\begin{equation}
\label{spectralP1}
\begin{cases}
x = z+\frac{1}{z}\\
y=\log z
\end{cases}.
\end{equation}
This is the reason we consider \eqref{qcP1}
as the quantization of the Laudau-Ginzburg
model 
$$
x=z+\frac{1}{z}.
$$

It was conjectured in \cite{NS} that the
stationary Gromov-Witten theory of $\bP^1$ should
satisfy the topological recursion
with respect to the spectral curve
\eqref{spectralP1}. 
The conjecture
is solved in \cite{DOSS} as a corollary to 
its main theorem, which 
establishes the correspondence between the 
topological 
recursion and the Givental formalism.

The key discovery of \cite{DMNPS} is that
the quantum curve equation \eqref{qcP1}
is equivalent to a  recursion equation
\begin{equation}
\label{XP1}
\frac{x}{\hbar}
\left(
e^{-\hbar\frac{d}{dx}} -1
\right) X_d(x,\hbar)
+\frac{1}{1+\frac{x}{\hbar}}
e^{\hbar\frac{d}{dx}}X_{d-1}(x,\hbar)=0
\end{equation}
for a rational function
\begin{equation}
\label{XdP1}
X_d(x,\hbar) = \sum_{\lambda\vdash d}
\left(
\frac{\dim \lambda}{d!}
\right)^2
\prod_{i=1}^{\ell(\lambda)}
\frac{x+(i-\lambda_i)\hbar}{x+i\hbar}.
\end{equation}
Here $\lambda$ is a partition of $d\ge 0$
with parts $\lambda_i$ and 
$\dim \lambda$ denotes the dimension 
of the irreducible representation of 
the symmetric group $S_d$ 
characterized  by $\lambda$.

\begin{ack}
The present article is based on the series of lectures
that the authors have given in Singapore, Kobe,  
Hannover, Hong Kong, and Leiden in 2014--2015.
They are indebted to Richard Wentworth
and Graeme Wilkin for their kind invitation to 
 the Institute for Mathematical
Sciences at the National University of Singapore,
where these lectures were first delivered at the 
IMS Summer Research Institute, 
\emph{The Geometry, Topology and Physics of Moduli Spaces of Higgs Bundles}, in July 2014.
The authors are also grateful to  
Masa-Hiko Saito 
for his kind invitation to run 
the Kobe Summer School
consisting of an undergraduate
and a graduate courses  on the
related topics at Kobe University, Japan, in 
July--August, 2014. 
A part of these lectures was also delivered 
at the \emph{Advanced Summer School: Modern Trends in Gromov-Witten Theory}, 
Leibniz Universit\"at Hannover, in September 2014.

The authors' 
special thanks are due  to Laura P.\ Schaposnik,
whose constant interest  in the subject
made these lecture notes possible. 

The authors are grateful to
the American Institute of Mathematics in California, 
the Banff International Research Station,
 Max-Planck-Institut f\"ur Mathematik in Bonn,
  and the Lorentz Center for Mathematical 
 Sciences, Leiden,
for their hospitality and financial support for
the collaboration of the authors. 
They  thank
J\o rgen Andersen,
Philip Boalch, 
Ga\"etan Borot,
Vincent Bouchard,
Andrea Brini,
Leonid Chekhov,
Bertrand Eynard,
Laura Fredrickson, 
Tam\'as Hausel,
Kohei Iwaki,
Maxim Kontsevich,
Andrew Neitzke,
Paul Norbury,
Alexei Oblomkov,
Brad Safnuk,
Albert Schwarz, 
Sergey Shadrin,
Yan Soibelman, 
and
Piotr Su\l kowski
for useful discussions.
They also thank the referees for numerous 
suggestions to improve these lecture notes.
O.D.\ thanks the Perimeter Institute for 
Theoretical Physics,
Canada, and
M.M.\  thanks 
the University of Amsterdam, 
l'Institut des Hautes \'Etudes Scientifiques,  
Hong Kong University of 
Science and Technology, and
the Simons Center for Geometry 
and Physics,
 for
financial support and 
hospitality.
The research of O.D.\ has been supported by
GRK 1463 of Leibniz Universit\"at 
 Hannover and MPIM in Bonn.
The research of M.M.\ has been supported 
by MPIM in Bonn, 
NSF grants DMS-1104734 and DMS-1309298, 
and NSF-RNMS: Geometric Structures And 
Representation Varieties (GEAR Network, 
DMS-1107452, 1107263, 1107367).
\end{ack}

%Bibliography

\providecommand{\bysame}{\leavevmode\hbox to3em{\hrulefill}\thinspace}

\bibliographystyle{amsplain}

\end{document}